\documentclass[a4paper,11pt]{amsart}
\usepackage{amsmath,amsthm,amssymb}
\usepackage[mathscr]{eucal}
\usepackage{upgreek}

\usepackage[bookmarks,bookmarksnumbered]{hyperref}

\usepackage{enumerate}
\usepackage{amsmath}
\usepackage{mathrsfs}
\usepackage{blindtext}
\usepackage{scrextend}
\usepackage{xcolor}

\usepackage{enumitem}
\usepackage{bm} 
\addtokomafont{labelinglabel}{\sffamily}
\usepackage{color}
\usepackage[at]{easylist}
\usepackage{leftindex}

\numberwithin{equation}{section}

\theoremstyle{plain}
\newtheorem{main}{Theorem}
\newtheorem{mcor}[main]{Corollary}

\newtheorem{theorem}{Theorem}[section]
\newtheorem{claim}[theorem]{Claim}
\newtheorem{conjecture}[theorem]{Conjecture}

\newtheorem{lemma}[theorem]{Lemma}

\newtheorem{corollary}[theorem]{Corollary}
\theoremstyle{definition}
\newtheorem{definition}[theorem]{Definition}
\newtheorem*{definition*}{Definition}

\newtheorem{remark}[theorem]{Remark}

\begin{document}

\title[A class of II$_1$ factors without non-trivial crossed product decompositions]
{A class of II$_1$ factors without non-trivial \\ crossed product decompositions}

\author[A. Fernández Quero]{Adriana Fernández Quero}

\address{Department of Mathematics, KU Leuven, Celestijnenlaan 200B, 3001 Leuven, Belgium}\email{adriana.fernandeziquero@kuleuven.be}

\author[A. Ioana]{Adrian Ioana}
\address{Department of Mathematics, University of California San Diego, 9500 Gilman Drive, La Jolla, CA 92093, USA}\email{aioana@ucsd.edu}

\author[H. Tan]{Hui Tan}

\address{Department of Mathematics, University of California, Los Angeles, 520 Portola Plaza, Los Angeles, CA 90095, USA}\email{tanhui@math.ucla.edu}

{\thanks{A.F.Q. was partially supported by FWO research project G016325N of the Research Foundation Flanders.}}
{\thanks{A.I. was supported by NSF grants DMS-2153805 and DMS-2451697, and the Presidential Chair in Mathematics}}

\begin{abstract}

We introduce a class of separable II$_1$ factors $M$ admitting no non-trivial crossed product decompositions: $M\not\cong B\rtimes_\sigma G$, for any trace preserving action $G\curvearrowright^\sigma (B,\tau)$ of an infinite countable group $G$ on a tracial von Neumann algebra $(B,\tau)$. These provide the first examples of II$_1$ factors that do not arise as crossed products of noncommutative dynamical systems. Our approach relies on a novel construction of separable II$_1$ factors $M$ whose embeddings into their tensor product square $M\overline{\otimes}M$ all
arise from the canonical embeddings $x\mapsto x\otimes 1$ and $x\mapsto 1\otimes x$. 

\end{abstract}
\maketitle

\section{Introduction}

\subsection{Background and motivation}
The crossed product construction is a central tool in the theory of von Neumann algebras,  providing a powerful connection between operator algebras and   dynamics. Every trace preserving action $G\curvearrowright^\sigma (B,\tau)$ of a countable discrete group $G$ on a tracial von Neumann algebra $(B,\tau)$ gives rise to the {\it crossed product von Neumann algebra} $B\rtimes_\sigma G$. This construction was introduced by Murray and von Neumann in their seminal articles \cite{MvN36,MvN43} in the cases $B=\mathbb C$ and, more generally,  $B=\text{L}^{\infty}(X)$, for a probability space $(X,\mu)$. 
In these cases, one obtains the group von Neumann algebra $\text{L}(G)$ and the group measure space von Neumann algebra $\text{L}^\infty(X)\rtimes_\sigma G$, respectively. 
The construction was later  extended in \cite{Tur58,NT58,Suz59} to arbitrary trace preserving actions $G\curvearrowright^\sigma(B,\tau)$. 
The resulting algebra $B\rtimes_\sigma G$ is again tracial and is often a II$_1$ factor, e.g., if $G$ is icc and  $\sigma$ is ergodic.
A fundamental problem is to understand to what extent general II$_1$ factors 
can be realized through the crossed product construction. 

The main goal of this paper is to give a definitive negative answer to this problem by constructing the first class of separable II$_1$ factors admitting no non-trivial crossed product decompositions.

Over the past five decades, this problem has received considerable attention and led to major advances. In the mid 1970s, Connes obtained the first examples of type II$_1$ factors \cite{Con75} (see also \cite{Jon80}) 
that are not isomorphic to group or group measure space von Neumann algebras. 
His argument shows that the factors $M$ studied, unlike group and group measure space von Neumann algebras, have no anti-automorphisms, i.e., $M\not\cong M^{\text{op}}$.  Two decades later, Voiculescu \cite{Voi95} used free entropy theory to prove that the free group factors $\text{L}(\mathbb F_n)$, for $2\leq n\leq\infty$, have no Cartan subalgebras (see \cite{Pop83b} for the case of uncountable free groups). Moreover, he showed that $\text{L}(\mathbb F_n)$ has no regular diffuse hyperfinite von Neumann subalgebras (equivalently, no regular diffuse amenable subalgebras by \cite{Con76}). Consequently, $\text{L}(\mathbb F_n)$ cannot be realized as a crossed product of the form $B\rtimes_\sigma G$, with $B$ diffuse and amenable. 

Since the early 2000s, Popa's deformation/rigidity theory has led to remarkable progress in the study of crossed product II$_1$ factors  (see the surveys \cite{Pop07,Vae10a,Ioa18}). We highlight several advances most relevant to the present paper, 
which dramatically expanded the list of examples of II$_1$ factors lacking various types of  crossed product decompositions. 
 Early results produced new examples of II$_1$ factors without anti-automorphisms, and hence without group measure space decompositions \cite{IPP05,PV06}. 
 Subsequently, Ozawa and Popa \cite{OP07}  showed that the free group factors $\text{L}(\mathbb F_n)$ satisfy a powerful structural property called strong solidity. This property is now known for numerous classes of factors $M$ (see, e.g., \cite{OP08,Hou09, Sin10,CS11,PV11,PV12,Ioa12,Iso12,BHV15,Cas18,DP23,DT25}). It requires that the normalizer of every diffuse amenable von Neumann subalgebra of $M$ is amenable, thereby precluding the existence of crossed product decompositions of the form $B\rtimes_\sigma G$ with $B$  diffuse amenable,  not only for $M$ but also for all of its nonamenable subfactors. 
 In a different direction, starting with \cite{Pet09,PV09,Ioa10} (see also \cite{CP10,Vae10b}),  several works exhibited classes of II$_1$ factors admitting at most one group measure space decomposition of the form $\text{L}^\infty(X)\rtimes_\sigma G$, where $G\curvearrowright^\sigma (X,\mu)$ is a free ergodic probability measure preserving action. 
 In parallel, beginning with \cite{Ioa10,IPV10} (see \cite{CIOS21,DV24} for recent work),  several families of II$_1$ factors were shown to not decompose as group von Neumann algebras.

Despite this progress, it remained open whether every separable II$_1$ factor can be realized as a crossed product $B\rtimes_{\sigma}G$, where an infinite group $G$ acts on a tracial von Neumann algebra $B$. Previous examples only ruled out such decompositions when $B$ is amenable. For non-amenable $B$, decompositions involving the trivial action $\sigma$ (yielding the tensor product $B\overline{\otimes}\text{L}(G)$) can be excluded for prime II$_1$ factors (see, e.g., \cite{Ge96,Oza03,Pet06,Pop06}). However, existing methods fail to handle arbitrary actions $\sigma$, and could not even rule out the possibility that every separable non-amenable II$_1$ factor can be written as $B\rtimes_\sigma G$, with $B=\text{L}(\mathbb F_n)$ a free group factor.

\subsection{Absence of crossed product decompositions}
We settle this problem by introducing a class of II$_1$ factors possessing only  trivial crossed product decompositions.
If $P,Q$ are II$_1$ factors, then $P$ {\it stably embeds} into $Q$ \cite{PV21} if there exists an embedding (i.e., a unital $*$-homomorphism) $\theta:P\rightarrow Q^t$, for some $t>0$, where $Q^t$ is the amplification of $Q$ by $t$.
If this holds when $t=1$, we simply say that $P$ {\it embeds} into $Q$. 

\begin{main}\label{no_crossed1} There exists a separable II$_1$ factor $M$ such that $M\not\cong B\rtimes_{\sigma} G$, for any trace preserving action $G\curvearrowright^\sigma(B,\tau)$ of any countable infinite group $G$ on any tracial von Neumann algebra $(B,\tau)$.

Moreover, there exist $2^{\aleph_0}$  pairwise non-stably embeddable separable II$_1$ factors  with this property.

\end{main}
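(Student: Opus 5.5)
The strategy is to exploit the comultiplication that every crossed product carries. If $M=B\rtimes_\sigma G$ with canonical unitaries $(u_g)_{g\in G}$, then there is a unital normal embedding
\[
\Delta: M\rightarrow M\overline{\otimes} \text{L}(G)\subset M\overline{\otimes}M,\qquad \Delta(bu_g)=bu_g\otimes u_g .
\]
The inclusion $\text{L}(G)\subset M$ makes the composite a genuine embedding $M\to M\overline{\otimes}M$. As the abstract suggests, the heart of the proof is to construct separable II$_1$ factors $M$ all of whose embeddings $\theta:M\rightarrow M\overline{\otimes}M$ are, up to unitary conjugacy, of the form $x\mapsto x\otimes 1$ or $x\mapsto 1\otimes x$. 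We would record this as a rigidity theorem, allowing also amplifications $M^t$ or corners if needed. Granting it, $\Delta$ would be unitarily conjugate to, say, $x\mapsto x\otimes 1$ or $x\mapsto 1\otimes x$.

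Both possibilities then lead to a contradiction when $G$ is infinite. In the case $x\mapsto x\otimes 1$, we have $\Delta(u_g)=u_g\otimes u_g$, which must equal $w(u_g\otimes 1)w^*$. Hence $\tau\otimes\tau(\Delta(u_g)\,\Delta(u_h)^*)=\delta_{g,h}$, yet the elements $\Delta(u_g)$ would all lie in $w(M\otimes 1)w^*$. That alone is not contradictory, so we use Popa's intertwining instead. The condition $\Delta(M)\subset w(M\otimes1)w^*$ implies $\Delta(\text{L}(G))\prec_{M\bar\otimes M} M\otimes 1$. On the other hand, for any $x,y\in M\overline{\otimes}M$, the norms $\|E_{M\otimes 1}(x\,\Delta(u_g)\,y)\|_2$ tend to $0$ as $g\to\infty$, because $u_g\rightarrow 0$ weakly in the second leg. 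This contradicts intertwining, and the case $x\mapsto 1\otimes x$ is symmetric. So $G$ must be finite, which proves the first assertion. Here one must be careful that the target of $\Delta$ is $M\overline{\otimes}M$ via $\text{L}(G)\subset M$; that is automatic.

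The main obstacle is constructing $M$ with this embedding rigidity. Our plan is an inductive, generic construction in the spirit of existentially closed or iterated amalgamated free product factors, for instance $M=\varinjlim M_n$ with $M_{n+1}$ obtained from $M_n$ by amalgamated free products or free products with rigid (property (T)) pieces. We would then run deformation/rigidity: (i) use property (T) subalgebras and the malleable deformation of $M\overline{\otimes}M$ coming from a free product structure to show that $\theta(M)$ intertwines into $M\otimes 1$ or $1\otimes M$ (a tensor product version of Ioana--Peterson--Popa and Ozawa--Popa-type dichotomies); (ii) upgrade the intertwining to unitary conjugacy of the whole embedding using factoriality and relative commutant control, namely that $\theta(M)'\cap M\overline{\otimes}M$ is small; (iii) diagonalize over countably many potential embeddings so that none escapes, with separability making a countable list suffice.

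For the second assertion, we would vary a parameter in the construction, such as a choice of infinite subset $S\subset\mathbb N$ of rigid building blocks (for example $\text{L}(\text{SL}_n(\mathbb Z))$ for $n\in S$, or property (T) groups with distinct invariants). The same intertwining techniques would show that $M_S$ stably embeds into $M_{S'}$ only if $S\subset S'$ up to finitely many elements. Choosing an almost disjoint family of $2^{\aleph_0}$ subsets $S$ then yields pairwise non-stably embeddable examples.
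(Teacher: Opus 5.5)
Your reduction is the paper's (Corollary \ref{DD}(1)), and it is fine: for infinite $G$, the comultiplication $\Delta(bu_g)=bu_g\otimes u_g$ satisfies $\Delta(M)\nprec M\overline{\otimes}1$ and $\Delta(M)\nprec 1\overline{\otimes}M$. What is missing is the construction of $M$ and the proof of embedding rigidity, which are the whole content of the theorem.

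\textbf{The rigidity statement must be weakened.} It has to take the form of Corollary \ref{D}(2), a sum of corners of the two canonical embeddings. As literally stated it fails for every II$_1$ factor. Take $\tau(q)=\tau(q')=\frac12$ and a partial isometry $w$ with $w^*w=q'\otimes 1$ and $ww^*=1\otimes(1-q)$. Then $x\mapsto x\otimes q+w(q'\otimes x)w^*$ is an embedding conjugate to neither canonical one. Relative commutants are also never small here: already $(M\otimes1)'\cap M\overline{\otimes}M=1\overline{\otimes}M$. The weaker form still suffices for the contradiction.

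\textbf{Your sketch gives no working mechanism for the construction.}
\begin{itemize}
\item Generic or existentially closed constructions point the wrong way. Such factors are McDuff, and then $M\cong M\overline{\otimes}\text{L}(\Gamma)=M\rtimes_{\text{Id}}\Gamma$ for any infinite amenable icc group $\Gamma$.
\item Deformation/rigidity with property (T) pieces only localizes each $\theta(M_i)$ into some tensor product of pieces (Theorem \ref{amalgamated}). This can happen non-degenerately: $\text{L}(\text{PSL}_3(\mathbb Z))$ has a comultiplication, and $\text{L}(\Gamma_1*_\Lambda\Gamma_2)$, with $\Gamma_i$ property (T) and $\Lambda$ amenable icc, is a group von Neumann algebra of the form $M_1*_RM_2$. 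So nothing forces $\theta(M)\prec M\overline{\otimes}1$ or $\theta(M)\prec 1\overline{\otimes}M$.
\item Diagonalizing over embeddings of $M$ itself is circular, since $M$ is not yet built.
\end{itemize}

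\textbf{The missing idea is to fix the rigid pieces first and then choose the amalgam.} Fix property (T) factors $M_1,M_2$. Choose coarse hyperfinite $R_i\subset M_i$ with two properties (Theorem \ref{embedd2}):
\begin{itemize}
\item Let $\theta$ be any non-degenerate embedding of $M_i$ into $\mathbb M_n(\mathbb C)\overline{\otimes}N_1\overline{\otimes}\cdots\overline{\otimes}N_k$ with $N_j\in\{M_1,M_2,M_1^{\text{op}},M_2^{\text{op}}\}$, required to be properly outer when $k=1$ and $N_1=M_i$. Then $\theta(R_i)$ does not intertwine into the algebra obtained by replacing $N_j$ with its copy of $R_1$, $R_2$ or their opposites.
\item $\sigma_1(R_1)\nprec\sigma_2(R_2)$ and $\sigma_2(R_2)\nprec\sigma_1(R_1)$ for all embeddings $\sigma_i$ of $M_i$.
\end{itemize}
Building such $R_i$ needs tools you do not mention:
\begin{itemize}
\item a strengthening of Popa's coarseness theorem that handles countably many bimodules at once (Theorem \ref{bimodulenonintertwining});
\item Popa's separability of the space of embeddings of a property (T) factor (Lemma \ref{sep_argument});
\item stability of non-intertwining under $d$-limits (Lemma \ref{dense_inter}), which passes from a countable dense family to all embeddings.
\end{itemize}
Only after gluing $R_1=R_2$ do the Ioana--Peterson--Popa intertwining results split $\theta(M_i)$ along finite tensor patterns. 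The conditions for $i=1$ and $i=2$ are then incompatible except on the canonical intertwiners.

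\textbf{The second assertion is unsupported.} The blocks you suggest do not distinguish index sets, since $\text{L}(\text{SL}_n(\mathbb Z))\subset\text{L}(\text{SL}_m(\mathbb Z))$ for $n\le m$. You give no argument that the assembled factors do, and you do not address amplifications.

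The paper instead varies the gluing: $M(\rho)=M_1*_RM_2$, with $R$ embedded via $\alpha_1\circ\rho$ and $\alpha_2$, for $\rho\in\text{Aut}(R)$. The argument (Theorem \ref{embeddings}(2)) runs as follows.
\begin{itemize}
\item A stable embedding $M(\rho)\hookrightarrow M(\sigma)^t$ yields an embedding into $M(\sigma)^{\mathbb N}\overline{\otimes}(M(\sigma)^{\text{op}})^{\mathbb N}$, because tensoring with a II$_1$ factor absorbs the amplification.
\item The rigidity theorem then gives a nonzero $v\in M(\sigma)$ with $v\alpha_1(\rho(x))=\alpha_1(\sigma(x))v$ for all $x\in R$.
\item Coarseness of $R$ in $M(\sigma)$ forces $v\in\alpha_1(R)$, so $\rho^{-1}\sigma\in\text{Inn}(R)$.
\end{itemize}
Choosing $2^{\aleph_0}$ distinct classes in $\text{Out}(R)$ gives the pairwise non-stably embeddable family.
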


Before discussing the  proof of Theorem \ref{no_crossed1}, we make several remarks on its statement.

\begin{remark}
\begin{enumerate}
\item The conclusion of Theorem \ref{no_crossed1} does not hold without restricting to infinite groups. 
For a finite group $G$, we have $\mathbb M_{|G|}(\mathbb C)\cong\mathbb B(\ell^2(G))=\ell^\infty(G)\rtimes_\lambda G $, where $\lambda$ denotes the left regular representation of $G$. Hence, every II$_1$ factor $M$ decomposes trivially as a crossed product by a finite group $G$: $(\star$) $M\cong \mathbb M_{|G|}(\mathbb C)\overline{\otimes} M^{1/|G|}\cong(\ell^\infty(G)\overline{\otimes} M^{1/|G|})\rtimes_{\lambda\otimes\text{Id}}G$.

\item  The II$_1$ factors $M$ from Theorem \ref{no_crossed1}  satisfy that any embedding $\theta: M\rightarrow M^t$, $t>0$, is trivial, see Corollary \ref{D}(1). 
This allows us to improve Theorem \ref{no_crossed1} and additionally show the following: if $M=B\rtimes_\sigma G$, for a trace preserving action $\sigma$ of a finite group $G$, then  $\sigma$ is of a specific form which generalizes ($\star$) and only depends on $G$ (see Theorem \ref{finite_cross}). 
This yields the first class of II$_1$ factors for which crossed product decompositions can be completely classified, exhibiting a new form of superrigidity for II$_1$ factors.

\item The crossed product construction 
was extended in \cite{Tak73} to 
 continuous actions $G\curvearrowright^\sigma B$ of locally compact groups $G$ on arbitrary von Neumann algebras $B$. The crossed product von Neumann algebra $B\rtimes_\sigma G$ can be tracial for non-discrete locally compact groups $G$, e.g., if $G$ is a  non-discrete SIN group acting trivially on a tracial von Neumann algebra $B$. However, if $B\rtimes_\sigma G$ is a II$_1$ factor, then $G$ must  be discrete, see Lemma \ref{non-discrete}.
Hence, if $M$ is a II$_1$ factor  as in Theorem \ref{no_crossed1}, then  $M\not\cong B\rtimes_{\sigma} G$, for any continuous action $G\curvearrowright^\sigma B$ of an infinite, locally compact group $G$ on a  von Neumann algebra $B$.
The II$_1$ factors provided by Theorem \ref{no_crossed1} 
are therefore not built from dynamical data 
in the strongest possible sense.

\item Let $N$ be a II$_1$ factor and $\mathcal U$ be a free ultrafilter on $\mathbb N$. Then any two diffuse separable abelian von Neumann subalgebras of $N^{\mathcal U}$ are unitarily conjugate \cite{Pop83b}. This endows $N^{\mathcal U}$ with various indecomposability properties, see for instance \cite{Pop83b, Pop13, GKEPT24}. In particular, $N^{\mathcal U}\not\cong B\rtimes_\sigma G$, for every infinite discrete group $G$ acting on a diffuse von Neumann algebra $B$ (see \cite[Proposition 3.17]{GKEPT24}). However, showing that $N^{\mathcal U}$ does not decompose as a group von Neumann algebra is open; see \cite[Remark 3.18]{GKEPT24} for positive evidence in this direction. Theorem \ref{no_crossed1} gives the first (separable or not) II$_1$ factors that are not crossed products by infinite groups.
\end{enumerate}
\end{remark}

\subsection{Rigidity for embeddings}
The proof of Theorem \ref{no_crossed1} follows the general strategy for studying crossed product decompositions of II$_1$ factors introduced in \cite{Ioa10,IPV10}. 
For a II$_1$ factor $M$, any decomposition $M=B\rtimes_\sigma G$ gives rise to an embedding $\Delta: M\rightarrow M\overline{\otimes}M$ \cite{PV09}, defined by $$\text{$\Delta(bu_g)=bu_g\otimes u_g$, for all $b\in B$ and $g\in G$.}$$ Note that $\Delta$ is a {\it comultiplication}, i.e., an embedding which is co-associative in the sense that $(\Delta\otimes\text{Id})\circ\Delta=(\text{Id}\otimes\Delta)\circ\Delta$.
One then aims to classify such embeddings $M\hookrightarrow M\overline{\otimes}M$. Ideally, this classification is precise enough to determine all triples $(B,G,\sigma)$ such that $M=B\rtimes_\sigma G$.

This strategy has been applied to great effect to study group and group measure space von Neumann algebra decompositions of II$_1$ factors.
It was used in \cite{Ioa10} to prove that Bernoulli actions  of icc property (T) groups  are W$^*$-superrigid.  
The same strategy was employed in \cite{IPV10} to give the first examples of countable icc groups which are W$^*$-superrigid 
and in all subsequent works providing new classes of W$^*$-superrigid groups (see, e.g., \cite{BV12,CI17,CDD21,CDD20,CIOS21,DV24}).

These results were obtained by classifying the comultiplication embeddings $\Delta:M\rightarrow M\overline{\otimes}M$  associated to the group or group measure space von Neumann decompositions $M=B\rtimes_\sigma G$ of the II$_1$ factors $M$ studied. Such classifications are possible when $B$ is trivial or abelian, in which case $\Delta$ behaves  analogously to the diagonal embedding, being {\it coarse} \cite{DV24} in the following sense: $\text{L}^2(M\overline{\otimes}M)$ is coarse as both $\Delta(M)$-$(M\overline{\otimes}1)$ and $\Delta(M)$-$(1\overline{\otimes}M)$ bimodules.

 However, $\Delta$ is coarse if and only if $B$ is amenable, and therefore this approach does not apply when $B$ is nonamenable. Moreover, it is generally believed that classifying arbitrary embeddings $M\hookrightarrow M\overline{\otimes}M$ is out of reach (see, e.g.,  \cite{DV25}). As such, while general embeddings $M\hookrightarrow M$ have been classified for several classes of II$_1$ factors $M$ \cite{Ioa10,Dep13,PV21,CIOS24}, not a single example of a II$_1$ factor $M$ for which all embeddings $M\hookrightarrow M\overline{\otimes}M$ are understood is currently known.

Our second main result, Theorem \ref{no_comult1}, provides the first class of II$_1$ factors $M$ for which all the embeddings $M\hookrightarrow M\overline{\otimes}M$ can be classified. The canonical embeddings $x\mapsto x\otimes 1$ and $x\mapsto 1\otimes x$ can be combined to build additional embeddings  $M\hookrightarrow M\overline{\otimes}M$ defined by $$x\mapsto u_1(x\otimes q_1)u_1^*+u_2(q_2\otimes x)u_2^*,$$
for given projections $q_1,q_2\in M$   and unitaries $u_1,u_2\in M\overline{\otimes}M$ with $u_1(1\otimes q_1)u_1^*+u_2(q_2\otimes 1)u_2^*=1$.

Theorem \ref{no_comult1} shows that any embedding $M\hookrightarrow M\overline{\otimes}M$ arises this way, thereby  establishing a new strong rigidity phenomenon for II$_1$ factors. Moreover, we prove that an analogous classification holds for all embeddings of $M$ into $M^{\mathbb N}=\overline{\otimes}_{n\in\mathbb N}M$ and furthermore into $M^{\mathbb N}\overline{\otimes}(M^{\text{op}})^{\mathbb N}$.

\begin{main}\label{no_comult1} There exists a separable II$_1$ factor $M$ such that, denoting $\mathcal M=M^{\mathbb N}\overline{\otimes}(M^{\emph{op}})^{\mathbb N}$ and by $\Delta_n:M\rightarrow\mathcal M$ the natural embedding into the $n$-th copy of $M$, for $n\in\mathbb N$, the following holds. 

For every embedding $\Delta:M\rightarrow\mathcal M$ we can find projections $q_n\in\Delta_n(M)'\cap \mathcal M$ and unitaries $u_n\in\mathcal M$, for every $n\in\mathbb N$, such that $\sum_{n\in\mathbb N}u_nq_nu_n^*=1$ and $$\text{$\Delta(x)=\sum_{n\in\mathbb N}u_n\Delta_n(x)q_nu_n^*$, $\forall x\in M$.}$$
Moreover, there exist $2^{\aleph_0}$  pairwise non-stably embeddable separable II$_1$ factors  with this property.

\end{main}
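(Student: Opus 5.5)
The plan is to build $M$ as an inductive limit whose embeddings are controlled by intertwining-by-bimodules (Popa's criterion $\prec$). The target statement amounts to two claims about an embedding $\Delta:M\to\mathcal M$. First, $\Delta(M)$ must intertwine into the canonical copies $\Delta_n(M)$. Second, the intertwining must be "locally inner": the relative commutant and normalizer must behave so that the pieces glue into an honest conjugacy $\Delta(x)=\sum_n u_n\Delta_n(x)q_nu_n^*$. I would therefore construct $M$ with two properties. The first is a strong \emph{embedding rigidity}: every embedding of $M$ into an amplification of itself is, up to unitary conjugacy, a corner of the identity. The second is an \emph{indecomposability} property. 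Namely, for any embedding $\theta:M\to\mathcal M$ and any nonzero projection $p\in\theta(M)'\cap\mathcal M$, the subalgebra $\theta(M)p$ intertwines into some $\Delta_n(M)$. Here $\mathcal M$ is viewed as $\Delta_n(M)\overline{\otimes}\mathcal M_n$, with $\mathcal M_n$ the tensor product of all the remaining factors.

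To get the second property I would take $M$ to be an inductive limit $M=\overline{\bigcup_k M_k}$, defined iteratively in the spirit of Popa's and Chifan--Ioana--Osin--Sun's constructions of II$_1$ factors with prescribed embedding behaviour. At each stage, $M_{k+1}$ is an amalgamated free product or HNN-type extension of $M_k$ with property (T) pieces. The extension is arranged so that each candidate "bad" embedding of $M_k$ into $\mathcal M$ fails to extend. Separability allows enumerating countably many approximate embeddings along a dense sequence.

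The intertwining argument then runs as follows.
\begin{enumerate}
\item Because $M$ contains a (T) subalgebra $Q$, any embedding $\theta$ puts $\theta(Q)$ rigidly inside $\mathcal M$. Popa's deformation/rigidity with the tensor-length deformation of $\mathcal M$ then gives, for each corner $p$, some $n$ with $\theta(Q)p\prec\Delta_n(M)$.
\item The inductive-limit structure (relative commutant and spectral-gap control, as in \cite{CIOS24}) upgrades this to $\theta(M)p\prec\Delta_n(M)$.
\item The upgraded intertwining gives an embedding $M\to M^t$. By the embedding rigidity this is a corner of the identity, so $\theta$ restricted to $p$ equals $u\,\Delta_n(\cdot)q\,u^*$ with $q\in\Delta_n(M)'\cap\mathcal M=\mathcal M_n$.
\end{enumerate}
A maximality (Zorn) argument over orthogonal families of such corners then exhausts $1$ and yields the decomposition in the statement.

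For the $2^{\aleph_0}$ pairwise non-stably embeddable examples, I would vary the (T) building blocks, for instance $\mathrm{L}(\mathrm{SL}_{n}(\mathbb Z))$-type groups or groups from a continuum family. Embedding rigidity together with the construction shows that a stable embedding $M\to N^t$ would force the building blocks of $M$ into those of $N$, which the choice of blocks prevents.

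The main obstacle is step (1) for the copies of $M^{\mathrm{op}}$ and for infinitely many tensor factors. There the rigidity must beat an arbitrary, possibly non-coarse embedding, and localization into a single tensor leg is not automatic. I would try first to prove a "tensor-splitting" lemma: if a (T) subalgebra of $M$ embeds into $P\overline{\otimes}R$ with $P,R$ built from $M$, then it intertwines into $P$ or into $R$. I would prove it by induction on the construction, using that property (T) subalgebras of amalgamated free products intertwine into a vertex algebra.
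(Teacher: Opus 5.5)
\textbf{There is a genuine gap in step (1) and in the tensor-splitting lemma you propose to rescue it.} That lemma is false for exactly the building blocks you would use. Take an icc property (T) group $\Gamma$ (e.g.\ $\mathrm{PSL}_3(\mathbb Z)$) and $Q=\mathrm{L}(\Gamma)\subset M$. Then $u_g\mapsto u_g\otimes u_g$ embeds $Q$ into $Q\overline{\otimes}Q\subset M\overline{\otimes}M$. Since $u_g\to 0$ weakly as $g\to\infty$, the image intertwines into neither $M\overline{\otimes}1$ nor $1\overline{\otimes}M$.

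What property (T) really yields, combined with a tensor-length deformation or with \cite{IPP05} as in Theorem \ref{amalgamated}, is weaker: $\theta(Q)p\prec\mathcal M_F$ for a \emph{finite} tensor product $\mathcal M_F$ of vertex algebras, with $|F|\geq 2$ allowed. A property (T) factor all of whose embeddings into such tensor products localize in a single leg would essentially be the theorem itself for a (T) factor, which no known method provides. So your induction has no base case.

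There are two further problems. Step (2) is an unsupported black box: $\theta(Q)p\prec\Delta_n(M)$ does not imply $\theta(M)p\prec\Delta_n(M)$ in general. The iterative scheme is also circular. The ``bad'' embeddings you want to kill land in $\mathcal M$, which depends on the final $M$. Enumerating them by separability also needs property (T) of the domain, and your intermediate amalgams/HNN extensions and their limit lack it.

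\textbf{The missing idea is to \emph{accept} multi-leg localizations of each (T) piece and rule them out by gluing.} The paper takes $M=M_1*_RM_2$ with $M_1,M_2$ arbitrary property (T) factors. Before $M$ exists, it uses Lemma \ref{sep_argument} to enumerate densely the embeddings of $M_i$ into $\mathbb M_n(\mathbb C)\overline{\otimes}N_1\overline{\otimes}\cdots\overline{\otimes}N_k$ with $N_j\in\{M_1,M_1^{\text{op}},M_2,M_2^{\text{op}}\}$. These targets do not depend on the choice of $R$.

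The strengthened Popa coarseness theorem (Theorem \ref{homomorphisms}) then gives coarse hyperfinite $R_i\subset M_i$ with two properties (Theorem \ref{embedd2}). First, for every non-degenerate embedding $\delta$ (properly outer in the single-leg case $N_1=M_i$), $\delta(R_i)$ does not intertwine into the tensor product with one leg $N_j$ replaced by its hyperfinite part. Second, $\sigma_1(R_1)\nprec\sigma_2(R_2)$ for all embeddings $\sigma_1,\sigma_2$. Lemma \ref{tensor_products} transfers these non-intertwinings into $\mathcal M$.

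For arbitrary $\theta$, each $\theta(M_i)$ localizes into various $\mathcal M_F$ (Claim \ref{max_proj}). Because the single algebra $\theta(R)$ lies in both $\theta(M_1)$ and $\theta(M_2)$, Claims \ref{non-inter}--\ref{b} show that all multi-leg and non-canonical pieces vanish. The same common copy of $R$, together with $R'\cap M=\mathbb C1$, glues the intertwiners for $M_1$ and $M_2$ into one for $M$; this replaces your step (2).

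Your final maximality argument is fine; it is Lemma \ref{intertwiners}. For the $2^{\aleph_0}$ family, the paper does not vary the building blocks. It varies the identification $R_1\cong R_2$ by automorphisms with distinct classes in $\mathrm{Out}(R)$.
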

Specifically, we note that $\Delta_n:M\rightarrow\mathcal M$ is  defined by $$\Delta_n(x)=(1\otimes\cdots\otimes 1\otimes x\otimes 1\otimes\cdots)\otimes 1_{(M^{\emph{op}})^{\mathbb N}},$$ where $x\in M$ is placed in the $n$-th tensor coordinate of $M^{\mathbb N}$.

 The  comultiplication $\Delta:M\rightarrow M\overline{\otimes}M$ associated to a crossed product decomposition $M=B\rtimes_\sigma G$ with $G$ infinite 
 has no nonzero intertwiner with either of the embeddings  $x\mapsto x\otimes 1$ and $x\mapsto 1\otimes x$.
 Consequently, the II$_1$ factors from Theorem \ref{no_comult1} satisfy the conclusion of Theorem \ref{no_crossed1}.

For an outline of the proof of Theorem \ref{no_comult1}, we refer the reader to Subsection \ref{comments}. For now, we simply note that the II$_1$ factors $M$
are constructed as amalgamated free products $M=M_1*_RM_2$, where $M_1$ and $M_2$ are property (T) factors with a ``highly coarse" hyperfinite common subfactor $R$. The proof  that these factors satisfy the conclusion of Theorem \ref{no_comult1} relies crucially on results from \cite{IPP05}.

\subsection{Applications}
The proof of Theorem \ref{no_comult1}  can be used to strengthen Theorem \ref{no_crossed1} by allowing amplifications, twisting by scalar $2$-cocycles, and groupoid actions. 
Given a trace preserving action $G\curvearrowright^\sigma(B,\tau)$ of a countable group $G$ on a tracial von Neumann algebra $(B,\tau)$ and a $2$-cocycle $c\in Z^2(G,\mathbb T)$ one obtains the {twisted crossed product von Neumann algebra} $B\rtimes_{\sigma,c}G$. More generally, any trace preserving action $\mathcal G\curvearrowright^{\sigma}\mathcal B$ of a discrete pmp groupoid $\mathcal G$ on a measurable field of tracial von Neumann algebras $\mathcal B=(B_x,\tau_x)_{x\in \mathcal G^{(0)}}$ together with a $2$-cocycle $c\in Z^2(\mathcal G,\mathbb T)$ yields the twisted crossed product von Neumann algebra  $\mathcal B\rtimes_{\sigma,c}\mathcal G$ (see, e.g., \cite{Yam93,Cha24}).  When $B=\mathbb C$ and $\mathcal B=(\mathbb C)_{x\in \mathcal G^{(0)}}$, these  reduce to the twisted group von Neumann algebra $\text{L}_c(G)$ and the twisted groupoid von Neumann algebra $\text{L}_c(\mathcal G)$, respectively.

Examples of II$_1$ factors $N$  not isomorphic to twisted group von Neumann algebras were found in \cite[Corollary G]{Ioa10}, which further showed that no corner $pNp$ has this form. 
Only recently, \cite[Theorem B]{DV24} gave examples of II$_1$ factors $N$ none of whose amplifications $N^t$ is isomorphic to a group von Neumann algebra  or, more generally,  to a twisted groupoid von Neumann algebra.

We strengthen these results by providing II$_1$ factors whose amplifications cannot be realized as a twisted crossed product 
$\mathcal B\rtimes_{\sigma,c}\mathcal G$, for any action of an infinite groupoid  and any scalar $2$-cocycle.

\begin{mcor}\label{C}

There exists a separable II$_1$ factor $M$ such that $M^t\not\cong\mathcal B\rtimes_{\sigma,c}\mathcal G$, for any $t>0$,
any trace preserving action $\mathcal G\curvearrowright^\sigma\mathcal B$ of an infinite discrete pmp groupoid $\mathcal G$ on a measurable field $\mathcal B=(B_x,\tau_x)_{x\in\mathcal G^{(0)}}$ of tracial von Neumann algebras and any $2$-cocycle $c\in Z^2(\mathcal G,\mathbb T)$.

Moreover, there exist $2^{\aleph_0}$  pairwise non-stably embeddable separable II$_1$ factors  with this property.
\end{mcor}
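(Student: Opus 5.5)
Corollary \ref{C} is deduced from Theorem \ref{no_comult1}: we show that any twisted groupoid crossed product decomposition of an amplification of $M$ produces an embedding $M\to\mathcal M$ (or into a corner/amplification of $M\overline{\otimes}M$) that cannot be of the form given there. Take $M$ as in Theorem \ref{no_comult1} and suppose $M^t\cong N:=\mathcal B\rtimes_{\sigma,c}\mathcal G$ with $\mathcal G$ infinite. First I reduce the scalar $2$-cocycle: the twisted comultiplication $\Delta(bu_g)=bu_g\otimes u_g^{(\bar c)}\otimes \ldots$ does not close up with a single copy. I therefore use the standard trick $N\to N\overline{\otimes}\text{L}_{\bar c}(\mathcal G)$ combined with $\text{L}_{\bar c}(\mathcal G)$ sitting inside $N^{\text{op}}$. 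Concretely, I define
\[
\Delta(b u_g)=b u_g\otimes \overline{u_g}\otimes u_g,
\]
where $\overline{u_g}$ lives in $N^{\text{op}}$. The cocycles $c\cdot\bar c\cdot c=c$ match, so $\Delta$ is a unital trace-preserving embedding $N\to N\overline{\otimes}N^{\text{op}}\overline{\otimes}N$. For groupoids, I work with partial isometries $u_g$, $g\in\mathcal G$, and take the range in the corner cut by the diagonal projection $\sum_{x}$ on the unit space, i.e. $\Delta$ lands in $pN\overline{\otimes}N^{\text{op}}\overline{\otimes}Np$, where $p=\Delta(1)$ is the projection onto the fibered product over $\mathcal G^{(0)}$. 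This explains the appearance of $M^{\text{op}}$ and of infinitely many copies in $\mathcal M$.

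Next I transport this to $M$. Writing $N=M^t$, $N\overline{\otimes}N^{\text{op}}\overline{\otimes}N$ is an amplification of $M\overline{\otimes}M^{\text{op}}\overline{\otimes}M$, and $M\overline{\otimes}M^{\text{op}}\overline{\otimes}M$ embeds in $\mathcal M$ as the tensor of coordinates $1,2$ of $M^{\mathbb N}$ and coordinate $1$ of $(M^{\text{op}})^{\mathbb N}$. Amplifications are absorbed by the infinite tensor product: $\mathcal M^s\cong\mathcal M$ for all $s>0$, since $M\overline{\otimes}M^{\text{op}}$ contains a copy of $R$ and $\mathcal M\cong\mathcal M\overline{\otimes}R$ via the infinitely many factors $M_k\overline{\otimes}M_k^{\text{op}}\supset R$. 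We then reduce to a unital embedding $\Theta:M\to\mathcal M$ by tensoring $\Delta$ with $\text{Id}_{\mathbb M_k}$, cutting down, and filling the complementary corner with copies of $\Delta_n$ on unused coordinates. By Theorem \ref{no_comult1}, $\Theta$ restricted to the relevant corner is a sum of conjugated pieces $u_n\Delta_n(\cdot)q_nu_n^*$. Hence some nonzero piece of the original $\Delta$ intertwines into one copy: $\Delta(N)\prec N\otimes 1\otimes 1$, or $\prec 1\otimes N^{\text{op}}\otimes 1$ (impossible since $N\not\hookrightarrow$ the opposite piece except as just described), or $\prec 1\otimes1\otimes N$.

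Finally I rule out these intertwinings. Say $\Delta(N)\prec_{\tilde{\mathcal M}} N\otimes 1\otimes 1$. The subalgebra generated by $\Delta(u_g)$ then satisfies, by Popa's criterion, that $\|E_{N\otimes1\otimes1}(a\Delta(u_g)b)\|_2\to0$ fails. However, $E(\overline{u_g}\otimes u_g)$ involves $\tau(u_g)$ terms, which vanish outside a finite-measure part of $\mathcal G$ tending to zero because $\mathcal G$ is infinite (its measure-theoretic "size" $\int |\mathcal G^x|$ is infinite, so one can choose a sequence of partial bisections $\phi_k$ with $\tau(u_{\phi_k}^*u_{\phi_k})$ bounded below and going to infinity in $\mathcal G$). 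This is a contradiction, and the other cases are symmetric. The main obstacle I expect is exactly this groupoid step. One has to produce a sequence of unitaries in $\Delta(N)$, built from full-groupoid sums of bisections plus elements of $\mathcal B$, that escape every finite-dimensional-over-$\mathcal B$ piece, uniformly over the coordinates and despite the cut-down corners. I would first try the approach of \cite{DV24}, using the full group of the groupoid to get honest unitaries. The $2^{\aleph_0}$ statement is inherited directly from Theorem \ref{no_comult1}.
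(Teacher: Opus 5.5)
There is a genuine gap, and it lies in the groupoid step you flagged as the main obstacle. Your map $\Delta(bu_g)=bu_g\otimes\overline{u_g}\otimes u_g$ is not defined once the unit space $X=\mathcal G^{(0)}$ is diffuse. On units it would send $1_A\mapsto 1_A\otimes 1_A\otimes 1_A$, which is not additive in $A$. For instance, $1_X\mapsto 1$, but $1_A+1_{X\setminus A}$ would go to $1_A\otimes 1_A\otimes 1_A+1_{X\setminus A}\otimes 1_{X\setminus A}\otimes 1_{X\setminus A}\neq 1$. Your ``diagonal projection onto the fibered product'' would be the limit of $\sum_i 1_{A_i}\otimes 1_{A_i}\otimes 1_{A_i}$ over finer partitions. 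Its trace is $\sum_i\mu(A_i)^3\to 0$, so $p=0$. Hence there is no embedding to feed into Theorem \ref{no_comult1}. This is not a technicality. For a twisted equivalence relation algebra $\text{L}_c(\mathcal R)$ (trivial isotropy, diffuse unit space) the only dynamical datum is a Cartan subalgebra. Nothing in your construction, or in the conclusion of Theorem \ref{no_comult1} alone, excludes such decompositions. Your last paragraph about bisections escaping to infinity only addresses non-intertwining, not the existence of $\Delta$.

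The missing idea is to first force $\mathcal G^{(0)}$ to be finite. This uses more than Theorem \ref{no_comult1}: it uses that $M=M_1*_RM_2$ with $M_1,M_2$ nonamenable and $R$ coarse (Corollary \ref{CC}). There the paper shows every regular amenable subalgebra $B\subset M^t$ is finite dimensional. Vaes' theorem on normalizers in amalgamated free products gives $B\prec^s_{M^t}R^t$. Coarseness makes $R^t\subset M^t$ mixing, so a diffuse such $B$ would force $M^t=\mathcal N_{M^t}(B)''\prec_{M^t}R^t$, contradicting nonamenability. Apply this to the regular abelian algebra $\mathcal Z(\int^\oplus B_x)\supset \text{L}^\infty(\mathcal G^{(0)})$: the unit space is finite. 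Each corner $p_xM^tp_x$ is then a twisted crossed product $B_x\rtimes_{\rho,d}G_x$ over the isotropy group with a scalar cocycle. Only now does your comultiplication argument apply, essentially the triple comultiplication into $M\overline{\otimes}M\overline{\otimes}M^{\text{op}}$ of Corollary \ref{DD}(2), and it forces each $G_x$ to be finite, so $\mathcal G$ is finite. Consequently the $2^{\aleph_0}$ statement comes from the family $M(\rho)$ of Theorem \ref{embeddings}(2), which all have this amalgamated free product form, not from Theorem \ref{no_comult1} alone. A smaller point: an abstract isomorphism $\mathcal M^s\cong\mathcal M$ does not let you invoke Theorem \ref{no_comult1}, whose conclusion refers to the specific embeddings $\Delta_n$. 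You must amplify and track tensor coordinates, as in the proof of Corollary \ref{DD}(1).
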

In particular, Corollary \ref{C} implies that $$M^t\not\cong B\rtimes_{\sigma,c}G,$$   for any $t>0$, any trace preserving action $G\curvearrowright^\sigma(B,\tau)$ of a countable infinite group $G$ on a tracial von Neumann algebra $(B,\tau)$ and any $2$-cocycle $c\in Z^2(G,\mathbb T)$. This holds for any II$_1$ factor $M$ as in Theorem \ref{no_comult1}.
Indeed, a twisted crossed product decomposition $M=B\rtimes_{\sigma,c}G$, where $c\in Z^2(G,\mathbb T)$,
gives rise to an embedding $\Delta:M\rightarrow M\overline{\otimes}M\overline{\otimes}M^{\text{op}}$ \cite{Ioa10} defined by $$\text{$\Delta(bu_g)=bu_g\otimes u_g\otimes (u_{g^{-1}})^{\text{op}}$, for all $b\in B$ and $g\in G$.}$$
If $G$ is infinite, then $\Delta$ has no  nonzero intertwiner with either 
$x\mapsto x\otimes 1\otimes 1$ or $x\mapsto 1\otimes x\otimes 1$, 
hence Theorem \ref{no_comult1} forces $G$ to be finite.

To prove the full strength of Corollary \ref{C}, allowing for groupoid actions, we  additionally use the special amalgamated free product decomposition of our II$_1$ factors mentioned above. 

\begin{remark}
Corollary \ref{C} suggests even stronger indecomposability phenomena which we formalize in the following conjecture. The statement can be interpreted as a von Neumann algebraic analogue of Margulis' normal subgroup theorem.  

\begin{conjecture}\label{conj:margulisII_1}
There exists a separable II$_1$ factor $M$ such that every regular von Neumann subalgebra $P\subset M$ is either finite dimensional or of 
finite index.
\end{conjecture}

Corollary \ref{C} provides some evidence towards this conjecture. If $M^t$, for some $t>0$, had an infinite-dimensional regular subalgebra of infinite index, then $M^s$ would have an irreducible, regular subfactor of infinite index, for some $s\in (0,t]$ (see Corollary \ref{CC}(2)). Consequently,  $M^s$ would decompose as a cocycle crossed product $B\rtimes_{\sigma,c}G$, for a cocycle action $(\sigma,c)$ of an infinite group $G$. By Corollary \ref{C}, such decompositions cannot exist when the $2$-cocycle is scalar; more generally, Corollary \ref{DD}(2) rules out finite dimensional $2$-cocycles. However, our methods do not exclude infinite dimensional $2$-cocycles. More broadly, it remains open whether there exist any II$_1$ factors that admit no cocycle crossed product decompositions $B\rtimes_{\sigma,c}G$ with $G$ infinite.

\begin{conjecture}\label{conj:no-cocycle-crossed-product}
There exists a separable II$_1$ factor $M$ such that $M\not\cong B\rtimes_{\sigma,c}G$, for every countably infinite group $G$, every tracial von Neumann algebra $(B,\tau)$, and every trace preserving cocycle action $(\sigma,c)$ of $G$ on $B$.
\end{conjecture}

\end{remark}

Our next result records further embedding rigidity properties for the II$_1$ factors from Theorem \ref{no_comult1}.

\begin{mcor}\label{D}
Any II$_1$ factor $M$ satisfying the conclusion of Theorem \ref{no_comult1} has the following properties:
\begin{enumerate}

\item If $\Delta:M\rightarrow M^t$ is any embedding, for some $t>0$, then $t\in\mathbb N$ and there exists a unitary $u\in M^t=M\overline{\otimes}\mathbb M_t(\mathbb C)$ such that $\Delta(x)=u(x\otimes 1)u^*$, for every $x\in M$.
\item If $\Delta:M\rightarrow M\overline{\otimes}M$ is any embedding, then there exist projections $q_1,q_2\in M$  and unitaries $u_1,u_2\in M\overline{\otimes}M$ such that  $\Delta(x)=u_1(x\otimes q_1)u_1^*+u_2(q_2\otimes x)u_2^*$, for every $x\in M$.  

 \item If $\Delta:M\rightarrow M\overline{\otimes}M$ is any comultiplication, then there exists a unitary $u\in M\overline{\otimes}M$ such that either $\Delta(x)=u(x\otimes 1)u^*$, for every $x\in M$, or $\Delta(x)=u(1\otimes x)u^*$, for every $x\in M$.

\item $M^{\emph{op}}$ does not embed into $M^{\mathbb N}$.

\end{enumerate}
\end{mcor}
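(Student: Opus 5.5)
Each of the four items will be obtained by composing the given embedding with a suitable canonical embedding into $\mathcal M=M^{\mathbb N}\overline{\otimes}(M^{\text{op}})^{\mathbb N}$ and then reading off the decomposition supplied by Theorem \ref{no_comult1}. Two facts will be used repeatedly. First, $\Delta_n(M)'\cap\mathcal M$ is the tensor product of all coordinates other than the $n$-th; in particular it is a II$_1$ factor, so projections in it are classified by their trace. Second, two embeddings of $M$ that both have the form of Theorem \ref{no_comult1} can be compared through their intertwiners.

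\textbf{Item (1).} Write $M^t=p(M\overline{\otimes}\mathbb M_k(\mathbb C))p$ and take $\Delta:M\to M^t$. Embed $M^t$ into a corner of $\mathcal M$ by placing the $M$ factor in coordinate $1$ and realizing $\mathbb M_k(\mathbb C)$ inside coordinate $2$; then pass to a unital embedding via Theorem \ref{no_comult1} applied to a corner, or by adding $\Delta_3(x)(1-p')$. The theorem writes $\Delta$ as $\sum_n u_n\Delta_n(\cdot)q_nu_n^*$. Only $n=1$ can contribute. If some $n\neq1$ contributed, we would obtain a nonzero partial isometry intertwining $\Delta$, which lands in $M\otimes\mathbb M_k\subset$ coordinates $1,2$, with $\Delta_n$. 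The image of $M$ under $\Delta$ is then contained in coordinates $\{1,2\}$, and moving it to coordinate $n\geq3$, which is independent of those, forces $M$ to have a finite dimensional corner, a contradiction. The case $n=2$ is handled by a similar argument: the copy of $M$ in coordinate $2$ has been replaced by the finite dimensional algebra $\mathbb M_k$.

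Cutting back down, $\Delta(x)=\sum_j v_j(x\otimes e_j)v_j^*$ for partial isometries $v_j$ and projections $e_j\in\mathbb M_k$, which come from $q_1\in 1\otimes(\text{other coordinates})$ restricted to $\mathbb M_k$. The projections $q_1$ live in $\mathbb M_k$, so their traces are integers over $k$; combining them gives $t\in\mathbb N$ and $\Delta(x)=u(x\otimes1_t)u^*$. I expect the main obstacle to be exactly this point: checking that $q_n$ can be chosen inside $M'\cap(M\overline{\otimes}\mathbb M_k)=\mathbb M_k$. To handle it, I would conjugate by unitaries so that $q_n$ commutes with the relevant coordinates, using that relative commutants are computed coordinatewise.

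\textbf{Item (2).} Compose $\Delta$ with the inclusion $M\overline{\otimes}M\subset\mathcal M$ into coordinates $1,2$. Summands with $n\geq3$ are excluded as in (1), since they would intertwine into an independent coordinate. The projection $q_1$ lies in the relative commutant of coordinate $1$ inside $M\overline{\otimes}M$, which is $1\otimes M$; likewise $q_2\in M\otimes1$. The unitaries $u_n$ can be taken in $M\overline{\otimes}M$ using a conditional expectation argument; alternatively, assemble each $u_n$ from partial isometries in $M\overline{\otimes}M$, which exist because both sides are projections of equal trace in the factor $M\overline{\otimes}M$. This gives exactly the formula in (2).

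\textbf{Items (3) and (4).} For (3), apply (2) to a comultiplication $\Delta$ and insert the result into the coassociativity identity $(\Delta\otimes\mathrm{Id})\Delta=(\mathrm{Id}\otimes\Delta)\Delta$, viewed inside $M^{\overline{\otimes}3}$. Expanding both sides gives decompositions along the three coordinates. On the left, coordinate $3$ appears with weight $\tau(q_2)$; on the right it appears with weight $\tau(q_2)^2$, because the second application of $\Delta$ passes the $q_2$-part further. The weights attached to a given coordinate are intertwining invariants, so $\tau(q_2)=\tau(q_2)^2$ and hence $q_2\in\{0,1\}$, which gives the dichotomy. For (4), take an embedding $M^{\text{op}}\to M^{\mathbb N}$ and compose with the flip to obtain an embedding $M\to(M^{\text{op}})^{\mathbb N}\subset\mathcal M$. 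Theorem \ref{no_comult1} then forces a nonzero intertwiner between this embedding and some $\Delta_n$. This is impossible, because $\Delta_n(M)$ sits in $M^{\mathbb N}\otimes1$, which is tensor independent of $1\otimes(M^{\text{op}})^{\mathbb N}$, and intertwining into an independent tensor factor would force $M$ to be non-diffuse.
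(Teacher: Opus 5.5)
Your reduction to Theorem \ref{no_comult1} is fine: unitalizing via $x\mapsto\Delta(x)+\Delta_3(x)(1-p)$ works, and so does excluding $n\neq 1$ by non-intertwining into a finite dimensional or tensor-independent factor. Items (3) and (4) are essentially the paper's arguments. In (3) you compare the weight of the third coordinate rather than the first, which is symmetric. You should still justify that these weights are invariants, e.g.\ as the trace of the maximal intertwining projection from Lemma \ref{intertwiners}, which is canonically attached to the embedding.

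The genuine gap is the step you flagged in (1), and it breaks (2) in the same way. Theorem \ref{no_comult1} only gives $u_n\in\mathcal M$ and $q_n\in\Delta_n(M)'\cap\mathcal M$. That is the II$_1$ factor generated by \emph{all} other coordinates, not $\mathbb M_k(\mathbb C)$ and not $1\otimes M$. Conjugating $q_1$ by unitaries of that factor preserves only its trace $t/k$. A projection of trace $t/k$ can be moved into $\mathbb M_k(\mathbb C)$ if and only if $t\in\mathbb N$, so ``conjugate so that $q_n$ commutes with the relevant coordinates'' presupposes the conclusion. In (2), the claims $q_1\in 1\otimes M$ and $q_2\in M\otimes 1$ are likewise unjustified. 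A conditional expectation does not fix this: $\text{E}_{M\overline{\otimes}M}$ of an intertwiner is again an intertwiner, but it may be $0$.

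The missing idea, which is what the paper uses, is to slice the intertwiner along the complementary tensor factor and then run an exhaustion argument.

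\textbf{Fix for (1).} Write $\mathcal M=(M\overline{\otimes}\mathbb M_k(\mathbb C))\overline{\otimes}\mathcal Q_0$, where $\mathcal Q_0$ is the relative commutant of $\mathbb M_k(\mathbb C)$ in the product of the coordinates other than $1$. Take $w=pu_1q_1$, so that $ww^*=p$ and $\Delta(x)w=w\Delta_1(x)$. Expand $w=\sum_j w_j\otimes\eta_j$ over an orthonormal basis $(\eta_j)$ of $\text{L}^2(\mathcal Q_0)$.
\begin{itemize}
\item Each $w_j\in p(M\overline{\otimes}\mathbb M_k(\mathbb C))$ satisfies $\Delta(x)w_j=w_j(x\otimes 1)$.
\item Since $ww^*=p$, for every nonzero projection $r\in\Delta(M)'\cap M^t$ some $\zeta=rw_j(1\otimes e_{ll})$ is nonzero.
\item For such $\zeta$ we have $\Delta(x)\zeta=\zeta(x\otimes e_{ll})$, and factoriality gives $\zeta^*\zeta\in\mathbb C(1\otimes e_{ll})$.
\item Hence every nonzero $r$ dominates a minimal projection of $\Delta(M)'\cap M^t$, implemented by a partial isometry with source $1\otimes e_{11}$.
\end{itemize}
A maximal orthogonal family of such projections must therefore sum to $p$. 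Counting traces gives $t\in\mathbb N$, and summing the partial isometries produces $u$. The paper instead embeds into $M\overline{\otimes}\mathcal M\cong\mathcal M$ by tensoring $\Delta$ with a projection of trace $\frac{1}{tn}$, but its slicing and maximality steps are the same.

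\textbf{Fix for (2).} Slice $u_nq_n$ for $n=1,2$ in the same way. Then apply Lemma \ref{intertwiners} inside $M\overline{\otimes}M$ to the embeddings $x\mapsto x\otimes 1$ and $x\mapsto 1\otimes x$. The slices force the two maximal central supports $z_1,z_2\in\mathcal Z(\Delta(M)'\cap M\overline{\otimes}M)$ to satisfy $z_1\vee z_2=1$. Diffuseness of $M$ gives $z_1z_2=0$. Only after this can you extend the resulting partial isometries to unitaries of the factor $M\overline{\otimes}M$.
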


Examples of II$_1$ factors $M$ satisfying (1) were found in the unpublished preprint \cite{Dep13} via  an involved construction, while simpler examples were exhibited recently in \cite[Theorem D]{PV21}.  Corollary \ref{D}(1) gives a new class of II$_1$ factors with this property.

Corollary \ref{D} further provides the first  II$_1$ factors exhibiting any of the rigidity properties (2)-(4). 
Since (2) was discussed above, we highlight the novelty of (3) and (4). Recall that a {\it Hopf von Neumann algebra} is a pair $(M,\Delta)$ consisting of a von Neumann algebra $M$ 
 and a comultiplication $\Delta:M\rightarrow M\overline{\otimes}M$ \cite{Ern70,ES92}. Hopf von Neumann algebras generalize compact quantum groups, which  additionally require an 
 invariant faithful normal state, 
 see, e.g., \cite{DV25}. If $(M,\Delta)$ is a compact quantum group, then $M\cong M^{\text{op}}$. Thus, II$_1$ factors $M$ with $M\not\cong M^{\text{op}}$ admit no compact quantum group structure. 
 
 In contrast, every II$_1$ factor $M$ has trivial Hopf von Neumann algebra structures arising from the comultiplications $\Delta_1(x)=x\otimes 1$ and $\Delta_2(x)=1\otimes x$. Corollary \ref{D}(3) shows that there exist II$_1$ factors $M$ for which every comultiplication  is unitarily conjugate to $\Delta_1$ or $\Delta_2$. Furthermore,   any comultiplication $\Delta$ agrees with $\Delta_1$ or $\Delta_2$ on a subalgebra $N\subset M$  of essentially finite index in the sense of \cite{Vae07}, see  Corollary \ref{DD}(5).

While II$_1$ factors $M$ with $M^{\text{op}}\not\cong M$ have been known since \cite{Con75}, Corollary \ref{D} provides examples of II$_1$ factors $M$ such that $M^{\text{op}}$ does not even embed into $M$ and also not into $M^{\mathbb N}$. Although recent methods in \cite{DV24} might plausibly yield examples with $M^{\text{op}}\not\hookrightarrow M$, the existence of II$_1$ factors $M$ with the stronger non-embeddability $M^{\text{op}} \not\hookrightarrow M\overline{\otimes}M$ appears out of reach with existing techniques.

We next discuss applications to $C^*$-algebras. The construction of \cite{Con75} was used in \cite{Phi02} to produce a simple separable non-nuclear $C^*$-algebra $A$ with $A\not\cong A^{\text{op}}$. By \cite{PV10}, $A$ can be taken exact. 
While non-separable nuclear examples were given in \cite{FH17}, the existence of separable simple nuclear examples remains open. Since groupoid $C^*$-algebras are isomorphic to their opposites \cite{BS17}, any $C^*$-algebra $A$ with $A\not\cong A^{\text{op}}$ is not a groupoid $C^*$-algebra. Very recently, \cite{BGP26,Gao26} showed that $\mathbb B(H)$ is not  a twisted groupoid $C^*$-algebra for any infinite-dimensional $H$.

As a consequence of our results, we  obtain the first  simple separable $C^*$-algebras not isomorphic to twisted groupoid $C^*$-algebras, and also the first known simple $C^*$-algebras not embedding into their opposite algebras.
 More precisely:

\begin{mcor}\label{applicationsC}
There exists a simple  separable non-nuclear unital $C^*$-algebra $A$ which is stably finite, has a unique tracial state, stable rank one, and real rank zero,  and satisfies the following:
\begin{enumerate}
\item $A^{\emph{op}}$ does not embed into the minimal tensor product $\otimes_{n\in\mathbb N}A$.
\item $A$ is not isomorphic to the reduced crossed product $C^*$-algebra $B\rtimes_{\sigma,c,r}G$, for any action $G\curvearrowright^\sigma B$ of a countable infinite group $G$ on a $C^*$-algebra $B$ and any $2$-cocycle $c\in Z^2(G,\mathbb T)$.
\item $A$ is not isomorphic to the reduced $C^*$-algebra $C^*_r(\mathcal G,\Sigma)$, for any locally compact Hausdorff \'{e}tale groupoid $\mathcal G$ and any twist $\Sigma$ over $\mathcal G$.

\end{enumerate}
\end{mcor}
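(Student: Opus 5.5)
The plan is to take a II$_1$ factor $M$ satisfying the conclusion of Theorem \ref{no_comult1}, constructed as an amalgamated free product $M_1*_RM_2$ of property (T) factors. We set $A=C^*_\lambda$-type norm-closure of a suitable countable dense $*$-subalgebra, arranged so that the properties of $M$ descend to $A$. Concretely, we choose a separable unital $C^*$-subalgebra $A\subset M$, weakly dense in $M$, by a L\"owenheim--Skolem/elementary-submodel argument (or by closing under countably many operations). We arrange that $A$ is simple with unique trace $\tau|_A$, stable rank one and real rank zero. Simplicity and unique trace follow if $A$ is closed under enough Dixmier-type averaging witnesses: for each $a\in A$ and $\varepsilon>0$ we include unitaries $u_i\in M$ with $\|\frac1n\sum u_iau_i^*-\tau(a)1\|<\varepsilon$. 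Such unitaries exist in $M$ by Dixmier averaging in a II$_1$ factor, in norm. Stable rank one and real rank zero are obtained similarly, using that a II$_1$ factor has both properties in norm: we add approximating invertibles and finite-spectrum self-adjoints, and iterate countably many times. Stable finiteness follows from the faithful trace. Non-nuclearity holds because $A''=M$ is not hyperfinite, since it contains property (T) subfactors. The key point is that $A$ sits weakly densely in $M$, and its unique trace has GNS closure $M$.

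For (1), suppose $\pi:A^{\rm op}\to\otimes_{\min}A$ is an embedding. The tensor product trace $\tau^{\otimes\mathbb N}$ on $\otimes_nA$ composed with $\pi$ is a tracial state on $A^{\rm op}$. It must be the unique trace $\tau^{\rm op}$. Hence $\pi$ is trace preserving, and therefore extends to a normal embedding $(A^{\rm op})''=M^{\rm op}\to M^{\mathbb N}$ between the GNS closures. This contradicts Corollary \ref{D}(4).

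For (2), suppose $A\cong B\rtimes_{\sigma,c,r}G$. The unique trace $\tau$ composed with the conditional expectation onto $B$ gives $\tau=\tau|_B\circ E$, by uniqueness, since $\tau|_B\circ E$ is a trace; this requires checking that $\tau|_B$ is $\sigma$-invariant. The GNS construction then identifies $M=A''$ with the twisted crossed product $B''\rtimes_{\sigma,c}G$, where $B''$ is taken in the GNS representation of $\tau|_B$. This contradicts the remark after Corollary \ref{C}.

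For (3), we argue the same way with groupoids. A trace on $C^*_r(\mathcal G,\Sigma)$ restricts to a measure $\mu$ on $\mathcal G^{(0)}$. Uniqueness of the trace forces $\tau=\mu\circ E$ with $\mu$ invariant, because $\mu\circ E$ is a trace once $\mu$ is invariant. Moreover $\mu$ is in fact invariant, since $\tau$ is a trace. The weak closure is then the twisted groupoid von Neumann algebra $\text{L}_c(\mathcal G,\mu)$ of a discrete pmp groupoid, which is infinite because $M$ is diffuse. This contradicts Corollary \ref{C} with $\mathcal B$ trivial.

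The main obstacle is step (3). There one must verify that the GNS closure is a genuine twisted pmp groupoid algebra. This involves handling measurability, non-freeness of $\mathcal G$ (isotropy), and the fact that the open support may not be all of $\mathcal G^{(0)}$. I would first try to reduce to the measured groupoid obtained by restricting to the support of $\mu$, which is conull, and then invoke Corollary \ref{C}.
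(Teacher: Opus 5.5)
Your proof is correct. For (1) it is the paper's argument; elsewhere it takes a partly different route.

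\textbf{Construction of $A$.} You build $A$ by a countable closure argument (Dixmier averaging witnesses, invertible and finite-spectrum approximants). This essentially reproves \cite[Proposition 2.1]{Phi02}, which the paper simply cites.

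\textbf{Part (2).} You get $\tau(bu_g)=\delta_{g,e}\tau(b)$ by observing that $\tau_{|B}\circ E$ is itself a tracial state and invoking uniqueness of the trace. The $\sigma$-invariance you flag is automatic: $B$ must be unital because $A$ is, so $u_g\in A$ and $\sigma_g=\mathrm{Ad}(u_g)$. One also uses $c(g,g^{-1})=c(g^{-1},g)$ for a normalized cocycle. The paper instead composes $\tau\otimes\varphi$ with the coaction $\Delta_0:A\rightarrow M\overline{\otimes}\text{L}(G)$, $bu_g\mapsto bu_g\otimes\lambda_g$. Your version is slightly more elementary, since it does not need this coaction to exist on the reduced twisted crossed product.

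\textbf{Part (3).} This is where the routes genuinely diverge.
\begin{itemize}
\item The paper never builds a measured groupoid. It uses that $M=M_1*_RM_2$ has no infinite-dimensional regular abelian subalgebras (the claim inside the proof of Corollary \ref{CC}). Hence $C_0(\mathcal G^{(0)})''$ is finite dimensional, the unit space is finite, and $\mathcal G$ is discrete. Then \cite[Lemma 4.2]{BGP26} reduces to corners $p_xAp_x\cong C^*_{r,\Sigma_x}(G_x)$, to which the argument of (2) and Corollary \ref{DD}(2) apply.
\item You instead show that $\mu=\tau_{|C_0(X)}$ is invariant and get $\tau=\mu\circ E$ by uniqueness. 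You then identify $A''$ with the twisted groupoid von Neumann algebra of the discrete pmp groupoid $(\mathcal G,\mu)$ and quote Corollary \ref{C}. Its proof rests on exactly the same regular-subalgebra claim.
\end{itemize}

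Your route treats (2) and (3) uniformly. Its cost is the standard bookkeeping you flagged, all of which goes through:
\begin{itemize}
\item $\mathcal G$ is second countable, because separability of $A$ together with $\|f\|_\infty\le\|f\|_r$ on $C_c(\mathcal G)$ forces $C_0(\mathcal G)$ to be separable.
\item A Borel section of $\Sigma$ gives a Borel $2$-cocycle.
\item The GNS closure of $C^*_r(\mathcal G,\Sigma)$ for $\mu\circ E$ is $\text{L}_c(\mathcal G,\mu)$.
\end{itemize}
Also, $\mu$ already has full support because $\tau$ is faithful on the simple algebra $A$, so no restriction to a support is needed.

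The paper's route avoids all measure theory because the unit space turns out to be finite, at the price of citing \cite{BGP26}.
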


\subsection{A strengthening of Popa's coarseness theorem}
We now turn to the principal technical contribution of the paper. 
Although different in flavor from the previous results, we highlight it here because of its independent interest. 
Recall that a subfactor $P$ of a II$_1$ factor $M$ is  {\it coarse} if the $P$-bimodule $\text{L}^2(M)\ominus\text{L}^2(P)$ is isomorphic to a subbimodule of a  multiple of the coarse $P$-bimodule, $\text{L}^2(P)\otimes\text{L}^2(P)$.
A striking theorem of Popa \cite{Pop18b}  shows that every separable II$_1$ factor $M$ contains a coarse copy of the hyperfinite II$_1$ factor $R$. 

The proof of Theorem \ref{no_comult1} relies on the following strengthening of Popa's coarseness theorem.

\begin{main}\label{popa_extension}
Let $M$ be a separable II$_1$ factor and $(N,\tau)$ a separable tracial von Neumann algebra. Let $\mathcal H$ be a separable $M$-$M\overline{\otimes}N$-bimodule such that the $M$-$N$-bimodule $_M\mathcal H_{1\overline{\otimes}N}$ is left weakly mixing. 
Let $\mathcal K\subset\mathcal H$ be the closed subspace of vectors $\xi\in\mathcal H$ such that $x\xi=\xi (x\otimes 1)$,    for every $x\in M$.

Then there is a hyperfinite subfactor $R\subset M$ such that the   $R$-$R\overline{\otimes}N$-bimodule $\mathcal H\ominus \overline{\emph{sp}}(R\mathcal K)$ is coarse. \end{main}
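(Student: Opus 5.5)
We follow the strategy of Popa's coarseness theorem \cite{Pop18b}, where $R$ is built as an inductive limit $R=\overline{\bigcup_k A_k}$ of finite dimensional matrix algebras $A_k\cong\mathbb M_{2^k}(\mathbb C)$. At each stage we choose the new matrix units so that the bimodule becomes increasingly coarse.

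First we reduce coarseness to a condition on finitely many vectors. Set $\mathcal H_0=\mathcal H\ominus\overline{\text{sp}}(R\mathcal K)$. The $R$-$R\overline{\otimes}N$-bimodule $\mathcal H_0$ is coarse if and only if the normal functionals $x\otimes y^{\text{op}}\mapsto\langle x\xi y,\eta\rangle$, for $\xi,\eta$ in a dense subset of $\mathcal H_0$, extend to normal functionals on $R\overline{\otimes}(R\overline{\otimes}N)^{\text{op}}$. It suffices to check this on matrix units, giving an $\ell^2$-summability condition. Concretely, let $\{u_i\}$ be the canonical unitaries of $A_k$ (the Pauli-type basis, orthonormal for $\tau$). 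We require that for fixed $\xi,\eta$ from a countable dense set, the coefficients $\langle u_i\xi (u_j^*\otimes 1)n,\eta\rangle$ become asymptotically orthogonal in $i\neq j$. Equivalently, $\sum_{i,j}|\langle u_i\xi(u_j\otimes n),\eta\rangle|^2$ should stay controlled as $k\to\infty$, uniformly in a way that passes to the limit.

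Second, we carry out the inductive step. Suppose $A_k$ is given, and fix finitely many vectors $\xi_1,\dots,\xi_m\in\mathcal H$ and finitely many $n\in N$. We want a unitary $v$ in the relative commutant $A_k'\cap M$ (a II$_1$ factor) such that, after subtracting the projection onto $\overline{\text{sp}}(M\mathcal K)$, the vectors $v^a\xi_i(v^{b}\otimes n)$ with $(a,b)\ne(0,0)$ are almost orthogonal to $\xi_j$. Here the only obstruction to mixing comes from diagonal terms $v\xi(v^*\otimes1)$. The hypothesis that $_M\mathcal H_{1\overline{\otimes}N}$ is left weakly mixing handles terms with $b=0$ and $a\neq0$: since $A_k'\cap M$ is a II$_1$ factor, weak mixing persists on it. For the mixed terms we use Popa's intertwining/averaging argument. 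We average over a Haar unitary $v$ in a diffuse abelian subalgebra of $A_k'\cap M$ chosen generically (using separability and a Baire category or \cite{Pop83b}-style perturbation argument). The limit of $\frac1L\sum_\ell v^\ell\xi(v^{-\ell}\otimes1)$ lies in the space of $A$-central vectors. Because $M$ is generated by such abelian subalgebras, this limit lies in $\mathcal K$ in the limit. We then enlarge $A_k$ to $A_{k+1}$ using $v$ together with a compatible partial isometry.

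Third, we diagonalize over a countable dense set of vectors and elements of $N$, choosing tolerances $2^{-k}$. We also ensure that $R$ is a factor, which it is automatically as an increasing union of matrix algebras, and verify that the summability estimates pass to $R$.

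The main obstacle is the second step: we must show that the non-coarse part of $\mathcal H$ relative to $R$ is exactly captured by $\overline{\text{sp}}(R\mathcal K)$, where $\mathcal K$ is defined with respect to all of $M$, not just $R$. This requires that vectors which are almost $v$-central for the chosen unitaries can be pushed into $\mathcal K$. We would first try to prove a lemma that there exist unitaries $v\in A_k'\cap M$ making $\|E_{\mathcal K^\perp}(v\xi(v^*\otimes 1))-\text{(mixing)}\|$ small. We would prove it by contradiction: if no such $v$ existed, a weak limit of $v\xi(v^*\otimes1)$ over $\mathcal U(A_k'\cap M)$ would produce a nonzero vector that is central for $A_k'\cap M$ and orthogonal to $\mathcal K$. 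Since $M=A_k\vee(A_k'\cap M)$ with $A_k$ finite dimensional, averaging over $\mathcal U(A_k)$ would then yield an $M$-central vector orthogonal to $\mathcal K$, a contradiction.
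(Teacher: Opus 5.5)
Your architecture is the one the paper uses in Theorem \ref{bimodulenonintertwining}: $R$ is built as an infinite tensor product of $2\times 2$ matrix algebras chosen recursively in relative commutants, coarseness is read off from matrix coefficients against the Pauli-type basis, and a diagonal argument runs over dense sequences. The gap is the inductive step, where all the difficulty lies, and the mechanisms you propose for it do not work.

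What is needed is Lemma \ref{one_unitary}: a \emph{single} symmetry $u\in R_{n-1}'\cap M$ with $u^2=1$ and $\tau(u)=0$. For $\xi,\xi'$ in a finite set, it must make $\langle u\xi(u\otimes y),\xi'\rangle$, $\langle u\xi(1\otimes y),\xi'\rangle$ and $\langle \xi(u\otimes y),\xi'\rangle$ all small, \emph{uniformly in $y\in(N)_1$}. Your tools do not give this:
\begin{itemize}
\item Weak limits of $v\xi(v^*\otimes 1)$ need not be central. The correct statement, Lemma \ref{fixed_point}, concerns the closed convex hull. It yields only some unitary with $\Re\langle u\xi u^*,\xi\rangle$ small, for one vector, with no control of cross terms, of the other two kinds of coefficients, or of the spectrum of $u$.
\item Uniformity in $y$ means $\|\text{E}_N(\Phi(u)u)\|_1$ must be small, where $\Phi$ is the completely positive map attached to $\xi$. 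This is not a statement about central vectors of $\mathcal H$. In the paper it comes from the absence of $M$-central vectors in $\text{L}^2(\langle M\overline{\otimes}N,e_N\rangle)$, i.e.\ from $M\nprec_{M\overline{\otimes}N}N$.
\item Left weak mixing alone only gives a net of unitaries for the terms $\langle u\xi(1\otimes y),\xi'\rangle$, not one $u$ that also handles the other two kinds.
\item Ergodic averaging does not help. Smallness of $\|\mathbb P_{\mathcal H^A}\xi\|$ for one diffuse abelian $A$ is itself a local quantization statement; it does not follow from ``$M$ is generated by abelian subalgebras''. Even granting it, small averages give only one good power of $v$ for one coefficient.
\end{itemize}
The missing ingredients are the local quantization theorem for bimodules (Theorems \ref{local_quant} and \ref{2-for-1}) and the random-sign integration trick (Lemma \ref{randomization}). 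The first gives one partition $\{p_i\}$ making $\sum_ip_i\xi p_i$, $\sum_ip_ie_Np_i$, $\sum_ip_i\xi_{\Phi'}p_i$ and a vector of the coarse bimodule simultaneously small. Here $\Phi'=(\text{E}_N\circ\Phi)^*\circ(\text{E}_N\circ\Phi)$, and $\mathcal H_{\Phi'}$ has no central vectors exactly by left weak mixing. The second shows that $u=\sum_i\pm p_i$ makes all required quantities small on average; the signs are then rebalanced to get $\tau(u)=0$. You also do not say how the ``compatible partial isometry'' is chosen so that $b$ and $ab$ obey the same estimates. The paper takes $b=b_0b_1$, with $b_0$ anticommuting with $a$ and $b_1$ given by the same lemma inside $\{a,b_0\}'\cap R_{n-1}'\cap M$ for the enlarged sets $T\widetilde F_nT$, $T=\{1,a,b_0,ab_0\}$.

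A second gap is the bookkeeping of what is subtracted. Removing $\overline{\text{sp}}(M\mathcal K)$ is wrong, because $\overline{\text{sp}}(M\mathcal K)\ominus\overline{\text{sp}}(R\mathcal K)$ must also become coarse. For $\mathcal H=\text{L}^2(M)$ and $N=\mathbb C1$ this is all of $\text{L}^2(M)\ominus\text{L}^2(R)$, i.e.\ Popa's theorem itself. Your closing contradiction also confuses orthogonality to $\mathcal K$ with orthogonality to $\text{sp}(A_k\mathcal K)$. A vector $w\kappa$ with $w\in A_k$, $\tau(w)=0$, $\kappa\in\mathcal K$ is $A_k'\cap M$-central and orthogonal to $\mathcal K$, yet averaging it over $\mathcal U(A_k)$ gives $0$, so no contradiction arises.

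The right object at stage $n$ is $\mathcal H^{R_{n-1}'\cap M}=\text{sp}(R_{n-1}\mathcal K)$ (Lemma \ref{fixed_point_proj}(2)). Level $n$ carries about $2^{4n}$ coefficient pairs. So estimates for $\sigma-\mathbb P_{\text{sp}(R_{n-1}\mathcal K)}(\sigma)$ transfer to $\sigma-\mathbb P_{\overline{\text{sp}}(R\mathcal K)}(\sigma)$ with summable error only if these projections converge at a prescribed rate. The paper builds this into the induction as condition (2n). It writes $\|\mathbb P_{\text{sp}(wR_{n-1}\mathcal K)}(\sigma)\|^2=\langle wf(\sigma)w^*,1\otimes 1\rangle$ with $f(\sigma)$ in the coarse bimodule (Claim \ref{f}), and makes this small for $w\in\{a,b,ab\}$. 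Your phrase ``verify that the summability estimates pass to $R$'' hides exactly this.

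Finally, the coarseness test must be $\ell^1$-summability over $(i,j)$ of $\sup_{y\in(N)_1}|\langle u_i\xi(u_j\otimes y),\xi\rangle|$ (Lemma \ref{coarse-criterion}). $\ell^2$-control of the scalar coefficients, even uniformly in $y\in(N)_1$, does not suffice. Take $N=R$ and let $R$ act on the left of $\text{L}^2(R\overline{\otimes}R)$ through the second tensor factor. This bimodule is not coarse, yet for $\xi=1$ the coefficients are $\tau(u_j)\tau(u_iy)$, with square sum $\|y\|_2^2\leq 1$.
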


Here,  $\overline{\text{sp}}(R\mathcal K)$  denotes the $\|\cdot\|_2$-closure of the linear span of $R\mathcal K$. We note that  $\overline{\text{sp}}(R\mathcal K)$ is naturally an 
$R$-$R\overline{\otimes}N$-bimodule, as used implicitly in Theorem \ref{popa_extension}. In fact, one can show that $\overline{\text{sp}}(R\mathcal K)$ is isomorphic, as an $R$-$R\overline{\otimes}N$-bimodule, to $\text{L}^2(R)\otimes\mathcal L$, for some right $N$-module $\mathcal L$.

For a more general version of Theorem \ref{popa_extension}, which treats additional types of bimodules needed in the proof of Theorem \ref{no_comult1}, see Theorem \ref{bimodulenonintertwining}. Their proof %of Theorem \ref{popa_extension}, and that of Theorem \ref{bimodulenonintertwining}, 
follows Popa's influential idea from \cite{Pop81,Pop83a} of recursively constructing a copy of the hyperfinite II$_1$ factor $R$ inside a given factor as the inductive limit of finite dimensional subalgebras that increasingly satisfy certain desired properties. This strategy has since found many applications, including in the proof of the coarseness theorem in \cite{Pop18b} and \cite{Pop19}.

The assumptions in Theorem \ref{popa_extension} are necessary. First, the separability assumption on $\mathcal H$  cannot be dropped. Indeed, the conclusion fails if $\mathcal H$ is the direct sum,  over all hyperfinite subfactors $R\subset M$, of the $M$-$M\overline{\otimes}N$-bimodules  $\text{L}^2(\langle M,e_R\rangle)\otimes \text{L}^2(N)$. Second, since $\mathcal H$ has to be assumed separable,  $M$ and $N$ must also be separable  whenever $\mathcal H$ is a faithful right $M\overline{\otimes}N$-module. Finally, the conclusion of Theorem \ref{popa_extension} fails if $\mathcal H$ not left weakly mixing as an $M$-$N$-bimodule. This can be seen by taking $\mathcal H=_{\alpha(M)}\text{L}^2(M\overline{\otimes}N)_{M\overline{\otimes}N}$, for any embedding $\alpha:M\rightarrow 1\overline{\otimes}N\subset M\overline{\otimes}N$.

The main novelty of Theorem \ref{popa_extension} lies in the case when $N\not=\mathbb C1$. However, the statement appears to be new even when  $N=\mathbb C1$. In that case,  Theorem \ref{popa_extension} asserts that for any separable $M$-bimodule $\mathcal H$, there exists a hyperfinite subfactor $R\subset M$ such that the $R$-bimodule $\mathcal H\ominus \overline{\text{sp}}(R\mathcal K)$ is coarse, where $\mathcal K\subset\mathcal H$ denotes the subspace of $M$-central vectors. For $\mathcal H=\text{L}^2(M)$,  this recovers Popa's coarseness theorem \cite{Pop18b}. The result is new if $\mathcal H$ has no nonzero $M$-central vectors, in which case it shows that $\mathcal H$ is coarse as a bimodule over some hyperfinite subfactor $R\subset M$. As a corollary, if $\theta_n:M\rightarrow M$ is any sequence of properly outer endomorphisms (e.g., outer automorphisms) of $M$, then there exists a hyperfinite subfactor $R\subset M$ such that the $R$-bimodule $_{\theta_n(R)}\text{L}^2(M)_R$ is coarse for every $n\in\mathbb N$. In particular, $\theta_n(R)\nprec_{M}R$, in the sense of Popa \cite{Pop03}, for every $n\in\mathbb N$.

Similarly, the general case of Theorem \ref{popa_extension} has the following consequence, which will be needed to prove Theorem \ref{no_comult1}. 
\begin{mcor}\label{corG}
Let $M$ be a separable II$_1$ factor. For $n\in\mathbb N$, let $\theta_n:M\rightarrow M\overline{\otimes}N_n$ be an embedding, with $(N_n,\tau_n)$ a separable tracial von Neumann algebra. 
Assume that there is no nonzero $v\in M\overline{\otimes}N_n$ such that $\theta_n(x)v=v(x\otimes 1)$, for every $x\in M$, and that $\theta_n(M)\nprec_{M\overline{\otimes}N_n}1\overline{\otimes}N_n$, for every $n\in\mathbb N$.

 Then there exists a hyperfinite subfactor $R\subset M$ such that the $R$-$R\overline{\otimes}N_n$-bimodule $_{\theta_n(R)}\emph{L}^2(M\overline{\otimes}N_n)_{R\overline{\otimes}N_n}$ is coarse, for every $n\in\mathbb N$. In particular, $\theta_n(R)\nprec_{M\overline{\otimes}N_n}R\overline{\otimes}N_n$, for every $n\in\mathbb N$.
\end{mcor}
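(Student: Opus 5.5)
We will deduce Corollary \ref{corG} from Theorem \ref{popa_extension}, applied to a single bimodule that packages all the $\theta_n$ together.

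\textbf{Setting up the bimodule.} Put $N=\bigoplus_n N_n$ with the trace $\tau=\sum_n 2^{-n}\tau_n$. Then $N$ is a separable tracial von Neumann algebra and $M\overline{\otimes}N=\bigoplus_n M\overline{\otimes}N_n$. Define
$$\mathcal H=\bigoplus_n {}_{\theta_n(M)}\text{L}^2(M\overline{\otimes}N_n)_{M\overline{\otimes}N_n}.$$
This is a separable $M$-$M\overline{\otimes}N$-bimodule: the right action of $M\overline{\otimes}N$ is coordinatewise, and the left action of $M$ is through $\theta_n$ on the $n$-th summand. Each summand is a subbimodule, so it is enough to verify the hypotheses and then read off the conclusion summand by summand.

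\textbf{Checking the hypotheses.}
\begin{enumerate}
\item \emph{Left weak mixing.} Suppose ${}_M\mathcal H_{1\overline{\otimes}N}$ had a nonzero $M$-$N$-subbimodule that is finitely generated as a right $N$-module. Projecting onto some summand $n$ gives a nonzero $\theta_n(M)$-$N_n$-subbimodule of $\text{L}^2(M\overline{\otimes}N_n)$ of finite right $N_n$-dimension. By Popa's intertwining criterion \cite{Pop03} this means $\theta_n(M)\prec_{M\overline{\otimes}N_n}1\overline{\otimes}N_n$, which the hypothesis excludes. This step uses the standard equivalence between left weak mixing of ${}_{\theta_n(M)}\text{L}^2_{N_n}$ and non-intertwining into $1\otimes N_n$; that equivalence is where the hypothesis enters.
\item \emph{No $\mathcal K$.} A vector $\xi$ satisfying $x\xi=\xi(x\otimes1)$ has components $\xi_n\in\text{L}^2(M\overline{\otimes}N_n)$ with $\theta_n(x)\xi_n=\xi_n(x\otimes 1)$. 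The usual polar decomposition argument applies: $\xi_n\xi_n^*$ commutes with $\theta_n(M)$ and $\xi_n^*\xi_n$ commutes with $M\otimes 1$, so the partial isometry $v$ in the polar decomposition of $\xi_n$, which lies in $M\overline{\otimes}N_n$, satisfies $\theta_n(x)v=v(x\otimes1)$. If $\xi_n\ne0$ this gives a nonzero intertwiner, contradicting the hypothesis. Hence $\mathcal K=0$.
\end{enumerate}

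\textbf{Applying the theorem.} Theorem \ref{popa_extension} now gives a hyperfinite subfactor $R\subset M$ such that $\mathcal H$ is coarse as an $R$-$R\overline{\otimes}N$-bimodule. Restricting to the $n$-th summand, on which $R\overline{\otimes}N$ acts through its corner $R\overline{\otimes}N_n$, shows that ${}_{\theta_n(R)}\text{L}^2(M\overline{\otimes}N_n)_{R\overline{\otimes}N_n}$ is contained in a multiple of $\text{L}^2(R)\otimes\text{L}^2(R\overline{\otimes}N_n)$, that is, it is coarse. This uses the fact that a subbimodule of a coarse bimodule, cut down by a central projection of $N$, is coarse over the corner.

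\textbf{The ``in particular''.} If $\theta_n(R)\prec R\overline{\otimes}N_n$, there would be a nonzero $\theta_n(R)$-$R\overline{\otimes}N_n$-subbimodule of finite right dimension. But a coarse bimodule $\text{L}^2(R)\otimes\text{L}^2(R\overline{\otimes}N_n)$ with $R$ diffuse has no such subbimodule, since $R$ is diffuse and its coarse bimodule is left weakly mixing. So $\theta_n(R)\nprec_{M\overline{\otimes}N_n}R\overline{\otimes}N_n$.

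The main obstacle is the left weak mixing step: one has to translate non-intertwining into $1\otimes N_n$ correctly into the bimodule statement for the direct sum. The remaining steps are routine.
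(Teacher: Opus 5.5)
Your proposal is correct and follows essentially the paper's route. The paper deduces Corollary \ref{corG} from Theorem \ref{homomorphisms}, whose proof does what you do: it packages the embeddings into the direct-sum bimodule over $\oplus_n N_n$, gets left weak mixing from the non-intertwining hypothesis via Theorem \ref{corner}, kills the central vectors with the no-intertwiner hypothesis, and applies Theorem \ref{bimodulenonintertwining}; beyond that, the paper also adds an identity summand $\text{L}^2(M)$ (with $\widetilde N_0=\mathbb C1$) to obtain coarseness of $R\subset M$ simultaneously, while your polar-decomposition step spells out the passage from $\text{L}^2$-intertwiners to bounded ones that the paper leaves implicit.
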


For a more general version of Corollary \ref{corG}, see Theorem \ref{homomorphisms}.

\subsection{Comments on the proof of Theorem \ref{no_comult1}}\label{comments} We end the introduction with an outline of the proof of Theorem \ref{no_comult1}. For simplicity, we explain the construction in the case $\mathcal M=M^{\mathbb N}:=\overline{\otimes}_{n\in\mathbb N}M$.

The factors $M$ arise  as amalgamated free products $M=M_1*_RM_2$, where $M_1, M_2$ are II$_1$ factors with property (T) (for instance, $M_1=\text{L}(\text{PSL}_m(\mathbb Z))$ and  $M_2=\text{L}(\text{PSL}_n(\mathbb Z))$, for  $m,n\geq 3$) and $R$ is the hyperfinite II$_1$ factor.

An amalgamated free product of this form does not, in general, satisfy Theorem \ref{no_comult1}. The key point is that the conclusion does hold for suitably chosen embeddings of $R$ into $M_1$ and $M_2$.
More precisely, we construct  coarse copies of the hyperfinite II$_1$ factor $R_1\subset M_1$ and $R_2\subset M_2$ for which Theorem \ref{no_comult1} holds for $M_1*_{R_1=R_2}M_2$, whenever $R_1, R_2$ are identified via an isomorphism $R_1\cong R_2$.

To explain the construction, we introduce a new notion for embeddings of the form 
$$\text{$\theta:M_i\rightarrow (M_{j_1}\overline{\otimes}\cdots\overline{\otimes}M_{j_k})^t,$\;\;\; with $k\geq 1,i,j_1,\dots, j_k\in\{1,2\}$ and $t>0$.}$$ We say that $\theta$ is {\it non-degenerate} if no corner $\theta(M_i)$ embeds into a proper tensor subproduct, i.e.,
$$\text{$\theta(M_i)\nprec (M_{j_1}\overline{\otimes}\cdots\overline{\otimes}M_{j_{\ell-1}}\overline{\otimes}1\overline{\otimes}M_{j_{\ell+1}}\overline{\otimes}\cdots\overline{\otimes}M_{j_k})^t$, \;\;\; for all $1\leq \ell\leq k$.}$$
This condition is automatic for $k=1$.
If $k=1$ and $j_1=i$, we also call $\theta:M_i\rightarrow M_i^t$ {\it properly outer} if it has no nonzero intertwiner with the embedding $M_i\ni x\mapsto x\otimes 1\in M_i\overline{\otimes}\mathbb M_p(\mathbb C)$, for any  $p\geq t$.

The coarse hyperfinite subfactors $R_i\subset M_i$ are built such that the following condition holds.  Let $\theta:M_i\rightarrow (M_{j_1}\overline{\otimes}\cdots\overline{\otimes}M_{j_k})^t$ be a non-degenerate embedding, assumed moreover to be properly outer if $k=1$ and $j_1=i$. Then
  $$(\diamond)\;\;\;\;\;\text{$\theta(R_i)\nprec(M_{j_1}\overline{\otimes}\cdots\overline{\otimes}M_{j_{\ell-1}}\overline{\otimes}R_{j_\ell}\overline{\otimes}M_{j_{\ell+1}}\overline{\otimes}\cdots\overline{\otimes}M_{j_k})^t$, \;\;\; for every $1\leq \ell\leq k$.}$$
  
Corollary \ref{corG} allows us to construct coarse hyperfinite subfactors satisfying $(\diamond)$ for any given sequence $(\theta_n)$ of such embeddings, though not a priori for all embeddings simultaneously.

However, property (T) for $M_i$ provides the needed separability: the space of all (not necessarily unital) embeddings  $\theta:M_i\rightarrow (M_{j_1}\overline{\otimes}\cdots\overline{\otimes}M_{j_k})^t$ is separable with respect to the metric $$d(\theta,\theta')=\sup_{\|x\|\leq 1}\|\theta(x)-\theta'(x)\|_2,$$ for every $k\geq 1,i,j_1,\dots, j_k\in\{1,2\}$ and $t\in\mathbb N$  \cite{Pop86} (see also \cite{Con80a}). Since the set of embeddings $\theta$ that satisfy $(\diamond)$ is closed in the metric $d$,  we  obtain subfactors $R_i\subset M_i$ with the desired properties.

Having constructed $R_1$ and $R_2$, we fix an identification $R_1\cong R_2$ and define
$M=M_1*_{R_1=R_2}M_2$.
We now analyze an arbitrary embedding $\Delta:M\rightarrow M^{\mathbb N}$.
For $n\in\mathbb N$, denote by $\Delta_n:M\rightarrow M^{\mathbb N}$ the natural embedding into the $n^{\text{th}}$ tensor copy of $M$. For a finite set $F\subset\mathbb N$ and $j=(j_k)_{k\in F}\subset \{1,2\}^F$, define $M_{F,j}=\overline{\otimes}_{k\in F}\Delta_k(M_{j_k})$.

Fix $i\in\{1,2\}$. Using once more that $M_i$ has property (T) together with structural results from \cite{IPP05} on property (T) subfactors of amalgamated free products, 
we find a partition of unity  
$\{p^i_{F,j}\mid\text{$F\subset \mathbb N$ finite, $j\in \{1,2\}^F$}\}$ in $\Delta(M_i)'\cap M^{\mathbb N}$ such that $$\text{$\Delta(M_i)p_{F,j}^i\prec^s M_{F,j}$ \;\;\; but \;\;\; $\Delta(M_i)p_{F,j}^i\nprec M_{F',j_{|F'}}$, \;\;\; for every $F'\subsetneq F$.}$$

Whenever $p_{F,j}^i\not=0$,  these conditions yield  a non-degenerate embedding $\theta:M_i\rightarrow M_{F,j}^t$, for some $t>0$.
Applying $(\diamond)$ to $\theta$ and interpreting the conclusion in terms of $\Delta$ we deduce that 
$$(\dagger)\;\;\;\;\;\text{$\Delta(R_i)p_{F,j}^i\nprec (\overline{\otimes}_{k\in F\setminus\{\ell\}}\Delta_k(M_{j_k}))\overline{\otimes}\Delta_\ell(R_{j_\ell})$, for every $\ell\in F$,}$$
{\it unless} $F=\{k\}$ is a singleton, $j_k=i$, and $\Delta_{|M_i}$ has a nonzero intertwiner with ${\Delta_k}_{|M_i}$.

Finally, recall that we identified $R_1=R_2$ and thus $\Delta(R_1)=\Delta(R_2)$. 
The conditions from $(\dagger)$ are incompatible for $i=1$ and $i=2$. Consequently, the only remaining possibility is that the supports of the maximal intertwiners between $\Delta_{|M_i}$ and the embeddings $\{{\Delta_k}_{|M_i}\}_{k\in\mathbb N}$  form a partition of unity in $\Delta(M_i)'\cap M^{\mathbb N}$.

Using again the equality $\Delta(R_1)=\Delta(R_2)$, we deduce that $\Delta(x)=\sum_{k\in\mathbb N}u_k\Delta_k(x)q_ku_k^*$, for unitaries $(u_k)_{k\in\mathbb N}\subset M^{\mathbb N}$ and projections $p_k\in\Delta_k(M)'\cap M^{\mathbb N}$, for every $k\in\mathbb N$.

\vspace{2mm}

\subsection*{Organization of the paper.} 
 Following the introduction, the paper is organized into seven sections. Section \ref{Sec:preliminaries} collects preliminaries concerning bimodules, intertwining techniques, property (T) factors, and crossed products. In particular, it includes a criterion for the coarseness of bimodules (Lemma \ref{coarse-criterion}) which is essential in the proof of Theorem \ref{popa_extension}. 
 
  Sections \ref{Sec3:localquant}-\ref{Sec:hyperfinite}  are devoted to the proof of Theorem \ref{popa_extension}, in its more general form stated as Theorem \ref{bimodulenonintertwining}.
  In Section \ref{Sec3:localquant}, we extend Popa's well-known local quantization principle \cite[Lemma A.1]{Pop94} to general $M$-$M$ and $M$-$M^{\text{op}}$ bimodules.
  Combined with an integration argument introduced in Section \ref{Sec:integration}, this principle is used in Section \ref{Sec:weaklymixunitaries} to construct unitaries satisfying prescribed coarse and weak mixing conditions. Section \ref{Sec:hyperfinite} then applies Lemma \ref{one_unitary} to construct certain coarse hyperfinite subfactors, thereby completing the proof of Theorem \ref{popa_extension}.  
  
  Section \ref{Sec:embeddings} establishes a rigidity theorem (Theorem \ref{embeddings}), which is subsequently used in Section \ref{sec:proofmain} to derive the main Theorems \ref{no_crossed1} and \ref{no_comult1}, as well as  their Corollaries \ref{C}-\ref{applicationsC}.

\subsection*{Acknowledgements and AI tool disclosure} 
We would like to thank Ionu\c{t} Chifan and Sorin Popa for several insightful comments, and Yoonje Jeong for a number of helpful corrections.

ChatGPT and Gemini were used for English language editing, proofreading, and grammatical corrections. All mathematical content was generated solely by the human authors.

\section{Preliminaries}\label{Sec:preliminaries}

\subsection{Tracial von Neumann algebras}
We first recall some basic facts concerning tracial von Neumann algebras, and refer the reader to \cite{AP17} for additional information.
 
 A {\it tracial von Neumann algebra} is a pair $(M,\tau)$ consisting of a von Neumann algebra endowed with a normal faithful tracial state (i.e., a trace) $\tau:M\rightarrow\mathbb C$. 

Let $(M,\tau)$ be a tracial von Neumann algebra. 
For $x\in M$, we denote by $\|x\|$ the {\it operator norm} of $x$ and by $\|x\|_p=\tau(|x|^p)^{\frac{1}{p}}$ the {\it $p$-norm} of $x$, for $p>0$, where $|x|=(x^*x)^{\frac{1}{2}}$.
We denote by $\mathcal U(M)$ the group of unitaries of $M$, by $\mathcal Z(M)=\{x\in M\mid xy=yx, \text{for every $y\in M$}\}$ the {\it center} of $M$, and by $(M)_1=\{x\in M\mid \|x\|\leq 1\}$ the {\it unit ball} of $M$.
We denote by $\text{L}^2(M)$ the Hilbert space obtained by completing $M$ with respect to $\|\cdot\|_2$. 
A {\it partition of unity} in $M$ is a finite family of projections $\{p_i\}_{i=1}^m\subset M$ with  $\sum_{i=1}^mp_i=1$. A partition of unity $\{q_j\}_{j=1}^n$ in $M$ {\it refines} $\{p_i\}_{i=1}^m$ if for every $1\leq i\leq m$ we have $p_i=\sum_{j\in F_i}q_j$, for some subset $F_i\subset\{1,\dots,n\}$.

The {\it opposite algebra} $M^{\text{op}}$ is a tracial von Neumann algebra which is equal to $M$ as a vector space, has the same involution and trace, and multiplication given by $x\cdot y=yx$. Thus, we can write $M^{\text{op}}=\{x^{\text{op}}\mid x\in M\}$ so that $x^{\text{op}}y^{\text{op}}=(yx)^{\text{op}}$.
We consider the standard representations $M, M^{\text{op}}\subset\mathbb B(\text{L}^2(M))$ given by the left and right multiplication actions of $M$ on $\text{L}^2(M)$.

Let $\mathcal H$ be a Hilbert space. Given $X\subset\mathcal H$, we denote by $\text{sp}(X)$ the {\it linear span} of $X$ and by $\overline{\text{sp}}(X)$ the {\it closure of the linear span} of $X$. 
Given $Y\subset\mathbb B(\mathcal H)$, we denote by $Y'$ its {\it commutant} and by $Y''=(Y')'$ its {\it double commutant}. By von Neumann's bicommutant theorem, if $Y$ is self-adjoint and contains the identity operator, then $Y''$ is the von Neumann algebra generated by $Y$.

Let $Q\subset M$ be a von Neumann subalgebra, which we will always assume to be unital  and endowed with the trace $\tau_{|Q}$. Thus, we have an inclusion $\text{L}^2(Q)\subset\text{L}^2(M)$. Let $e_Q\in\mathbb B(\text{L}^2(M))$  be the orthogonal projection from $\text{L}^2(M)$ onto $\text{L}^2(Q)$. The restriction of $e_Q$ to $M$ is equal to the unique trace preserving conditional expectation $\text{E}_Q:M\rightarrow Q$. The
von Neumann algebra $$\langle M,e_Q\rangle:=\{M,e_Q\}''\subset\mathbb B(\text{L}^2(M))$$ generated by $M$ and $e_Q$ is called {\it Jones' basic construction} of the inclusion $Q\subset M$. It is endowed with a normal faithful semifinite trace $\text{Tr}$ which satisfies $\text{Tr}(xe_Qy)=\tau(xy)$, for every $x,y\in M$. We denote by $\text{L}^2(\langle M,e_Q\rangle)$ the Hilbert space associated to $\langle M,e_Q\rangle$ and $\text{Tr}$.
We also recall that the {\it normalizer} of $Q$ in $M$ is defined as $\mathcal N_M(Q)=\{u\in\mathcal U(M)\mid uQu^*=Q\}$. We say that $Q$ is {\it regular} in $M$ if $\mathcal N_M(Q)''=M$.

Given tracial von Neumann algebras $(M,\tau)$ and $(\mathcal M,\tau)$, an {\it embedding} of $M$ into $\mathcal M$ is a unital $*$-homomorphism $\theta:M\rightarrow\mathcal M$ which is trace preserving, i.e., satisfies $\tau(\theta(x))=\tau(x)$, for every $x\in M$.
If $(N_i,\tau_i)_{i\in I}$ is a family of tracial von Neumann algebras, we denote by $\overline{\otimes}_{i\in I}N_i$ the tensor product von Neumann algebra endowed with the tensor product trace $\tau=\otimes_{\in I}\tau_i$. For a subset $J\subset I$, we view $\overline{\otimes}_{i\in J}N_i$ as a subalgebra of $\overline{\otimes}_{i\in I}N_i$ by identifying it with $(\overline{\otimes}_{i\in J}N_i)\overline{\otimes}(\overline{\otimes}_{i\in I\setminus J}1)$. If $N_i=N$, for every $i\in I$, we write $N^I$ instead of $\overline{\otimes}_{i\in I}N_i$.
If $(M_1,\tau_1)$ and $(M_2,\tau_2)$ are tracial von Neumann algebras with a common von Neumann subalgebra $B$ such that ${\tau_1}_{|B}={\tau_2}_{|B}$, we denote by $M_1*_BM_2$ the amalgamated free product von Neumann algebra endowed with its canonical trace $\tau$.

\subsection{Bimodules and completely positive maps}\label{bimm} Let $(M,\tau)$ and  $(N,\tau)$ be tracial von Neumann algebras.
A  {\it Hilbert $M$-$N$-bimodule} (which, hereafter, we call for simplicity an {\it $M$-$N$-bimodule}) is a Hilbert space $\mathcal H$ equipped with a unital $*$-homomorphism $\pi:M\otimes_{\text{alg}} N^{\text{op}}\rightarrow\mathbb B(\mathcal H)$ whose restrictions to $M\otimes 1$ and $1\otimes N^{\text{op}}$ are normal.
For $x\in M,y\in N$ and $\xi\in\mathcal H$ we denote $x\xi y=\pi(x\otimes y^{\text{op}})\xi$.
If $M=N$, we simply say that $\mathcal H$ is an $M$-bimodule. For example, the Hilbert spaces $\text{L}^2(M)$ and $\text{L}^2(\langle M,e_Q\rangle)$, for a von Neumann subalgebra $Q\subset M$, carry natural $M$-bimodule structures.

Let $\mathcal H$ be an $M$-$N$-bimodule.
The conjugate Hilbert space $\overline{\mathcal H}=\{\overline{\xi}\mid\xi\in\mathcal H\}$ carries a {\it contragradient} $N$-$M$-bimodule structure given by $$\text{$y\cdot \overline{\xi}\cdot x=\overline{x^*\xi y^*}$, for every $x\in M$ and $y\in N$.}$$

A vector  $\xi\in\mathcal H$ is called  {\it right bounded} if there exists $C>0$ such that $\|\xi y\|\leq C\|y\|_2$, for every $y\in N$,  {\it left bounded} if there exists $C>0$ such that $\|x\xi\|\leq C\|x\|_2$, for every $x\in M$, and {\it bounded} if it is left and right bounded.
The subspace of bounded vectors $\xi\in\mathcal H$  is a dense subspace of $\mathcal H$. 

Denote by $\mathcal H^0$ the set of right bounded vectors $\xi\in\mathcal H$.
Given $\xi\in\mathcal H^0$, we  define a bounded  operator $T_\xi:\text{L}^2(N)\rightarrow \mathcal H$ by letting $T_\xi(y)=\xi y$, for every $y\in N$.
If $\xi_1,\xi_2\in\mathcal H^0$ and $x\in M$, then the operator 
$T_{\xi_2}^*xT_{\xi_1}\in\mathbb B(\text{L}^2(N))$ commutes with the right multiplication action of $N$ on $\text{L}^2(N)$ and therefore belongs to $N$.  In other words, we have  $$\text{$T_{\xi_2}^*x T_{\xi_1}\in N$,  for every $\xi_1,\xi_2\in\mathcal H^0$ and $x\in M$}.$$ 

We next recall the correspondence between Hilbert modules and completely positive maps.
If $\xi\in\mathcal H^0$, then  $\Phi_\xi:M\rightarrow N$ given by $\Phi_\xi(x)=T_{\xi}^*xT_\xi$ is a normal, completely positive map such that 
$$\text{$\tau(\Phi_\xi(x)y)=\langle x\xi y,\xi\rangle$, for every $x\in M$ and $y\in N$.}$$ If $\|\xi y\|\leq \|y\|_2$, for every $y\in N$, then $\|T_\xi\|\leq 1$. Thus, $\Phi_\xi$ is subunital ($\Phi_\xi(1)=T_\xi^*T_\xi\leq 1$) and $\|\cdot\|$-contractive ($\|\Phi_\xi(x)\|\leq \|x\|$, for every $x\in M$). Assume that $\|x\xi\|\leq \|x\|_2$, for every $x\in M$. Then for every $x\in M$ we have $$\Phi_\xi(x^*)\Phi_\xi(x)=T_\xi^*x^*(T_\xi T_\xi^*)xT_\xi\leq T_\xi^* x^*x T_\xi=\Phi_\xi(x^*x),$$ 
hence  $\|\Phi_\xi(x)\|_2^2=\tau(\Phi_\xi(x^*)\Phi_\xi(x))\leq\tau(\Phi_\xi(x^*x))=\|x\xi\|^2\leq \|x\|_2^2$. Therefore, $\Phi_\xi$ is $\|\cdot\|_2$-contractive.

Conversely, let $\Phi:M\rightarrow N$ be a normal completely positive map. Then there exist an $M$-$N$-bimodule $\mathcal H_\Phi$  and a vector $\xi_\Phi\in\mathcal H_\Phi$ such that the linear span of $M\xi_\Phi N$ is dense in $\mathcal H_\Phi$ and  
$$\text{$\tau(\Phi(x)y)=\langle x\xi_\Phi y,\xi_\Phi\rangle$, for every $x\in M$ and $y\in N$.}$$
Assume that there exists $C>0$ such that $\tau\circ\Phi\leq C\tau$. Then the vector $\overline{\xi}_\Phi\in\overline{\mathcal H}_\Phi$ is right bounded: $\|\overline{\xi}_\Phi x\|=\|x^*\xi_\Phi\|=\sqrt{\tau(\Phi(xx^*))}\leq \sqrt{C}\|x\|_2$, for every $x\in M$.
By the above discussion, we obtain a normal completely positive map 
 $\Phi^*:N\rightarrow M$ (called the {\it adjoint} of $\Phi$) such that for every $x\in M$ and $y\in N$ we have $$\text{$\tau(x\Phi^*(y))=\langle y\overline{\xi}_\Phi x,\overline{\xi}_\Phi\rangle=\langle\overline{x^*\xi_\Phi y^*},\overline{\xi_\Phi}\rangle=\langle\xi_\Phi,x^*\xi_\Phi y^*\rangle=\langle x\xi_\Phi y,\xi_\Phi\rangle=\tau(\Phi(x)y)$.}$$
 By construction, we have an isomorphism of $N$-$M$-bimodules $\overline{\mathcal H}_\Phi\cong \mathcal H_{\Phi^*}$ which sends $\overline{\xi}_\Phi$ to $\xi_{\Phi^*}$.

Let $(P,\tau)$ be a tracial von Neumann algebra and $\mathcal K$ be an $N$-$P$-bimodule. The {\it Connes tensor product} of $\mathcal H$ and $\mathcal K$ is an $M$-$P$-bimodule denoted by $\mathcal H\otimes_N\mathcal K$ and obtained as the separation/completion of $\mathcal H^0\otimes_{\text{alg}}\mathcal K$ with respect to the scalar product
$$\langle\xi_1\otimes_N\eta_1,\xi_2\otimes_N\eta_2\rangle=\langle (T_{\xi_2}^*T_{\xi_1})
\eta_1,\eta_2\rangle$$
and endowed with the $M$-$P$-bimodule structure given by $x(\xi\otimes_N\eta)y=x\xi\otimes_N\eta y$.

If $Q\subset M$ is a von Neumann subalgebra, then the $M$-bimodule $\text{L}^2(\langle M,e_Q\rangle)$ is isomorphic to $\text{L}^2(M)\otimes_Q\text{L}^2(M)$.
If $\Phi:M\rightarrow N$ and $\Psi:N\rightarrow P$ are normal completely positive maps then there is an isomorphism of $M$-$P$-bimodules $\mathcal H_\Phi\otimes_N\mathcal H_\Psi\cong \mathcal H_{\Psi\circ\Phi}$ sending $\xi_\Phi\otimes_N\xi_\Psi$ to $\xi_{\Psi\circ\Phi}$.

\subsection{The space of central vectors}
Let $(M,\tau)$ be a tracial von Neumann algebra and $\mathcal H$ be an $M$-bimodule. We denote by $$\mathcal H^M:=\{\xi\in\mathcal H\mid x\xi=\xi x, \text{ for every $x\in M$}\}$$ the space of {\it $M$-central} vectors. In this subsection, we establish several basic properties of $\mathcal H^M$.

\begin{lemma}\label{fixed_point_proj}
Let $(M,\tau)$ be a tracial von Neumann algebra and $\mathcal H$ be an $M$-bimodule.

\begin{enumerate}
\item  $(p\mathcal H p)^{pMp}=p\mathcal H^Mp$, for every projection $p\in M$.  Thus, if $\mathcal H^M=\{0\}$, then  $(p\mathcal Hp)^{pMp}=\{0\}$. 
\item  $\mathcal H^{B'\cap M}=\emph{sp}(B\mathcal H^M)$, for every finite dimensional subfactor $B\subset M$.
\end{enumerate}
\end{lemma}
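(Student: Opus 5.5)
For (1), I will prove the two inclusions directly. If $\xi\in\mathcal H^M$, then for $x\in pMp$ we have $x(p\xi p)=xp\xi p = \xi x = p\xi p x$, using $xp=x=px$ and $\xi x = p\xi x$ since $x\in pMp$ and $\xi$ is central. Hence $p\mathcal H^Mp\subset (p\mathcal Hp)^{pMp}$.

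The reverse inclusion is the main step. Let $\eta\in(p\mathcal Hp)^{pMp}$. I want to extend $\eta$ to an $M$-central vector $\xi$ with $p\xi p=\eta$. Let $z$ be the central support of $p$. Choose partial isometries $v_i\in M$ (countably many, $i\ge 0$, with $v_0=p$) such that $v_i^*v_i\le p$ and $\sum_i v_iv_i^*=z$; these exist by the comparison theory of projections. Define
\[\xi=\sum_i v_i\eta v_i^*.\]
The summands are orthogonal, since $v_i\eta v_i^*$ lies in $(v_iv_i^*)\mathcal H(v_iv_i^*)$. Moreover $\|v_i\eta v_i^*\|^2=\langle v_i^*v_i\eta v_i^*v_i,\eta\rangle$. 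Because $v_i^*v_i\in pMp$, centrality of $\eta$ gives $v_i^*v_i\eta v_i^*v_i=\eta v_i^*v_i$. Summing, $\sum_i\langle \eta v_i^*v_i,\eta\rangle\le \sum_i \tau$-type quantities, and these are controlled through the normal right action: the map $x\mapsto\langle\eta x,\eta\rangle$ is a normal positive functional on $M$ supported under $p$ and commuting with $pMp$. To settle convergence cleanly, I will first treat the case $\eta$ central in $p\mathcal Hp$, where $\langle \eta\, y,\eta\rangle$ for $y\in pMp$ is a normal trace-like functional, and note that $\eta v_i^*v_i$ are orthogonal pieces of $\eta$ only up to a finite multiplicity bound. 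Alternatively I will write $z=\sum_k z_k$ with central projections $z_k$ on which $p$ has integer multiples $n_k$ exactly matching, reducing to finitely many $v_i$ per block. The central-support decomposition via comparison is where I expect the bookkeeping to be the obstacle.

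Centrality of $\xi$ then follows by checking $x\xi=\xi x$ on $x=v_jyv_i^*$ with $y\in pMp$, since these span a weakly dense subset of $zMz$:
\[v_jyv_i^*\,v_i\eta v_i^* = v_j y\eta v_i^*=v_j\eta y v_i^*=v_j\eta v_j^*v_jyv_i^*.\]
Here $yv_i^*v_i = y$ after replacing $y$ by $y\,v_i^*v_i$, and $v_j^*v_j$ is absorbed similarly. Normality extends the identity to all of $zMz$. For $x\in(1-z)M$ both sides vanish, because $\xi=z\xi z$. Finally $p\xi p=\eta$, since $pv_i=0$ for $i\neq0$, as $v_iv_i^*\perp p$. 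The ``thus'' statement is immediate from this.

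For (2), let $B\cong\mathbb M_n(\mathbb C)$ with matrix units $e_{kl}$. Then $M\cong B\otimes (B'\cap M)$ and $B'\cap M\cong e_{11}Me_{11}$ via $y\mapsto \sum_k e_{k1}ye_{1k}$. The inclusion $\operatorname{sp}(B\mathcal H^M)\subset\mathcal H^{B'\cap M}$ is clear, because $B$ commutes with $B'\cap M$. Conversely, let $\xi\in\mathcal H^{B'\cap M}$ and put $\xi_{kl}=e_{1k}\xi e_{l1}\in e_{11}\mathcal He_{11}$. Each $\xi_{kl}$ is $e_{11}Me_{11}$-central. By (1), $\xi_{kl}=e_{11}\zeta_{kl}e_{11}$ with $\zeta_{kl}\in\mathcal H^M$, and then $\xi_{kl}=e_{11}\zeta_{kl}$. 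It follows that
\[\xi=\sum_{k,l}e_{k1}\xi_{kl}e_{1l}=\sum_{k,l}e_{k1}e_{11}\zeta_{kl}e_{1l}=\sum_{k,l}e_{kl}\zeta_{kl}\in\operatorname{sp}(B\mathcal H^M).\]
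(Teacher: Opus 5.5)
Your construction in (1) is the paper's: $\xi=\sum_i v_i\eta v_i^*$ with $v_i^*v_i\le p$, $\sum_i v_iv_i^*=z$ and $v_0=p$. Once $\xi$ exists, your centrality check and $p\xi p=\eta$ are correct. The gap is the one you flag and leave open, namely convergence of $\xi$. The summands are orthogonal, so $\|\xi\|^2=\sum_i\|v_i\eta v_i^*\|^2=\sum_i\phi(v_i^*v_i)$, where $\phi(y)=\langle y\eta,\eta\rangle$ is a normal positive trace on $pMp$. Nothing makes this sum finite. Neither fallback helps: there is no uniform multiplicity bound, and finiteness inside each central block does not control the sum over blocks.

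In fact (1) is false as stated for non-factors. Take $M=\bigoplus_{n\geq 1}\mathbb M_n(\mathbb C)$ with $\tau=\sum_n 2^{-n}\mathrm{tr}_n$, $\mathcal H=\text{L}^2(M)$ and $p=\bigoplus_n e^{(n)}_{11}$.
\begin{itemize}
\item Since $pMp$ is abelian, $(p\mathcal Hp)^{pMp}=\text{L}^2(pMp)$ consists of all $\bigoplus_n b_ne^{(n)}_{11}$ with $\sum_n 2^{-n}|b_n|^2/n<\infty$.
\item On the other hand, $p\mathcal H^Mp=p\,\text{L}^2(\mathcal Z(M))\,p$ consists only of those with $\sum_n 2^{-n}|b_n|^2<\infty$.
\item The choice $b_n=2^{n/2}n^{-1/2}$ lies in the first set but not the second.
\end{itemize}
The paper's proof uses the same sum and does not address its convergence either.

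What does hold in general is $(p\mathcal Hp)^{pMp}=\overline{p\mathcal H^Mp}$. Let $T$ be the center-valued trace of $M$ and $z_c=\mathbf{1}_{[c,\infty)}(T(p))$. In $Mz_c$ finitely many $v_i$ suffice, so $\eta z_c\in p\mathcal H^Mp$. Moreover $\eta z_c\to\eta$ as $c\downarrow 0$, by normality of the right action. This gives the ``thus'' clause, which is the only part of (1) the paper actually uses. When $M$ is a factor, $\phi$ is a multiple of $\tau$ on $pMp$, so $\|\xi\|^2=\|\eta\|^2/\tau(p)<\infty$ and (1) holds verbatim.

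Your proof of (2) is complete as it stands. There you only need (1) for $p=e_{11}$ with the finite family $v_k=e_{k1}$, so no convergence issue arises. It is a different route from the paper's, which never invokes (1). The paper averages over matrix units: $\sum_{i,j}e_{ij}\eta e_{ji}\in\mathcal H^B$, hence $\mathcal H^B\cap\mathcal H^{B'\cap M}=\mathcal H^M$ because $\{B,B'\cap M\}''=M$. It then recovers each $e_{kk}\xi e_{ll}$ by compressing with $e_{k1},e_{1l}$. Your reduction via $B'\cap M\cong e_{11}Me_{11}$ to the corner is equally short and makes the link between (1) and (2) explicit.
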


\begin{proof}  (1) Since we clearly have $p\mathcal H^Mp\subset (p\mathcal Hp)^{pMp}$, it remains to prove the opposite inclusion.  To this end, let $\eta\in (p\mathcal Hp)^{pMp}$. Let $z\in\mathcal Z(M)$ be the central support of $p$ and $(v_i)_{i\in I}\subset M$ be partial isometries such that $\sum_{i\in I}v_i v_i^*=z$, $v_i^*v_i\leq p$, for every $i\in I$, and $v_{i_0}=p$, for some $i_0\in I$. Let $\zeta=\sum_{i\in I}v_i \eta v_i^*\in z\mathcal H z$. If $x\in M$, then $v_i^*xv_{j}\in pMp$, for every $i,j\in I$, hence 
$$x\zeta =xz\zeta=zx\zeta=\sum_{i,j\in I}v_i (v_i^*xv_{j})\eta v_{j}^*.$$
Similarly, we derive that $\zeta x=\zeta zx=\zeta xz=\sum_{i,j\in I}v_i \eta(v_i^*xv_{j})v_{j}^*$, and thus $\zeta\in\mathcal H^M$.
Since $pv_i=v_{i_0}^*v_i=0$, for every $i\in I\setminus\{i_0\}$, we get that $\eta=p\zeta p\in p\mathcal H^Mp$,  which finishes the proof.

(2)  Since $B\mathcal H^M\subset \mathcal H^{B'\cap M}$, it is sufficient to show that $\mathcal H^{B'\cap M}\subset\text{sp}(B\mathcal H^M)$.
Let $n\geq 1$ such that $B\cong\mathbb M_n(\mathbb C)$ and $(e_{i,j})_{i,j=1}^n\subset B$ denote the usual matrix units. If $\eta\in\mathcal H$, then a direct calculation shows that $\sum_{i,j=1}^ne_{i,j}\eta e_{j,i}\in\mathcal H^B$. Hence, if $\eta\in\mathcal H^{B'\cap M}$, then $\sum_{i,j=1}^ne_{i,j}\eta e_{j,i}\in\mathcal H^B\cap\mathcal H^{B'\cap M}=\mathcal H^M$, where we used the fact that $\{B,B'\cap M\}''=M$.

Now, let $\xi\in\mathcal H^{B'\cap M}$ and fix $1\leq k,l\leq n$. Since $e_{1,k},e_{l,1}\in B$, we get that $e_{1,k}\xi e_{l,1}\in\mathcal H^{B'\cap M}$. 
The previous paragraph gives that $\sum_{i=1}^ne_{i,k}\xi e_{l,i}=\sum_{i,j=1}^ne_{i,j}(e_{1,k}\xi e_{l,1})e_{j,i}\in\mathcal H^M$. Since $e_{k,1},e_{1,l}\in B$ we get that $e_{k,k}\xi e_{l,l}=\sum_{i=1}^ne_{k,1}(e_{i,k}\xi e_{l,i})e_{1,l}\in \text{sp}(B\mathcal H^M)$. Since this holds for every $1\leq k,l\leq n$ and $\xi=\sum_{k,l=1}e_{k,k}\xi e_{l,l}$, we get that $\xi$ belongs to $\text{sp}(B\mathcal H^M)$, which finishes the proof.
\end{proof}

\begin{lemma}\label{fixed_point}
Let $(M,\tau)$ be a tracial von Neumann algebra, $\mathcal H$ an $M$-bimodule and  $e:\mathcal H\rightarrow\mathcal H^M$ the orthogonal projection.
Let $\xi\in\mathcal H$ and $\mathcal C\subset\mathcal H$ be the norm closure of the convex hull of the set $\{u\xi u^*\mid u\in\mathcal U(M)\}$. Let $\varepsilon>0$.
Then the following hold:

\begin{enumerate}
\item $e(\xi)$ is equal to the unique element of minimal norm in $\mathcal C$. 
\item If $\xi\in\mathcal H\ominus\mathcal H^M$, then there exists $u\in\mathcal U(M)$ such that $\Re\langle u\xi u^*,\xi\rangle\leq \varepsilon\|\xi\|^2$.
\item If $\xi\in\mathcal H\ominus \mathcal H^M$, then there exists a bounded vector $\xi'\in\mathcal H\ominus\mathcal H^M$ with $\|\xi'-\xi\|<\varepsilon$.
\end{enumerate}
\end{lemma}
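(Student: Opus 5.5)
Part (1) is the standard Hilbert-space ergodic argument. Since $\mathcal C$ is a nonempty closed convex subset of $\mathcal H$, it has a unique element $\eta_0$ of minimal norm. For every $v\in\mathcal U(M)$, the map $\zeta\mapsto v\zeta v^*$ is an isometry of $\mathcal H$. It maps the convex hull of $\{u\xi u^*\}$ onto itself, and hence maps $\mathcal C$ onto $\mathcal C$. By uniqueness of the minimizer, $v\eta_0v^*=\eta_0$, i.e. $v\eta_0=\eta_0v$. Since $M$ is spanned by its unitaries, we get $\eta_0\in\mathcal H^M$.

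It remains to identify $\eta_0$ with $e(\xi)$. For $u\in\mathcal U(M)$ and $\zeta\in\mathcal H^M$ we have
\[
\langle u\xi u^*,\zeta\rangle=\langle \xi,u^*\zeta u\rangle=\langle\xi,\zeta\rangle .
\]
Thus every element of $\mathcal C$ has the same projection onto $\mathcal H^M$, namely $e(\xi)$, so $\mathcal C\subset e(\xi)+(\mathcal H\ominus\mathcal H^M)$. Because $\eta_0\in\mathcal H^M$ and $e(\eta_0)=e(\xi)$, it follows that $\eta_0=e(\xi)$. As a consistency check, every $\eta\in\mathcal C$ satisfies $\|\eta\|^2=\|e(\xi)\|^2+\|\eta-e(\xi)\|^2\ge\|e(\xi)\|^2$.

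For (2), let $\xi\perp\mathcal H^M$. Then $e(\xi)=0$, so by (1) we have $0\in\mathcal C$. Suppose, for a contradiction, that $\Re\langle u\xi u^*,\xi\rangle>\varepsilon\|\xi\|^2$ for every $u\in\mathcal U(M)$. The functional $\eta\mapsto\Re\langle\eta,\xi\rangle$ is continuous and real-linear, so the same bound holds (non-strictly, $\ge\varepsilon\|\xi\|^2$) on all of $\mathcal C$. Evaluating at $0\in\mathcal C$ gives $0\ge\varepsilon\|\xi\|^2$. This forces $\xi=0$, and for $\xi=0$ any $u$ works anyway. So such a $u$ exists.

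For (3), I would use that bounded vectors are dense in $\mathcal H$. Pick a bounded $\xi_0$ with $\|\xi_0-\xi\|<\varepsilon/2$ and set $\xi'=\xi_0-e(\xi_0)$. This vector lies in $\mathcal H\ominus\mathcal H^M$, and
\[
\|\xi'-\xi\|=\|(1-e)(\xi_0-\xi)\|\le\|\xi_0-\xi\|<\varepsilon .
\]
The main obstacle is showing that $e(\xi_0)$, and hence $\xi'$, is bounded. For left boundedness, I would show that the set of vectors $\eta$ satisfying $\|x\eta\|\le C\|x\|_2$ for all $x\in M$ is convex and norm-closed. Given such $\eta$ and a unitary $u$, we get $\|x u\eta u^*\|=\|(xu)\eta u^*\|$. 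Here the right action of the unitary $u^*$ is isometric, so this equals $\|(xu)\eta\|\le C\|xu\|_2=C\|x\|_2$. Hence this set is invariant under $\eta\mapsto u\eta u^*$, so it contains $\mathcal C$ and in particular $e(\xi_0)$ by (1). Right boundedness is handled symmetrically, and the difference of two bounded vectors is bounded, which completes (3).
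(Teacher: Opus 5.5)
Your proposal is correct and follows the paper's proof essentially step for step. In (1), the minimal-norm element is fixed by unitary conjugation and differs from $\xi$ by a vector orthogonal to $\mathcal H^M$. In (2), you test $\Re\langle\cdot,\xi\rangle$ at $e(\xi)=0\in\mathcal C$. In (3), the bounds $\|x\eta\|\le D\|x\|_2$ and $\|\eta x\|\le D\|x\|_2$ pass to the closed convex hull of $\{u\xi_0u^*\}$, which contains $e(\xi_0)$. You simply make explicit a few computations the paper leaves implicit, such as $\langle u\xi u^*,\zeta\rangle=\langle\xi,\zeta\rangle$ for $\zeta\in\mathcal H^M$ and $\|xu\eta u^*\|=\|(xu)\eta\|$.
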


\begin{proof} (1) Let $\eta$ be the element of minimal norm in $\mathcal C$. Since $\mathcal C$ is invariant under the norm preserving map $\rho\mapsto u\rho u^*$, we get that $u\eta u^*=\eta$, for every $u\in\mathcal U(M)$. Thus, $\eta\in\mathcal H^M$. On the other hand, since $\eta\in\mathcal C$, it is immediate that $\xi-\eta\perp\mathcal H^M$. This proves that $e(\xi)=\eta$.

(2) Assume that $e(\xi)=0$. Assume by contradiction that $\Re\langle u\xi u^*,\xi\rangle> \varepsilon\|\xi\|^2$, for every $u\in\mathcal U(M)$, so $\xi\not=0$. Then $\langle \rho,\xi\rangle\geq \varepsilon\|\xi\|^2$, for every $\rho\in\mathcal C$. In particular, since $e(\xi)\in\mathcal C$ by (1), we get that $\|e(\xi)\|^2=\langle e(\xi),\xi\rangle\geq \varepsilon\|\xi\|^2>0$, which  contradicts the fact that $e(\xi)=0$.

(3) Assume that $e(\xi)=0$  and let $\varepsilon>0$. Let $\xi_0\in\mathcal H$ be a bounded vector such that $\|\xi_0-\xi\|<\varepsilon$. Put $\xi'=\xi_0-e(\xi_0)$. Then $\xi'\in\mathcal H\ominus\mathcal H^M$ and $\|\xi'-\xi\|=\|(\text{Id}-e)(\xi_0-\xi)\|\leq \|\xi_0-\xi\|<\varepsilon$. 
Since $\xi_0$ is bounded, there exists $D>0$ such that $\|x\xi_0\|\leq D\|x\|_2$ and $\|\xi_0 x\|\leq D\|x\|_2$, for every $x\in M$. Since $e(\xi_0)$ belongs to the norm closure of the convex hull of the set $\{u\xi_0u^*\mid u\in\mathcal U(M)\}$ by (1), we get $\|xe(\xi_0)\|\leq D\|x\|_2$ and $\|e(\xi_0) x\|\leq D\|x\|_2$, for every $x\in M$. Thus, $e(\xi_0)$ and so $\xi'$ is bounded.
\end{proof}

\begin{lemma}\label{central_space}
Let $M$ be a II$_1$ factor and $\mathcal H$ be an $M$-bimodule. 
Then there exists an $M$-bimodular unitary operator $T:\overline{\emph{sp}}(M\mathcal H^M)\rightarrow \emph{L}^2(M)\otimes\ell^2(I)$, for a set $I$, such that 
$T(\mathcal H^M)=\mathbb C1\otimes\ell^2(I)$.
\end{lemma}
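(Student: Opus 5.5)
The plan is to build $T$ directly from the inner products of central vectors. The key observation is that for $\xi,\eta\in\mathcal H^M$ the functional $x\mapsto\langle x\xi,\eta\rangle$ is a multiple of the trace. Indeed, for $u\in\mathcal U(M)$ we have
$$\langle u x u^*\xi,\eta\rangle=\langle ux\xi u^*,\eta\rangle=\langle x\xi,u^*\eta u\rangle=\langle x\xi,\eta\rangle,$$
using centrality of $\xi$ and $\eta$. Hence $\varphi_{\xi,\eta}(x)=\langle x\xi,\eta\rangle$ is a normal linear functional on $M$ that is invariant under unitary conjugation. Since $M$ is a II$_1$ factor, every such functional is a scalar multiple of $\tau$. (Decompose into positive parts, or note that $\varphi_{\xi,\xi}$ is a normal positive tracial functional and then polarize.) Evaluating at $x=1$ gives
$$\langle x\xi,\eta\rangle=\tau(x)\langle\xi,\eta\rangle\quad\text{for every }x\in M.$$

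Next, choose an orthonormal basis $(\xi_i)_{i\in I}$ of the closed subspace $\mathcal H^M$. It is closed because each condition $x\xi=\xi x$ is a closed condition. Define $T$ on the algebraic span of $M\mathcal H^M$ by
$$T\Big(\sum_k x_k\xi_{i_k}\Big)=\sum_k x_k\otimes\delta_{i_k}.$$
By the identity above,
$$\langle x\xi_i,y\xi_j\rangle=\langle y^*x\xi_i,\xi_j\rangle=\tau(y^*x)\delta_{ij}=\langle x\otimes\delta_i,y\otimes\delta_j\rangle.$$
So $T$ is well defined and isometric, and it extends to an isometry from $\overline{\text{sp}}(M\mathcal H^M)$ into $\text{L}^2(M)\otimes\ell^2(I)$. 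Here one uses that $\text{sp}(M\{\xi_i\})$ is dense in $\overline{\text{sp}}(M\mathcal H^M)$, which holds because $x\xi\mapsto$ is continuous in $\xi$ with $\|x\xi\|\le\|x\|\|\xi\|$. The image contains $M\otimes\delta_i$ for every $i$, so $T$ is onto and therefore unitary.

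It remains to check bimodularity and the claim $T(\mathcal H^M)=\mathbb C1\otimes\ell^2(I)$. Left modularity is immediate from the definition. For right modularity, compute $(x\xi_i)y=x\xi_iy=xy\xi_i$, so $T((x\xi_i)y)=xy\otimes\delta_i=T(x\xi_i)y$, and extend by density and continuity. Finally, $T(\xi_i)=1\otimes\delta_i$, and by continuity $T$ maps $\mathcal H^M=\overline{\text{sp}}\{\xi_i\}$ onto $\overline{\text{sp}}\{1\otimes\delta_i\}=\mathbb C1\otimes\ell^2(I)$.

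The only step requiring real input is the scalar identity $\langle x\xi,\eta\rangle=\tau(x)\langle\xi,\eta\rangle$, which rests on the uniqueness of the trace on a II$_1$ factor. Everything after it is routine.
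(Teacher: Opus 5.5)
Your proof is correct, and it takes a somewhat different route from the paper's.

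\textbf{The paper's argument.} It chooses, by maximality, a family of unit vectors $\xi_i\in\mathcal H^M$ whose cyclic submodules $M\xi_i$ are mutually orthogonal. It then argues by contradiction that $\bigoplus_i\overline{\text{sp}}(M\xi_i)$ exhausts $\overline{\text{sp}}(M\mathcal H^M)$ and that $\mathcal H^M=\bigoplus_i\mathcal H_i^M$. This step uses Lemma \ref{fixed_point}(1): the projection onto $\mathcal H^M$ maps each subbimodule $\mathcal L$ onto $\mathcal L^M$. Uniqueness of the trace enters only through the diagonal states $x\mapsto\langle x\xi_i,\xi_i\rangle$.

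\textbf{Your argument.} You prove the polarized identity $\langle x\xi,\eta\rangle=\tau(x)\langle\xi,\eta\rangle$ for all $\xi,\eta\in\mathcal H^M$. This shows at once that orthogonality inside $\mathcal H^M$ forces orthogonality of the generated cyclic submodules. Consequently any orthonormal basis of $\mathcal H^M$ works, exhaustion reduces to a density remark, and $T(\mathcal H^M)=\mathbb C1\otimes\ell^2(I)$ follows directly from $T(\xi_i)=1\otimes\delta_i$. You therefore avoid both the maximality/contradiction step and Lemma \ref{fixed_point}(1).

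\textbf{Comparison.} Your version is shorter and gives an explicit $T$ attached to any orthonormal basis of $\mathcal H^M$. The paper's version follows the standard template of a maximal family of orthogonal cyclic subrepresentations and needs only the positive, diagonal case of the trace identity.

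\textbf{Minor presentational point.} Your formula defines $T$ only on $\text{sp}(M\{\xi_i\}_{i\in I})$, not on all of $\text{sp}(M\mathcal H^M)$ as you first state. Your subsequent density remark covers this.
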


\begin{proof}
Let $\{\xi_i\}_{i\in I}\subset\mathcal H^M$ be a maximal family of unit vectors such that $M\xi_i\perp M\xi_{i'}$, for all $i,i'\in I$ with $i\not=i'$.
Put $\mathcal H_i=\overline{\text{sp}}(M\xi_i)$ and note that $\mathcal H_i$ is an $M$-subbimodule, for every $i\in I$.
 We claim that $\overline{\text{sp}}(M\mathcal H^M)=\bigoplus_{i\in I}\mathcal H_i$. Assume by contradiction that $\mathcal K:=\overline{\text{sp}}(M\mathcal H^M)\ominus(\bigoplus_{i\in I}\mathcal H_i)\not=\{0\}$. Let $e:\mathcal H\rightarrow\mathcal H^M$ be the orthogonal projection. By Lemma \ref{fixed_point}(1) we have that $e(\mathcal L)=\mathcal L^M$, for every $M$-subbimodule $\mathcal L\subset\mathcal H$. Thus, we derive that $\mathcal H^M=e(\mathcal H)=(\bigoplus_{i\in I}\mathcal H_i^M)\oplus\mathcal K^M,$ and further that $$\overline{\text{sp}}(M\mathcal H^M)=(\bigoplus_{i\in I}\overline{\text{sp}}(M\mathcal H_i^M))\oplus\overline{\text{sp}}(M\mathcal K^M).$$This implies that $\mathcal K^M\not=\{0\}$ since otherwise $\overline{\text{sp}}(M\mathcal H^M)=\bigoplus_{i\in I}\overline{\text{sp}}(M\mathcal H_i^M)\subset\bigoplus_{i\in I}\mathcal H_i$, which contradicts our assumption.
If $\xi\in\mathcal K^M$ is a unit vector, then $\xi\perp \mathcal H_i$ and thus $M\xi\perp M\xi_i$, for every $i\in I$. This contradicts the maximality of the family $\{\xi_i\}_{i\in I}$, which altogether proves our claim.

If $i\in I$, then the map $M\ni x\mapsto \langle x\xi_i,\xi_i\rangle\in\mathbb C$ is a trace. Since $M$ is a II$_1$ factor, we get that $\langle x\xi_i,\xi_i\rangle=\tau(x)$, for every $x\in M$. 
Thus, we have an $M$-bimodular unitary operator $T_i:\mathcal H_i\rightarrow\text{L}^2(M)$ such that $T_i(\xi_i)=1$.
Hence, $T=\oplus_{i\in I}T_i:\overline{\text{sp}}(M\mathcal H^M)=\bigoplus_{i\in I}\mathcal H_i\rightarrow \text{L}^2(M)\otimes\ell^2(I)=\bigoplus_{i\in I}\text{L}^2(M)$ is an $M$-bimodular unitary operator. 
Since $\mathcal H^M=\bigoplus_{i\in I}\mathcal H_i^M$, we get that $T(\mathcal H^M)=\mathbb C1\otimes\ell^2(I)$. \end{proof}

\subsection{Coarse and weakly mixing bimodules}
In this subsection we discuss the notions of coarse and weakly mixing bimodules. 
\begin{definition}
Let $(M,\tau)$ and $(N,\tau)$ be tracial von Neumann algebras. 
The Hilbert space $\text{L}^2(M)\otimes\text{L}^2(N)$ equipped with the $M$-$N$-bimodule structure given by $x(\xi\otimes \eta)y=x\xi\otimes y\eta$ is called the {\it coarse} $M$-$N$-bimodule. We say that an $M$-$N$-bimodule $\mathcal H$ is  {\it coarse} if it is isomorphic to a subbimodule of a multiple of the coarse bimodule, i.e., $\mathcal H\subset(\text{L}^2(M)\otimes\text{L}^2(N))\otimes\ell^2(I),$ for some set $I$. Equivalently, $\mathcal H$ is coarse if and only if its defining $*$-homomorphism $\pi:M\otimes_{\text{alg}} N^{\text{op}}\rightarrow\mathbb B(\mathcal H)$ extends to a normal $*$-homomorphism from the von Neumann algebra $M\overline{\otimes}N^{\text{op}}$.

A von Neumann subalgebra $P\subset M$ (more generally, an embedding $\theta:P\rightarrow M$) is called {\it coarse} if the $P$-bimodule $\text{L}^2(M)\ominus\text{L}^2(P)$ (respectively, the $\theta(P)$-bimodule $\text{L}^2(M)\ominus\text{L}^2(\theta(P))$) is coarse.
\end{definition}

We next record an elementary criterion for the coarseness of bimodules. This criterion will be used in the proof of Theorem \ref{bimodulenonintertwining} as a substitute of the criterion stated in \cite[Lemma 2.7.1]{Pop18b}.

\begin{lemma}\label{coarse-criterion}
Let $(M,\tau),(N,\tau),(P,\tau)$ be tracial von Neumann algebras. Let $(u_i)_{i\in I}\subset M$ and $(v_j)_{j\in J}\subset N$ be orthonormal bases for
 $\emph{L}^2(M)$ and $\emph{L}^2(N)$ whose linear spans are WOT-dense in $M$ and $N$. 
Let  $\mathcal H$ be an $M$-$N\overline{\otimes}P$-bimodule and $\{\xi_k\}_{k\in K}\subset\mathcal H$ a dense set. For $k\in K,i\in I, j\in J$, let $c_k(i,j)=\sup_{x\in (P)_1}|\langle u_i\xi_k(v_j\otimes x),\xi_k\rangle|$. Assume that $c_k\in\ell^1(I\times J)$, for every $k\in K$.

Then $\mathcal H$ is a coarse $M$-$N\overline{\otimes}P$-bimodule.
\end{lemma}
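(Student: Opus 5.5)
The plan is to show that the normal extension property holds on a dense set of vectors, and then pass to the whole bimodule. Concretely, it suffices to show that for every $k\in K$ the positive functional
$$\omega_k(T)=\langle \pi(T)\xi_k,\xi_k\rangle, \qquad T\in M\otimes_{\text{alg}}N^{\text{op}}\otimes_{\text{alg}}P^{\text{op}},$$
extends to a normal positive functional on $M\overline{\otimes}N^{\text{op}}\overline{\otimes}P^{\text{op}}$ (the relevant algebra, since $(N\overline{\otimes}P)^{\text{op}}=N^{\text{op}}\overline{\otimes}P^{\text{op}}$). Indeed, the cyclic subbimodule $\mathcal H_k=\overline{\text{sp}}(M\xi_k(N\overline{\otimes}P))$ would then be the GNS space of a normal state of $M\overline{\otimes}(N\overline{\otimes}P)^{\text{op}}$. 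It is therefore a subbimodule of a multiple of the standard representation of that algebra, i.e.\ of $\text{L}^2(M)\otimes\text{L}^2(N\overline{\otimes}P)$, and so it is coarse. Coarseness is preserved under closed sums of subbimodules: a sum of coarse subbimodules is a quotient of their direct sum, and normality of the extended representation passes to quotients. Since $\{\xi_k\}$ is dense, $\mathcal H=\overline{\sum_k\mathcal H_k}$ is coarse. Equivalently, one can argue via the closure of normal vector states.

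To get normality of $\omega_k$, I will use $\text{L}^1$-type control coming from $c_k\in\ell^1(I\times J)$. First I exploit the $P$-part. For fixed $i,j$, the functional $\varphi_{ij}(x)=\langle u_i\xi_k(v_j\otimes x),\xi_k\rangle$ on $P$ is normal, because the right action of $N\overline{\otimes}P$ is normal. It also has norm at most $c_k(i,j)$, since its norm is the supremum over the unit ball. The idea is to write, for $a\in M$, $b\in N$ and $x\in P$,
$$\omega_k(a\otimes b^{\text{op}}\otimes x^{\text{op}})=\sum_{i,j}\tau(a u_i^*)\,\overline{\tau(b v_j^*)}\text{-type coefficients}\cdot\varphi_{ij}(x).$$
More precisely, I expand $a=\sum_i\langle a,u_i\rangle u_i$ in $\text{L}^2$, and similarly $b$. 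I would first verify this expansion for $a,b$ in the WOT-dense spans of the bases, where it is a finite sum. I then define the candidate extension
$$\tilde\omega(T)=\sum_{i,j}(\tau\otimes\tau\otimes\varphi_{ij})\big(T\,(u_i^*\otimes (v_j^*)^{\text{op}}\otimes 1)\big)$$
with the appropriate conventions. Each summand is a normal functional on $M\overline{\otimes}N^{\text{op}}\overline{\otimes}P^{\text{op}}$, being a tensor product of normal functionals composed with multiplication. Each has norm at most $\|u_i\|\|v_j\|c_k(i,j)$.

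The main obstacle is that $u_i,v_j$ are only orthonormal in $\text{L}^2$, not uniformly bounded in operator norm. Absolute summability of $\sum c_k(i,j)$ then does not immediately give norm convergence of the series of normal functionals. I would handle this by defining the functional $\tilde\omega$ on the predual side, namely as the vector functional $\eta\mapsto\langle T\eta,\zeta\rangle$ on $\text{L}^2(M)\otimes\text{L}^2(N)\otimes\text{L}^2(P)$. Alternatively I would show directly that $\omega_k$ is bounded by a multiple of a normal functional, using the estimate
$$|\omega_k(a\otimes b^{\text{op}}\otimes x^{\text{op}})|\le\sum_{i,j}|\langle a,u_i\rangle||\langle b,v_j\rangle|c_k(i,j)\le\|a\|_2\|b\|_2\|x\|\sum_{i,j}c_k(i,j).$$
This rests on Cauchy--Schwarz together with $|\langle a,u_i\rangle|\le\|a\|_2$.

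I expect the cleanest route to be the following. For fixed $x$, the map $(a,b)\mapsto\omega_k(a\otimes b^{\text{op}}\otimes x^{\text{op}})$ is given by a kernel $(i,j)\mapsto\varphi_{ij}(x)$ that is in $\ell^1$, hence Hilbert--Schmidt. Such a form is implemented by a trace-class-like vector in $\text{L}^2(M)\otimes\text{L}^2(N)$, and this yields $\|\cdot\|_2$-continuity of $\omega_k$ in the $M\otimes N$ variables, uniformly in $x\in(P)_1$. Continuity in $\|\cdot\|_2$ on bounded sets, together with normality in the $P$ variable, gives normality of $\omega_k$ on the algebraic tensor product. By a standard argument, such as that of Kaplansky density and an $\ell^1$ tail estimate, $\omega_k$ then extends normally to the von Neumann tensor product. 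Making this extension step rigorous, with the $P$-variable treated jointly rather than separately, is the delicate part.
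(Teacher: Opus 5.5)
Your overall plan, and in particular your first candidate $\tilde\omega$, is exactly the paper's proof. The paper represents $\varphi_{ij}$ by $Y_k(i,j)\in\mathrm{L}^1(P)$ with $\|Y_k(i,j)\|_1=c_k(i,j)$ and sets $Y_k=\sum_{i,j}u_i^*\otimes(v_j^*)^{\mathrm{op}}\otimes Y_k(i,j)^{\mathrm{op}}$, which is your $\tilde\omega$ viewed as an element of $\mathrm{L}^1(M\overline{\otimes}(N\overline{\otimes}P)^{\mathrm{op}})$. However, you leave open the one step that carries the content, namely normality of $\tilde\omega$, and you do so because the obstacle you name is not real. The $(i,j)$-summand is the tensor product of the normal functionals $\tau(\cdot\,u_i^*)$, $\tau(\cdot\,v_j^*)$ and $\varphi_{ij}$. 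The norm of $\tau(\cdot\,u_i^*)$ in $M_*\cong\mathrm{L}^1(M)$ is $\|u_i\|_1\le\|u_i\|_2=1$, not $\|u_i\|$. So each summand has norm at most $\|u_i\|_1\|v_j\|_1c_k(i,j)\le c_k(i,j)$, and the series converges absolutely in the norm-closed predual. Hence $\tilde\omega$ is normal, and no operator-norm control on $u_i,v_j$ is ever needed. Orthonormality gives $\tilde\omega=\omega_k$ on $u_i\otimes v_j^{\mathrm{op}}\otimes x^{\mathrm{op}}$, and normality in each variable together with the density of the spans then gives $\tilde\omega=\omega_k$ everywhere. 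Your passage from the $\xi_k$ to all of $\mathcal H$ is fine in the norm-limit-of-vector-functionals form; the ``quotient of the direct sum'' version needs weights to make the summation map bounded.

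The substitutes you propose instead do not work as stated. The bound $|\omega_k(a\otimes b^{\mathrm{op}}\otimes x^{\mathrm{op}})|\le C\|a\|_2\|b\|_2\|x\|$ on elementary tensors is not a domination by a normal functional, and it cannot yield normality. Indeed, $\mathcal H=\mathrm{L}^2(M)$ with $N=M$, $P=\mathbb C$, $\xi=\hat 1$ satisfies it with $C=1$, yet this bimodule is not coarse for diffuse $M$, since it has a nonzero central vector. Likewise, a kernel that is Hilbert--Schmidt in the $(M,N)$-variables uniformly in $x\in(P)_1$, together with normality in $x$, does not give joint normality. Take $M=P=\mathrm{L}^\infty[0,1]$, $N=\mathbb C$, $u_n=e^{2\pi i n t}$, and $\mathcal H=\mathrm{L}^2[0,1]$ with both algebras acting by multiplication and $\xi=1$. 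Then $\sum_n|\tau(u_nx)|^2=\|x\|_2^2\le 1$ for all $x\in(P)_1$. But the vector functional on $\mathrm{L}^\infty([0,1]^2)$ is the diagonal measure, which is singular, so the bimodule is not coarse. This joint treatment of the $P$-variable, which you flagged as delicate, is exactly where $c_k\in\ell^1$ with the supremum over $x$ taken inside must be used. The $\mathrm{L}^1$-summability above is what accomplishes it.
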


\begin{proof}

Let $k\in K, i\in I,j\in J$. Since the linear functional $P\ni x\mapsto  \langle u_i\xi_k(v_j\otimes x),\xi_k\rangle\in\mathbb C$ is normal, we can find $Y_k(i,j)\in\text{L}^1(P)$ such that  $\|Y_k(i,j)\|_1=c_k(i,j)$ and $\langle u_i\xi_k(v_j\otimes x),\xi_k\rangle=\tau(xY_k(i,j))$, for every $x\in P$. Since 
$\|u_i^*\otimes (v_j^*)^{\text{op}}\otimes Y_k(i,j)^{\text{op}}\|_1=\|u_i\|_1\|v_j\|_1\|Y_k(i,j)\|_1\leq \|u_i\|_2\|v_j\|_2 c_k(i,j)=c_k(i,j)$, for every $i\in I$ and $j\in J$, and $c_k\in \ell^1(I\times J)$,  we can define $Y_k\in\text{L}^1(M\overline{\otimes}(N\overline{\otimes}P)^{\text{op}})$ by letting 
$$Y_k=\sum_{i\in I,j\in J}u_i^*\otimes (v_j^*)^{\text{op}}\otimes Y_k(i,j)^{\text{op}}.$$
If $x\in P$, then since $\tau(u_iu_{i'}^*)=\delta_{i,i'}$ and $\tau(v_jv_{j'}^*)=\delta_{j,j'}$, for every $i'\in I$ and $j'\in J$, we get that
\begin{align*}\tau((u_i\otimes (v_j\otimes x)^{\text{op}})Y_k)&=\tau((u_i\otimes v_j^{\text{op}}\otimes x^{\text{op}})Y_k)\\&=\tau(x^{\text{op}}Y_k(i,j)^{\text{op}})=\tau(xY_k(i,j))=\langle u_i\xi_k(v_j\otimes x),\xi_k\rangle.\end{align*}
Since the linear spans of $(u_i)_{i\in I}$ and $(v_j\otimes P)_{j\in J}$ are WOT-dense in $M$ and $N\overline{\otimes}P$, respectively, we get that $\tau((a\otimes b^{\text{op}})Y_k)=\langle a\xi_k b,\xi_k\rangle$, for every $a\in M$ and $b\in N\overline{\otimes}P$. Hence, the $M$-$N\overline{\otimes}P$-bimodule $\overline{M\xi_k(N\overline{\otimes}P)}$ is coarse. Since $(\xi_k)_{k\in K}$ is dense in $\mathcal H$, we get that $\mathcal H$ is a coarse $M$-$N\overline{\otimes}P$-bimodule.
\end{proof}

\begin{definition}\label{weaklymixing}
Let $(M,\tau)$ and $(N,\tau)$ be tracial von Neumann algebras. 
An $M$-$N$-bimodule is called {\it left weakly mixing}  if 
one of the following three equivalent conditions holds:
\begin{enumerate}
\item $(\mathcal H\otimes_N\overline{\mathcal H})^M=\{0\}$.
\item There exists a net $(u_n)\subset\mathcal U(M)$ such that $\sup_{y\in (N)_1}|\langle u_n\xi y,\eta\rangle|\rightarrow 0$, for every $\xi,\eta\in\mathcal H$.
\item $\mathcal H$ contains no nonzero $M$-$N$-subbimodule $\mathcal K\subset\mathcal H$ such that $\text{dim}(\mathcal K_N)<\infty$.

\end{enumerate}
\end{definition}
Left weak mixingness for $M$-bimodules was defined by requiring (1) in \cite[Definition 1.3]{PS09}.
For the equivalence of (1)-(3), see \cite[Theorem A.2.2]{Bou14}.

\subsection{Intertwining by bimodules}

We recall from  \cite[Theorem 2.1 and Corollary 2.3]{Pop03} (see also \cite[Appendix C]{Vae06}) Popa's {\it intertwining-by-bimodules} theory.

\begin{theorem}[\cite{Pop03}]
\label{corner} Let $(M,\tau)$ be a tracial von Neumann algebra and $P\subset p_0Mp_0,Q\subset  q_0Mq_0$ be von Neumann subalgebras. Then the following conditions are equivalent
\begin{enumerate}
\item[(a)] There exist projections $p\in P, q\in Q$, a $*$-homomorphism $\varphi: pPp\rightarrow qQq$  and a nonzero partial isometry $v\in qM p$ such that $\varphi(x)v=vx$, for all $x\in pPp$.

\item[(b)] There exist $k\in\mathbb N$, a projection $r\in\mathbb M_k(\mathbb C)\overline{\otimes}Q$,  a $*$-homomorphism $\theta:P\rightarrow r(\mathbb M_k(\mathbb C)\overline{\otimes}Q)r$ and a nonzero partial isometry $w\in r(\mathbb M_{k,1}(\mathbb C)\overline{\otimes}M)$ such that $\theta(x)w=wx$, for every $x\in P$.

\item[(c)] There is no net $(u_n)\subset\mathcal U(P)$ satisfying $\|\emph{E}_Q(xu_ny)\|_2\rightarrow 0$, for all $x,y\in M$.

\end{enumerate}

If $Q$ is assumed unital, then conditions (a)-(c) are also equivalent to
\begin{enumerate}
\item[(d)] $(p_0\emph{L}^2(\langle M,e_Q\rangle) p_0)^P\not=\{0\}$.

\item [(e)] The $P$-$Q$-bimodule $\emph{L}^2(p_0M)$ is not left weakly mixing.
\end{enumerate}
\end{theorem}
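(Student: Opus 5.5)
The plan is to route all implications through the central-vector condition (d), working first in the case where $Q$ is unital (i.e.\ $q_0=1$). The general case then follows by replacing $Q$ with the unital subalgebra $Q\oplus\mathbb C(1-q_0)$ and cutting down by $q_0$. I would prove the chain (a)$\Rightarrow$(b)$\Rightarrow$(d)$\Rightarrow$(a), and separately the equivalences (d)$\Leftrightarrow$not(c) and (d)$\Leftrightarrow$not(e).

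\emph{Step 1: (a)$\Rightarrow$(b).} Given $\varphi:pPp\to qQq$ and $v$, let $z$ be the central support of $p$ in $P$. Choose partial isometries $(a_i)\subset P$ with $a_i^*a_i\le p$ and $\sum_i a_ia_i^*=z$. Only finitely many are needed after shrinking $p$, since we may replace $p$ by a subprojection whose central support is dominated by finitely many copies. Define
\[
\theta(x)=\bigl[\varphi(a_i^*xa_j)\bigr]_{i,j}\in r(\mathbb M_k(\mathbb C)\overline{\otimes}Q)r
\]
and $w=[v a_i^*]_i$. A direct computation gives $\theta(x)w=wx$, and $w\neq0$ because $w_{i_0}=v$ when $a_{i_0}=p$. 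Passing to the polar part of $w$ makes it a partial isometry.

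\emph{Step 2: (b)$\Rightarrow$(d).} In the amplified basic construction $\mathbb M_k(\mathbb C)\overline{\otimes}\langle M,e_Q\rangle$, the projection $1\otimes e_Q$ commutes with $\theta(P)\subset \mathbb M_k(Q)$. Hence $\xi=w^*(1\otimes e_Q)w\in p_0\langle M,e_Q\rangle p_0$ commutes with $P$. It is nonzero, positive, and satisfies $\mathrm{Tr}(\xi)\le\|w\|_2^2<\infty$, so $\xi$ is a nonzero $P$-central vector in $p_0\text{L}^2(\langle M,e_Q\rangle)p_0$.

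\emph{Step 3: (d)$\Rightarrow$(a), the main step.} Let $\eta$ be a nonzero $P$-central vector. Since $P$-centrality is preserved under $\eta\mapsto\eta^*$, positive and negative parts, and the functional calculus, I may take $\eta$ to be a positive element of $P'\cap p_0\langle M,e_Q\rangle p_0$ of finite trace. A spectral projection $f$ of $\eta$ is then nonzero with $\mathrm{Tr}(f)<\infty$ and lies in $P'\cap p_0\langle M,e_Q\rangle p_0$. The subspace $f\text{L}^2(M)$ is a $P$-$Q$-subbimodule of $\text{L}^2(p_0M)$ with $\dim(f\text{L}^2(M)_Q)=\mathrm{Tr}(f)<\infty$.

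Any right $Q$-module of finite dimension embeds as $r(\mathbb C^k\otimes\text{L}^2(Q))$ with $r\in\mathbb M_k(Q)$. The left $P$-action then gives $\theta:P\to r\mathbb M_k(Q)r$. The image of the intertwiner applied to a suitable vector is $Q$-bounded and lies in $\text{L}^2(M)$. The technical point is to pass from $\text{L}^2$-vectors to genuine elements of $M$. I would handle this by cutting $f$ so that the relevant vectors are bounded and then taking a polar decomposition, which yields $w$ as in (b). Cutting down by a minimal-ish corner of $r$ then gives (a). I expect this conversion from a finite-dimensional bimodule to honest partial isometries to be the main obstacle.

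\emph{Step 4: the remaining equivalences.}

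For not(c)$\Rightarrow$(d): failure of (c) yields $x_1,\dots,x_m\in p_0M$ and $\delta>0$ such that
\[
\sum_{i,j}\|\text{E}_Q(x_i^*ux_j)\|_2^2\ge\delta \quad\text{for all } u\in\mathcal U(P).
\]
Set $\xi=\sum_i x_ie_Qx_i^*$. Then $\mathrm{Tr}(\xi u\xi u^*)$ equals the sum above and so is at least $\delta$. The minimal-norm element of the closed convex hull of $\{u\xi u^*\}$, as in Lemma \ref{fixed_point}(1), is therefore nonzero and $P$-central.

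For (d)$\Rightarrow$not(c): suppose $(u_n)$ satisfies $\|\text{E}_Q(xu_ny)\|_2\to0$ for all $x,y\in M$. Then $\langle u_n\zeta u_n^*,\zeta\rangle\to0$ for every $\zeta$ in the span of $Me_QM$. Since this span is dense, the same holds for any $P$-central $\eta$, forcing $\|\eta\|^2=0$.

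Finally, (d)$\Leftrightarrow$not(e) is Definition \ref{weaklymixing}(1), using the identification $\text{L}^2(p_0M)\otimes_Q\overline{\text{L}^2(p_0M)}\cong p_0\text{L}^2(\langle M,e_Q\rangle)p_0$ from Subsection \ref{bimm}.
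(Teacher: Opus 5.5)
The paper does not prove this theorem. It quotes \cite{Pop03} (and Vaes) and only remarks, exactly as you do, that (d)$\Leftrightarrow$(e) is Definition~\ref{weaklymixing}(1) applied via $\text{L}^2(p_0M)\otimes_Q\overline{\text{L}^2(p_0M)}\cong p_0\text{L}^2(\langle M,e_Q\rangle)p_0$. Your Steps 1, 2 and 4 are the standard arguments and are fine. Step 2 uses $w=wp_0$, i.e.\ it reads (b) with $w\in r(\mathbb M_{k,1}(\mathbb C)\overline{\otimes}M)p_0$, as in Lemma~\ref{elementary_facts}(1). The $\text{L}^2$-to-bounded conversion you single out as the main obstacle is actually routine: cut the row $\xi$ by a spectral projection of $\xi\xi^*$, which is affiliated with $P'\cap p_0Mp_0$, then take a polar decomposition.

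The real gap is the last sentence of Step 3, the passage from $\mathbb M_k(\mathbb C)\overline{\otimes}Q$-valued data $(\theta,w)$ to (a). Compressing $\theta$ by a projection $r'\leq r$ gives a $*$-homomorphism only if $r'$ commutes with the image. The relative commutant may contain no nonzero projection $\preceq e_{11}\otimes 1$. Example: take $Q=\mathbb C1\subset M=P=\mathbb M_2(\mathbb C)$, $\theta=\mathrm{id}$, $k=2$, $r=1$, $w=(e_{11},e_{12})^{T}$. Then $\theta(P)'\cap\mathbb M_2(\mathbb C)=\mathbb C1$, and (a) only appears after passing to the corner $e_{11}Pe_{11}$. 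Conversely, for $P=\mathbb Cp_0$ and $\theta(p_0)=1_k$, no corner of $P$ helps and the cut must be made in the relative commutant.

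What you need is a pair of projections:
\begin{itemize}
\item $p\in P$, and
\item $e\in\theta(pPp)'\cap\theta(p)(\mathbb M_k(\mathbb C)\overline{\otimes}Q)\theta(p)$ with $e\preceq e_{11}\otimes 1$ and $ewp\neq0$.
\end{itemize}
Given these, choose $u$ with $u^*u=e$ and $uu^*=e_{11}\otimes q$. Then $\varphi=u\theta(\cdot)u^*$ on $pPp$, together with the polar part of the first entry of $uwp$ (which lies in $qMp$), gives (a).

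Finding $(p,e)$ is a genuine comparison argument with the centre-valued trace of $\mathbb M_k(\mathbb C)\overline{\otimes}Q$:
\begin{itemize}
\item On the type II$_1$ central summand of $p_0$, split into $2^m\geq k$ equivalent projections, take $p$ one of them with $wp\neq0$, and set $e=\theta(p)$.
\item On the type I part, take $p$ abelian, so that $\theta(pPp)$ is central in its relative commutant. Then partition $\theta(p)$ into projections of centre-valued trace at most $1/k$ lying in a maximal abelian subalgebra of that relative commutant. Such a subalgebra is maximal abelian in $\theta(p)(\mathbb M_k(\mathbb C)\overline{\otimes}Q)\theta(p)$.
\end{itemize}
Nothing in your sketch supplies this step.

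Two further points need repair.

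First, in Step 3, a right $Q$-module of finite $Q$-dimension embeds into $\mathbb C^k\otimes\text{L}^2(Q)$ with $k$ finite only if its centre-valued dimension is bounded. So when $\mathcal Z(Q)$ is diffuse you must first replace $f$ by $f\,JzJ$, where $J$ is the canonical conjugation on $\text{L}^2(M)$ and $z\in\mathcal Z(Q)$ is a suitable nonzero spectral projection of that dimension. This stays in $P'\cap p_0\langle M,e_Q\rangle p_0$, because $JzJ$ is central in $\langle M,e_Q\rangle$.

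Second, the reduction to unital $Q$ via $Q\oplus\mathbb C(1-q_0)$ can change the truth value when $P$ is not diffuse. For $M=\mathbb C^2$, $p_0=(1,0)$, $q_0=(0,1)$, $P=\mathbb Cp_0$, $Q=\mathbb Cq_0$, one has $P\prec_M Q\oplus\mathbb C(1-q_0)$ but $P\nprec_M Q$. (The paper makes this replacement in Lemma~\ref{small_projections}, where $P$ is diffuse.) So ``cutting down by $q_0$'' must be built into the argument. One way is to run Steps 2--4 with the $P$-$Q$-bimodule $p_0\text{L}^2(M)q_0$. Another is to work directly with $e_Q$ for non-unital $Q$, where $e_Qxe_Q=\text{E}_Q(x)e_Q$ still holds.
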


If  (a)-(c) hold then we write $P\prec_M Q$ and say that a corner of $P$ embeds into $Q$ inside $M$.
Moreover, if $Pp'\prec_MQ$, for every nonzero projection $p'\in P'\cap p_0Mp_0$, then we write $P\prec_M^sQ$.
The equivalence of (d) and (e) follows from Definition \ref{weaklymixing} since $\text{L}^2(\langle M,e_Q\rangle)=\text{L}^2(M)\otimes_Q\text{L}^2(M)$.

As a consequence of Theorem \ref{local_quant} which we will prove in Section \ref{Sec3:localquant} we are able to add a new equivalence to Theorem \ref{corner}, which improves \cite[Lemma 1.4]{Ioa11}.
\begin{lemma}
\label{small_projections} Let $(M,\tau)$ be a tracial von Neumann algebra and $P\subset p_0Mp_0,Q\subset  q_0Mq_0$ be von Neumann subalgebras. Assume that $P$ is diffuse. Then $P\prec_M Q$ if and only if there is no net of nonzero projections $(p_n)\subset P$ satisfying $\frac{\|\emph{E}_Q(xp_ny)\|_2}{\|p_n\|_2}\rightarrow 0$, for every $x,y\in M$.

\end{lemma}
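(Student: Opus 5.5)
The substantive direction is: if $P\nprec_M Q$, then such a net of projections exists. I would derive it from the local quantization principle for bimodules (Theorem \ref{local_quant}), applied to the $P$-bimodule $\mathcal H=p_0\text{L}^2(\langle M,e_Q\rangle)p_0$.

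\textbf{Reduction to a bimodule statement.} By Theorem \ref{corner}(d), $P\nprec_MQ$ means exactly that $\mathcal H^P=\{0\}$. For $x,y\in M$ the trace property of $\text{Tr}$ gives
$$\|\text{E}_Q(xpy)\|_2^2=\text{Tr}(e_Qy^*px^*e_Qxpy)=\langle p\,\xi\, p,\zeta\rangle,\qquad \xi=p_0x^*e_Qxp_0,\ \ \zeta=p_0ye_Qy^*p_0 .$$
Both $\xi$ and $\zeta$ are bounded vectors of $\mathcal H$, since left and right multiplication by $M$ on $Me_QM$ is controlled in $\|\cdot\|_2$ by operator norms. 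So it suffices to produce, for each finite set $F\subset M$ and each $\varepsilon>0$, a nonzero projection $p\in P$ such that for all $x,y\in F$
$$|\langle p\xi_xp,\zeta_y\rangle|\le\varepsilon\|p\|_2^2 .$$
Indexing by pairs $(F,\varepsilon)$ then gives a net with $\|\text{E}_Q(xp_ny)\|_2^2/\|p_n\|_2^2\to0$ for all $x,y$ in the $*$-algebra $\text{sp}(M e_Q M)$-generating set. By density, together with the trivial estimate $\|\text{E}_Q(xpy)\|_2\le\|x\|\,\|y\|\,\|p\|_2$, this extends to all $x,y\in M$.

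\textbf{Local quantization step.} Because $\mathcal H^P=\{0\}$ and $P$ is diffuse, I would apply Theorem \ref{local_quant} to the finitely many bounded vectors $\xi_x$, $x\in F$. The output I expect is a nonzero projection $p\in P$ with $\|p\xi_xp\|\le\varepsilon'\|p\|_2$ for all $x\in F$; this is the bimodule analogue of Popa's $\|pxp-\text{E}(x)p\|\le\varepsilon\|p\|_2$. Right-boundedness of $\zeta_y$ gives $\|p\zeta_yp\|\le C\|p\|_2$. Hence
$$|\langle p\xi_xp,\zeta_y\rangle|=|\langle p\xi_xp,p\zeta_yp\rangle|\le C\varepsilon'\|p\|_2^2,$$
and choosing $\varepsilon'$ small finishes this direction.

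\textbf{The other implication.} Taking the contrapositive of the above: if no such net exists, then $P\prec_MQ$. The remaining implication, that $P\prec_MQ$ rules out such a net, is where I expect the main obstacle, and I want to flag it.

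I would try to get it from (a) or (d) of Theorem \ref{corner}. Take a nonzero finite-trace projection $f\in P'\cap p_0\langle M,e_Q\rangle p_0$. Approximate $f$ in $\text{L}^2(\text{Tr})$ by a finite sum $T=\sum_k x_ke_Qy_k$. Then $\text{Tr}(Tp)=\sum_k\tau(\text{E}_Q(y_kpx_k))$ is $o(\|p_n\|_2)$ along the net, and one wants a lower bound on $\text{Tr}(fp_n)$.

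The obstruction is uniformity. The map $p\mapsto\text{Tr}(fp)$ is given by a positive element of $\mathcal Z(P)$, which may vanish on a central summand of $P$ that does not intertwine into $Q$. For example, if $P=Pz\oplus P(1-z)$ with $Pz\prec_MQ$ but $P(1-z)\nprec_MQ$, the first direction produces a bad net inside $P(1-z)$ even though $P\prec_M Q$.

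So this implication should only be claimed in the form matching $\prec^s_M$: either under the hypothesis $P\prec^s_MQ$, or with the net required to satisfy $p_n\le z$ for the relevant central projection $z$. In that form, the argument above gives it: choose $z$ with $\text{Tr}(f\,\cdot)\ge c\,\tau(z\,\cdot)$ on $Pz$, which yields a uniform lower bound $\text{Tr}(fp_n)\ge c\,\tau(p_n)$ and hence a contradiction.
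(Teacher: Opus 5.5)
Your proof of the implication with real content ($P\nprec_MQ\Rightarrow$ a bad net exists) is the paper's proof. Both apply local quantization (Theorem \ref{local_quant}) to the $P$-bimodule $p_0\mathrm{L}^2(\langle M,e_Q\rangle)p_0$, which has no $P$-central vectors, use the identity $\|\mathrm{E}_Q(xpy)\|_2^2=\langle p\xi p,\zeta\rangle_{\mathrm{Tr}}$, and finish with a pigeonhole over the resulting partition of unity. The paper packs everything into the single vector $\xi=\sum_{x\in F}p_0xe_Qx^*p_0$, so that $\sum_{x,y\in F}\|\mathrm{E}_Q(x^*py)\|_2^2=\|p\xi p\|_{2,\mathrm{Tr}}^2$; your Cauchy--Schwarz argument using right boundedness of $\zeta_y$ is equivalent. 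Three small corrections:
\begin{itemize}
\item Theorem \ref{corner}(d) is stated only for unital $Q$, so you must first replace $Q$ by $Q\oplus\mathbb C(1-q_0)$, as the paper does. This reduction needs $P\nprec_MQ\oplus\mathbb C(1-q_0)$, which holds because a diffuse $P$ cannot intertwine into $\mathbb C(1-q_0)$; this is where the paper's argument uses diffuseness.
\item The quantization step itself does not need diffuseness. Theorem \ref{local_quant} is stated for type II$_1$ algebras, but since $\mathcal H^P=\{0\}$ only Lemma \ref{no_central} is involved, and its proof works for any tracial $P$.
\item No density argument is needed: index the net by all finite $F\subset M$.
\end{itemize}

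Your objection to the remaining implication is correct, and it exposes a real error in the paper, which calls this direction ``easy to see''. Concretely, let $M$ be a II$_1$ factor, $Q\subset M$ diffuse abelian, $z\in Q$ a projection with $0<\tau(z)<1$, and $P=Qz\oplus(1-z)M(1-z)$. Then $P\prec_MQ$ (take $v=z$). On the other hand, $(1-z)M(1-z)$ is a II$_1$ factor, so it does not intertwine into the abelian $Q$. The implication you proved, applied to it, therefore yields a bad net inside $P$. Only the implication you prove is used later, in the proof of Corollary \ref{DD}(5).

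Your proposed repair, however, is false, and the argument for it has a gap. Along the net you only get $|\mathrm{Tr}(Tp_n)|\le\sum_k\|\mathrm{E}_Q(y_kp_nx_k)\|_2=o(\|p_n\|_2)$. This does not beat $c\,\tau(p_n)=c\|p_n\|_2^2$ once $\tau(p_n)\to0$. Moreover, $|\mathrm{Tr}((f-T)p_n)|$ is controlled only by the approximation error of $f$ by $T$ (e.g. by $\|f-T\|_{1,\mathrm{Tr}}$), not by a multiple of $\tau(p_n)$.

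This cannot be patched. Let $M=\bigoplus_{n\geq1}\mathbb M_{2^n}(\mathbb C)\overline{\otimes}R$ with trace weights $3\cdot4^{-n}$, $P=M$ and $Q=\bigoplus_n1\otimes R$. Then $P\prec_M^sQ$. Let $f=\sum_nJ(c_n\otimes1)J$, with $c_n$ a minimal projection of $\mathbb M_{2^n}(\mathbb C)$ in the $n$-th summand. This is a projection in $P'\cap\langle M,e_Q\rangle$ with $\mathrm{Tr}(f)=3$ and $\mathrm{Tr}(fx)\geq\tau(x)$ for $x\in P_+$, so your hypothesis holds with $z=1$. Now take a finite $F\subset(M)_1$. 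In the $n$-th summand, the $P$-central part of $\xi=\sum_{x\in F}xe_Qx^*$ is $J\beta_nJ$ with $\beta_n=4^{-n}\mathrm{E}_{\mathbb M_{2^n}(\mathbb C)\otimes1}(\sum_{x\in F}x^*x1_n)$. For projections $p$ in that summand it contributes at most $4^{-n}|F|^2\tau(p)$ to $\|p\xi p\|^2$, and local quantization removes the rest. Hence for every $(F,\varepsilon)$ there is a projection $p$ in a far summand with $\sum_{x,y\in F}\|\mathrm{E}_Q(x^*py)\|_2^2\leq\varepsilon\tau(p)$, i.e.\ a bad net exists.

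What does survive is the case where one finite set works uniformly, e.g.\ $P$ a II$_1$ factor. Take $p,\varphi,v$ as in Theorem \ref{corner}(a) and partial isometries $w_k\in P$ with $\sum_kw_k^*w_k=p_0$ and $w_kw_k^*\leq p$. Then for every projection $e\in P$ one gets $\sum_{k,l}\|\mathrm{E}_Q(vw_kew_l^*v^*)\|_2^2=c\,\tau(e)$ with $c=\tau(\mathrm{E}_Q(vv^*)^2)/\tau(p)>0$, which rules out a bad net.
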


\begin{proof} Let $P\subset p_0Mp_0,	Q\subset q_0Mq_0$ be von Neumann subalgebras. If $P\prec_M Q$, it is easy to see that there is no net of nonzero projections $(p_n)\subset P$ satisfying $\frac{\|\text{E}_Q(xp_ny)\|_2}{\|p_n\|_2}\rightarrow 0$, for every $x,y\in M$. 

To prove the converse, assume that $P\nprec_MQ$. 
Since $P\nprec_MQ\oplus\mathbb C(1-q_0)$, after replacing $Q$ by $Q\oplus\mathbb C(1-q_0)$, we may assume that $Q\subset M$ is unital. 
Let $F\subset M$ be a finite set and $\varepsilon>0$. Put $\xi=\sum_{x\in F}p_0(xe_Qx^*)p_0\in p_0\text{L}^2(\langle M,e_Q\rangle)p_0$.
If $p\in P$ is a projection, then for every $x\in M$ we have $$\|\text{E}_Q(x^*py)\|_2^2=\text{Tr}(x^*pye_Qy^*pxe_Q)=\text{Tr}(p(ye_Qy^*)p(xe_Qx^*))=\langle p(ye_Qy^*)p,xe_Qx^*\rangle_{\text{Tr}}.$$
Using this identity we conclude that
\begin{equation}\label{xpy}\text{$\sum_{x,y\in F}\|\text{E}_Q(x^*py)\|_2^2=\langle p\xi p,\xi\rangle_{\text{Tr}}=\|p\xi p\|_{2,\text{Tr}}^2$, for every projection $p\in M$.}\end{equation} Since $P\nprec_MQ$, Theorem \ref{corner} gives that  $(p_0\emph{L}^2(\langle M,e_Q\rangle) p_0)^P\not=\{0\}$. Theorem \ref{local_quant} provides a partition of unity $\{p_i\}_{i=1}^m$ in $P$ such that $\|\sum_{i=1}^mp_i\xi p_i\|_{2,\text{Tr}}<\varepsilon$. Thus, $\sum_{i=1}^m\|p_i\xi p_i\|_{2,\text{Tr}}^2<\varepsilon^2=\varepsilon^2\sum_{i=1}^m\|p_i\|_2^2.$ Therefore, we can find $1\leq i\leq n$ such that $\|p_i\xi p_i\|_{2,\text{Tr}}^2<\varepsilon^2\|p_i\|_2^2$. In combination with \eqref{xpy}, we derive that $\sum_{x,y\in F}\|\text{E}_Q(x^*p_iy)\|_2^2<\varepsilon^2\|p_i\|_2^2$. In conclusion, $p_i\in P$ is a nonzero projection such that $\frac{\|\text{E}_Q(x^*p_iy)\|_2}{\|p_i\|_2}<\varepsilon$, for every $x,y\in F$. This finishes the proof.
\end{proof}

We next record three lemmas containing several intertwining results
 that will be needed later on. We start by collecting some useful facts from the literature. 

\begin{lemma}\label{elementary_facts}
Let $(M,\tau)$ be a tracial von Neumann algebra. Let $P\subset p_0Mp_0,P_0\subset P,Q\subset  q_0Mq_0$ and  $S_1,\dots,S_k\subset Q$  be von Neumann subalgebras. Then the following hold:

\begin{enumerate}

\item Assume that $P\prec_M^sQ$ and let  $\varepsilon>0$. Then there exist $k\in\mathbb N$, a projection $r\in\mathbb M_k(\mathbb C)\overline{\otimes}Q$,  a $*$-homomorphism $\theta:P\rightarrow r(\mathbb M_k(\mathbb C)\overline{\otimes}Q)r$ and a nonzero partial isometry $w\in r(\mathbb M_{k,1}(\mathbb C)\overline{\otimes}M)p_0$ such that $\theta(x)w=wx$, for every $x\in P$, and $\|w^*w-p_0\|_2\leq\varepsilon$.
\item  Assume that $P\prec_MQ$ and $P_0\nprec_MS_i$, for every $1\leq i\leq k$. Additionally, assume that every projection in $P$ is equivalent (in $P$) to a projection in $P_0$.
Then there exist projections $p\in P_0, q\in Q$,  a $*$-homomorphism $\varphi:pPp\rightarrow qQq$ and a nonzero partial isometry $v\in qM p$  such that $\varphi(x)v=vx$, for every $x\in pPp$, and $\varphi(pP_0p)\nprec_QS_i$, for every $1\leq i\leq k$.
\item Assume that $P\prec_MQ$. Then there exists a nonzero projection $z\in \mathcal N_{p_0Mp_0}(P)'\cap p_0Mp_0$ such that $Pz\prec_M^sQ$.
\end{enumerate}

\end{lemma}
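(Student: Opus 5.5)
These three statements are standard facts in Popa's intertwining theory. I would prove each by reducing to Theorem \ref{corner}.

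For (1), the assumption $P\prec^s_MQ$ lets me run a maximality argument. Consider families of triples $(r_i,\theta_i,w_i)$ as in Theorem \ref{corner}(b), where the $w_i$ have mutually orthogonal initial projections $w_i^*w_i\in P'\cap p_0Mp_0$. Take a maximal such family and let $e=\sum_i w_i^*w_i$.
\begin{itemize}
\item First I check that $e=p_0$. The projection $w_i^*w_i$ commutes with $P$, since $w_i^*w_i x=w_i^*\theta_i(x)w_i=xw_i^*w_i$. If $e\neq p_0$, then $P(p_0-e)\prec_MQ$ by strong intertwining. Theorem \ref{corner}(b), applied to $P(p_0-e)$, yields a new partial isometry $w$ with $w=w(p_0-e)$, contradicting maximality.
\item The family is countable, because the $w_i^*w_i$ are orthogonal and $M$ is tracial. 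So for any $\varepsilon>0$ a finite subfamily $i=1,\dots,m$ satisfies $\|\sum_{i\le m}w_i^*w_i-p_0\|_2\le\varepsilon$.
\item I then assemble $w$ as the column $(w_1,\dots,w_m)$, with $\theta=\oplus_i\theta_i$ into $\mathbb M_{\sum k_i}(\mathbb C)\overline{\otimes}Q$ and $r=\oplus_i r_i$.
\end{itemize}
After shrinking $r$ to the support of $ww^*$, which commutes with $\theta(P)$, $w$ is a partial isometry with $w^*w=\sum_{i\le m}w_i^*w_i$.

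For (3), let $\mathcal N=\mathcal N_{p_0Mp_0}(P)''$ and let $z$ be the supremum of the projections $w^*w$ over all intertwiners $w$ as in Theorem \ref{corner}(b). Since $P\prec_MQ$, $z\neq0$, and $z\in P'\cap p_0Mp_0$.
\begin{itemize}
\item If $u\in\mathcal N_{p_0Mp_0}(P)$, then $wu^*$ is again an intertwiner, for the homomorphism $\theta\circ\mathrm{Ad}(u)$, and its support is $uw^*wu^*$. Hence $z$ is invariant under conjugation by normalizing unitaries, so $z\in\mathcal N'\cap p_0Mp_0$.
\item For strong intertwining, take a nonzero projection $p'\in P'\cap p_0Mp_0$ with $p'\le z$. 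Some intertwiner $w$ satisfies $w^*wp'\neq0$. Since $p'$ commutes with $P$, $wp'$ is a nonzero intertwiner for $Pp'$, so $Pp'\prec_MQ$.
\item A general $p'\le z$ in $(Pz)'\cap zMz$ lies in $P'\cap p_0Mp_0$, so the previous step covers it and gives $Pz\prec^s_MQ$.
\end{itemize}

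For (2), I would follow the standard argument of \cite{Vae06,IPP05}.
\begin{itemize}
\item Start from $(p,q,\varphi,v)$ given by Theorem \ref{corner}(a). The hypothesis that every projection of $P$ is equivalent in $P$ to one in $P_0$ lets me conjugate by a partial isometry of $P$ and assume $p\in P_0$; compose $\varphi$ accordingly.
\item The main obstacle is arranging $\varphi(pP_0p)\nprec_QS_i$. Suppose instead that $\varphi(pP_0p)\prec_QS_i$ for some $i$. Then there is $(p_1,q_1,\psi,v_1)$ with $\psi:p_1\varphi(pP_0p)p_1\to q_1S_iq_1$ and $\psi(\varphi(x)p_1)v_1=v_1\varphi(x)p_1$. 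Composing gives $v_1v$ intertwining a corner of $P_0$ into $S_i$.
\item To conclude $P_0\prec_MS_i$, a contradiction, I need $v_1v\neq0$. Arrange this first by replacing $q$ with the support projection $E$ of $vv^*$ in $\varphi(pPp)'\cap qMq$. Then take the non-intertwining witness for $P_0$, a net $u_n\in\mathcal U(pP_0p)$ with $\|\mathrm{E}_{S_i}(x u_n y)\|_2\to0$ for all $x,y\in M$, from Theorem \ref{corner}(c).
\item Push this net through $\varphi$. Since $\varphi(x)vv^*=vxv^*$, I get $\|\mathrm{E}_{S_i}(a\varphi(u_n)vv^*b)\|_2\to0$ for all $a,b\in M$. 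I would use that $vv^*$ sits under the support of $\varphi$, so $vv^*\in(\varphi(pPp)'\cap qMq)$, to transfer this to $\|\mathrm{E}_{S_i}(a\varphi(u_n)b)\|_2\to 0$ for $a,b\in Q$. This transfer is the delicate step.
\item That decay shows $\varphi(pP_0p)\nprec_QS_i$ for every $i$ simultaneously. It is precisely the standard \cite[Remark 3.8]{Vae06} type argument.
\end{itemize}
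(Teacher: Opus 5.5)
Parts (1) and (3) are fine. They are the standard maximality and normalizer-invariance arguments behind \cite[Lemma 3.7]{Vae07} and \cite[Lemma 2.4(3)]{DHI16}, which is what the paper cites. One harmless slip in (1): you cannot ``shrink $r$ to the support of $ww^*$'', since $ww^*\notin\mathbb M_K(\mathbb C)\overline{\otimes}Q$. Nothing is lost, because $w^*w=\sum_{i\le m}w_i^*w_i$ is already a projection.

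The genuine gap is in (2), exactly at the step you call delicate. Your normalization is vacuous. $vv^*$ is already a projection of $\varphi(pPp)'\cap qMq$, so its support there is $vv^*$ itself. That is an element of $M$, not of $Q$, so it can neither replace $q$ nor be used to cut down $\varphi$.

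Moreover, knowing only that $vv^*$ commutes with $\varphi(pPp)$ cannot give the transfer. For an arbitrary quadruple $(p,q,\varphi,v)$ as in Theorem \ref{corner}(a), the conclusion $\varphi(pP_0p)\nprec_QS_i$ can fail. The reason is that $\varphi$ may contain a summand $\varphi(\cdot)q'$, with $q'\in\varphi(pPp)'\cap qQq$ and $q'\mathrm{E}_Q(vv^*)=0$, which $v$ does not see and which does intertwine into $S_i$.

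For instance, take $M=Q=R\overline{\otimes}R$, $P=P_0=R\otimes 1$ and $S_1=1\otimes R$, so that $P_0\nprec_MS_1$. Let $p=e\otimes 1$ with $\tau(e)=\frac12$, $q=1$, $v=p$, and $\varphi(x\otimes 1)=x\otimes 1+(1-e)\otimes\beta(x)$ for an isomorphism $\beta:eRe\rightarrow R$. Then $\varphi(pP_0p)((1-e)\otimes 1)=(1-e)\otimes R$, so $\varphi(pP_0p)\prec_QS_1$. In your contradiction version the same problem appears as $\|v_1v\|_2^2=\tau(v_1^*v_1\,\mathrm{E}_Q(vv^*))$, which can vanish.

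The missing idea is to cut $\varphi$ by the support projection $s$ of $\mathrm{E}_Q(vv^*)$, which is the smallest projection of $Q$ majorizing $vv^*$. Since $vv^*$ commutes with $\varphi(pPp)\subset qQq$ and $\mathrm{E}_Q$ is $Q$-bimodular, $s\in\varphi(pPp)'\cap qQq$. Also $sv=v$, because $\mathrm{E}_Q((q-s)vv^*(q-s))=0$. So after replacing $\varphi$ by $\varphi(\cdot)s$ and $q$ by $s$, you may assume $\mathrm{E}_Q(vv^*)$ has support $q$.

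Now suppose $\varphi(pP_0p)\prec_QS_i$. Theorem \ref{corner}(b) gives $\psi:\varphi(pP_0p)\rightarrow t(\mathbb M_\ell(\mathbb C)\overline{\otimes}S_i)t$ and a nonzero partial isometry $w\in t(\mathbb M_{\ell,1}(\mathbb C)\overline{\otimes}Q)q$ with $\psi(y)w=wy$. Then $(\psi\circ\varphi)(x)wv=wvx$ for $x\in pP_0p$. Moreover $\|wv\|_2^2=\tau(w^*w\,\mathrm{E}_Q(vv^*))>0$, since $0\neq w^*w\le q$ lies in $Q$ and $\mathrm{E}_Q(vv^*)$ has full support $q$. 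Hence $P_0\prec_MS_i$, a contradiction; this is the paper's argument.

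Your net approach also goes through after this normalization. For $a,b\in Q$ one has $\mathrm{E}_{S_i}(a\varphi(u_n)vv^*b)=\mathrm{E}_{S_i}(a\varphi(u_n)d\,b)$ with $d=\mathrm{E}_Q(vv^*)$. Since $d$ has support $q$, you can approximate $q$ in $\|\cdot\|_2$ by $df_k(d)$ with $f_k(d)\in Q$, which gives the transfer.
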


\begin{proof}
For (1), see the proof of \cite[Lemma 3.7]{Vae07}.

In the case $P_0=P$, (2) is treated in \cite[Remark 3.8]{Vae07} and \cite[Lemma 5.4]{BCC24}. The general case follows  by adapting the arguments therein. First, since 
$P\prec_MQ$, there exist projections $p\in P, q\in Q$,  a $*$-homomorphism $\varphi:pPp\rightarrow qQq$ and a nonzero partial isometry $v\in qMp$  such that $\varphi(x)v=vx$, for every $x\in pPp$. Since $p$ is equivalent to some projection in $P_0$, we may assume that $p\in P_0$. Moreover, after replacing $v$ with $sv$ and $\varphi(\cdot)$ with $\varphi(\cdot)s$, where $s\in\varphi(pPp)'\cap qQq$ is the support projection of $\text{E}_{qQq}(vv^*)$, we may assume that the support projection of $\text{E}_{qQq}(vv^*)$ is equal to $q$. We claim that $\varphi(pP_0p)\nprec_Q S_i$, for every $1\leq i\leq k$.

Assume by contradiction that $\varphi(pP_0p)\prec_QS_i$, for some $1\leq i\leq k$. Then there exist a $*$-homomorphism $\psi:\varphi(pP_0p)\rightarrow t(\mathbb M_\ell(\mathbb C)\overline{\otimes}S_i)t$, for some $\ell\in\mathbb N$ and a projection $t\in\mathbb M_\ell(\mathbb C)\overline{\otimes}S_i$, and a nonzero partial isometry $w\in t(\mathbb M_{\ell,1}(\mathbb C)\overline{\otimes}Q)q$ such that $\psi(y)w=wy$, for every $y\in \varphi(pP_0p)$. Let $\theta=\psi\circ(\varphi_{|pP_0p}):pP_0p\rightarrow t(\mathbb M_\ell(\mathbb C)\overline{\otimes}S_i)t$. Then $\theta(x)wv=wvx$, for every $x\in pP_0p$. 
Since $w^*w\leq q$ and the support of $\text{E}_Q(vv^*)$ is equal to $q$, we get $\text{E}_Q(w^*wvv^*)=w^*w\text{E}_Q(vv^*)\not=0$. Hence $wv\not=0$. Since $wv\in t(\mathbb M_{\ell,1}(\mathbb C)\overline{\otimes}M)p$, we deduce that $P_0\prec_M S_i$, which is a contradiction.

For (3), see \cite[Lemma 2.4(3)]{DHI16}.
\end{proof}

\begin{lemma}\label{tensor_products}
Let $(M,\tau)$ and $(N,\tau)$ be tracial von Neumann algebras. Let $B\subset M$, $N_0\subset N$, $P\subset p(M\overline{\otimes}N_0)p$ and $Q\subset q(M\overline{\otimes}N)q$ be von Neumann subalgebras.

\begin{enumerate}
\item Assume that $P\nprec_{M\overline{\otimes}N_0}B\overline{\otimes}N_0$. Then $P\nprec_{M\overline{\otimes}N}B\overline{\otimes}N$. 
\item Assume that $M=M_1*_BM_2$, for some von Neumann subalgebras $M_1,M_2\subset M$. Assume additionally that $P\subset M_1\overline{\otimes}N_0$ and $P\nprec_{M_1\overline{\otimes}N_0}B\overline{\otimes}N_0$.
Then $P\nprec_{M\overline{\otimes}N}B\overline{\otimes}N$.
\item Assume that $M=M_1*_BM_2$, for some von Neumann subalgebras $M_1,M_2\subset M$. Assume additionally that $Q\prec^s_{M\overline{\otimes}N}M_1\overline{\otimes}N_0$ and $Q\prec_{M\overline{\otimes}N}^sM_2\overline{\otimes}N$. Then $Q\prec_{M\overline{\otimes}N}^s B\overline{\otimes}N_0$.
\end{enumerate}
\end{lemma}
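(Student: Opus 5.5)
We prove the three parts separately, each time using the characterization of intertwining through nets of unitaries (Theorem \ref{corner}(c)).

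\emph{Part (1).} Since $P\nprec_{M\overline{\otimes}N_0}B\overline{\otimes}N_0$, there is a net $(u_n)\subset\mathcal U(P)$ with $\|\text{E}_{B\overline{\otimes}N_0}(xu_ny)\|_2\to 0$ for all $x,y\in M\overline{\otimes}N_0$. We show the same net witnesses $P\nprec_{M\overline{\otimes}N}B\overline{\otimes}N$.

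By $\|\cdot\|_2$-density and uniform boundedness of the $u_n$, it suffices to check $x=a\otimes c$ and $y=b\otimes d$ with $a,b\in M$ and $c,d\in N$. Expand $c$ and $d$ along an orthonormal basis of $\text{L}^2(N)$ as a right (respectively left) $N_0$-module, i.e.\ a Pimsner--Popa basis. Since $u_n\in M\overline{\otimes}N_0$, the $M\overline{\otimes}N_0$-bimodule $\text{L}^2(M\overline{\otimes}N)$ then decomposes as a direct sum of pieces $\text{L}^2(M\overline{\otimes}N_0)\eta$. On each piece, $\text{E}_{B\overline{\otimes}N}$ acts through $\text{E}_{B\overline{\otimes}N_0}$. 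Concretely, one uses the identity $\text{E}_{B\overline{\otimes}N}(z)=\text{E}_{B\overline{\otimes}N_0}(z)$ for $z\in M\overline{\otimes}N_0$, together with the commuting square
\[
\begin{array}{ccc}
B\overline{\otimes}N_0 & \subset & M\overline{\otimes}N_0\\
\cap & & \cap\\
B\overline{\otimes}N & \subset & M\overline{\otimes}N.
\end{array}
\]
A cleaner route: $\text{E}_{B\overline{\otimes}N}=\text{E}_B\otimes\text{Id}$. Then
\[
\|\text{E}_{B\overline{\otimes}N}((a\otimes c)u_n(b\otimes d))\|_2=\|(1\otimes c)\,(\text{E}_B\otimes\text{Id})((a\otimes 1)u_n(b\otimes1))\,(1\otimes d)\|_2\le\|c\|\,\|d\|\,\|\text{E}_{B\overline{\otimes}N_0}((a\otimes1)u_n(b\otimes1))\|_2,
\]
where the last step holds because $(a\otimes1)u_n(b\otimes1)\in M\overline{\otimes}N_0$, on which $\text{E}_B\otimes\text{Id}$ coincides with $\text{E}_{B\overline{\otimes}N_0}$. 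The right-hand side tends to $0$, which proves (1).

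\emph{Part (2).} First use (1) inside $M_1$ to pass from $N_0$ to $N$. Then the goal reduces to: if $P\subset M_1\overline{\otimes}N$ and $P\nprec_{M_1\overline{\otimes}N}B\overline{\otimes}N$, then $P\nprec_{M\overline{\otimes}N}B\overline{\otimes}N$. This is the amalgamated free product fact over the base $N$, since
\[
M\overline{\otimes}N=(M_1\overline{\otimes}N)*_{B\overline{\otimes}N}(M_2\overline{\otimes}N).
\]
Take a net $(u_n)\subset\mathcal U(P)$ with $\|\text{E}_{B\overline{\otimes}N}(xu_ny)\|_2\to0$ for $x,y\in M_1\overline{\otimes}N$. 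It suffices to check $x^*,y$ among reduced words $x_1\cdots x_k$ with letters alternating in $(M_i\overline{\otimes}N)\ominus(B\overline{\otimes}N)$. Writing $x=x'x_k$ and $y=y_1y'$, the expectation vanishes unless the inner letters $x_k$ and $y_1$ lie in $M_1\overline{\otimes}N$. In that case
\[
\text{E}_{B\overline{\otimes}N}(x'\,x_ku_ny_1\,y')=\text{E}_{B\overline{\otimes}N}\big(x'\,\text{E}_{B\overline{\otimes}N}(x_ku_ny_1)\,y'\big),
\]
because the part of $x_ku_ny_1$ orthogonal to $B\overline{\otimes}N$ produces reduced words of positive length, whose expectation is $0$. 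Hence the norm is at most $\|x'\|\,\|y'\|\,\|\text{E}_{B\overline{\otimes}N}(x_ku_ny_1)\|_2\to0$, as in \cite[Theorem 1.2.1]{IPP05}.

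\emph{Part (3).} This is the main obstacle. We need the tensor version of the intersection result: if $Q\prec^s M_1\overline{\otimes}N_0$ and $Q\prec^s M_2\overline{\otimes}N$, then $Q\prec^s B\overline{\otimes}N_0$. The plan has three steps.

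\begin{enumerate}
\item Apply the known result for $\prec^s$ with commuting-square subalgebras: \cite[Lemma 2.8]{DHI16}, or Vaes' results for algebras in the position of \cite[Proposition 2.7]{PV11}. It applies because $M_1\overline{\otimes}N_0$ and $M_2\overline{\otimes}N$ form a commuting square in $M\overline{\otimes}N$, meaning $\text{E}_{M_1\overline{\otimes}N_0}$ and $\text{E}_{M_2\overline{\otimes}N}$ commute. Their composition is $\text{E}_{B\overline{\otimes}N_0}$, using the freeness relation $\text{E}_{M_1}\text{E}_{M_2}=\text{E}_B$ tensored with $\text{E}_{N_0}$.
\item Conclude $Q\prec^s(M_1\overline{\otimes}N_0)\cap(M_2\overline{\otimes}N)=B\overline{\otimes}N_0$.
\item Verify the commuting property concretely on elementary tensors: $(\text{E}_{M_1}\otimes\text{E}_{N_0})(\text{E}_{M_2}\otimes\text{Id})=\text{E}_{M_1}\text{E}_{M_2}\otimes\text{E}_{N_0}=\text{E}_B\otimes\text{E}_{N_0}$, and symmetrically in the other order.
\end{enumerate}

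If the cited lemma requires regularity hypotheses, the fallback is to reprove it directly. One uses the criterion that $Q\prec^s D$ if and only if for every $\varepsilon>0$ there is a finite set $F$ with $\sum_{x,y\in F}\|\text{E}_D(xuy)\|_2^2\ge\|\cdot\|$-bounded-below uniformly in $u\in\mathcal U(Q)$. Composing the two commuting expectations then transfers this bound to $\text{E}_{B\overline{\otimes}N_0}$.
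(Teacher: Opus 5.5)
Parts (1) and (2) follow the paper. Your ``cleaner route'' in (1) is exactly the paper's computation. In (2) you likewise apply (1) inside $M_1$ and then use $M\overline{\otimes}N=(M_1\overline{\otimes}N)*_{B\overline{\otimes}N}(M_2\overline{\otimes}N)$; for this last step the paper simply quotes \cite[Theorem 1.1]{IPP05}.

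There is one slip in your sketch of that step. The expectation does not vanish when the inner letters lie outside $M_1\overline{\otimes}N$. For instance, if $x_k,y_1\in(M_2\overline{\otimes}N)\ominus(B\overline{\otimes}N)$, you get $\text{E}_{B\overline{\otimes}N}(x'x_k\,\text{E}_{B\overline{\otimes}N}(u_n)\,y_1y')$, which is nonzero in general. The correct version is to absorb an inner letter into the middle only when it lies in $M_1\overline{\otimes}N$, and otherwise use $1$. Every term is then bounded by some $\|\text{E}_{B\overline{\otimes}N}(au_nb)\|_2$ with $a,b\in M_1\overline{\otimes}N$, which tends to $0$.

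The genuine gap is in (3). You want to use the principle that $Q\prec^s D_1$ and $Q\prec^s D_2$, for a commuting square $D_1,D_2$, force $Q\prec^s D_1\cap D_2$. This is false in that generality. The versions in the literature, including the one from \cite{DHI16} you point to, also assume that one of the two algebras is regular. Neither $M_1\overline{\otimes}N_0$ nor $M_2\overline{\otimes}N$ is regular in $M\overline{\otimes}N$ in general.

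Here is a counterexample. Let $\mathbb F_2=\langle a,b\rangle$, and inside $\text{L}(\mathbb F_2)$ take $Q=D_1=\text{L}(\langle a\rangle)$ and $D_2=u_b\text{L}(\langle a\rangle)u_b^*$.
\begin{itemize}
\item Group subalgebras always form commuting squares, so $D_1,D_2$ is one.
\item $Q\prec^s D_1$ holds trivially, and $Q\prec^s D_2$ follows by conjugating with $u_b$.
\item However, $D_1\cap D_2=\mathbb C1$ and $Q$ is diffuse, so $Q\nprec\mathbb C1$.
\end{itemize}

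So your fallback cannot be repaired either. Lower bounds for $\|\text{E}_{D_1}(xuy)\|_2$ and $\|\text{E}_{D_2}(xuy)\|_2$ say nothing about $\|\text{E}_{D_1}\text{E}_{D_2}(xuy)\|_2$. Also, a uniform lower bound over $\mathcal U(Q)$ with a single finite set characterizes $\prec$, not $\prec^s$.

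What is missing is the free product structure. Your argument for (3) never uses it, and it is the reason (1) and (2) are proved first. The paper argues by contradiction:
\begin{enumerate}
\item Suppose $Qq'\nprec_{M\overline{\otimes}N}B\overline{\otimes}N_0$ for some nonzero projection $q'\in Q'\cap q(M\overline{\otimes}N)q$.
\item Apply Lemma \ref{elementary_facts}(2) to $Qq'\prec M_1\overline{\otimes}N_0$. This gives projections $q_0\in Q$, $r\in M_1\overline{\otimes}N_0$, a $*$-homomorphism $\theta:q_0Qq_0q'\rightarrow r(M_1\overline{\otimes}N_0)r$ and a nonzero partial isometry $v$ with $\theta(x)v=vx$. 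Moreover, $P=\theta(q_0Qq_0q')$ satisfies $P\nprec_{M_1\overline{\otimes}N_0}B\overline{\otimes}N_0$.
\item Part (2) upgrades this to $P\nprec_{M\overline{\otimes}N}B\overline{\otimes}N$.
\item Then \cite[Theorem 1.1]{IPP05} yields $P\nprec_{M\overline{\otimes}N}M_2\overline{\otimes}N$.
\item Since $v(q_0Qq_0q')v^*=Pvv^*$, this contradicts $Q\prec^s_{M\overline{\otimes}N}M_2\overline{\otimes}N$.
\end{enumerate}
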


\begin{proof}
(1) Since $P\nprec_{M\overline{\otimes}N_0}B\overline{\otimes}N_0$, there exists a net $(u_n)\subset\mathcal U(P)$ satisfying $\|\text{E}_{B\overline{\otimes}N_0}(au_nb)\|_2\rightarrow 0$, for every $a,b\in M\overline{\otimes}N_0$. We claim that $\|\text{E}_{B\overline{\otimes}N}(au_nb)\|_2\rightarrow 0$, for every $a,b\in M\overline{\otimes}N$. To prove the claim, we may assume that $a=a_1\otimes b_1$ and $b=a_2\otimes b_2$, for some $a_1,a_2\in M$ and $b_1,b_2\in N$. Since $(a_1\otimes 1)u_n(a_2\otimes 1)\in M\overline{\otimes}N_0$ and $\text{E}_{B\overline{\otimes}N}\circ\text{E}_{M\overline{\otimes}N_0}=\text{E}_{B\overline{\otimes}N_0}$, for every $n$, we have \begin{align*}\text{E}_{B\overline{\otimes}N}(au_nb)&=(1\otimes b_1)\text{E}_{B\overline{\otimes} N}((a_1\otimes 1)u_n(a_2\otimes 1))(1\otimes b_2)\\&=(1\otimes b_1)\text{E}_{B\overline{\otimes} N_0}((a_1\otimes 1)u_n(a_2\otimes 1))(1\otimes b_2).\end{align*}
Since $a_1\otimes 1,a_2\otimes 1\in M\overline{\otimes}1\subset M\overline{\otimes}N_0$, we have that $\|\text{E}_{B\overline{\otimes} N_0}((a_1\otimes 1)u_n(a_2\otimes 1))\|_2\rightarrow 0$. Hence, $\|\text{E}_{B\overline{\otimes}N}(au_nb)\|_2\rightarrow 0$, which proves the claim and the assertion.

(2) By applying part (1), we get that $P\nprec_{M_1\overline{\otimes}N}B\overline{\otimes}N$. Since we have that $P\subset M_1\overline{\otimes}N$ and $M\overline{\otimes}N=(M_1\overline{\otimes}N)*_{B\overline{\otimes}N}(M_2\overline{\otimes}N)$, \cite[Theorem 1.1]{IPP05} implies that $P\nprec_{M\overline{\otimes}N}B\overline{\otimes}N$.

(3) Suppose by contradiction that the conclusion fails. Thus, there exists a nonzero projection $q'\in Q'\cap q(M\overline{\otimes}N)q$ such that $Qq'\nprec_{M\overline{\otimes}N}B\overline{\otimes}N_0$. Since $Qq'\prec_{M\overline{\otimes}N}M_1\overline{\otimes}N_0$, by Lemma \ref{elementary_facts}(2) we can find projections $q_0\in Q$, $r\in M_1\overline{\otimes}N_0$, a nonzero partial isometry $v\in r(M\overline{\otimes}N)q_0q'$ and a $*$-homomorphism $\theta:q_0Qq_0q'\rightarrow r(M_1\overline{\otimes}N_0)r$ such that $\theta(x)v=vx$, for every $x\in q_0Qq_0q'$ and that $P:=\theta(q_0Qq_0q')$ satisfies $P\nprec_{M_1\overline{\otimes}N_0}B\overline{\otimes}N_0$. 

By (2) we deduce that $P\not\prec_{M\overline{\otimes}N}B\overline{\otimes}N$. Since $P\subset M_1\overline{\otimes}N$ and  $M\overline{\otimes}N=(M_1\overline{\otimes}N)*_{B\overline{\otimes}N}(M_2\overline{\otimes}N)$, \cite[Theorem 1.1]{IPP05} implies that $P\nprec_{M\overline{\otimes}N}M_2\overline{\otimes}N$. Since $v(q_0Qq_0q')v^*=Pvv^*$, we derive that $v(q_0Qq_0q')v^*\nprec_{M\overline{\otimes}N}M_2\overline{\otimes}N$, which contradicts the assumption that $Q\prec_{M\overline{\otimes}N}^sM_2\overline{\otimes}N$.
\end{proof}

\begin{lemma}\label{intertwiners} Let $(M,\tau)$ and $(\mathcal M,\tau)$ be tracial von Neumann algebras and $\alpha,\beta:M\rightarrow\mathcal M$ be embeddings.
Assume that $\beta(M)'\cap\mathcal M$ is a factor. Then there exists a projection $z\in\mathcal Z(\alpha(M)'\cap\mathcal M)$ such that the following two conditions hold:
\begin{enumerate}
\item There is a partial isometry $v\in\mathcal M$ satisfying $vv^*=z$ and $\alpha(x)v=v\beta(x)$, for every $x\in M$.
\item If $w\in\mathcal M$ satisfies $\alpha(x)w=w\beta(x)$, for every $x\in M$, then $(1-z)w=0$.
\end{enumerate}

\begin{proof}
Let $\{p_i\}_{i\in I}\subset\alpha(M)'\cap\mathcal M$ be a maximal family of pairwise orthogonal projections such that for every $i\in I$, there exists a partial isometry $v_i\in\mathcal M$ satisfying $v_iv_i^*=p_i$ and $\alpha(x)v_i=v_i\beta(x)$, for every $x\in M$. We will show that $z=\sum_{i\in I}p_i\in \alpha(M)'\cap\mathcal M$ has the desired properties.

First, assuming by contradiction that $z$ does not belong to the center of $\alpha(M)'\cap\mathcal M$, we can find $i\in I$ and a non-zero partial isometry $\zeta\in\alpha(M)'\cap\mathcal M$ such that $\zeta^*\zeta\leq p_i$ and $\zeta\zeta^*\leq 1-z$. Then $\xi=\zeta v_i$ is a nonzero partial isometry satisfying $\xi\xi^*\leq 1-z$ and $\alpha(x)\xi=\xi\beta(x)$, for every $x\in M$. Hence, the family of projections $\{p_i\}_{i\in I}\cup\{\xi\xi^*\}$ contradicts the maximality of the family $\{p_i\}_{i\in I}$.

Second, let $w\in\mathcal M$ such that $\alpha(x)w=w\beta(x)$, for every $x\in M$. Then $\eta=(1-z)w$ satisfies $\alpha(x)\eta=\eta \beta(x)$, for every $x\in M$. If $\delta\in\mathcal M$ is the partial isometry in the polar decomposition of $\eta$, then $\delta\delta^*\leq 1-z$ and  $\alpha(x)\delta=\delta \beta(x)$, for every $x\in M$. Thus, if $\eta\not=0$, then $\delta\not=0$, which would again contradict the maximality of the family $\{p_i\}_{i\in I}$. This proves (2).

Finally, since $v_i^*v_i\in\beta(M)'\cap \mathcal M$, for every $i\in I$,  $\sum_{i\in I}\tau(v_i^*v_i)=\sum_{i\in I}\tau(p_i)\leq 1$, and $\beta(M)'\cap \mathcal M$ is a factor, we can find unitaries $\{u_i\}_{i\in I}\subset\beta(M)'\cap \mathcal M$ such that the projections $\{u_i^*(v_i^*v_i)u_i\}_{i\in I}$ are pairwise orthogonal. Then $v=\sum_{i\in I}v_iu_i\in\mathcal M$ is a partial isometry which satisfies (1).
\end{proof}

\end{lemma}

\subsection{Property (T) for II\texorpdfstring{$_1$}{1} factors and local rigidity} A II$_1$ factor $M$ has {\it property (T)}  \cite{Con80b,CJ85} if one of the following two equivalent conditions holds:
\begin{enumerate}
\item There exist a finite set $S\subset M$ and $C>0$ such that if $\mathcal H$ is an $M$-bimodule which admits a vector $\xi\in\mathcal H$ satisfying $\max_{y\in S}\|y\xi-\xi y\|<C\|\xi\|$, then $\mathcal H^M\not=\{0\}$. 
\item For any sequence $\Phi_n:M\rightarrow M$ of unital, trace preserving, completely positive maps such that $\|\Phi_n(x)-x\|_2\rightarrow 0$, for every $x\in M$, we have that $\sup_{x\in (M)_1}\|\Phi_n(x)-x\|_2\rightarrow 0$.
\end{enumerate}
We refer the reader to \cite{Pop01} for the more general version of relative property (T) for inclusions. A  II$_1$ factor $M$ has property (T) if and only if the inclusion $M\subset M$ has relative property (T).

 If $G$ is a countable discrete ICC group with Kazhdan's property (T) (e.g., $G=\text{PSL}_n(\mathbb Z)$, for $n\geq 3$), then  $\text{L}(G)$ is a II$_1$ factor with property (T). Property (T) was first used in the context of von Neumann algebras by Connes in \cite{Con80b} to prove that   $\text{L}(G)$ has countable fundamental and outer automorphism groups, for every ICC property (T) group $G$. In \cite{Pop86}, Popa used property (T)  together with a separability argument to obtain further rigidity results. The combination of property (T) and separability arguments has since led to many rigidity statements up to countable classes (see \cite[Section 4]{Pop07}). Here we record the following well-known fact underpinning such arguments:
 \begin{definition}
 Let $(M,\tau)$ and $(\mathcal M,\tau)$ be tracial von Neumann algebras. 
  We denote by $\mathcal E(M,\mathcal M)$ the set of all embeddings $\theta:M\rightarrow p\mathcal Mp$, for some projection $p\in\mathcal M$, endowed with the metric $$\text{$\text{d}(\theta,\theta'):=\sup_{x\in (M)_1}\|\theta(x)-\theta'(x)\|_{2}$.}$$ 

 \end{definition}
 
 \begin{lemma}\label{sep_argument}
Let $M$ be a II$_1$ factor with property (T) and $(\mathcal M,\tau)$ be a separable tracial von Neumann algebra.
 Then $(\mathcal E(M,\mathcal M),\emph{d})$ is a separable metric space.
 \end{lemma}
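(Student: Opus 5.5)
The plan is to prove separability via property (T) in the form of condition (2), a uniform-convergence statement, combined with separability of a space in which all embeddings sit in the pointwise $\|\cdot\|_2$ topology.

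\textbf{Step 1 (pointwise separability).} Since $M$ is a separable II$_1$ factor, I fix a $\|\cdot\|_2$-dense sequence $(x_k)_{k\geq 1}$ in $(M)_1$. Consider the map
\[
\iota:\mathcal E(M,\mathcal M)\rightarrow \prod_{k\geq1}(\mathcal M)_1 ,\qquad \iota(\theta)=(\theta(x_k))_k .
\]
Here the product is given the metric $\rho((a_k),(b_k))=\sum_k 2^{-k}\|a_k-b_k\|_2$. The space $(\mathcal M)_1$ with $\|\cdot\|_2$ is separable because $\mathcal M$ is separable, so the product is separable and metrizable. Any subset of a separable metric space is separable, hence $\iota(\mathcal E(M,\mathcal M))$ is separable.

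It therefore suffices to show the following: if $\theta_n,\theta\in\mathcal E(M,\mathcal M)$ and $\|\theta_n(x_k)-\theta(x_k)\|_2\to0$ for every $k$, then $\mathrm d(\theta_n,\theta)\to0$. Then a countable set whose image is $\rho$-dense is $\mathrm d$-dense, by a standard sequence argument.

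A preliminary remark on the metric: trace preservation in the definition of embedding for corners should be read as $\tau(\theta(x))=\tau(\theta(1))\tau(x)$. For $*$-homomorphisms from the factor $M$ this is automatic by uniqueness of the trace. Consequently $\|\theta(x)-\theta(y)\|_2^2=\tau(\theta(1))\|x-y\|_2^2\le\|x-y\|_2^2$. Density of $(x_k)$ then gives $\|\theta_n(x)-\theta(x)\|_2\to0$ for every $x\in M$.

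\textbf{Step 2 (property (T) upgrade).} Put $p_n=\theta_n(1)$ and $p=\theta(1)$, so $\|p_n-p\|_2\to0$. I form the $M$-bimodule $\mathcal H_n=\overline{p_n\mathrm L^2(\mathcal M)p}$ with $x\cdot\xi\cdot y=\theta_n(x)\xi\theta(y)$, and the vector $\xi_n=p_np\in\mathcal H_n$. Then
\[
\|x\xi_n-\xi_nx\|_2=\|\theta_n(x)p-p_n\theta(x)\|_2\le\|\theta_n(x)-\theta(x)\|_2+2\|x\|\,\|p_n-p\|_2\to0,
\]
while $\|\xi_n\|_2\to\|p\|_2$.

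I would rather apply condition (2) directly. Define $\Phi_n:M\to M$ by
\[
\Phi_n(x)=\tau(p)^{-1}\theta^{-1}\!\bigl(\mathrm E_{\theta(M)}(p\,\theta_n(x)\,p)\bigr),
\]
where $\mathrm E_{\theta(M)}$ is the trace preserving expectation onto $\theta(M)\subset p\mathcal Mp$. This $\Phi_n$ is completely positive, and $\Phi_n(x)\to x$ in $\|\cdot\|_2$ pointwise. It is not exactly unital or trace preserving, since $\Phi_n(1)=\tau(p)^{-1}\theta^{-1}(\mathrm E(pp_np))\to1$. I correct this by the standard normalization: compose with $a_n^{-1/2}(\cdot)a_n^{-1/2}$ after cutting $a_n=\Phi_n(1)$ by a spectral projection. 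This yields unital trace preserving maps up to an error tending to $0$ uniformly on $(M)_1$.

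Property (T) in form (2) then gives
\[
\sup_{x\in(M)_1}\|\Phi_n(x)-x\|_2\to0 .
\]

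\textbf{Step 3 (from $\Phi_n$ back to $\theta_n$).} For $x=u$ unitary, I use
\[
\|p\theta_n(u)p-\theta(u)\|_2^2 = \|p\theta_n(u)p\|_2^2+\|p\|_2^2-2\Re\tau\bigl(\theta(u)^*p\theta_n(u)p\bigr).
\]
The cross term equals $\tau(p)\,\tau\bigl(u^*\Phi_n(u)\bigr)\tau(p)$ up to normalization, and is uniformly close to $\tau(p)$. The first term is at most $\tau(p)+o(1)$ uniformly, since $\|p-p_n\|_2\to0$. Hence $\|p\theta_n(u)p-\theta(u)\|_2\to0$ uniformly over $u\in\mathcal U(M)$. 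Adding the uniformly small errors $\|(1-p)\theta_n(u)\|_2\le\|p_n-p\|_2+o(1)$ gives uniform convergence on unitaries. Every $x\in(M)_1$ is an average of four unitaries, so uniform convergence extends to $(M)_1$, i.e. $\mathrm d(\theta_n,\theta)\to0$.

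\textbf{Main obstacle.} The delicate point is the normalization in Step 2: making $\Phi_n$ exactly unital and trace preserving while keeping the error uniform on $(M)_1$. I would handle it with the perturbation estimate $\|a_n-1\|_2\to0$, which implies the corrected maps differ from $\Phi_n$ by $O(\|a_n-1\|_2^{1/2})$ uniformly. Alternatively, I would use the bimodule form (1) with the vectors $\xi_n$: it yields $M$-central vectors that are uniformly close, hence the uniform estimate via the usual Connes–Jones argument.
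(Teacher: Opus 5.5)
\textbf{Verdict.} Your argument is correct in outline, but it takes a different route from the paper. There are three small repairs, listed at the end.

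\textbf{How the paper argues.} The paper uses the quantitative Connes--Jones form of property (T). There is a finite set $F\ni 1$ and a constant $C$ with $\sup_{x\in(M)_1}\|x\xi-\xi x\|\le C\max_{y\in F}\|y\xi-\xi y\|$ for every $M$-bimodule. The paper applies this once, to $\overline{p\,\text{L}^2(\mathcal M)p'}$ with $x\cdot\xi\cdot y=\theta(x)\xi\theta'(y)$ and $\xi=pp'$. This gives the Lipschitz bound $\text{d}(\theta,\theta')\le(3C+2)\max_{y\in F}\|\theta(y)-\theta'(y)\|_2$. Separability then follows from that of $\text{L}^2(\mathcal M)^F$. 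This route needs no separability of $M$, no completely positive maps and no normalization. Your fallback suggestion, using form (1) with the vectors $\xi_n$, is essentially this argument.

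\textbf{How yours differs.} You use only the sequential characterization (2). You show that pointwise convergence on a dense sequence forces $\text{d}$-convergence, by passing through the maps $\theta^{-1}\circ\mathrm E_{\theta(M)}(p\,\theta_n(\cdot)\,p)$. This avoids the quantitative constant, but it has two costs.
\begin{enumerate}
\item You need a dense sequence in $(M)_1$. The statement does not assume $M$ separable, so you must invoke that property (T) II$_1$ factors are separable [Pop86]; the paper cites this elsewhere.
\item You need to normalize completely positive maps, and yield only sequential equivalence of the two topologies rather than an explicit estimate.
\end{enumerate}

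\textbf{Repairs.}
\begin{enumerate}
\item Drop the factor $\tau(p)^{-1}$ in $\Phi_n$. With it, $\Phi_n(x)\to\tau(p)^{-1}x$, which contradicts your claim $\Phi_n(1)\to 1$. Without it, $\Phi_n(x)\to x$ pointwise, and the cross term is exactly $\tau(\theta(u)^*p\theta_n(u)p)=\tau(p)\,\tau(u^*\Phi_n(u))$.
\item Conjugating by $a_n^{-1/2}$ after a spectral cut-off gives unitality only on a corner, and never gives trace preservation. A direct fix works. For $x\geq 0$ one has $\tau\circ\Phi_n(x)\le\frac{\tau(p_n)}{\tau(p)}\tau(x)$. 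So $\phi_n=c_n\Phi_n$ with $c_n=\min\{1,\tau(p)/\tau(p_n)\}$ is subunital and subtracial, and $\|1-\phi_n(1)\|_2\to0$.
\begin{itemize}
\item Put $a_n=1-\phi_n(1)\ge0$ and $b_n=1-\phi_n^*(1)\ge0$, so that $\tau(a_n)=\tau(b_n)$.
\item When $\tau(a_n)>0$, set $\psi_n(x)=\phi_n(x)+\tau(a_n)^{-1}\tau(b_nx)\,a_n$. When $\tau(a_n)=0$, $\phi_n$ is already unital and trace preserving.
\item Then $\psi_n$ is completely positive, unital and trace preserving, with $\|\psi_n(x)-\phi_n(x)\|_2\le\|x\|\,\|a_n\|_2$ uniformly on $(M)_1$.
\end{itemize}
\item Elements of $(M)_1$ are combinations $\tfrac12(u_1+u_1^*)+\tfrac i2(u_2+u_2^*)$ of unitaries, not averages. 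This only costs a factor $2$.
\end{enumerate}

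With these changes, your Step 3 goes through as written.
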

 
 \begin{proof}

  Denote by $\mathcal E=\mathcal E(M,\mathcal M)$. Since $M$ has property (T), \cite[Proposition 1]{CJ85} implies the existence of a finite set $F\subset M$ and $C>0$ such that $1\in F$, $F\subset (M)_1$, and for every $M$-bimodule $\mathcal H$ we have 
 \begin{equation}\label{uniform}\text{$\sup_{x\in (M)_1}\|x\xi -\xi x\|\leq C\max_{y\in F}\|y\xi-\xi y\|$, for every $\xi\in \mathcal H$.}
 \end{equation}

 Let $\theta,\theta'\in\mathcal E$, put $p=\theta(1),p'=\theta'(1)\in\mathcal M$, and consider the Hilbert space $\mathcal H=\text{L}^2(pL^2(\mathcal M)p')$  with the $M$-bimodule structure given by $x\cdot \xi \cdot y=\theta(x)\xi\theta'(y)$. Applying \eqref{uniform} to $\xi=pp'\in\mathcal H$ we derive that
 \begin{equation}\label{gap}
 \text{$\sup_{x\in (M)_1}\|\theta(x)p'-p\theta'(x)\|_2\leq C\max_{y\in F}\|\theta(y)p'-p\theta'(y)\|_2$.}
 \end{equation}
 If $x\in (M)_1$ and $y\in (M)_1$, then $\|\theta(x)-\theta'(x)\|_2=\|\theta(x)p-p'\theta'(x)\|_2\leq \|\theta(x)p'-p\theta'(x)\|_2+2\|p-p'\|_2$ and $\|\theta(y)p'-p\theta'(y)\|_2\leq \|\theta(y)p-p'\theta'(y)\|_2+2\|p-p'\|_2= \|\theta(y)-\theta'(y)\|_2+2\|p-p'\|_2$. 
 Combining these two inequalities with \eqref{gap} and using that $1\in F$, we get that $$\text{d}(\theta,\theta')=\sup_{x\in (M)_1}\|\theta(x)-\theta'(x)\|_2\leq (3C+2)\max_{y\in F}\|\theta(y)-\theta'(y)\|_2.$$
 Since $\text{L}^2(\mathcal M)$ is separable and $F$ is finite, the last inequality implies that $(\mathcal E,\text{d})$ is separable.
 \end{proof}
 
 \begin{lemma}\label{dense_inter}
  Let $(M,\tau)$, $(\mathcal M,\tau)$ be tracial von Neumann algebras and $P\subset M$, $\mathcal P\subset \mathcal M$ be von Neumann subalgebras.
  Assume that there exists a dense subset $\mathcal E\subset\mathcal E(M,\mathcal M)$ such that $\theta(P)\nprec_{\mathcal M}\mathcal P$, for every $\theta\in\mathcal E$. 
  Then $\theta(P)\nprec_{\mathcal M}\mathcal P$, for every $\theta\in\mathcal E(M,\mathcal M)$. 
 \end{lemma}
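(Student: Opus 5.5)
The plan is to show that the set of embeddings $\theta\in\mathcal E(M,\mathcal M)$ with $\theta(P)\nprec_{\mathcal M}\mathcal P$ is closed in the metric $\text{d}$. Density of $\mathcal E$ then forces this set to be all of $\mathcal E(M,\mathcal M)$. Equivalently, I will prove that $\{\theta\mid \theta(P)\prec_{\mathcal M}\mathcal P\}$ is open.

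Fix $\theta\in\mathcal E(M,\mathcal M)$ with $\theta(P)\prec_{\mathcal M}\mathcal P$. Put $p=\theta(1)$, so $\theta(P)\subset p\mathcal Mp$. I will use the negation of condition (c) in Theorem \ref{corner}, in its standard quantitative form: there exist a finite set $F\subset\mathcal M$ and $\delta>0$ such that
$$\sum_{x,y\in F}\|\text{E}_{\mathcal P}(x\theta(u)y)\|_2^2\geq\delta,\quad\text{for every } u\in\mathcal U(P).$$
This form follows from (c) by the usual net argument: if no such $F,\delta$ existed, then indexing over pairs $(F,\delta)$ would produce a net $(u_n)\subset\mathcal U(P)$ with $\|\text{E}_{\mathcal P}(x\theta(u_n)y)\|_2\to 0$ for all $x,y$, first for $x,y$ in finite sets and then, by linearity, for all $x,y\in\mathcal M$. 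Let $C=\max_{x\in F}\|x\|$.

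Now let $\theta'\in\mathcal E(M,\mathcal M)$ with $\text{d}(\theta,\theta')<\varepsilon$. For $u\in\mathcal U(P)\subset (M)_1$ we have $\|\theta(u)-\theta'(u)\|_2<\varepsilon$. Since $\text{E}_{\mathcal P}$ is $\|\cdot\|_2$-contractive,
$$\|\text{E}_{\mathcal P}(x\theta'(u)y)-\text{E}_{\mathcal P}(x\theta(u)y)\|_2\leq C^2\varepsilon.$$
The functions $\|\text{E}_{\mathcal P}(x\theta(u)y)\|_2$ are bounded by $C^2$, so choosing $\varepsilon$ small relative to $\delta$, $C$ and $|F|$ gives
$$\sum_{x,y\in F}\|\text{E}_{\mathcal P}(x\theta'(u)y)\|_2^2\geq\delta/2,\quad\text{for every } u\in\mathcal U(P).$$
No net in $\mathcal U(\theta'(P))$ can then satisfy condition (c). Indeed, every unitary of $\theta'(P)$ has the form $\theta'(u)$ with $u\in\mathcal U(P)$, because $\theta'$ is injective and unital onto $\theta'(P)\subset\theta'(1)\mathcal M\theta'(1)$. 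Hence $\theta'(P)\prec_{\mathcal M}\mathcal P$, and the intertwining set is open.

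Finally, suppose some $\theta$ had $\theta(P)\prec_{\mathcal M}\mathcal P$. By density there is some $\theta'\in\mathcal E$ in the $\varepsilon$-ball around $\theta$, and the previous step gives $\theta'(P)\prec_{\mathcal M}\mathcal P$. This contradicts the hypothesis on $\mathcal E$.

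The only delicate point is bookkeeping. Theorem \ref{corner} is stated for $P\subset p_0Mp_0$, and here the relevant algebra is $\theta(P)\subset\theta(1)\mathcal M\theta(1)$ with $p_0=\theta(1)$. The corner $p_0$ changes with $\theta$, but condition (c) never refers to $p_0$ explicitly, so this causes no problem. The other ingredient is the passage from the net formulation of (c) to the uniform finite-set formulation, which is the standard argument sketched above.
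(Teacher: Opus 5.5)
Your proof is correct and is essentially the paper's argument in contrapositive form. The paper fixes $\theta$, a finite set $F\subset\mathcal M$ and $\varepsilon>0$, picks $\theta'\in\mathcal E$ with $\text{d}(\theta,\theta')<\varepsilon/(2C^2+1)$, and transfers a unitary $u\in\mathcal U(P)$ that makes $\|\text{E}_{\mathcal P}(x\theta'(u)y)\|_2$ small over to $\theta$ using the same estimate $\|\text{E}_{\mathcal P}(x(\theta(u)-\theta'(u))y)\|_2\leq C^2\,\text{d}(\theta,\theta')$ that you use to show the intertwining set is open; both arguments rest on the finite-set reformulation of condition (c) in Theorem \ref{corner}, which you justify correctly.
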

 \begin{proof}
 Let $\theta\in\mathcal E(M,\mathcal M)$, $\varepsilon>0$, and $F\subset\mathcal M$ be a finite set and put $C=\max_{x\in F}\|x\|$. Let $\theta'\in\mathcal E$ such that $\text{d}(\theta,\theta')<\frac{\varepsilon}{2C^2+1}$.
 Since $\theta'(P)\nprec_{\mathcal M}\mathcal P$, we can find $u\in\mathcal U(P)$ such that $\|\text{E}_{\mathcal P}(x\theta'(u)y)\|_2<\frac{\varepsilon}{2}$, for every $x,y\in F$. Since for every $x,y\in F$, we have 
 $$\|\text{E}_{\mathcal P}(x(\theta(u)-\theta'(u))y)\|_2\leq \|x\| \|y\|\|\theta(u)-\theta'(u)\|_2\leq C^2\frac{\varepsilon}{2C^2+1}\leq\frac{\varepsilon}{2},$$ we derive that $\|\text{E}_{\mathcal P}(x\theta(u)y)\|_2<\varepsilon$, for every $x,y\in F$. Theorem \ref{corner} implies that $\theta(P)\nprec_{\mathcal M}\mathcal P$.
 \end{proof}

\subsection{Property (T) subfactors of tensor products of amalgamated free products} 
In this subsection, we establish the following result that will be needed later on.
\begin{theorem}\label{amalgamated} Let $K$ be a set. For every $k\in K$, let  $(M_{k,1},\tau_{k,1})$ and $(M_{k,2},\tau_{k,2})$ be tracial von Neumann algebras with a common von Neumann subalgebra $B_k$ such that ${\tau_{k,1}}_{|B_k}={\tau_{k,2}}_{|B_k}$, and put $M_k=M_{k,1}*_{B_k}M_{k,2}$. Define $\mathcal M=\overline{\otimes}_{k\in K}M_k$ and let $P\subset p\mathcal Mp$ be a II$_1$ subfactor with property (T). 

Then there exist a finite set $F\subset K$ and $i_k\in\{1,2\}$, for every $k\in F$, such that $P\prec_{\mathcal M}\overline{\otimes}_{k\in F} M_{k,i_k}$.
\end{theorem}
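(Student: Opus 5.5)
Our plan is to combine two standard ingredients: the spectral-gap/deformation argument of \cite{IPP05} for property (T) subalgebras of amalgamated free products, and the observation that a property (T) algebra cannot spread over infinitely many tensor coordinates. Throughout, $\mathcal M_F=\overline{\otimes}_{k\in F}M_k$ denotes the finite tensor subproducts.

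We proceed in three steps.

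\textbf{Step 1: reduce to finitely many coordinates.} For finite $F\subset K$, let $\mathcal M_F$ be as above. Consider the conditional expectations $\text{E}_{\mathcal M_F}$. These are unital trace preserving completely positive maps, and $\|\text{E}_{\mathcal M_F}(x)-x\|_2\to 0$ along the net of finite sets $F$, for every $x\in\mathcal M$. Restricting to $pPp$ (after cutting by $p$) and applying property (T) of $P$ in its relative form for $P\subset p\mathcal Mp$, as in \cite{Pop01}, we obtain a finite $F$ with
\[
\|\text{E}_{\mathcal M_F}(u)-u\|_2\le \tfrac12\|p\|_2 \quad\text{for all } u\in\mathcal U(P).
\]
By Theorem \ref{corner}(c) this gives $P\prec_{\mathcal M}\mathcal M_F$. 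Equivalently, we are using that $\mathcal M_F'\cap\mathcal M$-type deformations $\otimes_{k\notin F}\Phi_t$ (for instance tensor-product depolarizing maps converging to the identity) converge uniformly on $P$. Then, using Theorem \ref{corner}(a), we transfer to a $*$-homomorphism $\varphi:qPq\to r\mathcal M_Fr$. Its image $Q=\varphi(qPq)$ is again a property (T) II$_1$ factor, since corners and images of (T) factors have (T).

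\textbf{Step 2: induct on $|F|$.} Write $\mathcal M_F=M_{k_0}\overline{\otimes}\mathcal M_{F\setminus\{k_0\}}$. This splits as an amalgamated free product
\[
(M_{k_0,1}\overline{\otimes}N)*_{B_{k_0}\overline{\otimes}N}(M_{k_0,2}\overline{\otimes}N), \qquad N=\mathcal M_{F\setminus\{k_0\}}.
\]
By \cite[Theorem 5.1]{IPP05}, the malleable deformation of the amalgamated free product converges uniformly on the property (T) algebra $Q$. Hence $Q\prec M_{k_0,i}\overline{\otimes}N$ for some $i$. Composing intertwiners as in the proof of Lemma \ref{elementary_facts}(2), we get a property (T) factor $Q_1\subset M_{k_0,i}\overline{\otimes}N$ with $P\prec_{\mathcal M}\mathcal M$-subalgebra $M_{k_0,i}\overline{\otimes}N$. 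The composition of intertwiners stays nonzero after shrinking projections appropriately.

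We then repeat the argument for the next coordinate $k_1$, now inside the algebra
\[
M_{k_0,i}\overline{\otimes}M_{k_1}\overline{\otimes}\mathcal M_{F\setminus\{k_0,k_1\}},
\]
which is again an amalgamated free product over $B_{k_1}$ tensored with the remaining algebra. After $|F|$ steps we obtain $P\prec_{\mathcal M}\overline{\otimes}_{k\in F}M_{k,i_k}$. The relation $\prec$ is transitive in this form, because at each stage we pass to the image algebra through an honest $*$-homomorphism, which preserves property (T).

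\textbf{Main obstacle.} The delicate step is Step 2. There we must verify that the IPP05 theorem applies when the amalgamated free product is tensored with an arbitrary tracial algebra $N$. This is the statement that property (T) subalgebras of $(A_1\overline{\otimes}N)*_{B\overline{\otimes}N}(A_2\overline{\otimes}N)$ intertwine into one of the two sides. I would first invoke the general version of \cite[Theorem 5.1]{IPP05}, which is stated for arbitrary amalgamated free products. It applies because
\[
M_k\overline{\otimes}N\cong(M_{k,1}\overline{\otimes}N)*_{B_k\overline{\otimes}N}(M_{k,2}\overline{\otimes}N)
\]
is again an amalgamated free product. A further point to check is that the nonzero intertwiners compose to a nonzero one. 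For this I would use the support-projection trick from Lemma \ref{elementary_facts}(2).
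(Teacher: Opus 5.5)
Your proposal is correct and follows essentially the paper's route. You apply property (T) to the conditional expectations onto finite subproducts to get $P\prec_{\mathcal M}\overline{\otimes}_{k\in F}M_k$, then apply Theorem \ref{IPP} one coordinate at a time via $M_k\overline{\otimes}N\cong(M_{k,1}\overline{\otimes}N)*_{B_k\overline{\otimes}N}(M_{k,2}\overline{\otimes}N)$, and the support-projection normalization of Lemma \ref{elementary_facts}(2) keeps the composed intertwiners nonzero.

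The paper packages the same argument as a proof by contradiction. It does the finite case by induction on $|K|$, absorbing $M_{1,i_1}$ into the second amalgamated free product, and then reduces the infinite case to the finite one.

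Two small corrections. The transitivity you use comes from that support normalization, not from property (T) being preserved under $*$-homomorphisms. And the general form of the IPP theorem you invoke is the one established in \cite{Hou07}.
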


Theorem \ref{amalgamated} is an immediate consequence of the following result.
 
\begin{theorem}[\cite{IPP05}]\label{IPP}
Let $(M_1,\tau_1)$ and $(M_2,\tau_2)$ be tracial von Neumann algebras with a common von Neumann subalgebra $B$ such that ${\tau_1}_{|B}={\tau_2}_{|B}$. Denote $M=M_1*_BM_2$ and let $P\subset pMp$ be a von Neumann subalgebra with the relative property (T).
Then $P\prec_M M_1$ or $P\prec_M M_2$.
\end{theorem}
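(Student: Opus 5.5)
We follow the deformation/rigidity strategy of \cite{IPP05}. Throughout, let $x_1,\dots,x_n$ denote elements of $M_{i_1}\ominus B,\dots,M_{i_n}\ominus B$ with $i_k\neq i_{k+1}$ (reduced words).

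\emph{Step 1: the free malleable deformation.} Let $\tilde M=M*_B(B\overline{\otimes}\text{L}(\mathbb F_2))$, and let $u_1,u_2\in\text{L}(\mathbb F_2)$ be the canonical Haar unitaries. Since $u_i$ commutes with $B$, writing $u_i^t=\exp(t\log u_i)$, the maps $x\mapsto u_i^txu_i^{-t}$ on $M_i$ agree on $B$. By the universal property of amalgamated free products (note that $\tilde M=\tilde M_1*_B\tilde M_2$ with $\tilde M_i=M_i*_B(B\overline{\otimes}\text{L}(\mathbb Z))$), they glue to a one-parameter group of trace preserving automorphisms $\theta_t$ of $\tilde M$. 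Moreover:
\begin{itemize}
\item $\|\theta_t(x)-x\|_2\to 0$ as $t\to0$, for every $x\in\tilde M$;
\item $\theta_1(x_1\cdots x_n)=u_{i_1}x_1u_{i_1}^*\cdots u_{i_n}x_nu_{i_n}^*$;
\item there is a period two automorphism $\beta$ with $\beta_{|M}=\text{Id}$, $\beta(u_i)=u_i^*$ and $\beta\theta_t\beta=\theta_{-t}$.
\end{itemize}

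\emph{Step 2: uniform convergence and an intertwiner.} The maps $\text{E}_M\circ\theta_t:M\rightarrow M$ are trace preserving, unital and completely positive, and they converge pointwise to the identity. Hence relative property (T) of $P\subset pMp$ gives
$$\sup_{x\in(P)_1}\|\theta_t(x)-x\|_2\to 0.$$
Fix $t=2^{-m}$ with $\sup_{u\in\mathcal U(P)}\|\theta_t(u)-u\|_2<\|p\|_2/2$. Taking the minimal norm element of the closed convex hull of $\{\theta_t(u)u^*\}_{u\in\mathcal U(P)}$ yields a nonzero $w\in\tilde M$ with $\theta_t(x)w=wx$ for all $x\in P$.

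We then apply Popa's transversality trick, using $\beta$: the element $\beta(w^*)\theta_t(w)$ intertwines $P$ with $\theta_{2t}(P)$. One checks via $\|\theta_t(y)-\text{E}_M\theta_t(y)\|_2$ estimates that this element stays nonzero. Iterating $m$ times produces $v\neq0$ in $\tilde M$ with
$$\theta_1(x)v=vx,\qquad x\in P.$$

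\emph{Step 3 (main obstacle): from $v$ to intertwining.} Assume by contradiction that $P\nprec_M M_1$ and $P\nprec_M M_2$; we show that $v$ must vanish. Decompose $\text{L}^2(\tilde M)\ominus\text{L}^2(B)$ into the span of reduced words in $M_1\ominus B$, $M_2\ominus B$, $u_i^k$ ($k\neq0$) and $B$. As a $\theta_1(M)$-$M$-bimodule, $\text{L}^2(\tilde M)$ then splits into pieces indexed by the positions where a letter $u_i^{\pm}$ separates the $\theta_1(M)$-part from the $M$-part. Each such piece is contained in a bimodule of the form
$$\text{L}^2(M)\otimes_{M_i}\mathcal K\otimes_{M_j}\text{L}^2(M),$$
induced from $M_i$ or $M_j$, because $\theta_1(M_i)=u_iM_iu_i^*$ and the $u_i$ commute only with $B$.

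Consequently, a nonzero $P$-central vector in any piece would give, via Theorem \ref{corner}(d), $P\prec_M M_1$ or $P\prec_M M_2$. The contradiction forces $v=0$, which is absurd. The delicate part is exactly this combinatorial word-length analysis, showing that each summand is left $P$-weakly mixing under the non-intertwining hypotheses. I would first carry it out for $\text{E}_M(a\theta_1(u_n)b)$ with $a,b$ reduced words and $u_n$ a net witnessing non-intertwining, and then pass to $v$ by density.
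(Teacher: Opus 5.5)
\textbf{Overall.} The paper does not prove this statement; it cites \cite[Theorem 4.3]{IPP05} and \cite[Section 5]{Hou07}. Your outline follows the free malleable deformation route used there. As written, though, it has a genuine gap in the doubling step of Step 2, while Step 3 is less of an obstacle than you expect.

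\textbf{The gap in Step 2.} The doubled element is misidentified. From $\theta_t(x)w=wx$ and $\beta_{|M}=\text{Id}$ you get $\theta_t(\beta(w^*))\,x=\theta_t(x)\,\theta_t(\beta(w^*))$ for $x\in P$. So the element intertwining $\theta_{2t}$ with the identity is $\theta_t(w\beta(w^*))$, not $\beta(w^*)\theta_t(w)$.

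More seriously, its non-vanishing cannot come from norm estimates. Keeping the intertwiner close to $p$ costs a factor $2$ per doubling, so you would need $2^m\sup_{u\in\mathcal U(P)}\|\theta_{2^{-m}}(u)-u\|_2$ to be small. Relative property (T) gives no such rate. Moreover, the intertwiner at $t=1$ is typically orthogonal to $M$: for $P\subset pM_1p$ with $P\nprec_MB$, every such intertwiner lies in $u_1M$, and $\text{E}_M(u_1M)=0$.

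Since $\|w\beta(w^*)\|_2^2=\tau(w^*w\,\beta(w^*w))$, what you need is control of $w^*w\in P'\cap p\tilde Mp$. The standard fix is a harmless reduction. If $P\prec_MB$ you are done, as $B\subset M_1$. Otherwise $P'\cap p\tilde Mp\subset pMp$, for the following reason:
\begin{itemize}
\item $\text{L}^2(\tilde M)\ominus\text{L}^2(M)$ is spanned by reduced words in $M*_BD$, with $D=B\overline{\otimes}\text{L}(\mathbb F_2)$, that contain a letter from $D\ominus B$.
\item On such words, $\langle u_n\xi u_n^*,\eta\rangle\to0$ along any net $(u_n)\subset\mathcal U(P)$ with $\|\text{E}_B(xu_ny)\|_2\to0$.
\end{itemize}
Now take $w$ to be a partial isometry at each stage. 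Then $w^*w\in pMp$ is $\beta$-invariant and $\|w\beta(w^*)\|_2^2=\tau(w^*w)>0$. This is what lets the iteration reach $t=1$.

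\textbf{Step 3 is routine.} The step you flag as the main obstacle works along the lines you suggest, and the unproved bimodule decomposition is not needed.

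Since $\theta_1$ fixes $D$ pointwise, $\tilde M=\theta_1(M)*_BD$. In this decomposition each reduced word $x_1\cdots x_k$ of $M\ominus B$ becomes $u_{i_1}^*\theta_1(x_1)(u_{i_1}u_{i_2}^*)\theta_1(x_2)\cdots\theta_1(x_k)u_{i_k}$. It starts and ends with a $D$-letter, and every $\theta_1(M)$-letter lies in some $\theta_1(M_i)\ominus B$.

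Write reduced words as $a=a'x$ and $b=yb'$, with $x,y\in\theta_1(M)$, $a'$ in $B$ or ending with a $D$-letter, and $b'$ in $B$ or starting with a $D$-letter. The part of $x\theta_1(u)y$ of length at least $2$ in $\theta_1(M_1)*_B\theta_1(M_2)$ only produces words orthogonal to $\text{L}^2(M)$. Hence $\|\text{E}_M(a\theta_1(u)b)\|_2\leq\|a'\|\,\|b'\|\,\|\phi(\theta_1^{-1}(x)u\theta_1^{-1}(y))\|_2$, where $\phi=\text{E}_{M_1}+\text{E}_{M_2}-\text{E}_B$.

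Take a single net $(u_n)\subset\mathcal U(P)$ witnessing both $P\nprec_MM_1$ and $P\nprec_MM_2$. Along it this bound tends to $0$, and by density $\|\text{E}_M(a\theta_1(u_n)b)\|_2\to0$ for all $a,b\in\tilde M$. Finally, $\|v\|_2^2=\tau(\text{E}_M(v^*\theta_1(u_n)v)u_n^*)\to0$ forces $v=0$, which is the desired contradiction.
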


Theorem \ref{IPP} is proved in \cite[Theorem 4.3]{IPP05} under an additional assumption, and in \cite[Section 5]{Hou07} in full generality.

\begin{proof}[\bf Proof of Theorem \ref{amalgamated}] We first 
prove the conclusion when $K$ is finite by using induction on $|K|$. If $|K|=1$, the conclusion follows from Theorem \ref{IPP}. Assume that the conclusion holds if $|K|=n$, for some fixed $n\in\mathbb N$. Suppose that $|K|=n+1$ and by contradiction that  $P\nprec_{\mathcal M}\overline{\otimes}_{k\in K}M_{k,i_k}$, for any $(i_k)_{k\in K}\subset \{1,2\}^K$. After relabelling, we may write $K=\{1,2,\dots,n+1\}$.
Let $\mathcal N=\overline{\otimes}_{k=2}^{n+1}M_k$. 

Then $\mathcal M=M_1\overline{\otimes}\mathcal N=(M_{1,1}\overline{\otimes}\mathcal N)*_{B_{1}\overline{\otimes}{\mathcal N}}(M_{1,2}\overline{\otimes}\mathcal N)$.
By Theorem \ref{IPP} we can find $i_1\in\{1,2\}$ such that $P\prec_{\mathcal M}M_{1,i_1}\overline{\otimes}\mathcal N$.
Note that $P\nprec_{\mathcal M}\overline{\otimes}_{k=1}^{n+1}M_{k,i_k}$, for any choice of $i_2,\dots,i_{n+1}\in\{1,2\}$.
Applying Lemma \ref{elementary_facts}(2) provides nonzero projections $r\in P,q\in M_{1,i_1}\overline{\otimes}\mathcal N$ and a $*$-homomorphism $\varphi:rPr\rightarrow q(M_{1,i_1}\overline{\otimes}\mathcal N)q$ such that  $\varphi(rPr)\nprec_{M_{1,i_1}\overline{\otimes}\mathcal N}\overline{\otimes}_{k=1}^{n+1}M_{k,i_k}$,  for any  $i_2,\dots,i_{n+1}\in\{1,2\}$.

Let $M_{n+2}=M_{1,i_1}\overline{\otimes}M_2$ and note that $M_{n+2}=M_{n+2,1}*_{B_{n+2}}M_{n+2,2}$,
 where $M_{n+2,1}=M_{1,i_1}\overline{\otimes}M_{2,1}$, $M_{n+2,2}=M_{1,i_1}\overline{\otimes}M_{2,2}$ and $B_{n+2}=M_{1,i_1}\overline{\otimes}B_2$. Then $M_{1,i_1}\overline{\otimes}\mathcal N$ can be identified with $\overline{\otimes}_{k=3}^{n+2}M_k$.  Since $S=\varphi(rPr)$ is a property (T) subfactor of $q(M_{1,i_1}\overline{\otimes}\mathcal N)q$ with  $S\nprec_{M_{1,i_1}\overline{\otimes}\mathcal N}\overline{\otimes}_{k=1}^{n+1}M_{k,i_k}=\overline{\otimes}_{k=3}^{n+2}M_{k,i_k}$,  for any  choice $i_2,\dots,i_{n+1},i_{n+2}\in\{1,2\}$ with $i_{n+2}=i_2$, this contradicts the induction assumption.

Finally, assume that $K$ is infinite.
Suppose by contradiction that $P\nprec_{\mathcal M}\overline{\otimes}_{k\in F} M_{k,i_k}$, for every finite subset $F\subset K$ and any $i_k\in\{1,2\}$, for every $k\in F$.
 Since $P$ has property (T), it is separable (see \cite[Theorem 4.4.1]{Pop86}). Thus, we can find a countable infinite set $K_0\subset K$ such that denoting $\mathcal M_0=\overline{\otimes}_{k\in K_0}M_k$ we have
 $p\in\mathcal M_0$ and $P\subset p\mathcal M_0p$. After relabelling, we may assume that $K_0=\mathbb N$. If $\text{E}_n$ denotes the conditional expectation from $\mathcal M_0$ onto $\overline{\otimes}_{k=1}^nM_k$, for $n\in\mathbb N$, then
 $\|\text{E}_n(x)-x\|_2\rightarrow 0$, for every $x\in\mathcal M_0$. Since $P$ has property (T), we can find $n\in\mathbb N$ such that $\|\text{E}_n(x)-x\|_2<\frac{\|p\|_2}{2}$, for every $x\in (P)_1$. Thus, $\|\text{E}_n(u)\|_2>\frac{\|p\|_2}{2}$, for every $u\in\mathcal U(P)$, which by Theorem \ref{corner} implies $P\prec_{\mathcal M_0}\overline{\otimes}_{k=1}^nM_k$. Note  also that $P\nprec_{\mathcal M_0}\overline{\otimes}_{k=1}^n M_{k,i_k}$, for every choice of $i_k\in\{1,2\}$, for every $1\leq k\leq n$. Lemma \ref{elementary_facts}(2) then gives  nonzero projections $s\in P$, $t\in \overline{\otimes}_{k=1}^nM_k$ and a $*$-homomorphism $\psi:sPs\rightarrow t(\overline{\otimes}_{k=1}^nM_k)t$ such that $\psi(sPs)\nprec_{\overline{\otimes}_{k=1}^nM_k}\overline{\otimes}_{k=1}^n M_{k,i_k}$, for every choice of $i_k\in\{1,2\}$, for every $1\leq k\leq n$.  Since $\psi(sPs)$ is a property (T) subfactor of $\overline{\otimes}_{k=1}^nM_k$, this contradicts the case when $K$ is finite treated above.
\end{proof}

\subsection{Cocycle actions and crossed products}\label{cocycle_section} We start by recalling the construction of crossed product von Neumann algebras associated to cocycle actions. Let $(B,\tau)$ be a tracial von Neumann algebra and $G$ be a discrete group. We denote by $\text{Aut}(B)$ the group of automorphisms of $B$, i.e., of trace preserving $*$-isomorphisms $\theta:B\rightarrow B$.
An {\it action} of $G$ on $B$ is a homomorphism $\sigma:G\rightarrow\text{Aut}(B)$. 
We denote by $B\rtimes_\sigma G$ the associated {\it crossed product von Neumann algebra}.

A {\it cocycle action} of $G$ on $B$ is a pair $(\sigma,c)$ consisting of maps $\sigma:G\rightarrow\text{Aut}(B)$ and $c:G\times G\rightarrow\mathcal U(B)$ such that  $\sigma_e=\text{Id}_B$ and for every $g,h,k\in G$ we have $$\text{$\sigma_g\sigma_h=\text{Ad}(c_{g,h})\sigma_{gh}$ \;\;\; and\;\;\;  $c_{g,h}c_{gh,k}=\sigma_g(c_{h,k})c_{g,hk}$.}$$
The map  $c$  is called the {\it $2$-cocycle} of the cocycle action $(\sigma,c)$, and is said to be {\it normalized} if $c_{g,e}=c_{e,g}=1$, for every $g\in G$.

Let  $(\sigma,c)$ be a cocycle action of $G$ on $B$, whose $2$-cocycle $c$ is normalized.
We denote by $B\rtimes_{\sigma,c}G$ the associated {\it cocycle crossed product von Neumann algebra} (see, e.g., \cite[Section 1.1]{Pop18a}). Let $\mathcal H=\text{L}^2(B)\otimes\ell^2(G)$, viewed as a left $B$-module and consider the induced embedding $B\subset \mathbb B(\mathcal H)$.
Then $B\rtimes_{\sigma,c}G\subset\mathbb B(\mathcal H)$ is generated by $B$ and unitary operators  $\{u_g\}_{g\in G}\subset\mathcal U(\mathcal H)$  given by $u_g(b\otimes \delta_h)=\sigma_g(b)c_{g,h}\otimes\delta_{gh}$. It is easy to check that for every $g,h\in G$ and $b\in B$ we have $$\text{$u_gbu_g^*=\sigma_g(b)$\;\;\; and \;\;\; $u_gu_h=c_{g,h}u_{gh}$. }$$
Moreover, $B\rtimes_{\sigma,c}G$ is a tracial von Neumann algebra with its trace given by $\widetilde\tau(x)=\langle x(1\otimes\delta_e),1\otimes \delta_e\rangle$. Then  $\widetilde\tau(bu_g)=\tau(b)\delta_{g,e}$, for every $b\in B$ and $g\in G$. Consequently,  $\text{L}^2(B\rtimes_{\sigma,c}G)=\bigoplus_{g\in G}\text{L}^2(B)u_g$ and every $x\in B\rtimes_{\sigma,c}G$ admits a Fourier decomposition $x=\sum_{g\in G}b_gu_g$, where the convergence holds in $\|\cdot\|_2$ and $b_g=\text{E}_B(xu_g^*)\in B$, for every $g\in G$.

Let $\sigma:G\rightarrow\text{Aut}(B)$ be an action. Let $c:G\times G\rightarrow\mathbb T$ be a normalized $\mathbb T$-valued $2$-cocycle, i.e., a map  satisfying $c_{g,h}c_{gh,k}=c_{g,hk}c_{h,k}$ and $c_{g,e}=c_{e,g}=1$, for every $g,h,k\in G$. If we view $\mathbb T=\mathcal U(\mathbb C)\subset\mathcal U(B)$, then $(\sigma,c)$ is a cocycle action. The cocycle crossed product $B\rtimes_{\sigma,c}G$ reduces to
\begin{itemize}
\item the usual crossed product $B\rtimes_\sigma G$ if  $c$ is trivial, i.e., $c_{g,h}=1$, for every $g,h\in G$, and
\item the twisted group von Neumann algebra $\text{L}_c(G)$ if $B=\mathbb C$.
\end{itemize}

If $M=B\rtimes_\sigma G$ is a crossed product, we have a {\it comultiplication} embedding $\Delta:M\rightarrow M\overline{\otimes}M$  given by $\Delta(bu_g)=bu_g\otimes u_g$, for every $b\in B$ and $g\in G$ \cite{PV09}. If $M=\text{L}_c(G)$ is a twisted group von Neumann algebra, then we have a {\it triple comultiplication} embedding $\Delta:M\rightarrow M\overline{\otimes}M\overline{\otimes}M^{\text{op}}$ given by $\Delta(u_g)=u_g\otimes u_g\otimes {(u_{g^{-1}})}^{\text{op}}$, for every $g\in G$ \cite{Ioa10}. 
Generalizing the last case, assume that $M=B\rtimes_{\sigma,c}G$, where $\sigma$ is an action of $G$ on $B$ and $c:G\times G\rightarrow\mathbb T$ is a normalized $2$-cocycle. Then we also have a triple comultiplication $\Delta: M\rightarrow M\overline{\otimes}M\overline{\otimes}M^{\text{op}}$ given by $$\text{$\Delta(bu_g)=bu_g\otimes u_g\otimes {(u_{g^{-1}})}^{\text{op}}$, \;\;\; for every $b\in B$ and $g\in G$}.$$
If $G$ is infinite and we put $\mathcal M=M\overline{\otimes}M\overline{\otimes}M^{\text{op}}$, then  $\Delta(M)\nprec_{\mathcal M}1\overline{\otimes}M\overline{\otimes}M^{\text{op}}$, $\Delta(M)\nprec_{\mathcal M}M\overline{\otimes}1\overline{\otimes}M^{\text{op}}$ and $\Delta(M)\nprec_{\mathcal M}M\overline{\otimes}M\overline{\otimes}1$.

More generally, we  show the existence of a triple comultiplication with an analogous property whenever the cocycle $c$ takes values in a finite dimensional subalgebra of $B$.

\begin{lemma}\label{cocyclefindim}
Let $(\sigma,c)$ be a cocycle action of an infinite countable group $G$ on a tracial von Neumann algebra $(B,\tau)$. 
Assume that the $2$-cocycle $c$ is normalized and there exists a finite dimensional von Neumann subalgebra $B_0\subset B$ such that $c(G\times G)\subset\mathcal U(B_0)$.  Denote $M=B\rtimes_{\sigma,c}G$.

Then there exists an embedding $\Delta:M\rightarrow p\mathcal Mp$, where $\mathcal M=\mathbb M_n(\mathbb C)\overline{\otimes}M\overline{\otimes}M\overline{\otimes}M^{\emph{op}}$, for some $n\in\mathbb N$ and projection $p\in\mathcal M$ such that if $S\in\{\mathbb M_n(\mathbb C)\overline{\otimes}1\overline{\otimes}M\overline{\otimes}M^{\emph{op}},
\mathbb M_n(\mathbb C)\overline{\otimes}M\overline{\otimes}1\overline{\otimes}M^{\emph{op}}, \mathbb M_n(\mathbb C)\overline{\otimes}M\overline{\otimes}M\overline{\otimes}1\}$, then $\Delta(M)\nprec_{\mathcal M}S$.
\end{lemma}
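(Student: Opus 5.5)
The plan is to imitate the scalar construction $\Delta(bu_g)=bu_g\otimes u_g\otimes(u_{g^{-1}})^{\text{op}}$ and to use $\mathbb M_n(\mathbb C)$ to absorb the leftover cocycle. I will use the unitaries $V_g=u_g\otimes(u_g^*)^{\text{op}}\in M\overline{\otimes}M^{\text{op}}$ rather than $u_{g^{-1}}$, so that no extra cocycle values $c_{g^{-1},g}$ appear.

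\textbf{Step 1 (the defect of the naive map).} Since $x^{\text{op}}y^{\text{op}}=(yx)^{\text{op}}$ and $u_gu_h=c_{g,h}u_{gh}$, a direct computation gives
$$V_gV_h=u_gu_h\otimes((u_gu_h)^*)^{\text{op}}=(c_{g,h}\otimes (c_{g,h}^*)^{\text{op}})V_{gh}.$$
Set $W_g=u_g\otimes V_g\in M\overline{\otimes}M\overline{\otimes}M^{\text{op}}$. Then $W_gW_h=c_{g,h}u_{gh}\otimes (c_{g,h}\otimes(c_{g,h}^*)^{\text{op}})V_{gh}$. The first leg reproduces the relation $u_gu_h=c_{g,h}u_{gh}$ exactly. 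The two other legs carry the defect $X_{g,h}=c_{g,h}\otimes(c_{g,h}^*)^{\text{op}}$, a unitary in the finite dimensional algebra $B_0\overline{\otimes}B_0^{\text{op}}$. For $b\in B$ I put $b\otimes 1\otimes 1$, so the relation $W_gbW_g^*=\sigma_g(b)\otimes 1\otimes 1$ holds.

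\textbf{Step 2 (absorbing the defect).} The point is that $B_0\overline{\otimes}B_0^{\text{op}}$ acts on $\text{L}^2(B_0)\cong\mathbb C^n$ by $a\otimes b^{\text{op}}\cdot\xi=a\xi b$. Under this action $X_{g,h}$ becomes $\xi\mapsto c_{g,h}\xi c_{g,h}^*$, which fixes the vector $\hat 1$. Let $n=\dim\text{L}^2(B_0)$ and $\pi:B_0\overline{\otimes}B_0^{\text{op}}\rightarrow\mathbb M_n(\mathbb C)$ be this representation. I would consider the ``diagonal'' element
$$T=\sum_{k}e_{k}\otimes \eta_k\in\mathbb M_{n,1}(\mathbb C)\overline{\otimes}(B_0\overline{\otimes}B_0^{\text{op}}),$$
built from an orthonormal basis of $\text{L}^2(B_0)$ and the identification $\text{L}^2(B_0)\otimes\text{L}^2(B_0)^*\cong B_0\overline{\otimes}B_0^{\text{op}}$. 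The aim is to choose $T$ so that it intertwines $\pi(X)\otimes 1$ with $X$, and hence satisfies $T^*(\pi(X_{g,h})\otimes 1)T=X_{g,h}$-type identities. The projection $p$ is then either the range projection of the associated partial isometry, or simply $e_{\hat 1}\otimes 1$ with $e_{\hat 1}$ the rank one projection onto $\mathbb C\hat 1$. The resulting map
$$\Delta(bu_g)=p\,(1_{\mathbb M_n}\otimes bu_g\otimes V_g')\,p,$$
where $V_g'$ is $V_g$ corrected by a unitary in $\mathbb M_n(\mathbb C)\overline{\otimes}B_0\overline{\otimes}B_0^{\text{op}}$, should then be multiplicative. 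I would verify this on the generators $b$ and $u_g$, using the cocycle identities $\sigma_g\sigma_h=\text{Ad}(c_{g,h})\sigma_{gh}$ and $c_{g,h}c_{gh,k}=\sigma_g(c_{h,k})c_{g,hk}$. I would also check that $\Delta$ is trace preserving after normalizing by $\tau(p)$, and that it is injective. Since $M$ is a II$_1$ factor only if we assume it, injectivity should instead be checked on Fourier coefficients via the trace formula.

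\textbf{Main obstacle.} The hard step is making the correction in Step 2 compatible with the automorphisms $\sigma_g$. The correcting unitary must be transported by $\text{Ad}(V_g)$, which acts by $\sigma_g\otimes\sigma_g^{\text{op}}$, and $\sigma_g$ need not leave $B_0$ invariant. What I would try first is to replace $B_0$ by the finite dimensional algebra generated by the $c_{g,h}$, which is what actually matters. I would then exploit that the element $\sum_{i,j}e_{ij}\otimes(e_{ji})^{\text{op}}$ (the trace-dual ``Casimir'' of $B_0$) is fixed by $\text{Ad}(u\otimes (u^*)^{\text{op}})$ for unitaries $u\in B_0$. I would check whether only this invariance, together with $c_{g,h}\in\mathcal U(B_0)$, is needed, rather than $\sigma_g$-invariance of $B_0$.

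\textbf{Step 3 (non-intertwining).} Given the embedding, the non-intertwining statements follow as in the scalar case. Note that $\Delta(u_g)$ has Fourier support in the coordinates $g$, $g$, $g$ of the three legs, up to the finite dimensional corrections. Hence $\|\text{E}_S(x\Delta(u_g)y)\|_2\rightarrow 0$ as $g\rightarrow\infty$, for each of the three choices of $S$ and all $x,y$. Indeed, each $S$ drops one leg, where the coefficient of $u_g$ tends to zero in $\|\cdot\|_2$ on finite sets. Since $G$ is infinite, Theorem \ref{corner}(c) then gives $\Delta(M)\nprec_{\mathcal M}S$.
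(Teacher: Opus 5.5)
Step 2 carries the whole content of the lemma, and it is not carried out. The correcting unitary in $V_g'$ is never specified, $T$ is described only by the property you want it to have, and $p$ is left as one of two options. Multiplicativity is only asserted, and a compression $x\mapsto pxp$ is multiplicative only if $p$ commutes with the range, which is exactly what has to be arranged. Your proposed way around the ``main obstacle'' also points the wrong way on both counts:
\begin{itemize}
\item $\sigma_g$-invariance \emph{is} needed, but it comes for free once $B_0$ is replaced by $C=c(G\times G)''$. The cocycle identity gives $\sigma_g(c_{h,k})=c_{g,h}c_{gh,k}c_{g,hk}^*\in C$, hence $\sigma_g(C)=C$. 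This is the first step of the paper's proof, and it is the missing idea.
\item Invariance of the Casimir under conjugation by $u\otimes(u^*)^{\text{op}}$ is not enough. If a projection $e$ merely commutes with $X_{g,h}$, then $X_{g,h}e$ is still a nontrivial unitary of the corner, so the defect survives. What you need is absorption, $X_{g,h}e=e$, which is what your observation that $X_{g,h}$ fixes $\hat 1\in\text{L}^2(B_0)$ really encodes.
\end{itemize}

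With these two points your idea closes without any amplification. Write $C=\bigoplus_k\mathbb M_{n_k}(\mathbb C)$ with matrix units $(e^k_{lm})$, and set $e=\sum_k n_k^{-1}\sum_{l,m}e^k_{lm}\otimes(e^k_{ml})^{\text{op}}\in C\overline{\otimes}C^{\text{op}}$. A direct matrix unit computation shows three things: $e$ is a projection; $(u\otimes(u^*)^{\text{op}})e=e$ for all $u\in\mathcal U(C)$; and $e$ does not depend on the choice of matrix units, so $(\alpha\otimes\alpha^{\text{op}})(e)=e$ for every automorphism $\alpha$ of $C$. Since $\text{Ad}(V_g)=\sigma_g\otimes\sigma_g^{\text{op}}$ on $B\overline{\otimes}B^{\text{op}}$ and $\sigma_g(C)=C$, $e$ commutes with every $V_g$. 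Hence $b\mapsto b\otimes e$ and $u_g\mapsto u_g\otimes V_ge$ satisfy the covariance and $c$-cocycle relations in the corner $(1\otimes e)(M\overline{\otimes}M\overline{\otimes}M^{\text{op}})(1\otimes e)$. The normalized trace of $bu_g\otimes V_ge$ is $\delta_{g,e}\tau(b)$, because $u_e=1$. This gives $\Delta(bu_g)=bu_g\otimes V_ge$ with $n=1$, and your Step 3 then applies as written. This route is shorter than the paper's. The paper instead passes to a finite index subgroup $H$ acting trivially on $\mathcal Z(C)$, and perturbs $u_g$ by unitaries of $C$ so that the new cocycle is central in $C$, hence scalar on each summand of $C'\cap(B\rtimes_{\sigma,c}H)$. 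It then uses the scalar triple comultiplication there and returns to $M$ through a finite index amplification, which is where $\mathbb M_n(\mathbb C)$ enters.
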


\begin{proof}
Let $C\subset B$ be the von Neumann subalgebra generated by $c(G\times G)$. Then $C\subset B_0$, hence $C$ is finite dimensional. Moreover, since $\sigma_g(c_{h,k})=c_{g,h}c_{gh,k}c_{g,hk}^*$, for every $g,h,k\in G$, we get that $C$, and hence $\mathcal Z(C)$, is $\sigma(G)$-invariant.  Since $\mathcal Z(C)$ is a finite dimensional abelian von Neumann algebra, we can find a finite index subgroup $H<G$ such that $\sigma(g)_{|\mathcal Z(C)}=\text{Id}_{\mathcal Z(C)}$, for every $g\in H$.

If $g\in H$, then $\sigma_g=\text{Ad}(u_g)$ induces an automorphism of the finite dimensional algebra $C$ which is the identity on its center, $\mathcal Z(C)$. Hence, we can find $w_g\in C$ such that $\text{Ad}(u_g)_{|C}=\text{Ad}(w_g)$. Denoting $v_g=w_g^*u_g\in C'\cap M$, we have that $d_{g,h}=v_gv_hv_{gh}^*\in C'\cap M$, for every $g,h\in H$. Moreover, since $d_{g,h}=w_g^*\sigma_g(w_h^*)c_{g,h}w_{gh}\in C$ we deduce that $d_{g,h}\in\mathcal Z(C)$, for every $g,h\in H$.

Put $N=B\rtimes_{\sigma_{|H},c_{|(H\times H)}}H$. Then $N=B\rtimes_{\rho,d}H$, where $\rho_g=\text{Ad}(v_g)\in\text{Aut}(B)$, for every $g\in H$. Define $P=C'\cap N$ and $Q=C'\cap B$.
Let $x\in P$ and write $x=\sum_{g\in H}b_gv_g$, where $(b_g)_{g\in H}\subset B$. 
Since $(v_g)_{g\in H}\subset P$, we get that $(b_g)_{g\in H}\subset Q$. Hence, $P=(Q\cup\{v_g\}_{g\in H})''$ and $P=Q\rtimes_{\rho,d}H$.
Write $\mathcal Z(C)=\oplus_{i=1}^m\mathbb Cp_i$, for a partition of unity $\{p_i\}_{i=1}^m$ in $\mathcal Z(C)$. Since $\mathcal Z(C)\subset\mathcal Z(P)$, we have $P=\oplus_{i=1}^mP_i$, where $P_i=Pp_i$. Moreover, since $\mathcal Z(C)\subset\mathcal Z(Q)$, if $1\leq i\leq m$, then $P_i=Qp_i\rtimes_{\rho_i,d_i}H$, where $\rho_i(g)=\text{Ad}(v_gp_i)$ and ${(d_i)}_{g,h}=d_{g,h}p_i$, for every $g,h\in H$. Since $p_i\in\mathcal Z(C)$ is a minimal projection and $d_{g,h}\in\mathcal Z(C)$, we get that ${(d_i)}_{g,h}\in\mathbb Tp_i$, for every $g,h\in H$.

For every $1\leq i\leq m$, we can thus define the triple comultiplication $\Delta_i:P_i\rightarrow P_i\overline{\otimes}P_i\overline{\otimes}P_i^{\text{op}}$ given by $\Delta_i(bv_gp_i)=bv_gp_i\otimes v_gp_i\otimes (v_{g^{-1}}p_i)^{\text{op}}$, for every $b\in (C'\cap B)p_i$ and $g\in H$.
Let $p=\sum_{i=1}^mp_i\otimes p_i\otimes p_i^{\text{op}}$ and define the embedding
 $\Delta_0:P\rightarrow p(P\overline{\otimes}P\overline{\otimes}P^{\text{op}})p$ given by letting for every $b\in Q$ and $g\in H$, \begin{equation}\label{triple}\Delta_0(bv_g)=\sum_{i=1}^m\Delta_i(bv_gp_i)=\sum_{i=1}^m(bv_gp_i\otimes v_gp_i\otimes (v_{g^{-1}}p_i)^{\text{op}})=(bv_g\otimes v_g\otimes (v_{g^{-1}})^{\text{op}})p.\end{equation} 
  Since the inclusion $P\subset M$ has finite index, we can find an embedding $\pi:M\rightarrow q(\mathbb M_n(\mathbb C)\overline{\otimes}P)q$,  for some $n\in\mathbb N$ and a projection $q\in\mathbb M_n(\mathbb C)\overline{\otimes}P$, such that the inclusion $\pi(M)\subset q(\mathbb M_n(\mathbb C)\overline{\otimes}P)q$ has finite index. In particular, since $H$ is infinite, we have that $\pi(M)\nprec_{\mathbb M_n(\mathbb C)\overline{\otimes}P}\mathbb M_n(\mathbb C)\overline{\otimes}B$.

Let $\mathcal P=\mathbb M_n(\mathbb C)\overline{\otimes}P\overline{\otimes}P\overline{\otimes}P^{\text{op}}$, $\mathcal M=\mathbb M_n(\mathbb C)\overline{\otimes}M\overline{\otimes}M\overline{\otimes}M^{\text{op}}$ and define $$\zeta=\text{Id}_{\mathbb M_n(\mathbb C)}\otimes\Delta_0:\mathbb M_n(\mathbb C)\overline{\otimes}P\rightarrow (1\otimes p)\mathcal P(1\otimes p).$$ 
Put $\mathcal F=\{\mathbb M_n(\mathbb C)\overline{\otimes}1\overline{\otimes}M\overline{\otimes}M^{\text{op}},
\mathbb M_n(\mathbb C)\overline{\otimes}M\overline{\otimes}1\overline{\otimes}M^{\text{op}}, \mathbb M_n(\mathbb C)\overline{\otimes}M\overline{\otimes}M\overline{\otimes}1\}$.
Equation \eqref{triple} implies that if $(x_i)_{i\in I}\subset (\mathbb M_n(\mathbb C)\overline{\otimes}P)_1$ is a net satisfying $\|\text{E}_{\mathbb M_n(\mathbb C)\overline{\otimes}B}(ax_ib)\|_2\rightarrow 0$, for every $a,b\in \mathbb M_n(\mathbb C)\overline{\otimes}P$, then $\|\text{E}_S(c\zeta(x_i)d)\|_2\rightarrow 0$, for every $c,d\in\mathcal M$ and $S\in\mathcal F$.

Finally, let $\Delta=\zeta\circ\pi:M\rightarrow r\mathcal Pr$, where $r=\zeta(q)$. Since $\pi(M)\nprec_{\mathbb M_n(\mathbb C)\overline{\otimes}P}\mathbb M_n(\mathbb C)\overline{\otimes}B$, the last paragraph implies that $\Delta(M)\nprec_{\mathcal M}S$, for every $S\in\mathcal F$.
This finishes the proof.
 \end{proof}

\subsection{Factorial crossed products} In this subsection, we prove two results concerning factorial crossed product von Neumann algebras. First, we prove that if a II$_1$ factor $M$ only has trivial embeddings into its amplifications, then one can describe all of its decompositions of the form $M=B\rtimes_\sigma G$, with $G$ a finite group.

\begin{lemma}\label{finite_cross}
Let $M$ be a II$_1$ factor satisfying that if $\theta: M\rightarrow M^t$ is an embedding, for some $t>0$, then $t\in\mathbb N$ and there is a unitary $u\in M^t=\mathbb M_t(\mathbb C)\overline{\otimes}M$ such that $\theta(x)=u(1\otimes x)u^*$, for every $x\in M$. Let $M=B\rtimes_\sigma G$ be a crossed product decomposition of $M$, where $G$ is a finite group.

Then there exist a subgroup $H<G$, a normalized $2$-cocycle $c\in Z^2(H,\mathbb T)$ 
such that $\emph{L}_c(H)$ is a factor and the following holds. 
Let $\{u_h\}_{h\in H}\subset\emph{L}_c(H)$ be the canonical generating unitaries and consider the action $H\curvearrowright^\rho\emph{L}_c(H)$ given by $\rho_h=\emph{Ad}(u_h)$, for every $h\in H$. Let $C=\emph{L}_c(H)\overline{\otimes}M^{|G|^{-1}}$ and consider the action $H\curvearrowright^{\rho\otimes\emph{Id}}C$.

Then the action $G\curvearrowright^\sigma B$ is conjugate to the action $G\curvearrowright^\delta\ell^\infty(G/H)\overline{\otimes}C$
obtained by inducing $\rho\otimes\emph{Id}$. 

\end{lemma}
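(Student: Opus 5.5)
The idea is to run the argument through the basic construction, identify it with a matrix amplification of $M$, and let the rigidity hypothesis pin down how $B$ sits inside. Since $G$ is finite, $M\subset \langle M,e_B\rangle$ has index $|G|$, and $\langle M,e_B\rangle\cong B\rtimes_\sigma G\rtimes_{\hat\sigma}\widehat G$. Concretely, it equals $B\overline{\otimes}\mathbb B(\ell^2(G))$ with $B$ acting diagonally through $\sigma$. A cleaner model is
$$\langle M,e_B\rangle=\ell^\infty(G)\rtimes_\lambda G\ \overline{\otimes}\ B\cong\mathbb M_{|G|}(\mathbb C)\overline{\otimes}B$$
after untwisting. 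In particular $B\cong e_B\langle M,e_B\rangle e_B$. Since $M$ is a factor and $\langle M,e_B\rangle = M^{|G|}$, the algebra $B$ is isomorphic to a corner of $\mathbb M_{|G|}(\mathbb C)\overline{\otimes}B$, which is $\ell^\infty(G)\overline{\otimes}B$ sitting diagonally.

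The key step is to apply the hypothesis to the inclusion $\theta:M\hookrightarrow\langle M,e_B\rangle\cong M^{|G|}$, where we must first choose an identification $\langle M,e_B\rangle\cong M^{|G|}$ (this uses only factoriality and index $|G|$). The hypothesis gives a unitary $u$ with $\theta(x)=u(1\otimes x)u^*$ in $\mathbb M_{|G|}(\mathbb C)\overline{\otimes}M$. Consequently, $M'\cap\langle M,e_B\rangle\cong\mathbb M_{|G|}(\mathbb C)$. On the other hand, this relative commutant contains $\ell^\infty(G)$, which equals $\mathrm{span}\{e_B^g:=u_ge_Bu_g^*\}$, and is generated by it together with $B'\cap M$-type data. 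So $\mathcal A:=M'\cap\langle M,e_B\rangle\cong\mathbb M_{|G|}(\mathbb C)$ contains the maximal abelian-type subalgebra $D=\ell^\infty(G)$. The dual coaction, or equivalently the $G$-action $\mathrm{Ad}(u_g)$ on $\langle M,e_B\rangle$, fixes $M$ and hence restricts to an action of $G$ on $\mathcal A$ by automorphisms that translates $D$. Every automorphism of $\mathbb M_{|G|}(\mathbb C)$ is inner, so this gives a projective representation $g\mapsto w_g\in\mathcal U(\mathcal A)$ with $\mathrm{Ad}(w_g)=\mathrm{Ad}(u_g)|_{\mathcal A}$ and $w_gw_h=\overline{c(g,h)}w_{gh}$. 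Set $H=\mathrm{Stab}_G(e_B)$ as far as this is relevant, and take $c$ the resulting cocycle.

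With this in place, one reads off $B$. Compressing by $e_B$, we get $B=e_B\langle M,e_B\rangle e_B$ and $e_B\mathcal A e_B=B'\cap (e_B\langle M,e_B\rangle e_B)\cdot$, so $\mathcal Z$-type data, $\mathcal A$, and $M'\cap\langle M,e_B\rangle$ all decompose as $\mathcal A\overline{\otimes}(\mathcal A'\cap\langle M,e_B\rangle)$. Here $\mathcal A'\cap\langle M,e_B\rangle\cong M^{1/|G|}$, since $\theta(M)=u(1\otimes M)u^*$ and $\mathcal A=u(\mathbb M_{|G|}\otimes1)u^*$. Thus $\langle M,e_B\rangle=\mathcal A\overline{\otimes}M^{1/|G|}$-type splitting holds with $M$ inside a diagonal copy. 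The projection $e_B$ lies in $\mathcal A$, and hence $B=e_B\mathcal Ae_B\overline{\otimes}\,(\text{commutant of }\mathcal A)$.

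The final step is to decompose $e_B\mathcal A e_B$, which equals $\mathcal A\cap B'$-part, under the $G$-action. The $G$-orbit structure of the minimal central projections of $e_B\mathcal Ae_B$ produces the induction from $H$ to $G$. On the stabilized block, $H$ acts by $\mathrm{Ad}(w_h)$, and the $w_h$ generate a twisted group algebra $\mathrm L_c(H)$ that fills that block; factoriality of $\mathrm L_c(H)$ follows from the block being a full matrix algebra. Dimension counting gives $\dim(\text{block})=|H|$, so the block is exactly $\mathrm L_c(H)$ with $\rho=\mathrm{Ad}(u_h)$, and the factor $M^{|G|^{-1}}$ comes out with trivial action. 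The main obstacle is making this middle identification rigorous: showing that $\mathcal A'\cap\langle M,e_B\rangle$ is fixed by the $G$-action and that $e_B$ genuinely lives in $\mathcal A$. I would try first to derive this from $B'\cap\langle M,e_B\rangle\supset\mathcal A$ together with the unitary conjugacy $\theta=\mathrm{Ad}(u)\circ(1\otimes\cdot)$.
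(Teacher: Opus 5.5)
\textbf{There is a genuine gap.} The setup in the basic construction rests on two false identifications, and both fail in the lemma's own model cases.

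First, $\langle M,e_B\rangle$ is not $M^{|G|}$ in general, and it is not even a factor. You have $\langle M,e_B\rangle=(JBJ)'$ on $\text{L}^2(M)$, so its center is $J\mathcal Z(B)J$. What is true is $\langle M,e_B\rangle\cong B\overline{\otimes}\mathbb M_{|G|}(\mathbb C)$. This is an amplification of $B$, not of $M$. Factoriality together with index $|G|$ does not make a finite-index overfactor of $M$ an amplification of $M$.

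Concretely:
\begin{itemize}
\item For $H=\{e\}$ one has $B=\ell^\infty(G)\overline{\otimes}M^{1/|G|}$ and $\langle M,e_B\rangle\cong\ell^\infty(G)\overline{\otimes}M$.
\item For $H=G$ one has $B=\text{L}_c(G)\overline{\otimes}M^{1/|G|}$, a factor, and $\langle M,e_B\rangle\cong M^{\sqrt{|G|}}$. This is consistent with the index of $1\otimes M\subset\mathbb M_t(\mathbb C)\overline{\otimes}M$ being $t^2$.
\end{itemize}
So the hypothesis cannot be applied to $M\hookrightarrow\langle M,e_B\rangle$ as you do. Your conclusion $M'\cap\langle M,e_B\rangle\cong\mathbb M_{|G|}(\mathbb C)$ is false in both cases: there it is $\ell^\infty(G)$, respectively $\mathbb M_{\sqrt{|G|}}(\mathbb C)$.

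Second, you conflate $M'\cap\langle M,e_B\rangle$ with $B'\cap\langle M,e_B\rangle$.
\begin{itemize}
\item In fact $M'\cap\langle M,e_B\rangle=J(B'\cap M)J$.
\item The projections $u_ge_Bu_g^*$, and $e_B$ itself, commute with $B$ but not with $M$, since $u_h(u_ge_Bu_g^*)u_h^*=u_{hg}e_Bu_{hg}^*$.
\item $\mathrm{Ad}(u_g)$ is implemented by a unitary of $M$, so it acts trivially on $M'\cap\langle M,e_B\rangle$. It cannot ``translate $D$'', and the $w_g$ you extract are scalars carrying no cocycle.
\item $\mathrm{Stab}_G(e_B)$ is always trivial. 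That is wrong whenever $B$ is a factor, where $H=G$.
\end{itemize}
The ``main obstacle'' you flag, $e_B\in\mathcal A$, is therefore not an obstacle but a false statement. The splitting $B=e_B\mathcal Ae_B\overline{\otimes}(\cdots)$ and the dimension count have nothing to stand on.

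The missing ideas, which are how the paper proceeds:
\begin{itemize}
\item[(i)] The hypothesis forces every finite-index subfactor $N\subset M$ to be of the form $N\overline{\otimes}1\subset N\overline{\otimes}\mathbb M_n(\mathbb C)$. Use the downward basic construction $P\subset N$ with $M\cong\langle N,e_P\rangle$, so that $P\cong M^s$. The hypothesis applied to the resulting embedding $M\hookrightarrow M^{1/s}$ gives $M=P\overline{\otimes}(P'\cap M)$ with $P'\cap M$ a finite-dimensional factor.
\item[(ii)] Reduce to $B$ a factor. $G$ acts ergodically on $\mathcal Z(B)$, which is therefore finite-dimensional. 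If $p$ is a minimal central projection with stabilizer $H$, then $pMp\cong Bp\rtimes H$ inherits the hypothesis, and $\sigma$ is induced from the restricted $H$-action.
\item[(iii)] For $B$ a factor, (i) gives $M=B\overline{\otimes}(B'\cap M)$. This forces every $\sigma_g$ to be inner, so $u_g=w_gz_g$ with $w_g\in\mathcal U(B)$ and $z_g\in B'\cap M$. This yields a cocycle with $w_gw_h=c_{g,h}w_{gh}$ and $z_gz_h=\overline{c_{g,h}}z_{gh}$. Using $\tau(z_g)=0$ for $g\neq e$, one gets $B'\cap M=\mathrm{span}\{z_g\}\cong\text{L}_{\overline c}(G)$. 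Hence $\text{L}_c(G)$ is a factor, $B=\text{L}_c(G)\overline{\otimes}D$ with $\sigma_g=\mathrm{Ad}(w_g)\otimes\mathrm{Id}$, and trace counting gives $D\cong M^{1/|G|}$.
\end{itemize}
The object that carries the information is $B'\cap M$, not $M'\cap\langle M,e_B\rangle$ viewed as a $G$-algebra.
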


\begin{proof}
We  claim that if $N\subset M$ is a finite index subfactor, then $(N\subset M)\subset (N\overline{\otimes}1\subset N\overline{\otimes}\mathbb M_n(\mathbb C))$, for some $n\in\mathbb N$.
Indeed, the  downward basic construction (see \cite[Corollary 3.1.9]{Jon83}) provides a finite index subfactor $P\subset N$ such that $M\cong \langle N,e_P\rangle$. Then $P\cong M^s$, for some $s>0$, which induces an embedding $M^s\cong P\hookrightarrow M$. The assumption on $M$ implies that $S=P'\cap M$ is a finite dimensional factor and $M=P\overline{\otimes}S$. Hence,  $P\overline{\otimes}1\subset N\subset P\overline{\otimes}S$, which implies that 
$N=P\overline{\otimes}T$, where $T\subset S$ is a finite dimensional subfactor. Since $T\subset S$ are finite dimensional factors, we have  $S=T\overline{\otimes}U$, for a subfactor $U\subset S$.
Hence, we have $N=P\overline{\otimes}T\overline{\otimes}1$ and $M=P\overline{\otimes}T\overline{\otimes}U$. Therefore, $(N\subset M)\cong (N\overline{\otimes}1\subset N\overline{\otimes}U)$, which proves the claim since $U\cong\mathbb M_n(\mathbb C)$, for some $n\in\mathbb N$.

We first prove the conclusion under the assumption that $B$ is a factor.
In this case, the above claim implies that $B'\cap M$ is a finite dimensional factor and $M=B\overline{\otimes}(B'\cap M)$.
Let $H$ be the subgroup of $g\in G$ such that $\sigma_g\in\text{Aut}(B)$ is inner. Since $B$ is a factor, it is easy to see that $B'\cap M\subset B\rtimes_{\sigma|H}H$. Hence, $M=B\overline{\otimes}(B'\cap M)\subset B\rtimes_{\sigma|H}H$, which forces that $H=G$.

For $g\in G$, let $w_g\in\mathcal U(B)$ such that $\sigma_g=\text{Ad}(w_g)$. We take $w_e=1$. Then $\text{Ad}(w_g)=\sigma_g=\text{Ad}(u_g)$, therefore $z_g:=w_g^*u_g\in B'\cap M$ and so $u_g=w_g\otimes z_g$, for every $g\in G$.
In particular, if $g,h\in G$,  then since $u_gu_h=u_{gh}$, we can find $c_{g,h}\in\mathbb T$ such that $w_gw_h=c_{g,h}w_{gh}$ and $z_gz_h=\overline{c_{g,h}}z_{gh}$. Then $c\in Z^2(G,\mathbb T)$ is a normalized $2$-cocycle.

Next, we claim that $B'\cap M=\text{sp}(\{z_g\}_{g\in G})$. Indeed, let $x\in B'\cap M$ and write $x=\sum_{g\in G}x_gu_g$, where $(x_g)_{g\in G}\subset B$. Then for every $g\in G$, we have $x_g\sigma_g(b)=bx_g$, for every $b\in B$. Since $\sigma_g=\text{Ad}(w_g)$ and $B$ is a factor, we get that $x_g\in\mathbb Cw_g^*$ and thus $x_gu_g\in\mathbb Cw_g^*u_g=\mathbb Cz_g$, for every $g\in G$. This proves that $x\in \text{sp}(\{z_g\}_{g\in G})$, which gives  the claim. 

If $g\in G\setminus\{e\}$, then since $u_g=w_g\otimes z_g$, we get $0=\text{E}_B(u_g)=\tau(z_g)w_g$ and hence $\tau(z_g)=0$. 
This  proves that $\text{sp}(\{z_g\}_{g\in G})$ is canonically isomorphic to $\text{L}_{\overline{c}}(G)$ and so $\text{L}_{\overline{c}}(G)\cong B'\cap M$. In particular, $\text{L}_{\overline{c}}(G)$ is a factor and thus so is  $\text{L}_c(G)\cong \text{L}_{\overline{c}}(G)^{\text{op}}$. Since  $w_gw_h={c_{g,h}}w_{gh}$, for every $g,h\in G$, the map $G\ni g\mapsto w_g\in B$ extends to a $*$-homomorphism $\text{L}_c(G)\to B$. Since $\text{L}_c(G)$ is a factor, this $*$-homomorphism is injective, thus $\text{sp}(\{w_g\}_{g\in G})$ is a copy of $\text{L}_c(G)$.
Since $B$ is a II$_1$ factor and $\text{L}_c(G)$ is a finite dimensional factor, we can decompose $B=\text{L}_c(G)\overline{\otimes}D$, where $D$ is a II$_1$ factor.

Finally, consider the action $G\curvearrowright^\rho\text{L}_c(G)$ given by $\rho_g=\text{Ad}(w_g)$. Since $u_g=w_g\otimes z_g$ and $z_g\in B'\cap M$, we get that $\sigma_g=\text{Ad}(w_g)=\rho_g\otimes\text{Id}_D$, for every $g\in G$. 
Since $\text{L}_c(G)$ is a factor, $|G|=\text{dim}(\text{L}_c(G))$ is a perfect square.
Since  $M=B\overline{\otimes}(B'\cap M)$ and $B'\cap M\cong\text{L}_{\overline{c}}(G)$, we have that $B\cong M^{\sqrt{|G|^{-1}}}$.
Similarly, since $B=\text{L}_c(G)\overline{\otimes}D$, we get $D\cong B^{\sqrt{|G|^{-1}}}$. By combining these facts, we derive that $D\cong M^{|G|^{-1}}$.
This shows that the conclusion holds in the case $B$ is a factor with $H=G$.

Second, for general $B$, write $\mathcal Z(B)=\bigoplus_{i\in I}\mathbb Cp_i$ for a partition of unity $\{p_i\}_{i\in I}$ in $\mathcal Z(B)$.
Since $M=B\rtimes_\sigma G$ is a factor, $G$ acts ergodically on $\mathcal Z(B)$. Fixing $i_0\in I$, we have $\{p_i\}_{i\in I}=\{\sigma_g(p_{i_0})\}_{g\in G}$.
Let $H=\{g\in G\mid \sigma_g(p_{i_0})=p_{i_0}\}$, $B_0=Bp_{i_0}$ and consider the action $H\curvearrowright^\zeta B_0$ given by $\zeta_h=(\sigma_h)_{|B_0}$, for every $h\in H$.
Then we can identify $B=\ell^\infty(G/H)\overline{\otimes}B_0$ in a way that identifies $\sigma$ with the induced action $\text{Ind}_H^G(\zeta)$.
Let $M_0=B_0\rtimes_\zeta H$. 
Since $M_0\cong p_{i_0}Mp_{i_0}$, we get that $M_0$ is a factor which satisfies the same property as $M$. Since $B_0$ is a factor, the case treated above shows that we can write $B_0=\text{L}_c(H)\overline{\otimes}D$, where $c\in Z^2(H,\mathbb T)$ is a $2$-cocycle such that $\text{L}_c(H)$ is a factor, $D\cong M_0^{|H|^{-1}}$ and $\zeta_h=\rho_h\otimes\text{Id}_D$, where $\rho_h=\text{Ad}(u_h)$, for every $h\in H$. Since $\tau(p_{i_0})=[G:H]^{-1}$, we get that $M_0=p_{i_0}Mp_{i_0}\cong M^{[G:H]^{-1}}$ and thus $D\cong M_0^{|H|^{-1}}\cong M^{|G|^{-1}}$.
This finishes the proof.
\end{proof}

The following fact is likely known, but since we could not find a reference, we include a short proof for the reader's convenience. It shows that if a continuous crossed product $B\rtimes_\sigma G$ is a II$_1$ factor, then the locally compact acting group $G$ is necessarily discrete.

\begin{lemma}\label{non-discrete}
Let $G\curvearrowright^\sigma B$ be a continuous action of a locally compact group $G$ on a von Neumann algebra $B$ such that the crossed product  algebra $M=B\rtimes_\sigma G$ is a II$_1$ factor.
Then $G$ is discrete.
\end{lemma}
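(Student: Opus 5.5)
We argue by contradiction and assume that $G$ is not discrete. The plan has three steps: use the finite trace to transfer a faithful normal state onto the dual side of the crossed product, then compare two descriptions of the Haar weight on a neighbourhood of the identity.

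\textbf{Step 1: the dual weight.} Takesaki duality gives $M\rtimes_{\hat\sigma}\widehat G\cong B\overline{\otimes}\mathbb B(\mathrm{L}^2(G))$, but this identification is awkward to exploit directly. Instead we work inside $M$ itself. The algebra $M=B\rtimes_\sigma G$ contains the group von Neumann algebra $\mathrm{L}(G)=\{\lambda_g\}_{g\in G}''$ as a unital von Neumann subalgebra. Restricting the normal faithful tracial state $\tau$ of $M$ to $\mathrm{L}(G)$ gives a normal faithful tracial state on $\mathrm{L}(G)$.

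\textbf{Step 2: trace on $\mathrm{L}(G)$ forces an open compact normal subgroup.} For a locally compact group $G$, we use the known facts that a normal faithful tracial state on $\mathrm{L}(G)$ forces $\mathrm{L}(G)$ to be finite. Such $G$ are the SIN-type groups; in particular $G$ has a compact open normal subgroup $K$ when $G$ is totally disconnected, but this is not true in general. We therefore argue more directly.

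\textbf{Step 3: the contradiction via $\sigma$-finite weights.} Let $f\in C_c(G)$ be positive with $f\geq 0$ supported in a small neighbourhood $U$ of $e$. Consider $\lambda(f)=\int f(g)\lambda_g\,dg\in\mathrm{L}(G)\subset M$. The key point is that $M$ carries the dual weight $\tilde\varphi$ associated with any faithful normal semifinite weight $\varphi$ on $B$. It satisfies $\tilde\varphi(\lambda(f)^*x\lambda(f))$-type formulas, and its Connes cocycle relative to $\tau$ computes $(D\tilde\varphi:D\tau)$. Because $\tau$ is a trace, $\sigma^{\tilde\varphi}_t=\mathrm{Ad}(h^{it})$ for the Radon–Nikodym derivative $h$ affiliated with $M$. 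On the other hand, $\sigma^{\tilde\varphi}_t$ restricted to $\mathrm{L}(G)$ is $\mathrm{Ad}(\delta_G^{it})$-twisted by the modular function and acts trivially on $\lambda_g$ up to the factor $\Delta_G(g)^{it}$.

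Concretely, we would use the simpler route via $\mathrm{E}$. There is a canonical faithful normal semifinite operator valued weight $T:M^+\to \widehat{B^+}$, $T(x)=\int_G \hat\sigma$-type averaging. Let $\tilde\varphi=\varphi\circ T$ with $\varphi=\tau_B$-weight. For $x=b\lambda(f)$ we have the Plancherel-type identity
\[
\tilde\varphi(\lambda(f)^*b^*b\lambda(f))=\varphi(b^*b)\,\|f\|_{\mathrm{L}^2(G)}^2 .
\]
We then use factoriality of $M$ and uniqueness of the trace up to scalar. The weight $\tilde\varphi$ is semifinite and its centralizer contains $B$. From $\sigma^{\tilde\varphi}_t(\lambda_g)=\Delta_G(g)^{it}\lambda_g$, we would show that $\tilde\varphi$ is tracial: the modular function is trivial because $\mathrm{L}(G)$ is finite, so $G$ is unimodular. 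By factoriality, $\tilde\varphi=c\,\tau$ for some $c\in(0,\infty]$, and semifiniteness forces $c<\infty$. Then $\tilde\varphi(1)<\infty$. Taking $b=1$ and letting $f$ run over an approximate identity $f_n$ with $\|f_n\|_1=1$ and supports shrinking to $e$, we get $\|\lambda(f_n)\|\leq 1$. Hence
\[
c\,\tau(\lambda(f_n)^*\lambda(f_n))\leq c ,
\]
while $\tilde\varphi(\lambda(f_n)^*\lambda(f_n))=\varphi(1)\|f_n\|_2^2$. If $\varphi$ is chosen with $\varphi(1)$ finite and nonzero (possible after cutting by a projection of $B$ if needed), then $\|f_n\|_2^2\to\infty$ when $G$ is non-discrete, because $\|f_n\|_2^2\geq \|f_n\|_1^2/\mu(\mathrm{supp} f_n)$ and $\mu(\mathrm{supp}f_n)\to0$. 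This contradicts the bound $\leq c$. If instead $G$ is discrete, Haar measure of $\{e\}$ is positive and no contradiction arises.

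\textbf{Main obstacle.} The hard step is justifying that the dual weight $\tilde\varphi$ is a finite multiple of $\tau$. This requires a genuinely normal semifinite tracial $\varphi$ on $B$ with $\varphi(1)<\infty$, and $B$ need not be finite. The first thing to try is to cut $M$ by a $\sigma$-invariant projection or to pass to $pMp$ with $p\in B$ a $\varphi$-finite projection. Failing that, we would invoke Haagerup's characterization that the dual weight is a trace iff $\varphi$ is a $\sigma$-invariant trace and $G$ is unimodular.
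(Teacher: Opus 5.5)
Your Step 3 strategy is sound and genuinely different from the paper's. As written, though, it leaves the decisive step open, and the ``main obstacle'' you name is not a real one.

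\textbf{The obstacle dissolves.} $B$ sits in $M$ through the canonical embedding $\pi$, so it is automatically finite. Concretely, $\varphi:=\tau\circ\pi$ is a faithful normal tracial state on $B$, and it is $\sigma$-invariant because $\pi(\sigma_g(b))=\lambda_g\pi(b)\lambda_g^*$. You even write $\varphi=\tau_B$, which is exactly the right choice. The remedies you then suggest are unnecessary, and cutting by a non-invariant projection of $B$ would destroy the crossed product structure.

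\textbf{Intermediate claims.} Your claims that $B$ lies in the centralizer of $\tilde\varphi$ and that $\sigma^{\tilde\varphi}_t(\lambda_g)=\Delta_G(g)^{it}\lambda_g$ hold precisely because $\varphi$ is tracial and $\sigma$-invariant. For a general $\varphi$, the modular group $\sigma^\varphi$ and Connes cocycles comparing $\varphi\circ\sigma_g$ with $\varphi$ enter.

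\textbf{Unimodularity.} This also needs an actual argument. The SIN property you allude to in Step 2 (Dixmier's theorem, as cited in the paper) gives it. Alternatively, argue internally: write $\tilde\varphi=\tau(h\,\cdot\,)$ with $h\geq 0$ nonsingular and affiliated with $M$. The formula above then gives $\lambda_g^*h\lambda_g=\Delta_G(g)h$. So the $\tau$-distribution of $h$ is a probability measure on $(0,\infty)$ invariant under dilation by $\Delta_G(g)$, which forces $\Delta_G(g)=1$.

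\textbf{Closing the argument.} With these points in place, $\sigma^{\tilde\varphi}$ is trivial, so $\tilde\varphi$ is a faithful normal semifinite trace on the II$_1$ factor $M$. Hence $\tilde\varphi=c\tau$ with $0<c<\infty$. Your Plancherel identity then gives $\|f\|_2^2\leq c\|\lambda(f)\|^2\leq c\|f\|_1^2\leq c\, m(\mathrm{supp} f)\|f\|_2^2$ for all $f\in C_c(G)$. This is impossible if $e$ has neighbourhoods of arbitrarily small Haar measure, i.e.\ if $G$ is not discrete.

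\textbf{Comparison with the paper.} The paper avoids weights altogether. Dixmier's theorem makes $G$ a SIN group, so for every conjugation-invariant compact neighbourhood $V$ of $e$, the vector state $\varphi_V$ given by $1_V/\sqrt{m(V)}$ is a trace on $\mathrm{L}(G)$. The paper pulls $\tau\otimes\varphi_V$ back along the comultiplication $M\to M\overline{\otimes}\mathrm{L}(G)$, $bu_g\mapsto bu_g\otimes u_g$. Uniqueness of the trace gives $\tau(u_g)=\tau(u_g)\varphi_V(u_g)$, hence $\tau(u_g)=0$ for $g\neq e$ (take $g\notin VV^{-1}$). Continuity of $g\mapsto\tau(u_g)$ then makes $\{e\}$ open.

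Both proofs transport a trace from $\mathrm{L}(G)$ to the factor $M$ and conclude by uniqueness of its trace. The paper's route is elementary but needs conjugation-invariant neighbourhoods to make $\varphi_V$ tracial. Yours needs only unimodularity, which it can even produce itself. The price is Haagerup's dual weight theory: the formula for $\sigma^{\tilde\varphi}$ and the identification of $\tilde\varphi$ on $\mathrm{L}(G)$ with the Plancherel weight.
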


\begin{proof} Denote by $\tau$ the trace of $M$. Since $M$ is a II$_1$ factor, its von Neumann subalgebra $\text{L}(G)$ must be tracial. Hence, by a result of Dixmier \cite[Proposition 13.10.5]{Dix77}, $G$ is a SIN group, i.e., it admits a basis of neighborhoods of $e$ which are invariant under conjugation. In particular, $G$ is unimodular.

Let $m$ be a Haar measure of $G$. Let $V$ be a compact neighborhood of $e$ which is invariant under conjugation, $\xi_V=\frac{1_V}{\sqrt{m(V)}}\in\text{L}^2(G)$ and define $\varphi_V:\text{L}(G)\rightarrow\mathbb C$ by letting $\varphi_V(x)=\langle x\xi_V,\xi_V\rangle$. 

Then $\varphi_V$ is a tracial state on $\text{L}(G)$. 
Let $\Delta:M\rightarrow M\overline{\otimes}\text{L}(G)$ be the comultiplication $*$-homomorphism given by $\Delta(bu_g)=bu_g\otimes u_g$, for every $b\in B$ and $g\in G$.
Then
$(\tau\otimes\varphi_V)\circ\Delta$ is a trace on $M$. Since $M$ is II$_1$ factor, the uniqueness of its trace implies that $(\tau\otimes\varphi_V)(\Delta(x))=\tau(x)$, for every $x\in M$. Hence, if $g\in G$, then $\tau(u_g)\varphi_V(u_g)=(\tau\otimes\varphi_V)(\Delta(u_g))=\tau(u_g)$.

If $g\in G\setminus\{e\}$, then we can find a conjugation-invariant compact neighborhood $V$ of $e$ such that $g\not\in VV^{-1}$. This implies that $\varphi_V(u_g)=0$ and further that $\tau(u_g)=\tau(u_g)\varphi_V(u_g)=0$. In conclusion, we have that $\tau(u_g)=0$, for every $g\in G\setminus\{e\}$. Since $\tau(u_e)=1$ and the map $G\ni g\mapsto\tau(u_g)\in\mathbb C$ is continuous, we conclude that $\{e\}$ is an open subset of $G$, and hence $G$ is discrete. 
\end{proof}

\section{Local quantization for bimodules}\label{Sec3:localquant}

%The following local quantization principle was proved by Popa in \cite[Lemma A.1.1]{Pop94} using a technique from \cite{Pop81}. 

We begin this section by recalling Popa's local quantization principle, proved in \cite[Lemma A.1.1]{Pop94} using a technique introduced in \cite{Pop81}. 
Recently, Marrakchi established analogues of this result for type III von Neumann algebras, see \cite[Theorems 11.6 and 11.7]{Mar23}. 

\begin{theorem}[Popa]\label{Popa}
Let $(N,\tau)$ be a tracial von Neumann algebra and $M\subset N$ be a von Neumann subalgebra of type \emph{II}$_1$. Let $S\subset N$ be a finite set such that $\emph{E}_{M'\cap N}(x)=0$, for every $x\in S$.  Then for every $\varepsilon>0$, there exists a partition of unity $\{p_i\}_{i=1}^m$ in $M$ such that 
$$\text{$\|\sum_{i=1}^mp_i\xi p_i\|_2<\varepsilon$, for every $\xi\in S$.}$$
\end{theorem}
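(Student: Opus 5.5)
The plan is to follow Popa's original argument: an exhaustion by maximal families of mutually orthogonal projections, with a local step that uses the diffuseness of $M$ and the vanishing of $\text{E}_{M'\cap N}$ on $S$.

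First we reduce to self-adjoint elements. Writing $\xi=\Re\xi+i\Im\xi$ and noting that $\text{E}_{M'\cap N}$ commutes with the adjoint, we may assume $S=S^*$. For a partition $\{p_i\}$, the map $\xi\mapsto\sum_i p_i\xi p_i$ is the trace preserving conditional expectation onto $\{p_i\}'\cap N$. Hence refining a partition can only decrease $\|\sum p_i\xi p_i\|_2$, because the relative commutant shrinks and conditional expectations are contractive and compose. We then take the finite abelian algebra $A$ generated by the $p_i$. The quantity $\sum_{\xi\in S}\|\text{E}_{A'\cap N}(\xi)\|_2^2$ is non-increasing along refinements. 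The goal is to show its infimum over finite abelian $A\subset M$ is $0$.

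The key local lemma reads as follows. Given a nonzero projection $q\in M$ and $\delta>0$, there is a nonzero projection $p\le q$ in $M$, or a partition of $q$ into projections of $M$, such that $\sum_{\xi\in S}\|p\xi p\|_2^2\le\delta\tau(p)$. For the proof, apply Lemma \ref{fixed_point}(1) in the $qMq$-bimodule $q\text{L}^2(N)q$. The minimal norm element of the closed convex hull of $\{u(q\xi q)u^*\mid u\in\mathcal U(qMq)\}$ is $\text{E}_{(qMq)'\cap qNq}(q\xi q)$. Since $(qMq)'\cap qNq=(M'\cap N)q$ and $\text{E}_{M'\cap N}(\xi)=0$, this element is $0$. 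Therefore there exist convex combinations of $u(q\xi q)u^*$ of small norm, simultaneously for the finitely many $\xi\in S$ (apply the lemma to $\oplus_{\xi\in S}q\xi q$ in the direct sum bimodule). Next, approximate each unitary $u\in qMq$ by a unitary with finite spectrum, $u\approx\sum_j\lambda_j e_j$ with $e_j\in qMq$ spectral projections. Using that $M$ is diffuse (type II$_1$), average over phases $\lambda$ in a finite group of roots of unity. This is the standard Popa trick: $\frac1K\sum_{k}\bigl(\sum_j\omega^{kj}e_j\bigr)\eta\bigl(\sum_j\omega^{kj}e_j\bigr)^*=\sum_je_j\eta e_j$. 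It converts convex combinations of conjugates into compressions $\sum_j e_j\eta e_j$ by partitions of $q$. The delicate point is that the averaging argument only naturally gives $\|\sum_j e_j\eta e_j\|$ small for the compression by one partition, not by a common refinement of several. This is where I expect the main obstacle. The fix I would try first is to use directly that $\|\sum_i p_i\eta p_i\|_2^2\le\langle \eta,\text{conv}\rangle$-type estimates. Concretely, $\|\text{E}_{A'\cap N}(\eta)\|_2^2=\langle \text{E}_{A'\cap N}(\eta),\eta\rangle$, and $\text{E}_{A'\cap N}(\eta)$ lies in the convex hull of $\{u\eta u^*:u\in\mathcal U(A)\}$. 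Then I would choose $A$ inductively: at each step, pick a diffuse-spectrum unitary $u$ in the relative commutant of the current $A$ inside $M$ for which $\Re\langle u\eta u^*,\eta\rangle$ is small (Lemma \ref{fixed_point}(2)), and refine $A$ by the spectral projections of $u$. The diffuseness of $M$ guarantees these commutants $A'\cap M$ are still type II$_1$, so the argument can be repeated in each corner $pMp$ with $p$ minimal in $A$, via Lemma \ref{fixed_point_proj}(1).

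Finally comes the exhaustion step. Suppose the infimum $c=\inf_A\sum_{\xi\in S}\|\text{E}_{A'\cap N}(\xi)\|_2^2$ is positive. Take $A$ nearly attaining it. For each minimal projection $p$ of $A$, the elements $p\xi p$ again satisfy $\text{E}_{(pMp)'\cap pNp}(p\xi p)=0$, so the local lemma applied in $pMp\subset pNp$ refines $p$ and strictly decreases $\sum_\xi\|p\xi p\|_2^2$ by a fixed proportion. Summing over $p$ yields a refinement with value at most $(1-\kappa)c$, contradicting the minimality. Equivalently, one can take a maximal family of orthogonal projections $p$ with $\sum_\xi\|p\xi p\|_2^2\le\varepsilon^2\tau(p)/|S|$, show by the local lemma that it sums to $1$, and then truncate to a finite subfamily, putting the tail into one projection of small trace, whose contribution is bounded by $\tau(\text{tail})\max\|\xi\|^2$.
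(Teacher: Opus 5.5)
There is a genuine gap, and it is in your local lemma. It is more basic than the common-refinement issue you flagged; that issue is harmless, since refining a partition never increases any of the compressions. You assert that $\text{E}_{(qMq)'\cap qNq}(q\xi q)=0$ because $(qMq)'\cap qNq=(M'\cap N)q$ and $\text{E}_{M'\cap N}(\xi)=0$. The identity is correct, but the conclusion is not. For $y\in M'\cap N$ you get $\tau(q\xi q\,yq)=\tau(\xi\,yq)$, and $yq\notin M'\cap N$ unless $q$ is central in $M$.

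For instance, let $N=M$ be a II$_1$ factor, $\tau(q)=\frac12$ and $\xi=2q-1$. Then $\text{E}_{M'\cap N}(\xi)=\tau(\xi)=0$, but $q\xi q=q$, so $\|p\xi p\|_2^2=\tau(p)$ for every projection $p\le q$. The local lemma fails at $q$, and both forms of your exhaustion break:

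\begin{itemize}
\item \emph{Maximal-family form.} A maximal family of orthogonal projections with $\|p\xi p\|_2^2\le\varepsilon^2\tau(p)$ need not sum to $1$. Take $q'\le q$ of small rational trace and split $1-q'$ into Fourier-type projections $p_k$ with $p_kqp_k=\alpha p_k$, where $\alpha=\frac{\tau(q)-\tau(q')}{1-\tau(q')}$. Then $\|p_k\xi p_k\|_2^2=(2\alpha-1)^2\tau(p_k)$ is small, yet nothing under $q'$ can be added.
\item \emph{Infimum form.} Let $p$ be a minimal projection of $A$ and $\{p'\}$ any partition of $p$ in $M$. Then $\tau\bigl(\sum_{p'}p'\xi p'\,y\bigr)=\tau(\xi py)$ for every $y\in M'\cap N$. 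So refining inside $p$ never goes below $\|\text{E}_{(M'\cap N)p}(p\xi p)\|_2^2$, and near-minimality of $A$ gives no contradiction.
\end{itemize}

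Your appeal to Lemma \ref{fixed_point_proj}(1) fails for the same reason. That lemma transfers the absence of central vectors to corners only when $\mathcal H^M=\{0\}$. Here $\mathcal H=\text{L}^2(N)$ has $\mathcal H^M=\text{L}^2(M'\cap N)\neq\{0\}$, and $\xi\perp\mathcal H^M$ does not imply $q\xi q\perp(q\mathcal Hq)^{qMq}$.

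What is missing is a separate, global treatment of the component of $\xi$ in $\mathcal H_1=\overline{\text{sp}}(M(M'\cap N))$. The paper obtains the statement as the case $\mathcal H=\text{L}^2(N)$ of Theorem \ref{local_quant}. Write $\xi=\xi_1+\xi_2$ with $\xi_1\in\mathcal H_1\ominus\mathcal H^M$ and $\xi_2\in\mathcal H_2=\mathcal H\ominus\mathcal H_1$.

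\begin{itemize}
\item \emph{The part $\xi_2$.} Since $\mathcal H_2^M=\{0\}$, Lemma \ref{fixed_point_proj}(1) does apply to $\mathcal H_2$. There your corner-by-corner iteration is exactly Lemma \ref{no_central}.
\item \emph{The part $\xi_1$.} Since $\mathcal H_1$ embeds in $\text{L}^2(M)\otimes\ell^2(I)$, this reduces to the case $\mathcal H=\text{L}^2(M)$ (Lemma \ref{L^2(M)}). That is precisely where the example above lives, and where the partition must be chosen globally rather than by refining corners. The paper does this in Lemma \ref{abelian}, which replaces the hyperfinite subfactor of Popa's original proof. One builds a maximal abelian $A=(\bigcup_nA_n)''$, adjoining at each step $u_{n+1}=\sum_jv_j$. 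Here $v_j\in q_jMq_j$ has order two, $\tau(v_j)=0$ and $|\tau(xv_j)|<\delta\tau(q_j)$ for $x\in S$ (Lemma \ref{factor}). This keeps $\text{E}_{A_n}(x)$ within $\varepsilon$ of $\text{E}_{\mathcal Z(M)}(x)$, while a dense sequence forces maximality. Then $\text{E}_{A_k'\cap M}\to\text{E}_A$ yields a finite partition.
\end{itemize}

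Finally, one quantizes $\xi_1$ first and then refines inside each corner to handle $\xi_2$. Refinement does not increase the bound already obtained for $\xi_1$.
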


In this section, we generalize Popa’s theorem to the settings of $M$-$M$ and $M$-$M^{\text{op}}$ bimodules, and a simultaneous treatment of both. The section is divided into three parts, each dealing with one of these cases.

\subsection{Local quantization for \texorpdfstring{$M$}{M}-bimodules}
Our goal here is to provide the following extension of Popa's result to bimodules over type II$_1$ von Neumann algebras:
\begin{theorem}\label{local_quant}
Let $(M,\tau)$ be a tracial von Neumann algebra of type \emph{II}$_1$ and $\mathcal H$ be an $M$-bimodule. Let $S\subset\mathcal H\ominus\mathcal H^M$ be a finite set. Then for every $\varepsilon>0$, there exists a partition of unity $\{p_i\}_{i=1}^m$ in $M$ such that $$\text{$\|\sum_{i=1}^mp_i\xi p_i\|<\varepsilon$, for every $\xi\in S$}.$$
\end{theorem}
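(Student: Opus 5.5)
We adapt Popa's iterative argument from \cite{Pop81,Pop94}, replacing the conditional expectation onto $M'\cap N$ by the orthogonal projection $e:\mathcal H\rightarrow\mathcal H^M$ of Lemma \ref{fixed_point}. First, two reductions. Replace the finite set $S=\{\xi_1,\dots,\xi_k\}$ by the single vector $\Xi=(\xi_1,\dots,\xi_k)$ in the $M$-bimodule $\mathcal H^{\oplus k}$. We have $(\mathcal H^{\oplus k})^M=(\mathcal H^M)^{\oplus k}$, so $\Xi\perp(\mathcal H^{\oplus k})^M$. Moreover $\|\sum_ip_i\Xi p_i\|^2=\sum_j\|\sum_ip_i\xi_jp_i\|^2$. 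Hence it suffices to treat $S=\{\xi\}$. By Lemma \ref{fixed_point}(3), we may also assume that $\xi$ is bounded. The goal is then to show that
$$\delta:=\inf\Bigl\{\bigl\|\textstyle\sum_ip_i\xi p_i\bigr\|:\{p_i\}\text{ a partition of unity in }M\Bigr\}$$
equals $0$.

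The key step is an \emph{incremental decrease lemma}. Fix a partition $\{p_i\}_{i=1}^m$ and put $\eta=\sum_ip_i\xi p_i$. We want a refinement $\{q_j\}$ with $\|\sum_jq_j\xi q_j\|^2\le(1-c)\|\eta\|^2$, where $c>0$ is absolute. Since each $q_j$ lies under some $p_i$, we have $\sum_jq_j\xi q_j=\sum_jq_j\eta q_j$, so it is enough to refine inside each corner $p_iMp_i$, acting on $\eta_i=p_i\eta p_i\in p_i\mathcal Hp_i$. This requires $\eta_i\perp(p_i\mathcal Hp_i)^{p_iMp_i}$. By Lemma \ref{fixed_point_proj}(1), that space equals $p_i\mathcal H^Mp_i$. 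Since $\mathcal H^M$ is an $M$-subbimodule, being orthogonal to it is preserved by $\xi\mapsto p_i\xi p_i$, so $\eta_i=p_i\xi p_i\perp\mathcal H^M$. Now, inside a type II$_1$ algebra $P=p_iMp_i$ and for a vector $\zeta\perp\mathcal H^P$, we want a projection $q\in P$ with
$$\|q\zeta q\|^2+\|(1-q)\zeta(1-q)\|^2\le(1-c)\|\zeta\|^2.$$
Using Lemma \ref{fixed_point}(2), we pick $u\in\mathcal U(P)$ with $\Re\langle u\zeta u^*,\zeta\rangle\le\varepsilon\|\zeta\|^2$.

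The main obstacle is to turn this unitary into a projection. This is the analogue of Popa's trick, in which one passes from a unitary with small Fourier coefficient to a partition by a spectral argument. The plan is as follows. Approximate $u$ by a unitary with finite spectrum, $u\approx\sum_{l=1}^L\lambda_lf_l$; we may do this because $\zeta$ is bounded, so $\|u\zeta u^*-u'\zeta u'^*\|\le 2D\|u-u'\|_2$. Then
$$\langle u\zeta u^*,\zeta\rangle=\sum_{l,l'}\lambda_l\overline{\lambda_{l'}}\,\langle f_l\zeta f_{l'},\zeta\rangle=\sum_{l,l'}\lambda_l\overline{\lambda_{l'}}\,\|f_l\zeta f_{l'}\|^2 .$$
If the diagonal mass $\sum_l\|f_l\zeta f_l\|^2$ were nearly all of $\|\zeta\|^2$, this quantity would have real part near $\|\zeta\|^2$, contradicting the choice of $u$. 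We cannot quite conclude this in one step, because the off-diagonal terms carry complex phases. To handle this, average over a random choice of a subset of spectral values. We may also replace $u$ by $u$ composed with a rotation, or use several unitaries, in order to take a $2$-coloring $q=\sum_{l\in A}f_l$. The identity
$$\mathbb E_A\bigl[\|q\zeta q\|^2+\|q^\perp\zeta q^\perp\|^2\bigr]=\sum_l\|f_l\zeta f_l\|^2+\tfrac12\sum_{l\neq l'}\|f_l\zeta f_{l'}\|^2$$
then yields a decrease by a fixed proportion of the off-diagonal mass. If the off-diagonal mass is small, we argue as above that the diagonal mass would force $\Re\langle u\zeta u^*,\zeta\rangle$ to be large. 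This last point is where one must be careful; I expect to use that the $f_l$ can be chosen to be small in trace (type II$_1$), so that $u$ restricted to the diagonal blocks is nearly central to $\zeta$.

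With the decrease lemma in hand, we finish as follows. Apply it to each block $p_i$ simultaneously to obtain a refinement $\{q_j\}$ with $\|\sum q_j\xi q_j\|^2\le(1-c)\|\sum p_i\xi p_i\|^2$. Iterating $N$ times gives $\|\cdot\|^2\le(1-c)^N\|\xi\|^2<\varepsilon^2$, which proves the theorem.
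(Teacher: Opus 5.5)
\textbf{There is a genuine gap in the reduction step of your decrease lemma.} You claim that, since $\mathcal H^M$ is an $M$-subbimodule, $\eta_i=p_i\xi p_i\perp\mathcal H^M$. But $\mathcal H^M$ is \emph{not} an $M$-subbimodule: for $\zeta\in\mathcal H^M$ and $y\in M$ we have $x(y\zeta)=xy\zeta$ but $(y\zeta)x=yx\zeta$. The bimodule generated by $\mathcal H^M$ is $\overline{\mathrm{sp}}(M\mathcal H^M)$, which is a multiple of $\mathrm{L}^2(M)$ by Lemma \ref{central_space}. So compressions $p_i\xi p_i$ of a vector $\xi\perp\mathcal H^M$ can have large components along $(p_i\mathcal Hp_i)^{p_iMp_i}=p_i\mathcal H^Mp_i$.

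As a result, your decrease lemma is false as stated. Take $M$ a II$_1$ factor and $\mathcal H=\mathrm{L}^2(M)$, so $\mathcal H^M=\mathbb C1$. Let $p$ be a projection with $\tau(p)=\frac12$ and $\xi=2p-1\perp\mathbb C1$. For the partition $\{p,1-p\}$ we get $\eta=\xi$, and every refinement $\{q_j\}$ satisfies $\sum_jq_j\xi q_j=\xi$, so no decrease is possible. More generally, in $\mathrm{L}^2(M)$, if $A$ is the abelian algebra generated by the current partition, every refinement yields a vector of norm at least $\|\mathrm{E}_A(\xi)\|_2$. A greedy iteration therefore gets stuck as soon as an early partition correlates with $\xi$. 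Your choice of $u$, which only makes $\Re\langle u\zeta u^*,\zeta\rangle$ small, does nothing to prevent this.

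\textbf{The missing idea is the splitting used in the paper:} $\mathcal H=\mathcal H_1\oplus\mathcal H_2$ with $\mathcal H_1=\overline{\mathrm{sp}}(M\mathcal H^M)$.

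On $\mathcal H_2$ there are no central vectors, so by Lemma \ref{fixed_point_proj}(1) no corner has any, and your iteration goes through; this is Lemma \ref{no_central}. The unitary-to-partition step there also needs no random coloring or small-trace spectral projections. Write $u=\sum_l\lambda_lf_l$ with finite spectrum (after norm approximation). The diagonal terms of $u\zeta u^*-\zeta=\sum_{l,l'}(\lambda_l\overline{\lambda_{l'}}-1)f_l\zeta f_{l'}$ vanish, so $2(1-\varepsilon)\|\zeta\|^2\le\|u\zeta u^*-\zeta\|^2\le 4(\|\zeta\|^2-\|\sum_lf_l\zeta f_l\|^2)$. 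Hence the spectral partition $\{f_l\}$ itself gives a fixed proportional decrease.

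On $\mathcal H_1\subset\mathrm{L}^2(M)\otimes\ell^2(I)$ one needs Popa's non-greedy argument (Lemma \ref{L^2(M)}). The partition must be built from the start so that $\mathrm{E}_A(\xi)$ stays small. The paper does this by constructing a maximal abelian $A$ with $\|\mathrm{E}_A(\xi)-\mathrm{E}_{\mathcal Z(M)}(\xi)\|_2$ small, choosing each new order-two unitary $v$ with small correlations $|\tau(xv)|$ (Lemmas \ref{factor} and \ref{abelian}).

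The two partitions are then combined by refining inside each block. This works because refinement never increases $\|\sum_ip_i\xi p_i\|$.
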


If $\mathcal H=\text{L}^2(N)$, where $N$ is  a tracial von Neumann algebra containing $M$, then $\mathcal H^M=\text{L}^2(M'\cap N)$. Thus, Theorem \ref{local_quant}  extends Theorem \ref{Popa}. Its proof follows the same strategy as Popa's proof of Theorem \ref{Popa} in \cite{Pop94}. More precisely, we first consider vectors in $\overline{\text{sp}}(M\mathcal H^M)$. When $\mathcal H=\text{L}^2(N)$, with $N\supset M$, this space is equal to the space $\overline{\text{sp}}(M(M'\cap N))$ considered in \cite[Lemma A.1.1]{Pop94}. We then consider vectors in  its orthogonal complement.
The first case reduces to the special case $\mathcal H=\text{L}^2(M)$.

Accordingly, the proof of Theorem \ref{local_quant} is obtained by combining two results. The first is the following reformulation of Theorem \ref{Popa} in the special case $N=M$.

\begin{lemma}\label{L^2(M)}
The conclusion of Theorem \ref{local_quant} holds if $\mathcal H=\emph{L}^2(M)$.
\end{lemma}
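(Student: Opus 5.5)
The plan is to deduce Lemma \ref{L^2(M)} directly from Theorem \ref{Popa} applied with $N=M$. First identify the space of central vectors. For the standard bimodule $\mathcal H=\text{L}^2(M)$ we have $\mathcal H^M=\text{L}^2(\mathcal Z(M))$. Indeed, a vector $\xi\in\text{L}^2(M)$ with $u\xi u^*=\xi$ for all $u\in\mathcal U(M)$ gives, via the isometry $J$ and the description $\text{L}^2(M)\cap(M')'$-type arguments, an element commuting with $M$. Concretely, $\mathcal H^M=\text{L}^2(M'\cap M)$, as already noted in the paper after Theorem \ref{local_quant} for $\mathcal H=\text{L}^2(N)$. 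Hence $S\subset\text{L}^2(M)\ominus\text{L}^2(\mathcal Z(M))$, i.e. $\text{E}_{\mathcal Z(M)}(\xi)=0$ for every $\xi\in S$, where $\text{E}_{\mathcal Z(M)}$ is extended to $\text{L}^2$ as the orthogonal projection.

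The only gap is that Theorem \ref{Popa} is stated for finite sets in $N=M$, while $S$ consists of $\text{L}^2$-vectors. I will handle this by approximation. Given $\varepsilon>0$ and $\xi\in S$, choose $x_\xi\in M$ with $\|x_\xi-\xi\|_2<\varepsilon/2$. Replace it by $y_\xi=x_\xi-\text{E}_{\mathcal Z(M)}(x_\xi)\in M$. Then $\text{E}_{M'\cap M}(y_\xi)=0$, and since $\text{E}_{\mathcal Z(M)}(\xi)=0$ and the projection is contractive, $\|y_\xi-\xi\|_2\leq\|x_\xi-\xi\|_2<\varepsilon/2$.

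Apply Theorem \ref{Popa}, with $N=M$ of type II$_1$, to the finite set $\{y_\xi\}_{\xi\in S}$ and $\varepsilon/2$. This gives a partition of unity $\{p_i\}_{i=1}^m$ in $M$ with $\|\sum_ip_iy_\xi p_i\|_2<\varepsilon/2$. The map $\eta\mapsto\sum_ip_i\eta p_i$ is a contraction on $\text{L}^2(M)$: it is the sum of the mutually orthogonal pieces $p_i\eta p_i$, each bounded by the corresponding block of $\eta$, so $\|\sum_ip_i\eta p_i\|_2^2=\sum_i\|p_i\eta p_i\|_2^2\leq\|\eta\|_2^2$. Therefore
\[
\|\textstyle\sum_ip_i\xi p_i\|_2\leq\|\textstyle\sum_ip_iy_\xi p_i\|_2+\|\xi-y_\xi\|_2<\varepsilon.
\]

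The only mildly delicate step is the identification $\text{L}^2(M)^M=\text{L}^2(\mathcal Z(M))$ for $\text{L}^2$-vectors rather than bounded ones. I would argue it via Lemma \ref{fixed_point}(1). The projection $e$ onto central vectors maps $\hat x$ to the minimal-norm element of the closed convex hull of $\{uxu^*\}$, which lies in the $\|\cdot\|_2$-closure of a $\|\cdot\|$-bounded set in $M$. Hence $e(\hat x)$ is a bounded central element of $M$, equal to $\text{E}_{\mathcal Z(M)}(x)$. Density of $M$ then gives $e=e_{\mathcal Z(M)}$.
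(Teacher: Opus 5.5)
Your argument is correct, but it is not the route the paper takes. It is exactly the reduction the paper mentions (``Lemma~\ref{L^2(M)} is a consequence of Theorem~\ref{Popa}'') and then deliberately sets aside. You use Theorem~\ref{Popa} with $N=M$ as a black box. The remaining steps all check out:
\begin{itemize}
\item the identification $\text{L}^2(M)^M=\text{L}^2(\mathcal Z(M))$, where your argument via Lemma~\ref{fixed_point}(1) and the $\|\cdot\|_2$-closedness of operator-norm balls of $M$ is fine;
\item the approximation of $S$ by elements $y_\xi\in M$ with $\text{E}_{\mathcal Z(M)}(y_\xi)=0$;
\item the fact that $\eta\mapsto\sum_ip_i\eta p_i$ is a contraction on $\text{L}^2(M)$.
\end{itemize}

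The paper instead gives a self-contained proof that bypasses the construction of hyperfinite subfactors in \cite{Pop94}. It first reduces to separable $M$ and, via disintegration, to factors. Lemma~\ref{factor} then produces trace-zero symmetries that move a given trace-zero element by a definite amount while being nearly orthogonal to a prescribed finite set. With these, Lemma~\ref{abelian} builds inductively a maximal abelian subalgebra $A=(\bigcup_nA_n)''$, with $A_n$ finite dimensional, such that $\|\text{E}_A(\xi)-\text{E}_{\mathcal Z(M)}(\xi)\|_2<\varepsilon$ for $\xi\in S$. Maximality of $A$ gives $\text{E}_{A_k'\cap M}\to\text{E}_A$ pointwise in $\|\cdot\|_2$. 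Finally, $\text{E}_{A_k'\cap M}(x)=\sum_ip_ixp_i$, where $\{p_i\}$ are the minimal projections of $A_k$.

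Your route is shorter and needs no separability reduction, at the cost of depending on Theorem~\ref{Popa}. The paper's route makes Section~\ref{Sec3:localquant} independent of \cite{Pop94}. In effect it re-proves the case $N=M$ of Theorem~\ref{Popa} from scratch, which is why the paper is careful to deduce Lemma~\ref{L^2(M)} from Lemma~\ref{abelian} rather than the other way around.
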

While Lemma \ref{L^2(M)} is a consequence of Theorem \ref{Popa}, for completeness, we provide a self-contained proof which bypasses the construction of hyperfinite subfactors used in the original proof from \cite{Pop94}.

The second ingredient needed to prove Theorem \ref{local_quant} is the following lemma:
\begin{lemma}\label{no_central}

The conclusion of Theorem \ref{local_quant} holds if $\mathcal H^M=\{0\}$.
\end{lemma}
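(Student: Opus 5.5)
We prove Lemma \ref{no_central}: if $M$ is of type II$_1$ and $\mathcal H^M=\{0\}$, then for every finite $S\subset\mathcal H$ and $\varepsilon>0$ there is a partition of unity $\{p_i\}$ in $M$ with $\|\sum_i p_i\xi p_i\|<\varepsilon$ for all $\xi\in S$. The plan is to follow Popa's incremental strategy from \cite{Pop81,Pop94}. We build the partition by successive refinements, each of which decreases the quantity
$$\Phi(\{p_i\}):=\sum_{\xi\in S}\Big\|\sum_i p_i\xi p_i\Big\|^2=\sum_{\xi\in S}\sum_i\|p_i\xi p_i\|^2$$
by a definite proportion. The second equality holds because the vectors $p_i\xi p_i$ are mutually orthogonal. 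Since the map $\xi\mapsto \sum_i p_i\xi p_i$ is contractive, we may first use Lemma \ref{fixed_point}(3) to replace each $\xi\in S$ by a bounded vector. This keeps the hypothesis, since $\mathcal H\ominus\mathcal H^M=\mathcal H$ here, and costs only $\varepsilon/2$.

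The basic step is as follows. Fix a nonzero projection $p\in M$. By Lemma \ref{fixed_point_proj}(1), the $pMp$-bimodule $p\mathcal Hp$ has no nonzero $pMp$-central vectors. By Lemma \ref{fixed_point}(2), applied to the $pMp$-bimodule $\bigoplus_{\xi\in S}p\mathcal Hp$ and the vector $(p\xi p)_{\xi\in S}$, we get a unitary $u\in\mathcal U(pMp)$ with
$$\Re\sum_{\xi\in S}\langle u\,p\xi p\,u^*,p\xi p\rangle\le\delta\sum_{\xi\in S}\|p\xi p\|^2 .$$
We cannot use $u$ directly, so we replace it by a self-adjoint unitary, i.e.\ a symmetry $2q-p$. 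We use that $pMp$ is of type II$_1$ and apply the standard trick from \cite{Pop94}. Approximate $u$ by a unitary with finite spectrum, $u\approx\sum_k\lambda_k e_k$. Then average over a finite family of characters: the map $\eta\mapsto\sum_{k,l}e_k\eta e_l$ is a convex combination of maps $\eta\mapsto v\eta v^*$ with $v=\sum_k\omega_ke_k$. From this we obtain a partition $\{e_k\}$ of $p$ for which $\sum_{\xi}\|\sum_k e_k\,p\xi p\,e_k\|^2$ is at most a fixed fraction, say $\tfrac{1+\delta}{2}$, of $\sum_\xi\|p\xi p\|^2$.

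An alternative, cleaner route gives directly a projection $q\le p$ with $\Re\langle(2q-p)\eta(2q-p),\eta\rangle$ small. The identity $\|q\eta q\|^2+\|(p-q)\eta(p-q)\|^2=\tfrac12(\|\eta\|^2+\Re\langle(2q-p)\eta(2q-p),\eta\rangle)$ then shows that splitting $p$ into $q,p-q$ reduces the contribution of $p$ by almost half.

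To get such a $q$, we apply Lemma \ref{fixed_point}(2) inside a maximal abelian subalgebra. Concretely, if every symmetry $s=2q-p$ gave $\Re\langle s\eta s,\eta\rangle>\delta\|\eta\|^2$, then by convexity the same would hold on the norm-closed convex hull of $\{s\eta s\}$. That hull contains the conjugates by all unitaries of the diffuse abelian algebra generated by dyadic symmetries, since such unitaries are approximable in the span of products of symmetries. Running the minimal-norm argument of Lemma \ref{fixed_point}(1) for the group generated by symmetries, which still generates $pMp$ as a von Neumann algebra, would then produce a nonzero central vector, a contradiction. I expect \emph{this reduction from arbitrary unitaries to symmetries} to be the main obstacle. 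If the symmetry argument fails, I will fall back on the spectral-averaging approach above, which uses only that $pMp$ has no minimal projections.

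Finally, we iterate. Applying the basic step to each $p_i$ of the current partition multiplies $\Phi$ by at most $c<1$, where $c$ is independent of the partition. After $N$ steps, $\Phi\le c^N\Phi(\{1\})=c^N\sum_{\xi\in S}\|\xi\|^2<(\varepsilon/2)^2$, which gives the claim.
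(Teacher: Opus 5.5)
Your spectral route is correct and is essentially the paper's proof (Claims \ref{quant} and \ref{induction}). It applies Lemma \ref{fixed_point}(2) in the corner $p\mathcal Hp$, which has no central vectors by Lemma \ref{fixed_point_proj}(1), then uses a finite-spectrum approximation $u\approx\sum_k\lambda_ke_k$ and iterated refinement of the partition. The differences from the paper are cosmetic. You handle all of $S$ at once through $\bigoplus_{\xi\in S}p\mathcal Hp$ instead of one vector at a time. You also get the factor $\frac{1+\delta}{2}$ instead of $\frac{35}{36}$; this follows in one line from $\Re\langle u\eta u^*,\eta\rangle=\sum_{k,l}\Re(\lambda_k\overline{\lambda_l})\|e_k\eta e_l\|^2\geq 2\sum_k\|e_k\eta e_k\|^2-\|\eta\|^2$ for $u=\sum_k\lambda_ke_k$. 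One typo: the averaging map you mean is $\eta\mapsto\sum_ke_k\eta e_k$; as written, $\sum_{k,l}e_k\eta e_l=\eta$.

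Some side remarks. Neither the type II$_1$ assumption nor the passage to bounded vectors is needed here. Nothing prevents you from ``using $u$ directly'': its spectral projections are exactly the partition you need.

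So drop the symmetry detour, whose sketched justification is wrong. The map $v\mapsto v\eta v^*$ is not linear, so approximating $v$ in the span of products of symmetries says nothing about $\overline{\mathrm{conv}}\{s\eta s\}$. Moreover, $\{s\eta s\}$ is not stable under conjugation by the group generated by the symmetries, and that stability is what the minimal-norm argument of Lemma \ref{fixed_point}(1) requires.

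For a concrete failure, take the coarse bimodule with $\eta=1\otimes 1$ and $v\in M$ a Haar unitary. Then $\langle s\eta s,v\eta v^*\rangle=|\tau(v^*s)|^2\leq\frac{9}{16}$ for every symmetry $s$, so $v\eta v^*\notin\overline{\mathrm{conv}}\{s\eta s\}$. A symmetry with small $\Re\langle s\eta s,\eta\rangle$ does exist, but the natural proof randomizes signs over a partition produced by the very lemma you are proving.
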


Before proving Lemmas \ref{L^2(M)} and \ref{no_central}, let us explain how their combination implies Theorem \ref{local_quant}.
\begin{proof}[\bf Proof of Theorem \ref{local_quant}]
We first argue that we may take $M$ to be separable. Since $\xi\in\mathcal H\ominus\mathcal H^M$, the $\|\cdot\|_2$-closure of the convex hull of $\{u\xi u^*\mid u\in\mathcal U(M)\}$ contains $0$ by Lemma \ref{fixed_point}(1), for every $\xi\in S$. Since $S$ is finite,  we may find a separable von Neumann subalgebra $M_0\subset M$ such that $0$ belongs to the $\|\cdot\|_2$-closure of the convex hull of $\{u\xi u^*\mid u\in\mathcal U(M_0)\}$, for every $\xi\in S$. This readily implies that $S\subset\mathcal H\ominus\mathcal H^{M_0}$. 
Since $M$ is of type II$_1$, it contains a copy of the hyperfinite II$_1$ factor, $R$. Let $M_1=(M_0\cup R)''$.
Then $M_1$ is separable, of type II$_1$, and satisfies $S\subset\mathcal H\ominus\mathcal H^{M_0}\subset\mathcal H\ominus\mathcal H^{M_1}$. 
Hence, after replacing $M$ with $M_1$, we may assume that $M$ is separable.

Let $\mathcal H_1=\overline{\text{sp}}(M\mathcal H^M)$ and $\mathcal H_2=\mathcal H\ominus\mathcal H_1$. Let $e_i:\mathcal H\rightarrow\mathcal H_i$ be the orthogonal projection onto $\mathcal H_i$, for every $i\in\{1,2\}$.
Since $e_1(S)\subset\mathcal H_1\ominus\mathcal H_1^M$ and $\mathcal H_1$ is isomorphic to an $M$-subbimodule of $\text{L}^2(M)\otimes\ell^2(I)$, for some set $I$, Lemma \ref{L^2(M)} gives a partition of unity $\{q_j\}_{j=1}^n$ in $M$ such that $\|\sum_{j=1}^nq_je_1(\xi)q_j\|<\frac{\varepsilon}{2}$, for every $\xi\in S$.

Let $1\leq j\leq n$. Since $\mathcal H_2^{M}=\{0\}$, we have  $(q_j\mathcal H_2q_j)^{q_jMq_j}=\{0\}$ by Lemma \ref{fixed_point_proj}. Applying Lemma \ref{no_central} to $q_je_2(S)q_j\subset q_j\mathcal H_2q_j$ yields projections $(q_{j,k})_{k=1}^{K_j}\subset q_jMq_j$ such that $\sum_{k=1}^{K_j} q_{j,k}=q_j$ and $\|\sum_{k=1}^{K_j}q_{j,k}e_2(\xi) q_{j,k}\|<\frac{\varepsilon}{2n}$, for every $\xi\in S$. Thus, $\{p_i\}_{i=1}^m=\{q_{j,k}\mid 1\leq j\leq n,1\leq k\leq K_j\}$ is a partition of unity  in $M$ such that $\|\sum_{i=1}^mp_ie_2(\xi)p_i\|<\frac{\varepsilon}{2}$, for every $\xi\in S$. Since $\{p_i\}_{i=1}^m$ refines the partition $\{q_j\}_{j=1}^n$, we also have $\|\sum_{i=1}^mp_ie_1(\xi)p_i\|\leq\|\sum_{j=1}^nq_je_1(\xi)q_j\|<\frac{\varepsilon}{2}$, for every $\xi\in S$. This implies that $\|\sum_{i=1}^mp_i\xi p_i\|<\varepsilon$, for every $\xi\in S$, which finishes the proof.
\end{proof}

We now turn to the proof of Lemma \ref{L^2(M)} which is based on the following two lemmas.

\begin{lemma}\label{factor}
Let $M$ be a II$_1$ factor, $F\subset M$ finite and $z\in M$ with $\tau(z)=0$. Then there exists $u\in\mathcal U(M)$ such that $u^2=1$, $\tau(u)=0$, $|\tau(ux)|<\varepsilon$, for every $x\in F$, and $\|uzu^*-z\|_2\geq \frac{1}{3}\|z\|_2$.
\end{lemma}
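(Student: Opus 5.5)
We look for $u$ of the form $u=2e-1$ with $e\in M$ a projection of trace $\frac12$; then $u$ is a self-adjoint unitary with $u^2=1$ and $\tau(u)=0$. The remaining conditions are $|\tau(ux)|<\varepsilon$ for $x\in F$ and $\|uzu^*-z\|_2\geq\frac13\|z\|_2$. The plan is to choose $e$ inside a suitable diffuse abelian (or hyperfinite) subalgebra, and to exploit averaging. The guiding fact is that if $A\subset M$ is a diffuse abelian subalgebra and $(u_n)\subset\mathcal U(A)$ is a sequence of symmetries $u_n=2e_n-1$ converging weakly to $0$ (e.g.\ Rademacher-type functions in $A\cong\mathrm L^\infty[0,1]$), then $\tau(u_nx)=\tau(u_n\mathrm E_A(x))\to0$ for every $x\in F$. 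This disposes of the trace conditions once $n$ is large. The case $z=0$ is trivial, so assume $z\neq0$.

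The key step is the lower bound $\|uzu-z\|_2\geq\frac13\|z\|_2$. We have $\|uzu-z\|_2^2=2\|z\|_2^2-2\Re\tau(uzuz^*)$, so it suffices to make $\Re\tau(u_nzu_nz^*)\leq \frac{17}{18}\|z\|_2^2$. For weakly null $u_n$ in $A$, decompose $z=\mathrm E_A(z)+z_0$.

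First consider the cross terms. By asymptotic freeness/weak mixing of $u_n$, $\tau(u_n a u_n z_0^*)$-type terms should tend to $0$ when $a\in A$. Indeed, since $u_n$ commutes with $a$, such a term equals $\tau(u_n^2az_0^*)=\tau(az_0^*)=0$, because $z_0\perp \mathrm L^2(A)$.

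Next, the diagonal term $\tau(u_n\mathrm E_A(z)u_n\mathrm E_A(z)^*)=\|\mathrm E_A(z)\|_2^2$ contributes no gain. The term $\tau(u_nz_0u_nz_0^*)$ is the problem.

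To get the gain, choose $A$ cleverly so that $\mathrm E_A(z)$ is small. Since $\tau(z)=0$, one can pick a diffuse abelian (even masa) $A$ with $\|\mathrm E_A(z)\|_2$ small: by Popa's theorem (Theorem \ref{Popa} with $M$ replaced by $M\subset M$, $\mathrm E_{M'\cap M}(z)=\tau(z)=0$) there is a partition $\{p_i\}$ with $\|\sum p_izp_i\|_2$ small. We then refine inside each $p_iMp_i$ to a diffuse abelian $A$ containing the $p_i$, which keeps $\mathrm E_A(z)$ small.

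Then for $z_0$ orthogonal to $A$, average over the group generated by independent symmetries $u_n$ in $A$. This gives $\lim_N\frac1N\sum_n\tau(u_nz_0u_nz_0^*)\to\|\mathrm E_{A'\cap M}\text{-part}\|^2$, which is at most $\|\mathrm E_A(z)\|_2^2$ (since $A$ is a masa), hence small. So some $n$ has $\Re\tau(u_nzu_nz^*)\leq\frac12\|z\|_2^2$, which is more than enough. Choosing $n$ also large enough handles the trace condition simultaneously. To ensure this, we take $n$ among a set of positive density, combining the averaging with the weak convergence.

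The main obstacle is this averaging estimate for $\tau(u_nz_0u_nz_0^*)$, i.e.\ an ergodic-type statement for the action of the Rademacher symmetries on $\mathrm L^2(M)\ominus\mathrm L^2(A)$. It reduces to the masa property and the fact that the Rademacher group $\{\prod_{n\in S}u_n\}$ generates $A$. We handle it by averaging over finite products, using the mean ergodic theorem in $\mathrm L^2(M)$ for the action of the compact abelian group $\{\pm1\}^{\mathbb N}$ via $\mathrm{Ad}$, whose fixed points are $A'\cap M=A$. If averaging only produces products of several $u_n$, these are still symmetries $2e-1$ with $\tau(e)=\frac12$ and are weakly null, so the same conclusion holds.
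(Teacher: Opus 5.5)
Your argument is correct once the averaging is taken over all finite products, as your last paragraph proposes, and from that point on it is the paper's. Realize a separable diffuse abelian $A$ as $\text{L}(\bigoplus_{\mathbb N}\mathbb Z/2\mathbb Z)$. Average $\mathrm{Ad}(u_g)$ over the finite subgroups $G_N$; this average is exactly $\text{E}_{A_N'\cap M}$, which converges to $\text{E}_{A'\cap M}$. Deduce that a fixed proportion of $g\in G_N$ move $z$ enough, then discard the finitely many $g$ with $|\tau(u_gx)|\geq\varepsilon$, using that $\{u_g\}$ is orthonormal.

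Where you differ is in how you force $z$ to be far from $A'\cap M$.
\begin{itemize}
\item You cite Theorem \ref{Popa} to get $\{p_i\}$ with $\|\sum_ip_izp_i\|_2$ small, and put the $p_i$ in $A$. Then $\|\text{E}_{A'\cap M}(z)\|_2\leq\|\sum_ip_izp_i\|_2$, which even gives any constant below $\sqrt2$ in place of $\frac13$.
\item The paper uses only the convexity statement Lemma \ref{fixed_point}(1). It takes one unitary $v$ with $\|vzv^*-z\|_2>\|z\|_2$ and places $v$ in $A$. This gives $\|z-\text{E}_{A'\cap M}(z)\|_2>\frac12\|z\|_2$, which is all the counting step needs.
\end{itemize}
Citing Theorem \ref{Popa} is logically admissible, but it is much heavier and it inverts the paper's purpose. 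Lemma \ref{factor} is precisely the input, via Lemmas \ref{abelian} and \ref{L^2(M)}, for a self-contained proof of the case $N=M$ of Theorem \ref{Popa}. With your proof of Lemma \ref{factor}, that route becomes circular in spirit.

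Three corrections.
\begin{enumerate}
\item The Ces\`aro average over the generators does not converge to $\|\text{E}_{A'\cap M}(z_0)\|_2^2$. Take the odometer $\sigma$ on $X=\{0,1\}^{\mathbb N}$. Then $A=\text{L}^\infty(X)$ is a masa in the factor $M=A\rtimes_\sigma\mathbb Z$. With $u_n$ the Rademacher functions and $z_0$ the implementing unitary, one has $\tau(u_nz_0u_nz_0^*)=1-2^{2-n}\to 1$, although $\text{E}_{A'\cap M}(z_0)=0$. You must average over all finite products. That is a finite-group average, so no ergodic theorem is needed, and the relevant group is the countable $\bigoplus_{\mathbb N}\mathbb Z/2\mathbb Z$, not the compact $\{\pm1\}^{\mathbb N}$.
\item The masa hypothesis is unnecessary, since containment of the $p_i$ in $A$ already gives the bound. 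It is also problematic for non-separable $M$, where a masa need not be generated by countably many symmetries. Take $A$ separable, diffuse, abelian and containing the $p_i$.
\item The splitting $z=\text{E}_A(z)+z_0$ is superfluous, since directly $\frac{1}{|G_N|}\sum_{g\in G_N}\tau(u_gzu_gz^*)=\|\text{E}_{A_N'\cap M}(z)\|_2^2$.
\end{enumerate}
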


\begin{proof} We may  assume that $z\not=0$. Since $\tau(z)=0$, Lemma \ref{fixed_point}(1) provides $v\in\mathcal U(M)$ such that 
$\Re\tau(vzv^*z^*)<\frac{1}{2}\|z\|_2^2$. Thus, $\|vzv^*-z\|_2^2=2(\|z\|_2^2-\Re\tau(vzv^*z^*))>\|z\|_2^2$, hence
$\|vzv^*-z\|_2>\|z\|_2$.

Next, since $M$ is diffuse,  there exists a diffuse separable abelian von Neumann subalgebra $A\subset M$ which contains $v$. 
Since $v$ commutes with $\text{E}_{A'\cap M}(z)$, we get that 
\begin{equation}
\label{z_inv}\|z\|_2<\|vzv^*-z\|_2=\|v(z-\text{E}_{A'\cap M}(z))v^*-(z-\text{E}_{A'\cap M}(z))\|_2\leq 2\|z-\text{E}_{A'\cap M}(z)\|_2.
\end{equation}
Since $A$ is abelian, diffuse and separable, we can identify it with $\text{L}(G)$, where $G=\bigoplus_{i=1}^\infty\frac{\mathbb Z}{2\mathbb Z}$. For $n\geq 1$, let $G_n=\bigoplus_{i=1}^n\frac{\mathbb Z}{2\mathbb Z}$ and $A_n=\text{L}(G_n)$. Then $A_n\subset A_{n+1}$, for every $n\geq 1$, and $(\cup_{n\geq 1}A_n)''=A$. Hence $A'_{n+1}\cap M\subset A_n'\cap M$, for every $n\geq 1$, and $A'\cap M=\cap_{n\geq 1}(A_n'\cap M)$. Thus, we have $\lim\limits_{n\rightarrow\infty}\|\text{E}_{A_n'\cap M}(y)-\text{E}_{A'\cap M}(y)\|_2=0$, for every $y\in M$. By using \eqref{z_inv} we can find $N\geq 1$ such that \begin{equation}\label{N}\text{$\|z\|_2<2\|\text{E}_{A_n'\cap M}(z)-z\|_2$, for every $n\geq N$.}\end{equation}
Since $\text{E}_{A_n'\cap M}(z)=\frac{1}{|G_n|}\sum_{g\in G_n}u_gzu_g^*$, using \eqref{N} we derive that
\begin{equation}\label{G_n}
\text{$\|z\|_2<2\|\emph{E}_{A_n'\cap M}(z)-z\|_2\leq\frac{1}{|G_n|}\sum_{g\in G_n}2\|u_gzu_g^*-z\|_2$, for every $n\geq N$}.
\end{equation}
Let $H_n=\{g\in G_n\mid \|u_gzu_g^*-z\|_2\geq\frac{1}{3}\|z\|_2\}$. Since $\|u_gzu_g^*-z\|_2\leq 2\|z\|_2$,  for every $g\in G$, \eqref{G_n} implies that for every $n\geq N$ we have $1< \frac{2|G_n\setminus H_n|}{3|G_n|}+\frac{4|H_n|}{|G_n|}$. This further implies that $|H_n|>\frac{|G_n|}{10}$.

On the other hand, since $u_g\rightarrow 0$ weakly, as $g\rightarrow\infty$, we can find a finite set $T\subset G$ such that $e\in T$ and $|\tau(u_gx)|<\varepsilon$, for every $x\in F$ and $g\in G\setminus T$. Let $n\geq N$ such that $\frac{|G_n|}{10}>|T|$. Then $|H_n|>|T|$, hence there exists $g\in H_n\setminus T$. Then $u=u_g\in\mathcal U(M)$ satisfies the conclusion.
\end{proof}

\begin{lemma}\label{abelian}
Let $(M,\tau)$ be a separable tracial von Neumann algebra of type \emph{II}$_1$, $S\subset M$ a finite set, and $\varepsilon>0$. Then there exists a maximal abelian subalgebra  $A\subset M$ such that $$\text{$\|\emph{E}_A(x)-\emph{E}_{\mathcal Z(M)}(x)\|_2<\varepsilon$, for every $x\in S$.}$$
\end{lemma}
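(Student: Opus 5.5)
We build $A$ as the weak closure of an increasing chain of finite dimensional abelian subalgebras, using Lemma \ref{factor} to kill the non-central part of the elements of $S$ step by step, and then enlarge the resulting abelian algebra to a masa.

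First we decompose over the center. Write $M=\int^\oplus_X M_x\,d\mu(x)$ with $M_x$ II$_1$ factors (separability is used here). Every element of $\mathcal Z(M)'\cap M=M$ disintegrates, and $\text{E}_{\mathcal Z(M)}(x)=\int \tau_x(x_x)\,d\mu$. So for $x\in S$ we must make the fiberwise quantities $\|\text{E}_{A_x}(x_x)-\tau_x(x_x)1\|_2$ small in $\text{L}^2(X)$-norm. To avoid measurable selection, we instead run the argument globally with diffuse abelian pieces inside $M$. The key local step is as follows. Given a projection $p$ and $z\in pMp$ with $\text{E}_{\mathcal Z(pMp)}(z)=0$, find a self-adjoint unitary $u\in pMp$ with $\text{E}_{\mathcal Z}(u)=0$ and $\|uzu-z\|_2\ge c\|z\|_2$ for a universal $c>0$. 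This is the relative version of Lemma \ref{factor}. Its proof is the same: Lemma \ref{fixed_point}(1) applied to the $M$-bimodule $\text{L}^2(M)\ominus \text{L}^2(\mathcal Z(M))$ gives a unitary moving $z$, and the dyadic-group averaging argument inside a diffuse abelian algebra containing it (now relative to the center) yields an order two unitary $u_g$ with a fixed proportion of movement.

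Iteration. Set $A_0=\mathbb C1$, and more generally let $A_n$ be generated by projections $p_1,\dots,p_k$ which will each satisfy $\text{E}_{\mathcal Z(M)}(p_j)\in\mathcal Z(M)$ scaled appropriately. Put $z_j=p_j(x-\text{E}_{A_n'\cap M}(x))p_j$, or more simply work with the compressions $p_jxp_j$ minus their center-valued traces in $p_jMp_j$. Apply the local step in each $p_jMp_j$ to get $u_j$, and let $A_{n+1}$ be generated by $A_n$ and the spectral projections $p_j(1\pm u_j)/2$. Then
\[
\text{E}_{A_{n+1}'\cap M}(y)=\tfrac12\,(y+uyu),\qquad u=\textstyle\sum_j u_j,
\]
for $y\in A_n'\cap M$. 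Since $\|\tfrac12(y+uyu)\|_2^2=\|y\|_2^2-\tfrac14\|uyu-y\|_2^2$, the non-central part of every $x\in S$ shrinks by the factor $\sqrt{1-c^2/4}$ at each step. For finitely many $x\in S$ we handle one element per step cyclically, or more robustly choose $u$ via Lemma \ref{fixed_point}(1) applied to the direct sum bimodule so that the sum $\sum_{x\in S}\|uzu-z\|_2^2$ is large. After $n$ steps we get $\|\text{E}_{A_n'\cap M}(x)-\text{E}_{\mathcal Z(M)}(x)\|_2<\varepsilon$ for all $x\in S$, using that $\text{E}_{\mathcal Z(M)}(x)$ is already the limit on the central part.

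Finishing. Let $A$ be any masa containing $A_n$; such an $A$ exists by Zorn's lemma. Then $A\subset A_n'\cap M$, so
\[
\text{E}_A(x)-\text{E}_{\mathcal Z(M)}(x)=\text{E}_A\bigl(\text{E}_{A_n'\cap M}(x)-\text{E}_{\mathcal Z(M)}(x)\bigr),
\]
because $\mathcal Z(M)\subset A$. This term has norm less than $\varepsilon$. The main obstacle is the relative (non-factor) version of Lemma \ref{factor}: producing order-two unitaries with center-valued trace zero that move $z$ by a uniform proportion. I would handle it by running the dyadic construction with a diffuse abelian algebra $A\supset\mathcal Z(M)$ that is generated over $\mathcal Z(M)$ by a copy of $\text{L}(\oplus\mathbb Z/2)$ with center-valued trace zero on nontrivial characters; this copy is available since $M$ is of type II$_1$, and the argument then repeats verbatim with $\text{E}_{\mathcal Z}$ replacing $\tau$.
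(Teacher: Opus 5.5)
There is a genuine gap: your iteration shrinks only part of the quantity you need to make small. Take the factor case for clarity; in general replace $\tau(x)$ by $\text{E}_{\mathcal Z(M)}(x)$ and $A_n$ by $A_n\vee\mathcal Z(M)$. Split
\[
\text{E}_{A_n'\cap M}(x)-\tau(x)=z_n+\bigl(\text{E}_{A_n}(x)-\tau(x)\bigr),\qquad z_n=\text{E}_{A_n'\cap M}(x)-\text{E}_{A_n}(x).
\]
Your first formula $z_j=p_j(x-\text{E}_{A_n'\cap M}(x))p_j$ is identically $0$; your second description gives $\sum_j z_j=z_n$.

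Your local step and the identity $\|\frac12(y+uyu)\|_2^2=\|y\|_2^2-\frac14\|uyu-y\|_2^2$ act only on $z_n$. The second summand lies in $A_n=\mathcal Z(A_n'\cap M)$ and commutes with every $u\in A_n'\cap M$, so it is never reduced. Moreover, since $A_n\subset A_m\subset A_m'\cap M$ for $m\geq n$, the number $\|\text{E}_{A_n}(x)-\tau(x)\|_2$ is nondecreasing in $n$. It bounds from below both $\|\text{E}_{A_m'\cap M}(x)-\tau(x)\|_2$ and $\|\text{E}_A(x)-\tau(x)\|_2$ for every masa $A\supset A_n$.

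Nothing in your choice of $u$ keeps this term small. With $A_{n+1}=A_n+A_nu$ one has $\text{E}_{A_{n+1}}(x)=\text{E}_{A_n}(x)+\text{E}_{A_n}(xu)u$, and an order two trace zero unitary that moves $z$ by a fixed proportion can still be strongly correlated with $x$. For example, let $e_1,e_2\in\mathbb M_2(\mathbb C)\subset M$ be anticommuting self-adjoint unitaries of trace $0$, and take $x=e_1$ and $u=\frac12e_1+\frac{\sqrt3}{2}e_2$. Then $u^2=1$, $\tau(u)=0$ and $\|uxu-x\|_2=\sqrt3$, yet already $\|\text{E}_{A_1}(x)\|_2=|\tau(xu)|=\frac12$. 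Your procedure does not exclude this choice, and after it no refinement brings the error below $\frac12$. So the sentence ``$\text{E}_{\mathcal Z(M)}(x)$ is already the limit on the central part'' is exactly where the argument breaks. The real obstacle is this correlation, not the relative version of Lemma \ref{factor}.

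The missing ingredient is the second clause of Lemma \ref{factor}: $|\tau(ux)|<\varepsilon$ for $x$ in a prescribed finite set. It is obtained by choosing $g$ outside a finite set in the dyadic construction, since $u_g\to0$ weakly. The paper uses it as $|\tau(xv_j)|<\delta\tau(q_j)$, which gives $\|\text{E}_{A_{n+1}}(x)-\text{E}_{A_n}(x)\|_2<\delta$ and so keeps $\|\text{E}_{A_n}(x)-\tau(x)\|_2<\varepsilon$ at every stage.

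If you add this, with tolerances $\delta_n$ satisfying $\sum_n\delta_n<\varepsilon/2$, your route works. You obtain a finite dimensional abelian $A_n$ with $\|\text{E}_{A_n'\cap M}(x)-\text{E}_{\mathcal Z(M)}(x)\|_2<\varepsilon$, which is a direct proof of Lemma \ref{L^2(M)}. Then any masa containing $A_n$ works, by Zorn, with no dense sequence needed. The paper instead builds the masa itself as $(\cup_nA_n)''$, using a dense sequence $(y_n)$ and the movement condition to force maximality. It does this so that Lemma \ref{L^2(M)} can afterwards be deduced from Lemma \ref{abelian}.

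Separately, your relative version of Lemma \ref{factor} is not verbatim. It needs an abelian algebra containing $v$ that is generated over $\mathcal Z(M)$ by Walsh-type unitaries of center-valued trace $0$. The paper avoids this by reducing to the factor case via disintegration.
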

Before proving Lemma \ref{abelian}, let us point out that it is a consequence of Theorem \ref{Popa} in the case $N=M$, or, equivalently, of Lemma \ref{L^2(M)}.
Indeed, by applying Theorem \ref{Popa} to $N=M$ and the set $\{x-\text{E}_{\mathcal Z(M)}(x)\mid x\in S\}$, we obtain a finite dimensional abelian von Neumann subalgebra $A_0\subset M$ with $\|\text{E}_{A_0'\cap M}(x)-\text{E}_{\mathcal Z(M)}(x)\|_2=\|\text{E}_{A_0'\cap M}(x-\text{E}_{\mathcal Z(M)}(x))\|_2<\varepsilon$, for every $x\in S$. If $A\subset M$ is a maximal abelian von Neumann subalgebra containing $A_0$, then $\mathcal Z(M)\subset A=A'\cap M\subset A_0'\cap M$. Hence, $\|\text{E}_A(x)-\text{E}_{\mathcal Z(M)}(x)\|_2\leq\|\text{E}_{A_0'\cap M}(x)-\text{E}_{\mathcal Z(M)}(x)\|_2<\varepsilon$, for every $x\in S$, proving Lemma \ref{abelian}. 
However, our goal is, conversely, to use Lemma \ref{abelian} in order to deduce Lemma \ref{L^2(M)}.

\begin{proof}[\bf Proof of Lemma \ref{abelian}]

Using standard disintegration arguments, we may assume that $M$ is a II$_1$ factor. 
Our goal  therefore becomes to find a maximal abelian subalgebra $A\subset M$ such that $\|\text{E}_A(x)-\tau(x)\|_2<\varepsilon$, for every $x\in S$.
Let $\{y_n\}_{n\geq 1}\subset M$ be a $\|\cdot\|_2$-dense sequence with $y_1=1$.

We will construct inductively an increasing sequence of finite dimensional abelian von Neumann subalgebras $\{A_n\}_{n\geq 1}$ of $M$ with $A_0=\mathbb C1$ such that for every $n\geq 1$ we  have 
\begin{equation}\label{1}\text{$\|\text{E}_{A_n}(x)-\tau(x)\|_2<\varepsilon$, for every $x\in S$,}\end{equation}
and there exists $u_n\in\mathcal U(A_n)$ such that denoting $z_n=\text{E}_{A_{n-1}'\cap M}(y_n)-\text{E}_{A_{n-1}}(y_n)$, we have 
\begin{equation}\label{2}
\|u_nz_nu_n^*-z_n\|_2\geq\frac{1}{3}{\|z_n\|_2}.
\end{equation}
For $n=1$, $A_1=\mathbb C1$ and $u_1=1$ satisfy the conclusion since $y_1=1$. Assuming that we have constructed $A_1,\dots,A_{n}$, for some $n\geq 1$, we will construct $A_{n+1}$ and $u_{n+1}\in\mathcal U(A_{n+1})$ satisfying \eqref{1} and \eqref{2} for $n+1$ instead of $n$. 
First, by \eqref{1}, we find $\delta>0$ such that 
\begin{equation}\label{e_delta}\text{$\|\text{E}_{A_{n}}(x)-\tau(x)\|_2+\delta<\varepsilon$, for every $x\in S$.}\end{equation}
Write $A_{n}=\bigoplus_{j=1}^{k}\mathbb Cq_{j}$, for a partition of unity $\{q_{j}\}_{j=1}^{k}$  in $M$. Then we have $A_{n}'\cap M=\bigoplus_{j=1}^{k}q_{j}Mq_{j}$. Put $\xi=z_{n+1}=\text{E}_{A_{n}'\cap M}(y_{n+1})-\text{E}_{A_{n}}(y_{n+1})$.
Since $\xi\in (A_{n}'\cap M)\ominus A_{n}$, we can write $\xi=\sum_{j=1}^{k}\xi_{j}$, where $\xi_{j}\in q_{j}Mq_{j}$ satisfies $\tau(\xi_{j})=0$, for every $1\leq j\leq k$.

By using that $q_jMq_j$ is a II$_1$ factor and Lemma \ref{factor} we find $v_{j}\in\mathcal U(q_jMq_j)$ such that 
\begin{equation}\label{3}\text{$v_{j}^2=q_j$,\;\; $\tau(v_{j})=0$,  \;\; $\|v_{j}\xi_jv_{j}^*-\xi_{j}\|_2\geq\frac{1}{3}\|\xi_{j}\|_2$,}
\end{equation}
and
\begin{equation}\label{4}
\text{ $|\tau(xv_{j})|=|\tau((q_jxq_j)v_{j})|<\delta\tau(q_j)$,\;\; for every $1\leq j\leq k$ and $x\in S$.}
\end{equation}
Let $u_{n+1}=\sum_{j=1}^{k}v_{j}\in\mathcal U(A_n'\cap M)$ and $A_{n+1}=(A_n\cup\{u_{n+1}\})''$.
Then $A_{n+1}$ is finite dimensional abelian and contains $A_n$.  
Since $u_{n+1}\xi u_{n+1}^*-\xi=\sum_{j=1}^{k}(v_{j}\xi_{j}v_{j}^*-\xi_{j})$ and  $v_{j}\xi_{j}v_{j}^*,\xi_{j}\in q_jMq_j$, for every $1\leq j\leq k$,  \eqref{2} is a consequence of the last part of \eqref{3}.

To prove \eqref{1}, we note first that $\text{E}_{A_n}(x)=\sum_{j=1}^k\frac{\tau(xq_j)}{\tau(q_j)}q_j$, for every $x\in M$. Since we have $\tau(u_{n+1}q_j)=\tau(v_j)=0$, for every $1\leq j\leq k$, we get that $\text{E}_{A_n}(u_{n+1})=0$. Since $u_{n+1}^2=1$, we also derive that $\text{E}_{A_{n+1}}(x)=\text{E}_{A_n}(x)+\text{E}_{A_n}(xu_{n+1})u_{n+1}$ and so 
\begin{equation}\label{5}\text{$\|\text{E}_{A_{n+1}}(x)-\text{E}_{A_n}(x)\|_2=\|\text{E}_{A_n}(xu_{n+1})\|_2$, for every $x\in M$.}\end{equation}
Now, if $x\in S$, then $\text{E}_{A_n}(xu_{n+1})=\sum_{j=1}^k\frac{\tau(xu_{n+1}q_j)}{\tau(q_j)}q_j=\sum_{j=1}^k\frac{\tau(xv_j)}{\tau(q_j)}q_j$. By using \eqref{4} we deduce that $\|\text{E}_{A_n}(xu_{n+1})\|_2^2=\sum_{j=1}^k\frac{|\tau(xv_j)|^2}{\tau(q_j)}<\sum_{j=1}^k\frac{\delta^2\tau(q_j)^2}{\tau(q_j)}=\delta^2$ and thus $\|\text{E}_{A_n}(xu_{n+1})\|_2<\delta$.
In combination with \eqref{5}, we deduce that $\|\text{E}_{A_{n+1}}(x)-\text{E}_{A_n}(x)\|_2<\delta$, for every $x\in S$. Together with \eqref{e_delta}, this implies \eqref{1}, which finishes the proof of the inductive step.

Finally, let $A=(\cup_{n=1}^\infty A_n)''$. By using \eqref{1} we get $\|\text{E}_A(x)-\tau(x)\|_2=\lim\limits_{n\rightarrow\infty}\|\text{E}_{A_n}(x)-\tau(x)\|_2\leq \varepsilon$, for every $x\in S$. To prove that $A$ is maximal abelian, let $y\in M$ and put $z=\text{E}_{A'\cap M}(y)-\text{E}_A(y)$. Let $(y_{n_k})$ be a subsequence of $(y_n)$ such that $\lim\limits_{k\rightarrow\infty}\|y_{n_k}-y\|_2=0$. Since $$\|z_{n_k}-(\text{E}_{A_{n_k-1}'\cap M}(y)-\text{E}_{A_{n_k-1}}(y))\|_2=\|(\text{E}_{A_{n_k-1}'\cap M}-\text{E}_{A_{n_k-1}})(y_{n_k}-y)\|_2\leq\|y_{n_k}-y\|_2,$$ and $\lim\limits_{n\rightarrow\infty}\|(E_{A_n'\cap M}(y)-\text{E}_{A_n}(y))-z\|_2=0$, we conclude that $\lim\limits_{k\rightarrow\infty}\|z_{n_k}-z\|_2=0$. Since $z\in A'\cap M$ and $u_{n_k}\in\mathcal U(A)$ we also have that $u_{n_k}zu_{n_k}^*=z$, for every $k\geq 1$. Combining the last two facts gives that $\lim\limits_{k\rightarrow\infty}\|u_{n_k}z_ku_{n_k}^*-z_k\|_2=0$.
In combination with \eqref{2}, we derive that $\|z\|_2=\lim\limits_{k\rightarrow\infty}\|z_k\|_2=0$. In other words, $\text{E}_{A'\cap M}(y)=\text{E}_A(y)$, for every $y\in M$. This implies that $A'\cap M=A$ and so $A$ is maximal abelian, which finishes the proof. 
\end{proof}

\begin{proof}[\bf Proof of Lemma \ref{L^2(M)}] 
As in the first paragraph of the proof of Theorem \ref{local_quant}, we may assume that $M$ is separable. Let $\mathcal H=\text{L}^2(M)$ and $S\subset\mathcal H\ominus\mathcal H^M=\text{L}^2(M)\ominus\text{L}^2(\mathcal Z(M))$ be a finite set.  By approximating in $\|\cdot\|_2$, we may assume that $S\subset M$ and $\text{E}_{\mathcal Z(M)}(\xi)=0$, for every $\xi\in S$. Lemma \ref{abelian} implies that we can find a maximal abelian subalgebra $A\subset M$ such that $\|\text{E}_A(\xi)\|_2<\varepsilon$, for every $\xi\in S$. Since $A$ is separable, $A=(\cup_{k\geq 1}A_k)''$, where $(A_k)_{k\geq 1}$ is a increasing sequence of finite dimensional abelian subalgebras of $M$. Since $A\subset M$ is maximal abelian, we have $\lim\limits_{k\rightarrow\infty}\|\text{E}_{A_k'\cap M}(x)-\text{E}_A(x)\|_2=0$, for every $x\in M$.  Therefore, we can find $k\geq 1$ such that $\|\text{E}_{A_k'\cap M}(\xi)\|_2<\varepsilon$, for every $\xi\in S$. If $A_k=\bigoplus_{i=1}^m\mathbb Cp_i$, for a partition of unity $\{p_i\}_{i=1}^m$  in $M$, then $\text{E}_{A_k'\cap M}(x)=\sum_{i=1}^mp_ixp_i$, for every $x\in M$, and the conclusion follows.
\end{proof}

In order to complete the proof of Theorem \ref{local_quant}, it remains to prove Lemma \ref{no_central}.

\begin{proof}[\bf Proof of Lemma \ref{no_central}] The proof is a straightforward adaptation of Popa's proof of \cite[Lemma 2.4]{Pop81}. Since the argument used here will also be needed to prove Theorem \ref{mop} below, we include a complete proof.

\begin{claim}\label{quant} For every $\xi\in\mathcal H$, there exists a partition of unity $\{e_i\}_{i=1}^n$ in $M$ such that we have $\|\sum_{i=1}^ne_i\xi e_i||^2\leq\frac{35}{36}\|\xi\|^2.$ \end{claim}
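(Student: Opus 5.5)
The plan is to combine Lemma \ref{fixed_point}(2) with a finite-spectrum approximation, in the spirit of Popa's argument in \cite[Lemma 2.4]{Pop81}. Here we work under the standing assumption of Lemma \ref{no_central}, namely that $\mathcal H^M=\{0\}$. We may assume $\xi\neq 0$. Then $\xi\in\mathcal H\ominus\mathcal H^M$, so Lemma \ref{fixed_point}(2) applied with $\varepsilon=\frac{1}{4}$ gives a unitary $u\in\mathcal U(M)$ such that $\Re\langle u\xi u^*,\xi\rangle\leq\frac14\|\xi\|^2$.

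Next we replace $u$ by a unitary with finite spectrum. By Borel functional calculus, there is a unitary $u'=\sum_{i=1}^n\lambda_ie_i$, with $\lambda_i\in\mathbb T$ and $\{e_i\}_{i=1}^n$ a partition of unity in $M$, such that $\|u-u'\|\leq\delta$ in operator norm. The left and right actions of $M$ on $\mathcal H$ are contractive representations. Hence
$$\|u'\xi u'^*-u\xi u^*\|\leq 2\delta\|\xi\|,$$
and taking $\delta$ small gives $\Re\langle u'\xi u'^*,\xi\rangle\leq\frac13\|\xi\|^2$.

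We now show that $\{e_i\}_{i=1}^n$ is the desired partition. Decompose $\xi=D+O$, where $D=\sum_i e_i\xi e_i$ and $O=\sum_{i\neq j}e_i\xi e_j$. These two vectors are orthogonal: $\langle e_i\xi e_j,e_k\xi e_l\rangle=0$ unless $i=k$ and $j=l$, because the left and right projections are orthogonal. Therefore $\|\xi\|^2=\|D\|^2+\|O\|^2$.

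Since $u'$ commutes with each $e_i$, we have $u'Du'^*=\sum_i\lambda_i\overline{\lambda_i}\,e_i\xi e_i=D$. Combined with $D\perp O$, this gives
$$\langle u'\xi u'^*,\xi\rangle=\langle D,\xi\rangle+\langle u'Ou'^*,\xi\rangle=\|D\|^2+\langle u'Ou'^*,\xi\rangle.$$
The last term has modulus at most $\|O\|\,\|\xi\|$.

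Suppose, for contradiction, that $\|D\|^2>\frac{35}{36}\|\xi\|^2$. Then $\|O\|^2<\frac1{36}\|\xi\|^2$, so $\|O\|<\frac16\|\xi\|$. The identity above then gives
$$\Re\langle u'\xi u'^*,\xi\rangle>\Big(\tfrac{35}{36}-\tfrac16\Big)\|\xi\|^2=\tfrac{29}{36}\|\xi\|^2>\tfrac13\|\xi\|^2,$$
which contradicts the previous step. Hence $\|\sum_ie_i\xi e_i\|^2\leq\frac{35}{36}\|\xi\|^2$.

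The only delicate point is the approximation step. It works because the bimodule actions are bounded, so operator-norm closeness of $u$ and $u'$ transfers directly to closeness of $u\xi u^*$ and $u'\xi u'^*$ in $\mathcal H$. No boundedness of $\xi$ is needed for this, and the cancellation $u'Du'^*=D$ is exact once $u'$ has finite spectrum with spectral projections $e_i$.
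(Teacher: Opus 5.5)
Your proof is correct and follows the paper's route. You obtain a moving unitary from Lemma \ref{fixed_point}(2), replace it by a finite-spectrum unitary $\sum_i\lambda_ie_i$, and use the pairwise orthogonality of the vectors $e_i\xi e_j$ together with the invariance of the diagonal part. The only difference is packaging: the paper writes the final estimate as $\|u\xi u^*-\xi\|^2=\|\sum_{i\neq j}(\lambda_i\overline{\lambda}_j-1)e_i\xi e_j\|^2\leq 4\|\sum_{i\neq j}e_i\xi e_j\|^2$, which transfers verbatim to the $M$-$M^{\text{op}}$ case of Theorem \ref{mop}, whereas you argue by contradiction through $\Re\langle u'\xi u'^*,\xi\rangle$.
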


\begin{proof}[Proof of Claim \ref{quant}] We may clearly assume that $\xi\not=0$. 
Since $\mathcal H^M=\{0\}$, Lemma \ref{fixed_point}(2) provides $u\in\mathcal U(M)$ such that $\Re\langle u\xi u^*,\xi\rangle<\frac{17}{18} \|\xi\|^2$.
This implies that  $\|u\xi u^*-\xi\|>\frac{1}{3}\|\xi\|$.

Let $u\in\mathcal U(M)$ such that $\|u\xi u^*-\xi\|>\frac{1}{3}\|\xi\|$. By approximating $u$ in the operator norm by a unitary with finite spectrum, we may assume that $u=\sum_{j=1}^n\lambda_je_j$, where $\lambda_1,\dots,\lambda_n\in\mathbb T$ and $\{e_j\}_{j=1}^n$ is a partition of unity in $M$.  Since the vectors $\{e_i\xi e_j\}_{i,j=1}^n\subset\mathcal H$ are pairwise orthogonal  and $|\lambda_i\overline{\lambda}_j-1|\leq 2$, for every $1\leq i,j\leq n$, we get 
\begin{align*}4\|\xi\|^2-4\|\sum_{i=1}^ne_i\xi e_i\|^2&=4\|\sum_{i\not=j}e_i\xi e_j\|^2\\&\geq\|\sum_{i,j}^n(\lambda_i\overline{\lambda}_j-1)e_i\xi e_j\|^2=\|u\xi u^*-\xi\|^2>\frac{1}{9}\|\xi\|^2.
\end{align*}
Thus, we get that $\|\sum_{i=1}^ne_i\xi e_i\|^2\leq \frac{35}{36}\|\xi\|^2$, which proves the claim.
\end{proof}

Next, we enumerate $S=\{\xi_1,\xi_2,\dots,\xi_m\}$ and prove the following claim.

\begin{claim}\label{induction}
For every integers $n_1,n_2,\dots,n_m\geq 0$, there exists a partition of unity $\{f_k\}_{k=1}^r$ in $M$ such that $\|\sum_{k=1}^r f_k\xi_jf_k\|^2\leq \big(\frac{35}{36}\big)^{n_j}\|\xi_j\|^2$, for every $1\leq j\leq m$.
\end{claim}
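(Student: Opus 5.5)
The plan is to prove Claim \ref{induction} by induction on $N=n_1+\dots+n_m$, refining the partition one step at a time using Claim \ref{quant} applied inside corners. The base case $N=0$ is trivial: take the partition $\{1\}$. For the inductive step, suppose $n_{j_0}\geq 1$ for some $j_0$. Apply the induction hypothesis to $(n_1,\dots,n_{j_0}-1,\dots,n_m)$ to get a partition of unity $\{f_k\}_{k=1}^r$ with $\|\sum_k f_k\xi_jf_k\|^2\leq (\frac{35}{36})^{n_j}\|\xi_j\|^2$ for $j\neq j_0$, and the bound with exponent $n_{j_0}-1$ for $j_0$.

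Next, for each $k$, consider the vector $f_k\xi_{j_0}f_k\in f_k\mathcal Hf_k$. By Lemma \ref{fixed_point_proj}(1), $(f_k\mathcal Hf_k)^{f_kMf_k}=f_k\mathcal H^Mf_k=\{0\}$. Also $f_kMf_k$ is again of type II$_1$ (nonzero corners of type II$_1$ algebras are type II$_1$), so Claim \ref{quant} applies to the $f_kMf_k$-bimodule $f_k\mathcal Hf_k$. It yields a partition $\{e_{k,i}\}_i$ of $f_k$ in $f_kMf_k$ with $\|\sum_i e_{k,i}f_k\xi_{j_0}f_ke_{k,i}\|^2\leq\frac{35}{36}\|f_k\xi_{j_0}f_k\|^2$. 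The collection $\{e_{k,i}\}_{k,i}$ is a partition of unity in $M$ refining $\{f_k\}$.

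Summing over $k$ and using the orthogonality of the vectors $e_{k,i}\xi e_{k,i}$ for different $k$ (they lie in mutually orthogonal subspaces $f_k\mathcal Hf_k$), we get
$$\|\textstyle\sum_{k,i}e_{k,i}\xi_{j_0}e_{k,i}\|^2\leq\frac{35}{36}\sum_k\|f_k\xi_{j_0}f_k\|^2=\frac{35}{36}\|\sum_kf_k\xi_{j_0}f_k\|^2\leq(\tfrac{35}{36})^{n_{j_0}}\|\xi_{j_0}\|^2.$$
For $j\neq j_0$, the map $\eta\mapsto\sum_{k,i}e_{k,i}\eta e_{k,i}$ restricted to $\sum_kf_k\mathcal Hf_k$ is an orthogonal projection, since $e_{k,i}\mathcal H e_{k,i}$ are orthogonal subspaces. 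Hence refining does not increase the norms: $\|\sum_{k,i}e_{k,i}\xi_je_{k,i}\|\leq\|\sum_kf_k\xi_jf_k\|$, and the remaining bounds persist.

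The only point requiring care is verifying that Claim \ref{quant} legitimately applies in the corner. This needs both the vanishing of central vectors, supplied by Lemma \ref{fixed_point_proj}(1), and the type II$_1$ hypothesis on $f_kMf_k$, which Claim \ref{quant} uses implicitly through Lemma \ref{fixed_point}(2) and the spectral approximation of unitaries. I expect this to be routine. Once the claim is proved, choosing all $n_j$ large enough that $(\frac{35}{36})^{n_j}\|\xi_j\|^2<\varepsilon^2$ finishes Lemma \ref{no_central}.
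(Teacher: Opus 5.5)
Your proof is correct and takes essentially the same approach as the paper: refine each piece $f_k$ of the previous partition by applying Claim \ref{quant} in the corner $f_k\mathcal H f_k$ (which has no nonzero central vectors by Lemma \ref{fixed_point_proj}(1)), then use orthogonality of the corners to gain the factor $\frac{35}{36}$ for $\xi_{j_0}$, while refinement does not increase the other norms. One minor correction: Claim \ref{quant} uses only $\mathcal H^M=\{0\}$ and Borel functional calculus, not the type II$_1$ hypothesis, so the corner check you flag is not actually needed (though it does hold).
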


\begin{proof} We prove the claim by induction.
Assume the claim holds for some $n_1,n_2,\dots,n_m\geq 0$ and a partition of unity $\{f_k\}_{k=1}^r$ in $M$. Fix $1\leq j_0\leq m$ and $1\leq k\leq r$. Since $\mathcal H^{M}=\{0\}$, Lemma \ref{fixed_point_proj} gives that $(f_k\mathcal Hf_k)^{f_kMf_k}=\{0\}$. By Claim \ref{quant} applied to $f_k\xi_{j_0}f_k$ we find projections $\{f_{k,l}\}_{l=1}^{l_k}$ such that $\|\sum_{l=1}^{l_k}f_{k,l}\xi_{j_0}f_{k,l}\|^2\leq\frac{35}{36}\|f_k\xi _{j_0}f_k\|^2$ and $\sum_{l=1}^{l_k}f_{k,l}=f_k$. 
Then $\{f_{k'}'\}_{k'=1}^{r'}:= \{f_{k,l}\mid 1\leq k\leq r, 1\leq l\leq l_k\}$ is a partition of unity in $M$
which satisfies $$\|\sum_{k=1}^{r'}f_k'\xi_{j_0}f_k'\|^2=\sum_{k=1}^r\|\sum_{l=1}^{l_k}f_{k,l}\xi_{j_0}f_{k,l}\|^2\leq\frac{35}{36}\sum_{k=1}^r\|f_k\xi_{j_0}f_k\|^2\leq\big(\frac{35}{36}\big)^{n_{j_0}+1}\|\xi_{j_0}\|^2.$$

Since the partition $\{f_{k'}'\}_{k'=1}^{r'}$ refines $\{f_k\}_{k=1}^r$, we also have  that

$$\text{$\|\sum_{k'=1}^{r'}f_{k'}'\xi_{j}f_{k'}'\|^2\leq \sum_{k=1}^r\|f_k\xi_jf_k\|^2\leq\big(\frac{35}{36}\big)^{n_j}\|\xi_j\|^2$,\;\;\; for every $1\leq j\leq m$.}$$

This shows that the claim holds when $(n_1,\dots,n_m)$ is replaced by $(n_1,\dots,n_{j_0-1},n_{j_0}+1,n_{j_0+1},\dots,n_m)$ and the partition $\{f_k\}_{k=1}^r$ is replaced by $\{f_{k'}'\}_{k'=1}^{r'}$. By induction, the claim follows.
\end{proof}

The conclusion follows by applying Claim \ref{induction} to $n_j\in\mathbb N$ with $\big(\frac{35}{36})^{n_j}<\varepsilon$, for every $1\leq j\leq m$.
\end{proof}

\subsection{Local quantization for \texorpdfstring{$M$}{M}-\texorpdfstring{$M^{\text{op}}$}{M{\text{op}}}-bimodules} Next, we establish a variant of Theorem \ref{local_quant} to $M$-$M^{\text{op}}$-bimodules.

\begin{theorem}\label{mop}
Let $(M,\tau)$ be a tracial von Neumann algebra of type $\emph{II}_1$ and $\mathcal H$ be an $M$-$M^{\emph{op}}$-bimodule. Let $S\subset\mathcal H$ be a finite set. Then for every $\varepsilon>0$, there exists a partition of unity $\{p_i\}_{i=1}^m$ in $M$ such that $$\text{$\|\sum_{i=1}^mp_i\xi p_i^{\emph{op}}\|<\varepsilon$, for every $\xi\in S$}.$$
\end{theorem}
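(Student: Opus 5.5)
The plan is to mimic the proof of Lemma \ref{no_central}, with the conjugation action $\xi\mapsto u\xi u^*$ replaced by $\xi\mapsto u\xi (u^{\rm op})^*$. The key observation is that no central-vector hypothesis is needed, because the relevant ``central'' vectors automatically vanish. Indeed, $M$ acts on $\mathcal H$ by $u\cdot\xi=u\xi u^{\rm op}$, since $(u^*)^{\rm op}$ acts on the right. Using $(uv)^{\rm op}=v^{\rm op}u^{\rm op}$ and the right action of $M^{\rm op}$, one checks that $\xi\mapsto u\xi u^{\rm op}$ does \emph{not} define a group action. Instead, for $\xi\neq0$ I will show directly that some unitary $u$ of finite spectrum, or a partition, moves $\xi$ substantially.

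\textbf{Step 1 (Claim analogous to Claim \ref{quant}).} For every nonzero $\xi\in\mathcal H$, there is a partition of unity $\{e_i\}_{i=1}^n$ in $M$ with $\|\sum_i e_i\xi e_i^{\rm op}\|^2\le c\|\xi\|^2$ for a universal constant $c<1$.

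To prove this, I take a unitary $u=\sum_j\lambda_je_j$ with finite spectrum. The vectors $e_i\xi e_j^{\rm op}$ are pairwise orthogonal, so, exactly as before,
$$4\|\xi\|^2-4\|\sum_i e_i\xi e_i^{\rm op}\|^2\ge\|u\xi u^{\rm op}-\xi\|^2,$$
where $u\xi u^{\rm op}=\sum_{i,j}\lambda_i\lambda_je_i\xi e_j^{\rm op}$. So it suffices to find $u$ with $\|u\xi u^{\rm op}-\xi\|\ge\frac13\|\xi\|$. I would take $u=2p-1$ for a projection $p\in M$, and compute
$$\|\sum_{i,j}(\lambda_i\lambda_j-1)e_i\xi e_j^{\rm op}\|^2=4\|p\xi(1-p)^{\rm op}+(1-p)\xi p^{\rm op}\|^2 .$$
Alternatively, I can argue by contradiction. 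Suppose that for every projection $p$ the off-diagonal part $p\xi(1-p)^{\rm op}+(1-p)\xi p^{\rm op}$ is small. Then, by the same refinement iteration, $\xi$ is almost ``block-diagonal'' with respect to every finite partition.

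\textbf{Step 2 (the main obstacle: showing that $\xi$ cannot be almost diagonal for all partitions).} Here I would use a scalar-unitary trick. Take $u=\lambda 1$ with $\lambda\in\mathbb T$: then $u\xi u^{\rm op}=\lambda^2\xi$, and $\lambda=-1$ would give distance $2\|\xi\|$. But this corresponds to a trivial partition, so it does not help directly, and I need the following argument. Choose a diffuse abelian subalgebra $A=\mathrm L^\infty[0,1]\subset M$ (possible since $M$ is of type II$_1$). Restricted to $A$, $\mathcal H$ is an $A$-$A^{\rm op}=A$-$A$ bimodule, and, since $A$ is abelian, $a\xi b^{\rm op}$ corresponds to a representation of $A\otimes A$. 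The map $\xi\mapsto\sum_ie_i\xi e_i^{\rm op}$ is the compression by the projection $\sum_i e_i\otimes e_i$ in $A\overline\otimes A$ acting normally on $\mathcal H$, since both actions are normal and commute. As the partitions refine dyadically, these projections decrease strongly to the spectral projection of the diagonal of $[0,1]^2$ under the associated measure class. So the task is to show that the diagonal carries no $\xi$-mass, i.e.\ that the diagonal projection $D$ is zero.

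This is where the opposite structure should enter. If $D\neq0$, then on $D\mathcal H$ the elements $a\otimes1$ and $1\otimes a$ agree, i.e.\ $a\eta=\eta a^{\rm op}$ for $a\in A$. I then plan to use the type II$_1$ property: pick unitaries $v\in M$ together with subalgebras $A$ and $vAv^*$, and exploit the reversal $(xy)^{\rm op}=y^{\rm op}x^{\rm op}$. If $a\eta=\eta a^{\rm op}$ held for all $a$ in the algebra generated by two noncommuting such subalgebras, then $ab\,\eta=a\eta b^{\rm op}=\eta b^{\rm op}a^{\rm op}=\eta(ab)^{\rm op}$. This seems consistent, so the contradiction must come from quantitative choices. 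I expect this to be the delicate point. Concretely, I would try to choose $A$ adapted to the finite set $S$: using a separability reduction as in the proof of Theorem \ref{local_quant}, together with an inductive construction of $A$ in the spirit of Lemma \ref{abelian}, at each stage I choose a partition killing the diagonal mass of the remaining vectors. This uses that the diagonal of a nonatomic product measure class is controlled by the trace, and I will need to verify this for general bimodules. That verification is the step I would work on first.

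\textbf{Step 3.} With Step 1 established, I iterate exactly as in Claim \ref{induction}, refining partitions in the corners $f_kMf_k$ (still of type II$_1$). Each refinement multiplies the squared norm of one $\xi_j$ by $c$ and does not increase the others, because compression by finer diagonal projections in $A\overline\otimes A$ is monotone. Choosing $n_j$ with $c^{n_j}<\varepsilon^2$ then finishes the proof.
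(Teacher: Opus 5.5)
\textbf{The gap is in Step 2.} Your Steps 1 and 3 follow the same skeleton as the paper's proof, which also adapts Claims \ref{quant} and \ref{induction}. But the whole content of the theorem lies in Step 2, and there you give no argument. You never produce, for a nonzero $\xi$, a unitary or projection in $M$ that moves $\xi$ by a fixed proportion of $\|\xi\|$.

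The route you sketch cannot work for a fixed diffuse abelian $A$. Take $M=R$ and let $\beta$ be the transpose anti-automorphism, which fixes the diagonal masa $A$ pointwise. Let $\mathcal H=\mathrm L^2(R)$ with $x\xi y^{\mathrm{op}}=x\xi\beta(y)$. For $\xi=\hat 1$ you get $\sum_ie_i\xi e_i^{\mathrm{op}}=\sum_ie_i^2=\xi$ for every partition $\{e_i\}\subset A$. So the diagonal projection $D$ contains $\xi$, and the partition has to be built from noncommuting parts of $M$; you give no mechanism for this.

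Your consistency check is exactly where this should have surfaced, but it contains an order error. If $a\eta=\eta a^{\mathrm{op}}$ for all $a$ in an algebra $B$, then
\[
ab\eta=(a\eta)b^{\mathrm{op}}=\eta a^{\mathrm{op}}b^{\mathrm{op}}=\eta(ba)^{\mathrm{op}}=ba\eta,
\]
so $[a,b]\eta=0$. The annihilator of $\eta$ in $B$ is then a two-sided ideal containing all commutators. Hence $\eta=0$ as soon as $B$ contains a unital copy of $\mathbb M_2(\mathbb C)$.

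\textbf{The missing ingredient} is a quantitative form of this observation, which is the paper's Lemma \ref{opposite}. Since $M$ is of type II$_1$, there are $u,v\in\mathcal U(M)$ with $vu=iuv$. Put $\alpha=\|u\xi(u^{\mathrm{op}})^*-\xi\|$ and $\beta=\|v\xi(v^{\mathrm{op}})^*-\xi\|$. Then $\|\xi\|\le\alpha+\beta$ for every $\xi$ in every $M$-$M^{\mathrm{op}}$-bimodule. The reason is that $vu\xi(u^{\mathrm{op}})^*(v^{\mathrm{op}})^*$ and $uv\xi(v^{\mathrm{op}})^*(u^{\mathrm{op}})^*$ both lie within $\alpha+\beta$ of $\xi$ and are negatives of each other. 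The phases $\pm i$ no longer cancel, as they do for $\xi\mapsto u\xi u^*$.

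This uniform bound replaces the hypothesis $\mathcal H^M=\{0\}$ and Lemma \ref{fixed_point}(2) in Claim \ref{quant}. A mere absence of invariant vectors would not suffice, because $u\mapsto(\xi\mapsto u\xi(u^{\mathrm{op}})^*)$ is not a group action.

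\textbf{A secondary error in Step 1.} The map attached to partitions is $u\xi(u^{\mathrm{op}})^*=\sum_{i,j}\lambda_i\bar\lambda_je_i\xi e_j^{\mathrm{op}}$, not $u\xi u^{\mathrm{op}}$. Your inequality $4\|\xi\|^2-4\|\sum_ie_i\xi e_i^{\mathrm{op}}\|^2\ge\|u\xi u^{\mathrm{op}}-\xi\|^2$ fails for general finite-spectrum $u$; try $u=i1$. Also, contrary to your remark, $\xi\mapsto u\xi u^{\mathrm{op}}$ \emph{is} a unitary representation of $\mathcal U(M)$, because $v^{\mathrm{op}}u^{\mathrm{op}}=(uv)^{\mathrm{op}}$. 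It has no nonzero invariant vectors only for the trivial reason $u=i1$, and it is irrelevant to partitions. Restricting to $u=2p-1$ avoids the issue, but then you cannot use Lemma \ref{opposite}, since self-adjoint unitaries cannot satisfy $vu=iuv$.

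\textbf{How to finish.} With Lemma \ref{opposite} in hand, approximate $u$ or $v$ in norm by a unitary with finite spectrum, and obtain the $\frac{35}{36}$ estimate using the coefficients $\lambda_i\bar\lambda_j-1$, which vanish on the diagonal. Your Step 3 then completes the proof by refining inside the type II$_1$ corners $f_kMf_k$ acting on $f_k\mathcal Hf_k^{\mathrm{op}}$.
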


\begin{lemma}\label{opposite}
Let $(M,\tau)$ be a tracial von Neumann algebra of type $\emph{II}_1$. Then there exist $u,v\in \mathcal U(M)$ such that for every $M$-$M^{\emph{op}}$-bimodule $\mathcal H$ we have that 
$$\text{$\|\xi\|\leq \|u\xi {(u^{\emph{op}})}^*-\xi\|+\|v\xi {(v^{\emph{op}})}^*-\xi\|$, for every  $\xi\in\mathcal H.$}$$
\end{lemma}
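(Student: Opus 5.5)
The plan is to reduce the statement to a computation inside a single finite dimensional matrix algebra, which is independent of $\mathcal H$.

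\emph{Step 1: reformulation.} Given an $M$-$M^{\text{op}}$-bimodule $\mathcal H$, the left action $x\mapsto x\xi$ is a normal unital $*$-homomorphism $\lambda:M\to\mathbb B(\mathcal H)$. The map $x\mapsto(\xi\mapsto \xi x^{\text{op}})$ is also a normal unital $*$-homomorphism $\rho:M\to\mathbb B(\mathcal H)$, because right multiplication by $M^{\text{op}}$ is anti-multiplicative and $x\mapsto x^{\text{op}}$ is anti-multiplicative too. The ranges of $\lambda$ and $\rho$ commute. Moreover, $\xi\mapsto u\xi(u^{\text{op}})^*=\lambda(u)\rho(u^*)\xi$. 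So we need unitaries $u,v\in M$ such that, for every pair of commuting unital representations $\lambda,\rho$ of $M$,
$$\|\xi\|\leq\|(\lambda(u)\rho(u^*)-1)\xi\|+\|(\lambda(v)\rho(v^*)-1)\xi\|.$$

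\emph{Step 2: localize to a matrix algebra.} Since $M$ is of type II$_1$, it contains a unital copy of $\mathbb M_n(\mathbb C)$ for every $n$, obtained by repeatedly halving projections. This uses only the type II$_1$ assumption, not factoriality. I would choose $u,v$ inside this copy. Restricted to $\mathbb M_n(\mathbb C)$, the commuting pair $(\lambda,\rho)$ gives a unital representation of $\mathbb M_n(\mathbb C)\otimes\mathbb M_n(\mathbb C)\cong\mathbb M_{n^2}(\mathbb C)$. Every such representation is a multiple of the irreducible one on $\mathbb C^n\otimes\mathbb C^n$. The inequality is additive over orthogonal direct summands only after squaring, so I would first prove the stronger estimate
$$\|\xi\|^2\leq\|(U-1)\xi\|^2+\|(V-1)\xi\|^2,\qquad U=u\otimes u^*,\;V=v\otimes v^*,$$
on $\mathbb C^n\otimes\mathbb C^n$. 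This estimate does pass to multiples, and it implies the desired one. Thus the whole problem reduces to the irreducible model, with a constant that is automatically uniform in $\mathcal H$.

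\emph{Step 3: choice of $u,v$ and the main obstacle.} Identify $\xi\in\mathbb C^n\otimes\mathbb C^n$ with a matrix $X$, so that $U$ acts by $X\mapsto uX\overline{u}$ and $V$ by $X\mapsto vX\overline{v}$ (the bar denotes entrywise complex conjugation). I would take the clock and shift matrices in $\mathbb M_n(\mathbb C)$: $u=\mathrm{diag}(1,\omega,\dots,\omega^{n-1})$ with $\omega=e^{2\pi i/n}$, and $v$ the cyclic shift. Then $X\mapsto uX\overline{u}$ multiplies the entry $X_{ij}$ by $\omega^{i-j}$, so $\|(U-1)\xi\|^2=\sum_{i,j}|\omega^{i-j}-1|^2|X_{ij}|^2$ vanishes only on diagonal $X$. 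On the other hand, $\overline{v}=v$ and $v$ acts on a diagonal matrix $D$ by $D\mapsto vDv=(vDv^{-1})v^2$, which is never diagonal unless $D=0$ (for $n\geq3$). Hence $U$ and $V$ have no common fixed vector, and $\|(U-1)\xi\|^2+\|(V-1)\xi\|^2\geq c\|\xi\|^2$ for some $c>0$.

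The main obstacle is to get $c\geq1$, not merely $c>0$. Concretely, I would decompose $\xi$ into its diagonal part and off-diagonal part. The off-diagonal part is controlled by $U$, with the factor $|\omega^{k}-1|^2$, $k\neq0$. The diagonal part $D$ is moved by $V$ into the $(i,i+2)$-diagonal, which is orthogonal to the diagonal, so $\|vDv-D\|^2=2\|D\|^2$. Balancing these two estimates with the cross terms is the explicit finite computation to carry out. If clock and shift with a given $n$ do not give $c\geq1$, I would try first a small $n$ (e.g. $n=3$ or $n=4$) where all phases $|\omega^k-1|^2$, $k\neq0$, are at least $2$. Failing that, I would replace $(u,v)$ by suitable products, still inside the same $\mathbb M_n(\mathbb C)$, and verify $c\geq1$ directly.
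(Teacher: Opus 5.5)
Your Steps 1 and 2 are correct: the squared inequality does pass to multiples of the irreducible representation of $\mathbb M_n(\mathbb C)\otimes\mathbb M_n(\mathbb C)$, and it implies the stated one. However, the proof stops exactly where the content of the lemma lies. The constant $1$ is never established, and the route you sketch does not reach it.

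Take $n=3$ and treat the cross terms the crude way you describe: $\|(U-1)\xi\|^2=3\|O\|^2$ and $\|(V-1)\xi\|\geq\sqrt2\|D\|-2\|O\|$. Minimizing over $\|D\|^2+\|O\|^2=1$ then only yields the constant $\frac{9-\sqrt{57}}{2}\approx0.72$, and for $n=4$ the constant is smaller still.

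Nor can clock and shift work for arbitrary $n$. On matrices of the form $X_{ij}=f(i-j)$, $U$ acts as multiplication by $\omega^{i-j}$ and $V$ shifts $i-j$ by $2$. A slowly varying bump $f$ of width about $\sqrt n$ gives $\|(U-1)\xi\|,\|(V-1)\xi\|=O(n^{-1/2})\|\xi\|$, so the best constant tends to $0$. The choice of $n$, and the mechanism that forces $c\geq1$, are precisely what is missing.

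The missing idea is to use the commutation relation of $u$ and $v$ directly, rather than a spectral estimate. Inside a unital $\mathbb M_4(\mathbb C)\subset M$ you can arrange $vu=iuv$. Then $v^*u^*=iu^*v^*$, and with $U=\lambda(u)\rho(u^*)$, $V=\lambda(v)\rho(v^*)$ you get $VU=\lambda(vu)\rho(v^*u^*)=-\lambda(uv)\rho(u^*v^*)=-UV$ in every bimodule. This is the paper's proof. Since $UV\xi+VU\xi=0$,
\[
2\|\xi\|\leq\|\xi-UV\xi\|+\|\xi-VU\xi\|\leq 2\big(\|\xi-U\xi\|+\|\xi-V\xi\|\big),
\]
using $\|\xi-UV\xi\|\leq\|\xi-U\xi\|+\|U\xi-UV\xi\|$ and the analogous bound for $VU$. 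No reduction to an irreducible model is needed.

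The same fact also closes your squared formulation. $\operatorname{Re}U$ and $\operatorname{Re}V$ anticommute, so $(\operatorname{Re}U+\operatorname{Re}V)^2\leq 2$. Hence
\[
\|(U-1)\xi\|^2+\|(V-1)\xi\|^2=4\|\xi\|^2-2\langle(\operatorname{Re}U+\operatorname{Re}V)\xi,\xi\rangle\geq(4-2\sqrt2)\|\xi\|^2 .
\]
Your $n=3$ fallback can also be completed, by exact diagonalization: the top eigenvalue of $\operatorname{Re}U+\operatorname{Re}V$ is $\frac{1+\sqrt3}{2}$. But that route needs a unital $\mathbb M_3(\mathbb C)\subset M$, which repeated halving of projections does not provide.
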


\begin{proof}
Since $M$ is of type \text{II}$_1$, we can find a unital embedding $\mathbb M_4(\mathbb C)\subset M$. Therefore, there exist $u,v\in\mathcal U(M)$ such that $vu=iuv$. Let $\xi\in\mathcal H$, and put  $\alpha=\|u\xi {(u^{\text{op}})}^*-\xi\|$ and $\beta=\|v\xi {(v^{\text{op}})}^*-\xi\|$. Then $\|vu\xi {(u^{\text{op}})}^*{(v^{\text{op}})}^*-\xi\|\leq\alpha+\beta$ and $\|uv\xi {(v^{\text{op}})}^*{(u^{\text{op}})}^*-\xi\|\leq\alpha+\beta$. Since $uv=-ivu$, we get  \begin{equation} {(u^{\text{op}})}^*{(v^{\text{op}})}^*=((uv)^{\text{op}})^*=((-ivu)^{\text{op}})^*=i((vu)^{\text{op}})^*=i(v^{\text{op}})^*(u^{\text{op}})^*.\end{equation}
Using also that $vu=iuv$, we get $vu\xi {(u^{\text{op}})}^*{(v^{\text{op}})}=i^2 uv\xi {(v^{\text{op}})}^*{(u^{\text{op}})}^*=-uv\xi {(v^{\text{op}})}^*{(u^{\text{op}})}^*$. By using this fact, the triangle inequality and the above inequalities, we conclude that $$2\|\xi\|\leq \|vu\xi {(u^{\text{op}})}^*{(v^{\text{op}})}^*-\xi\|+\|uv\xi {(v^{\text{op}})}^*{(u^{\text{op}})}^*-\xi\|\leq 2(\alpha+\beta).$$
Hence, $\|\xi\|\leq\alpha+\beta$, as desired.
\end{proof}

\begin{remark}
If $\mathcal H$ is an $M$-$M^{\text{op}}$-bimodule, then $x\xi y^{\text{op}}=\pi(x\otimes y)\xi$, where $\pi:M\overline{\otimes}M\rightarrow\mathbb B(\mathcal H)$ is a $*$-homomorphism such that $\pi_{|(M\otimes 1)}$ and $\pi_{|(1\otimes M)}$ are normal. The conclusion of Lemma \ref{opposite} thus rewrites as $\|\xi\|\leq \|\pi(u\otimes u^*)\xi-\xi\|+\|\pi(v\otimes v^*)\xi-\xi\|$, for every $\xi\in\mathcal H$. It is not hard to see that this implies that $\|\pi(1\otimes 1+u\otimes u^*+v\otimes v^*)\|\leq 2\sqrt{2}$ and hence $\|\pi(\frac{1}{3}(1\otimes 1+u\otimes u^*+v\otimes v^*))\|\leq\frac{2\sqrt{2}}{3}<1$. Thus, if $M\otimes_{\text{bin}}M$ denotes the binormal tensor product of $M$ with itself in the sense of \cite{EL77}, then 
$$\|\frac{1}{3}\big(1\otimes 1+u\otimes u^*+v\otimes v^*\big)\|_{M\overline{\otimes}_{\text{bin}}M}\leq\frac{2\sqrt{2}}{3}.$$
This fact strengthens \cite[Lemma 9.3]{IM19}.
\end{remark}

\begin{proof}[\bf Proof of Theorem \ref{mop}]
Theorem \ref{mop} follows from Lemma \ref{opposite} by adapting the proof of Lemma \ref{no_central}. For this reason, we only sketch the proof, leaving the details to the reader. 

We  first claim that if $\xi\in\mathcal H$, then there exists a partition of unity $\{e_i\}_{i=1}^n$  in $M$ such that  $\|\sum_{i=1}^ne_i\xi e_i^{\text{op}}||^2\leq\frac{35}{36}\|\xi\|^2$. 
To prove this, we may assume that $\xi\not=0$. Lemma \ref{opposite} implies the existence of $u\in\mathcal U(M)$ with finite spectrum such that $\|u\xi (u^{\text{op}})^*-\xi\|>\frac{1}{3}\|\xi\|$. 
Write $u=\sum_{j=1}^n\lambda_je_j$, where $\lambda_1,\dots,\lambda_n\in\mathbb T$ and $\{e_j\}_{j=1}^n$ is a partition of unity  in $M$.  
Then $u^{\text{op}}=\sum_{i=1}^n\lambda_ie_i^{\text{op}}$,  $(u^{\text{op}})^*=\sum_{i=1}^n\overline{\lambda}_ie_i^{\text{op}}$ and $u\xi (u^{\text{op}})^*=\sum_{i,j=1}^n\lambda_i\overline{\lambda}_je_i\xi e_j^{\text{op}}$.
Since the vectors $\{e_i\xi e_j^{\text{op}}\}_{i,j=1}^n\subset\mathcal H$ are pairwise orthogonal  and $|\lambda_i\overline{\lambda}_j-1|\leq 2$, for every $1\leq i,j\leq n$, we get 
\begin{align*}4\|\xi\|^2-4\|\sum_{i=1}^ne_i\xi e_i^{\text{op}}\|^2&=4\|\sum_{i\not=j}e_i\xi e_j^{\text{op}}\|^2\\&\geq\|\sum_{i,j}^n(\lambda_i\overline{\lambda}_j-1)e_i\xi e_j^{\text{op}}\|^2=\|u\xi u^*-\xi\|^2>\frac{1}{9}\|\xi\|^2.
\end{align*}

Therefore, $\|\sum_{i=1}^ne_i\xi e_i^{\text{op}}\|^2\leq \frac{35}{36}\|\xi\|^2$, which proves the claim.
The claim now implies the conclusion of Theorem \ref{mop}  via an obvious modification of Claim \ref{induction}.
\end{proof}

\subsection{Simultaneous local quantization for \texorpdfstring{$M$}{M}-\texorpdfstring{$M$}{M} and \texorpdfstring{$M$}{M}-\texorpdfstring{$M^{\text{op}}$}{M{\text{op}}} bimodules}
We end this section with the following result which puts together Theorems \ref{local_quant} and \ref{mop}. 

\begin{theorem}\label{2-for-1}
Let $(M,\tau)$ be a tracial von Neumann algebra of type $\emph{II}_1$. Let $\mathcal H$ be an $M$-bimodule and $\mathcal K$ be an $M$-$M^{\emph{op}}$-bimodule. Let $S\subset\mathcal H\ominus\mathcal H^M$ and $T\subset\mathcal K$ be finite sets. Then for every $\varepsilon>0$, there exists a partition of unity $\{p_i\}_{i=1}^m$ in  $M$ such that 
$$\text{$\|\sum_{i=1}^mp_i\xi p_i\|<\varepsilon$, for every $\xi\in S$,\;\;\; and \;\;\; $\|\sum_{i=1}^mp_i\eta p_i^{\emph{op}}\|<\varepsilon$, for every $\eta\in T$}.$$
Moreover, we can take $m$ to be any large enough integer, and $\tau(p_i)=\frac{1}{m}$, for every $1\leq i\leq m$.
\end{theorem}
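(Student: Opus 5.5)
The plan is to prove the two estimates in turn and then equalize the traces. The key point is that both quantities $\|\sum_i p_i\xi p_i\|$ and $\|\sum_i p_i\eta p_i^{\text{op}}\|$ can only decrease under refinement of the partition.

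\emph{Refinement is monotone.} Let $\{q_j\}$ be a partition of unity and $\{p_i\}$ a refinement, with $p_i\le q_{j(i)}$. The vectors $\{q_j\xi q_j\}_j$ are pairwise orthogonal, and so are the vectors $\{q_j\eta q_j^{\text{op}}\}_j$, because the left supports are orthogonal. Moreover
\[
\sum_{i:\,j(i)=j}p_i\,(q_j\xi q_j)\,p_i
\]
is the image of $q_j\xi q_j$ under a contraction, namely a sum of orthogonal compressions, and the same holds in the op-case. Hence
\[
\Big\|\sum_i p_i\xi p_i\Big\|\le\Big\|\sum_j q_j\xi q_j\Big\|\quad\text{and}\quad \Big\|\sum_i p_i\eta p_i^{\text{op}}\Big\|\le\Big\|\sum_j q_j\eta q_j^{\text{op}}\Big\|.
\]

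\emph{First two estimates.} Apply Theorem \ref{local_quant} to $S$ to obtain a partition $\{q_j\}_{j=1}^n$ with $\|\sum_j q_j\xi q_j\|<\varepsilon/2$ for every $\xi\in S$. Each $q_jMq_j$ is of type II$_1$, and $q_j\mathcal K q_j^{\text{op}}$ is a $q_jMq_j$-$(q_jMq_j)^{\text{op}}$-bimodule. Apply Theorem \ref{mop} to $q_jTq_j^{\text{op}}$ to get partitions $\{q_{j,k}\}_k$ of $q_j$ with $\|\sum_k q_{j,k}\eta q_{j,k}^{\text{op}}\|<\varepsilon/(2\sqrt n)$. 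Since the blocks for different $j$ are orthogonal, summing the squares gives the $T$-estimate for the combined partition $\{q_{j,k}\}$. By monotonicity, the $S$-estimate is preserved.

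\emph{Equal traces.} This is the main step. Let $\{p_i\}_{i=1}^k$ be the partition just obtained, but built with $\varepsilon/2$ in place of $\varepsilon$.
1. Use Lemma \ref{fixed_point}(3) to replace each $\xi\in S$ by a bounded vector in $\mathcal H\ominus\mathcal H^M$ within $\varepsilon/8$; similarly approximate each $\eta\in T$ by a bounded vector. The maps $\xi\mapsto\sum_i p_i\xi p_i$ are contractions, so this costs at most $\varepsilon/4$. Let $C$ be a common constant with $\|x\zeta y\|\le C\|x\|\,\|y\|_2$ and $\|x\zeta y\|\le C\|x\|_2\|y\|$ for all these bounded vectors $\zeta$.
2. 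For $m$ large, split each $p_i$ into $\lfloor m\tau(p_i)\rfloor$ projections of trace $1/m$ plus a remainder of trace less than $1/m$. This is possible since in a type II$_1$ algebra the trace on subprojections of $p$ takes every value in $[0,\tau(p)]$.
3. The remainders add up to a projection $r$ with $\tau(r)<k/m$, and $\tau(r)$ is automatically a multiple of $1/m$. Cut $r$ into projections of trace $1/m$.
4. Call the resulting projections $e_1,\dots,e_m$. A projection $e$ is \emph{good} if it lies under some $p_i$, and \emph{bad} otherwise.
5. For good $e$ we have $\sum_{\text{good}}e\zeta e=\sum_{\text{good}}e\big(\sum_i p_i\zeta p_i\big)e$, whose norm is at most $\|\sum_i p_i\zeta p_i\|$ by orthogonality; the same holds in the op-case.
6. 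For the bad part, orthogonality gives
\[
\Big\|\sum_{\text{bad}}e\zeta e\Big\|^2=\sum_{\text{bad}}\|e\zeta e\|^2\le C^2\tau(r)<C^2k/m.
\]
7. This is below $(\varepsilon/8)^2$ once $m$ is large enough, which yields the conclusion for every sufficiently large $m$.

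The delicate point is that the bad projections straddle several $p_i$, so their cross terms are not controlled by the quantization. The argument only works because bounded vectors turn these terms into a small-trace error, so the reduction to bounded vectors must come first.
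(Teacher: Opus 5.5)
Your proof is correct. The main assertion is argued exactly as in the paper: Theorem~\ref{local_quant}, then Theorem~\ref{mop} on each block, then monotonicity under refinement. Your choice of $\varepsilon/(2\sqrt n)$ together with orthogonality of the blocks is a slightly sharper version of the paper's $\varepsilon/n$ plus the triangle inequality.

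For the equal-trace assertion your implementation differs from the paper's.
\begin{itemize}
\item The paper perturbs the partition $\{r_l\}$ to a nearby partition $\{r_l'\}$ with $\tau(r_l')\in\mathbb Z/m$. It requires $\|(r_l'-r_l)\xi\|$, $\|\xi(r_l'-r_l)\|$ and their op-analogues to be below a prescribed $\delta$, and then refines $\{r_l'\}$ into pieces of trace $\frac1m$.
\item You instead refine whatever fits inside each $p_i$ and collect the leftovers into one projection $r$ with $\tau(r)<k/m$. You then control the contribution of $r$ through the orthogonal good/bad splitting and a uniform constant $C$ for bounded approximants. This gives the explicit rate $C\sqrt{k/m}$ and avoids approximating each block separately.
\end{itemize}
Both arguments rest on the same observation: changing the partition only on a small-trace piece costs little.

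Your closing remark overstates the role of boundedness; it is a convenience, not a necessity. Normality of the right action already gives $\sum_{\text{bad}}\|e\xi e\|^2\le\|\xi r\|^2\to 0$ uniformly as $\tau(r)\to 0$, and this is what the paper tacitly uses. Similarly, Lemma~\ref{fixed_point}(3) is more than you need. You never re-run the quantization on the approximants; they enter only through contraction estimates. So plain density of right-bounded vectors suffices.
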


\begin{proof}
Theorem \ref{local_quant} provides a partition of unity $\{q_j\}_{j=1}^n$ in $M$ such that $\|\sum_{j=1}^nq_j\xi q_j\|<\varepsilon$, for every $\xi\in S$. Let $1\leq j\leq n$. Since $q_jMq_j$ is of type II$_1$, by applying Theorem \ref{mop} to the $(q_jMq_j)-(q_j^{\text{op}}M^{\text{op}}q_j^{\text{op}})$-bimodule $q_j\mathcal Kq_j^{\text{op}}$ and $q_jTq_j^{\text{op}}\subset q_j\mathcal Kq_j^{\text{op}}$ we find a partition of unity $\{q_{j,k}\}_{k=1}^{K_j}$ in $q_jMq_j$ with $\|\sum_{k=1}^{K_j}q_{j,k}\eta q_{j,k}^{\text{op}}\|<\frac{\varepsilon}{n}$, for every $\eta\in T$. Let $\{r_l\}_{l=1}^L=\{q_{j,k}\mid 1\leq j\leq n,1\leq k\leq K_j\}$. Then $\{r_l\}_{l=1}^L$ is a partition of unity in $M$ such that $\|\sum_{l=1}^Lr_l\eta r_l^{\text{op}}\|<\varepsilon$, for every $\eta\in T$. Moreover, since $\{r_l\}_{l=1}^L$ refines $\{q_j\}_{j=1}^n$, we also have $\|\sum_{l=1}^Lr_l\xi r_l\|<\|\sum_{j=1}^nq_j\xi q_j\|<\varepsilon$, for every $\xi \in S$. This proves the main assertion. 

To justify the moreover assertion, note that for every $\delta>0$ and every large enough $m\in\mathbb N$ we can find a partition of unity $\{r_l'\}_{l=1}^L$ such that $\tau(r_l')\in\mathbb Z/m$, $\|(r_l'-r_l)\xi\|<\delta$, $\|\xi(r_l'-r_l)\|<\delta$, $\|(r_l'-r_l)\eta\|<\delta$ and $\|\eta(r_l'-r_l)^{\text{op}}\|<\delta$, for every $1\leq l\leq L$, $\xi\in S$ and $\eta\in T$. For $\delta>0$ small enough we have $\|\sum_{l=1}^Lr_l'\xi r_l'\|<\varepsilon$ and $\|\sum_{l=1}^Lr_l'\eta {r_l'}^{\text{op}}\|<\varepsilon$, for every $\xi\in S$ and $\eta\in T$. The conclusion holds for any partition of unity $\{p_i\}_{i=1}^m$ in $M$ which refines $\{r_l'\}_{l=1}^L$ and satisfies $\tau(p_i)=\frac{1}{m}$, for every $1\leq i\leq m$.
\end{proof}

\section{An integration trick}\label{Sec:integration} 

In this section, we establish the following integration trick of independent interest which will be needed in the proofs of our main results. This result provides a method of finding a unitary $u\in M$ such that the quantity $\|\Psi(\Phi_1(u)\Phi_2(u))\|_2$ is small, where $\Phi_1,\Phi_2,\Psi:M\rightarrow M$ are completely positive maps defined on a tracial von Neumann algebra $(M,\tau)$. More generally, we prove:

\begin{lemma}\label{randomization}
Let $(M,\tau)$, $(N,\tau)$, $(P,\tau)$ be tracial von Neumann algebras. Let $\{p_i\}_{i=1}^m$ be a partition of unity in $M$. Endow $\Omega=\{\pm 1\}^m$ with the uniform probability measure $\mu$ and $\mathcal U(M)$ with the probability measure $\nu=\pi_*\mu$, where $\pi:\Omega\rightarrow\mathcal U(M)$ is the map given by $\pi(\varepsilon)=\varepsilon_1p_1+\dots+\varepsilon_mp_m$, for every $\varepsilon=(\varepsilon_1,\dots,\varepsilon_m)\in\Omega$. Then the following hold:

\begin{enumerate}
\item Let $\Phi:M\rightarrow N$ be a linear map. Then $$\int_{\mathcal U(M)}\|\Phi(u)\|_2^2\;\emph{d}\nu(u)=\sum_{i=1}^m\|\Phi(p_i)\|_2^2.$$

\item Let $\Phi_1,\Phi_2:M\rightarrow N$ be positive linear maps. Let $\Psi:N\rightarrow P$ be a normal completely positive map such that $\Psi(1)\leq 1$ and $\tau\circ\Psi\leq\tau$. Denote by $\Psi^*:P\rightarrow N$ the adjoint of $\Psi$ and let $\xi:=\xi_{\Psi^*\circ\Psi}\in\mathcal H_{\Psi^*\circ\Psi}$, using the notation from Subsection \ref{bimm}.
Then $$\int_{\mathcal U(M)}\|\Psi(\Phi_1(u)\Phi_2(u))\|_2^2\;\emph{d}\nu(u)\leq \|\sum_{i=1}^m\Phi_1(p_i)\Phi_2(p_i)\|_2^2+2\|\Phi_1(1)\|^2\|\sum_{i=1}^m\Phi_2(p_i)\xi\Phi_2(p_i)\|.$$

\end{enumerate}
\end{lemma}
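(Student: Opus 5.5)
Both parts reduce to expanding $u=\sum_i\varepsilon_ip_i$ and integrating over $\Omega$, where the random signs are independent, centered and satisfy $\int\varepsilon_i\varepsilon_j\,\mathrm d\mu=\delta_{i,j}$ and $\int\varepsilon_i\varepsilon_j\varepsilon_k\varepsilon_l\,\mathrm d\mu=1$ exactly when the indices pair up.

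For (1), linearity gives $\Phi(\pi(\varepsilon))=\sum_i\varepsilon_i\Phi(p_i)$. Expanding $\|\cdot\|_2^2$ yields $\sum_{i,j}\varepsilon_i\varepsilon_j\langle\Phi(p_i),\Phi(p_j)\rangle$. Integrating kills the off-diagonal terms and leaves $\sum_i\|\Phi(p_i)\|_2^2$.

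For (2), write $a_i=\Phi_1(p_i)$ and $b_j=\Phi_2(p_j)$; these are positive, and $\sum_ia_i=\Phi_1(1)$. Then
$$\Psi(\Phi_1(u)\Phi_2(u))=\sum_{i,j}\varepsilon_i\varepsilon_j\Psi(a_ib_j).$$
Expanding $\|\cdot\|_2^2$ produces a fourfold sum with coefficient $\int\varepsilon_i\varepsilon_j\varepsilon_k\varepsilon_l$, which is nonzero only in three pairing patterns.

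\emph{Pattern $i=j,\ k=l$.} This contributes $\|\Psi(\sum_ia_ib_i)\|_2^2$. Since $\Psi(1)\le1$ and $\tau\circ\Psi\le\tau$, Kadison--Schwarz gives $\tau(\Psi(x)^*\Psi(x))\le\tau(\Psi(x^*x))\le\|x\|_2^2$. Hence this term is at most $\|\sum_i\Phi_1(p_i)\Phi_2(p_i)\|_2^2$.

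\emph{Patterns $i=k,\ j=l$ and $i=l,\ j=k$.} These contribute
$$\sum_{i,j}\|\Psi(a_ib_j)\|_2^2+\sum_{i,j}\tau\big(\Psi(a_ib_j)^*\Psi(a_jb_i)\big),$$
with the overlaps where all four indices coincide accounted for by inclusion–exclusion. The diagonal $i=j$ terms are harmless: they are subtracted, or bounded as in the first pattern. Rewrite each term through the adjoint, using $\tau(\Psi(x)^*\Psi(y))=\tau(x^*\,\Psi^*\Psi(y))$, which equals $\langle y\,\xi,\ x\,\xi\rangle$ in $\mathcal H_{\Psi^*\circ\Psi}$ up to the standard identification. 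A typical term then reads $\tau(b_ja_i\,\Psi^*\Psi(a_ib_j))$.

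The goal is to recognize the sum over $j$ as a pairing of the form
$$\Big\langle \sum_j b_j\,\xi\,b_j,\ \eta\Big\rangle,$$
where $\eta$ is built from the $a_i$ and satisfies $\|\eta\|\le\|\Phi_1(1)\|^2$. The intended route is to move the $a$'s to the outside using positivity, $0\le a_i\le\sum_ia_i=\Phi_1(1)$, and then bound the remaining operator by $\|\Phi_1(1)\|^2$. Cauchy–Schwarz then gives each of these two patterns at most $\|\Phi_1(1)\|^2\,\|\sum_j\Phi_2(p_j)\,\xi\,\Phi_2(p_j)\|$, which produces the factor $2$ in the stated bound.

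The main obstacle is precisely this regrouping. The sum over $i$ must be collapsed using positivity of the $a_i$ together with $\sum_ia_i=\Phi_1(1)$, rather than the $\ell^2$ bound, so that $\sum_jb_j\,\xi\,b_j$ appears with only an operator-norm factor. This needs the bimodule picture of $\Psi^*\circ\Psi$ from Subsection \ref{bimm}: right-boundedness of $\xi$, the relation $\tau(\Psi^*\Psi(x)y)=\langle x\xi y,\xi\rangle$, and careful choice of which side each $a_i$ and $b_j$ acts on so that the inner product is genuinely of the displayed form. The first attempt will be to write $\sum_{i,j}\langle a_ib_j\,\xi\,b_ja_i,\xi\rangle$ and bound it by $\|\sum_i a_i\otimes a_i^{\mathrm{op}}\|\le\|\Phi_1(1)\|^2$ acting on $\sum_j b_j\,\xi\,b_j$. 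The cross pattern is to be handled by the same device after one application of Cauchy–Schwarz.
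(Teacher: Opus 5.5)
Your proposal is correct and follows the paper's proof. Both arguments make the same expansion into the three pairing patterns, subtract the fully diagonal terms (and then drop them, since they are nonnegative), and use $\|\cdot\|_2$-contractivity of $\Psi$ for the first pattern. Both apply Cauchy--Schwarz to dominate the cross pattern by $I_3=\sum_{i,j}\|\Psi(a_ib_j)\|_2^2=\sum_{i,j}\langle a_ib_j\xi b_ja_i,\xi\rangle$, and both rewrite this as the pairing $\langle \sum_j b_j\xi b_j,\sum_i a_i\xi a_i\rangle$.

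The only difference is how you bound $\|\sum_i a_i\xi a_i\|$. You use the operator norm of $\eta\mapsto\sum_i a_i\eta a_i$, which is at most $\|\Phi_1(1)\|^2$ because the left and right actions commute and are positive. You then multiply by $\|\xi\|=\tau(\Psi(1)^2)^{1/2}\le 1$; this last bound is needed and you should state it. The paper instead computes $\|\sum_i a_i\xi a_i\|^2=\sum_{i,j}\|\Psi(a_ia_j)\|_2^2\le\|\sum_i a_i^2\|_2^2\le\|\Phi_1(1)\|^4$ directly.

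One small slip: $\tau(\Psi(x)^*\Psi(y))$ equals $\langle y\xi x^*,\xi\rangle$, not $\langle y\xi,x\xi\rangle$. Your later formula $\langle a_ib_j\xi b_ja_i,\xi\rangle$ is the correct one.
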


Lemma \ref{randomization} is especially useful when used in conjunction with the local quantization Theorem \ref{local_quant}, as we will see in the proof of Lemma \ref{one_unitary} in the next section.

\begin{proof}
(1) If $\varepsilon=(\varepsilon_1,\dots,\varepsilon_m)\in\Omega$ and $u=\varepsilon_1p_1+\cdots+\varepsilon_mp_m$, then 
$\|\Phi(u)\|_2^2=\sum_{i,j=1}^m\varepsilon_i\varepsilon_j\tau(\Phi(p_i)\Phi(p_j)^*)$. Since $\int_\Omega\varepsilon_i\varepsilon_j\;\text{d}\mu(\varepsilon)=\delta_{i,j}$, for every $1\leq i,j\leq m$, the conclusion follows.

(2) 
Put $I=\int_{\mathcal U(M)}\|\Psi(\Phi_1(u)\Phi_2(u))\|_2^2\;\text{d}\nu(u)$.
Since $\Phi_1,\Phi_2$ and $\Psi$ are self-adjoint, for every $u\in\mathcal U(M)$ we have 
 $\|\Psi(\Phi_1(u)\Phi_2(u))\|_2^2=\tau(\Psi(\Phi_1(u)\Phi_2(u))\Psi(\Phi_2(u^*)\Phi_1(u^*)))$.
This implies that
\begin{equation}\label{i}
I=\int_{\Omega}\sum_{i,j,k,l=1}^m\varepsilon_i\varepsilon_j\varepsilon_k\varepsilon_l\;\tau(\Psi(\Phi_1(p_i)\Phi_2(p_j))\Psi(\Phi_2(p_k)\Phi_1(p_l)))\;\text{d}\mu(\varepsilon_1,\dots,\varepsilon_m).
\end{equation}
If $i,j,k,l\in\{1,\dots,m\}$, then $$\displaystyle{\int_\Omega\varepsilon_i\varepsilon_j\varepsilon_k\varepsilon_l\;\text{d}\mu(\varepsilon)}=
\begin{cases} 1, \text{if 
$(i,k)=(j,l), (i,j)=(k,l)$ or $(i,j)=(l,k)$} \\ \text{$0$, otherwise.}
\end{cases}$$
Together with \eqref{i}, this implies that $I=I_1+I_2+I_3-2I_4$, where
\begin{equation}\label{I_1}I_1=\sum_{i,j=1}^m\tau(\Psi(\Phi_1(p_i)\Phi_2(p_i))\Psi(\Phi_2(p_j)\Phi_1(p_j)))
=\|\Psi(\sum_{i=1}^m\Phi_1(p_i)\Phi_2(p_i))\|_2^2,\end{equation}
\begin{equation}\label{I_2}I_2=\sum_{i,j=1}^m\tau(\Psi(\Phi_1(p_i)\Phi_2(p_j))\Psi(\Phi_2(p_i)\Phi_1(p_j))),\end{equation}
\begin{equation}\label{I_3}I_3=\sum_{i,j=1}^m\tau(\Psi(\Phi_1(p_i)\Phi_2(p_j))\Psi(\Phi_2(p_j)\Phi_1(p_i)))=\sum_{i,j=1}^m\|\Psi(\Phi_1(p_i)\Phi_2(p_j))\|_2^2,\end{equation} and
\begin{equation}\label{I4}I_4=\sum_{i=1}^m\tau(\Psi(\Phi_1(p_i)\Phi_2(p_i))\Psi(\Phi_2(p_i)\Phi_1(p_i)))=\sum_{i=1}^m\|\Psi(\Phi_1(p_i)\Phi_2(p_i))\|_2^2.\end{equation}

Thus, $I_1,I_3,I_4\geq 0$ and since $I\geq 0$, we get that $I_2\in\mathbb R$.
Since $|\tau(xy^*)|\leq \|x\|_2\|y\|_2\leq \frac{\|x\|_2^2+\|y\|_2^2}{2}$, for every $x,y\in P$,  by using \eqref{I_2} and \eqref{I_3} we get that
\begin{equation}\label{I2}
I_2\leq \sum_{i,j=1}^m\frac{\|\Psi(\Phi_1(p_i)\Phi_2(p_j))\|_2^2+\|\Psi(\Phi_1(p_j)\Phi_2(p_i))\|_2^2}{2}=I_3
\end{equation}
If $x\in N$, then $\|\Psi(x)\|_2^2=\tau(\Psi(x)\Psi(x^*))=\tau((\Psi^*\circ\Psi)(x)x^*)=\langle x\xi x^*,\xi\rangle$. Thus, by using  \eqref{I_3} and 
denoting $S_k=\sum_{i=1}^m\Phi_k(p_i)\xi\Phi_k(p_i)$, for $k\in\{1,2\}$, we get 
\begin{equation}\label{I_33}
I_3=\sum_{i,j=1}^m\langle\Phi_1(p_i)\Phi_2(p_j)\xi\Phi_2(p_j)\Phi_1(p_i),\xi\rangle=\langle S_2,S_1\rangle\leq \|S_2\|\cdot\|S_1\|.
\end{equation}
Next, the assumptions on $\Psi$ imply that it is $\|\cdot\|_2$-contractive. 
Thus, we derive that
\begin{equation}\label{SS1}\|S_1\|^2=
\sum_{i,j=1}^m\|\Psi(\Phi_1(p_i)\Phi_1(p_j))\|_2^2\leq\sum_{i,j=1}^m\|\Phi_1(p_i)\Phi_1(p_j)\|_2^2
=\|\sum_{i=1}^m\Phi_1(p_i)^2\|_2^2.
\end{equation}

Since $\Phi_1$ is positive, $0\leq\Phi_1(p)\leq\Phi_1(1)\leq \|\Phi_1(1)\|1$ and thus $\Phi_1(p)^2\leq \|\Phi_1(1)\|\Phi_1(p)$, for every projection $p\in M$. Hence, $\sum_{i=1}^m\Phi_1(p_i)^2\leq \|\Phi_1(1)\|\sum_{i=1}^m\Phi_1(p_i)=\|\Phi_1(1)\|\Phi_1(1)\leq \|\Phi_1(1)\|^21$.
By combining this fact with \eqref{SS1} we get that $\|S_1\|\leq \|\Phi_1(1)\|^2$.
Together with \eqref{I_33} we further get that $I_3\leq \|\Phi_1(1)\|^2\|\sum_{i=1}^m\Phi_2(p_i)\xi\Phi_2(p_i)\|$.
Since $\Psi$ is $\|\cdot\|_2$-contractive, \eqref{I_1} implies that $I_1\leq\|\sum_{i=1}^m\Phi_1(p_i)\Phi_2(p_i)\|_2^2$. Since also $I_4\geq 0$ by \eqref{I4} and $I_2\leq I_3$ by \eqref{I2}, we conclude that $$I=I_1+I_2+I_3-2I_4\leq I_1+2I_3\leq \|\sum_{i=1}^m\Phi_1(p_i)\Phi_2(p_i)\|_2^2+2\|\Phi_1(1)\|^2\|\sum_{i=1}^m\Phi_2(p_i)\xi\Phi_2(p_i)\|.$$
This finishes the proof.
\end{proof}

\section{Weakly mixing bimodules and order two unitaries}\label{Sec:weaklymixunitaries}

In this section we derive the following consequence of the local quantization Theorem \ref{2-for-1} and Lemma \ref{randomization}. The result constitutes the main technical ingredient in the inductive construction of hyperfinite II$_1$ factors with certain coarseness properties in Theorem \ref{bimodulenonintertwining}. It provides a method for constructing unitaries $u\in M$ whose matrix coefficients are simultaneously small on several prescribed bimodules. More precisely, conditions (1)--(3) below give the estimates needed to obtain the coarse property in part (a) of Theorem \ref{bimodulenonintertwining}, while conditions (4)--(6) provide the analogous estimate required for part (b). The weak mixing conclusion in part (c) will follow from a combination of diffuseness of the ambient factor and condition (3); see Remark \ref{unitaryleftaction}.

\begin{lemma}\label{one_unitary}
Let $(M,\tau)$ be a  tracial von Neumann algebra of type $\emph{II}_1$. Let $(N,\tau)$ and $(P,\tau)$ be tracial von Neumann algebras. 
Let $\mathcal H$ be an  $M$-$M\overline{\otimes}N$-bimodule  such that the $M$-$N$-bimodule $_M\mathcal H_{1\overline{\otimes} N}$ is left weakly mixing. Let  $\mathcal K$ be an $M$-$M^{\emph{op}}\overline{\otimes}P$-bimodule such that the $M$-$P$-bimodule $_M\mathcal K_{1\overline{\otimes} P}$ is left weakly mixing.
Let $\varepsilon>0$ and $X\subset \mathcal H\ominus\mathcal H^M$, $Y\subset \mathcal K$ be finite sets.

Then there exists $u\in\mathcal U(M)$ such that $u^2=1$, $\tau(u)=0$, and the following conditions hold:
 \begin{enumerate}
 \item $\sup\{|\langle u\xi_1 (u\otimes y),\xi_2\rangle|\mid \xi_1,\xi_2\in X,y\in (N)_1\}<\varepsilon,$
\item $\sup\{|\langle u\xi_1 (1\otimes y),\xi_2\rangle|\mid \xi_1,\xi_2\in X,y\in (N)_1\}<\varepsilon,$
\item $\sup\{|\langle \xi_1 (u\otimes y),\xi_2\rangle|\mid \xi_1,\xi_2\in X,y\in (N)_1\}<\varepsilon,$
\item $\sup\{|\langle u\eta_1(u^{\emph{op}}\otimes z),\eta_2\rangle|\mid\eta_1,\eta_2\in Y,z\in (P)_1\}<\varepsilon,$
\item $\sup\{|\langle u\eta_1(1\otimes z),\eta_2\rangle|\mid\eta_1,\eta_2\in Y,z\in (P)_1\}<\varepsilon,$
\item $\sup\{|\langle \eta_1(u^{\emph{op}}\otimes z),\eta_2\rangle|\mid\eta_1,\eta_2\in Y,z\in (P)_1\}<\varepsilon.$
\end{enumerate}

\end{lemma}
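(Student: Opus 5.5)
We choose $u$ at random via Lemma \ref{randomization}. Fix a partition of unity $\{p_i\}_{i=1}^m$ in $M$ with $m$ even and $\tau(p_i)=\frac1m$, and consider the random unitary $u=\sum_i\varepsilon_ip_i$. Restricting to sign vectors with exactly $m/2$ entries equal to $-1$ gives $u^2=1$ and $\tau(u)=0$. This restriction costs only a factor comparable to $\sqrt{m}$ in the probability: the balanced set has $\mu$-mass about $m^{-1/2}$. So we really need the expected squared errors to be small compared to $m^{-1/2}$, which is why we keep the freedom of taking $m$ large. A cleaner alternative is to pair the projections as $p_{2k-1},p_{2k}$ and use signs $\varepsilon_{2k}=-\varepsilon_{2k-1}$. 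The computation in Lemma \ref{randomization}(1) adapts verbatim, with $\{p_{2k-1}-p_{2k}\}$ playing the role of the $p_i$, and we adopt this second route.

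Next we translate each of the six conditions into an integral of a $\|\cdot\|_2$ norm. For $\xi_1,\xi_2\in X$, consider the normal map $\Theta(x,y)=\langle x\xi_1(x'\otimes y),\xi_2\rangle$. For conditions (2), (3), (5), (6), $u$ appears only once. After approximating the vectors by bounded ones (Lemma \ref{fixed_point}(3)), we have a linear map $\Phi:M\to N$ given by $\tau(\Phi(x)y^*)=\langle x\xi_1(1\otimes y),\xi_2\rangle$, and the supremum over $y\in(N)_1$ is bounded by $\|\Phi(u)\|_1\le\|\Phi(u)\|_2$. Lemma \ref{randomization}(1) then reduces the problem to making $\sum_i\|\Phi(p_i)\|_2^2$ small. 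Since $\|\Phi(p_i)\|_2^2$ is a matrix coefficient of the form $\langle p_i\xi_1\otimes_N\overline{\xi_2}p_i, \xi_1\otimes_N\overline{\xi_2}\rangle$, it is controlled by $\|\sum_i p_i\zeta p_i\|$ for the vector $\zeta$ in the $M$-bimodule $\mathcal H\otimes_{N}\overline{\mathcal H}$. Left weak mixing of ${}_M\mathcal H_N$, in the form of Definition \ref{weaklymixing}(1), says precisely that this bimodule has no $M$-central vectors. Theorem \ref{local_quant} therefore makes these sums small, and conditions (3) and (6) are handled symmetrically using the right action. For conditions (1) and (4), $u$ appears on both sides. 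Here we use the polarization and positivity structure of Lemma \ref{randomization}(2): we write the coefficient as $\Psi(\Phi_1(u)\Phi_2(u))$, with $\Phi_1,\Phi_2$ the completely positive maps $\Phi_\xi$ coming from the left and right actions and $\Psi$ the slice to $N$. Lemma \ref{randomization}(2) bounds the expectation by two terms. The first, $\|\sum_i\Phi_1(p_i)\Phi_2(p_i)\|_2^2$, is essentially $\|\sum_ip_i\xi p_i\|$ for $\xi\in X\subset\mathcal H\ominus\mathcal H^M$. The second is $\|\sum_i\Phi_2(p_i)\xi'\Phi_2(p_i)\|$ for a vector $\xi'$ in a bimodule without central vectors, again by left weak mixing. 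For (4), the analogous terms involve $\sum_ip_i\eta p_i^{\mathrm{op}}$, which Theorem \ref{mop} makes small with no hypothesis on $Y$.

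All these finitely many vectors, a subset of an $M$-bimodule orthogonal to its central vectors together with a subset of an $M$-$M^{\mathrm{op}}$-bimodule, are handled simultaneously by Theorem \ref{2-for-1}. That theorem also delivers equal traces $\tau(p_i)=1/m$ for arbitrary large $m$, and we may take $m$ even. The expected sum of the squares of all six error quantities is then below $\varepsilon^2$, so some choice of signs works for every condition at once. Passing from the supremum over $(N)_1$ to an $L^2$ quantity costs only fixed constants, because the norms of the bounded vectors are fixed before $m$ is chosen.

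The main obstacle is step (1)/(4): correctly identifying $\Phi_1,\Phi_2,\Psi$ so that $\langle u\xi_1(u\otimes y),\xi_2\rangle$ is dominated by $\|\Psi(\Phi_1(u)\Phi_2(u))\|$, and checking that the resulting vector $\xi_{\Psi^*\circ\Psi}$ lies in a bimodule where Theorem \ref{local_quant} applies. Left weak mixing over $1\otimes N$, rather than merely $\xi\perp\mathcal H^M$, is exactly what is needed for that second term. I would first try realizing everything inside $\mathcal H\otimes_{M\overline\otimes N}\overline{\mathcal H}$, with $\Psi$ the conditional-expectation-like slice onto $N$.
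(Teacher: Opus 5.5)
Your architecture (random sign unitaries, Lemma \ref{randomization}, Theorem \ref{2-for-1}) matches the paper's. There are, however, two genuine gaps.

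\textbf{The pairing step fails as you set it up.} You take $\{p_i\}$ from Theorem \ref{2-for-1} and only afterwards pair it. With $q_k=p_{2k-1}-p_{2k}$, the quantities governing (1) and (4) are controlled by the coarser partition $r_k=p_{2k-1}+p_{2k}$; for instance, $\|\sum_kq_k\zeta q_k\|=\|\sum_kr_k\zeta r_k\|$ for any bimodule vector $\zeta$. Local quantization is not stable under coarsening.

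Here is a concrete failure. Let $M$ be a II$_1$ factor, $N=P=\mathbb C$, $\mathcal H=\text{L}^2(M)$, $Y=\emptyset$, and $X=\{x\}$ with $x=x^*$ unitary and $\tau(x)=0$. Choose $\{p_i\}$ with $\tau(p_i)=\frac1m$ and $xp_{2k-1}x=p_{2k}$. Then $\sum_ip_ixp_i=0$. The other local quantization targets here (e.g.\ $e_{\mathbb C}$, which is $1\otimes1$ in the coarse bimodule, and $\xi_{\Phi'}$) give sums of norm $m^{-1/2}$. So for large $m$ this partition is an admissible output of Theorem \ref{2-for-1}. Yet every $u=\sum_k\varepsilon_kq_k$ satisfies $xux=-u$, hence $\langle uxu,x\rangle=\tau(xuxu)=-1$, and (1) fails for every choice of signs.

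Moreover, Lemma \ref{randomization}(2) does not carry over verbatim to the $q_k$. Its proof uses $\Phi_1(p_i)\geq0$ and $\Phi_1(p_i)^2\leq\|\Phi_1(1)\|\Phi_1(p_i)$, which fail for non-positive $q_k$.

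The repair is to obtain the coarse partition $\{r_k\}$ from Theorem \ref{2-for-1} first, and only then halve each $r_k$ into two equivalent projections. Redoing the fourth-moment computation with $q_kq_l=\delta_{k,l}r_k$ and $(a-b)^2\leq 2a^2+2b^2$ then bounds all expectations by the quantities $\|\sum_kr_k\zeta r_k\|$, up to constants. So repaired, your route is simpler than the paper's. The paper averages over all of $\{\pm1\}^m$, picks a good $v$, and then flips $\frac{|A|-|B|}{2}$ signs. That costs $\|u-v\|_2=\sqrt{2\tau(v)}$, which it makes small through the heavily weighted term $\sigma(v)\geq|\tau(v)|^2$ (using $1\in S$) and a Lipschitz estimate for the error functional.

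\textbf{Step (1)/(4), which you flag as the main obstacle, is left open, and your pointers go the wrong way.} First reduce to a single vector, since for $\xi_1\neq\xi_2$ the relevant map is not positive. The paper does this by polarization, replacing $N$ by $N\overline{\otimes}\ell^\infty(X\times X\times\{\pm1,\pm i\})$.

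Then define $\Phi:M\to M\overline{\otimes}N$ by $\tau(\Phi(a)b)=\langle a\xi b,\xi\rangle$. One has $\langle u\xi(u\otimes y),\xi\rangle=\tau(\text{E}_N(\Phi(u)(u\otimes 1))(1\otimes y))$. So in Lemma \ref{randomization}(2) the right choices are:
\begin{itemize}
\item $\Phi_1=\Phi$;
\item $\Phi_2$ the inclusion $x\mapsto x\otimes1$, not a second compression map, because the $\Phi_2(p_i)$ must be projections for local quantization to apply;
\item $\Psi=\text{E}_N$.
\end{itemize}
The first term is at most $\|\sum_ip_i\xi p_i\|^2$, which is where $\xi\perp\mathcal H^M$ enters. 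The second term is $\|\sum_ip_ie_Np_i\|_{2,\text{Tr}}$ in $\text{L}^2(\langle M\overline{\otimes}N,e_N\rangle)$. That bimodule has no $M$-central vectors simply because $M\nprec_{M\overline{\otimes}N}N$, i.e.\ because $M$ is diffuse. Left weak mixing plays no role here, and nothing from $\mathcal H\otimes_{M\overline{\otimes}N}\overline{\mathcal H}$ is involved. Condition (4) is analogous, with $x\mapsto x^{\text{op}}\otimes1$ and $e_P$.

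Left weak mixing is needed only for (2) and (5). It enters via $\mathcal H_{\Phi'}\cong\mathcal H_{\text{E}_N\circ\Phi}\otimes_N\overline{\mathcal H_{\text{E}_N\circ\Phi}}$, where $\Phi'=\Phi^*\circ\text{E}_N\circ\Phi$.

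For (3) and (6) there is no right-sided weak mixing hypothesis to invoke ``symmetrically.'' They hold because $M\otimes1$ acts on the right through the normal $M\overline{\otimes}N$-action, so only diffuseness matters. The paper approximates $\Phi(1)$ by finite sums of elementary tensors. It then controls $\sum_{\zeta\in S}|\tau(u\zeta)|^2=\langle u\widetilde\zeta u^*,1\otimes1\rangle$ in the coarse bimodule (cf.\ Remark \ref{unitaryleftaction}).
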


\begin{proof}
We first claim that we may assume that $|X|=|Y|=1$. 
Let $F=\{\pm 1,\pm i\}$,
$I=X\times X\times F$, $J=Y\times Y\times F$,
$\widetilde{\mathcal H}=\mathcal H^{\oplus_I}$ and $\widetilde{\mathcal K}=\mathcal K^{\oplus_J}$.
Let $N_0=N\overline{\otimes}\ell^\infty(I)$ and $P_0=P\overline{\otimes}\ell^\infty(J)$.
Then $\widetilde{\mathcal H}$ is an  $M$-$M\overline{\otimes}N_0$-bimodule  such that  $_M\widetilde{\mathcal H}_{1\otimes N_0}$ is left weakly mixing, and  $\widetilde{\mathcal K}$ is  an $M$-$M^{\text{op}}\overline{\otimes}P_0$-bimodule such that  $_M\widetilde{\mathcal K}_{1\otimes P_0}$ is left weakly mixing. 
Put $$\text{$\xi=\bigoplus_{(\xi_1,\xi_2,c)\in I}(\xi_1+c\xi_2)\in\widetilde{\mathcal H}$ \;\;\;\;\; and\;\;\;\;\; $\eta=\bigoplus_{(\eta_1,\eta_2,c)\in J}(\eta_1+c\eta_2)\in\widetilde{\mathcal K}$.}$$
By the polarization identity we have that $\langle T(\zeta_1),\zeta_2\rangle=\frac{1}{4}\sum_{c\in F}c\langle T(\zeta_1+c\zeta_2),\zeta_1+c\zeta_2\rangle$ and therefore $|\langle T(\zeta_1),\zeta_2\rangle|\leq\max_{c\in F}|\langle T(\zeta_1+c\zeta_2),\zeta_1+c\zeta_2\rangle|$, for any  $\zeta_1,\zeta_2$ in a Hilbert space $\mathcal L$ and $T\in\mathbb B(\mathcal L)$. 
For $i\in I$ and $j\in J$, let $e_i\in\ell^\infty(I)$ and $f_j\in\ell^\infty(J)$  be the corresponding coordinate projections.
Then $|\langle a\xi_1b,\xi_2\rangle|\leq \max_{i \in I} |\langle a\xi (b \otimes e_i),\xi\rangle|$ 
and $|\langle a\eta_1c,\eta_2\rangle|\leq \max_{j \in J} |\langle a\eta (c \otimes f_j),\eta\rangle|$, 
for every
$\xi_1,\xi_2\in X$, $\eta_1,\eta_2\in Y$
 $a\in M,b\in M\overline{\otimes}N$ and $c\in M^{\text{op}}\overline{\otimes}P$.
Since $X\subset \mathcal H\ominus\mathcal H^M$ and $\widetilde{\mathcal H}^M={(\mathcal H^M)}^{\oplus_I}$, we get that $\xi\in\widetilde{\mathcal H}\ominus\widetilde{\mathcal H}^M$.
Thus, if the lemma is proved in the case $|X|=|Y|=1$ for arbitrary tracial von Neumann algebras $N$ and $P$, then applying that case to $N_0$ and $P_0$ gives the general case.
This altogether justifies our claim.

By the claim we can assume that $X=\{\xi\}$ and $Y=\{\eta\}$, for some $\xi\in\mathcal H\ominus\mathcal H^M$ and $\eta\in\mathcal K$. By using the density of bounded vectors and Lemma \ref{fixed_point}(3), we may take $\xi$ and $\eta$ to be bounded. Further,
by rescaling $\xi$ and $\eta$, we may assume that $\|a\xi\|\leq \|a\|_2$, $\|\xi b\|\leq \|b\|_2$,  $\|a\eta\|\leq \|a\|_2$ and $\|\eta c\|\leq \|c\|_2$,  for every $a\in M$, $b\in M\overline{\otimes}N$ and $c\in M^{\text{op}}\overline{\otimes}P$. 
Let $$\text{$\Phi:M\rightarrow M\overline{\otimes}N$ \;\;\;\;\; and \;\;\;\;\;  $\Psi:M\rightarrow M^{\text{op}}\overline{\otimes}P$}$$ be the subunital, $\|\cdot\|$-contractive and $\|\cdot\|_2$-contractive, completely positive maps defined by the identities $\tau(\Phi(a)b)=\langle a\xi b,\xi\rangle$ and $\tau(\Psi(a)c)=\langle a\eta c,\eta\rangle$, for all $a\in M,b\in M\overline{\otimes}N$ and $c\in M^{\text{op}}\overline{\otimes}P$.
 
 For simplicity of notation, we write $x$ instead of $x\otimes 1$, if $x$ belongs to $M$ or $M^{\text{op}}$, and $y$ instead of $1\otimes y$, if $y$ belongs to $N$ or $P$.
We will also write $M,M^{\text{op}},N,P$ instead of $M\overline{\otimes} 1, M^{\text{op}}\overline{\otimes} 1,1\overline{\otimes} N,1\overline{\otimes} P$.

Let $\varepsilon\in (0,1)$. Then conditions (1)--(6) for $u\in\mathcal U(M)$ rewrite as:
  \begin{enumerate}
 \item[(i)] $\|\text{E}_{N}(\Phi(u)u)\|_1<\varepsilon,$
\item [(ii)] $\|\text{E}_N(\Phi(u))\|_1<\varepsilon,$
\item [(iii)] $\|\text{E}_N(\Phi(1)u)\|_1<\varepsilon,$
\item [(iv)] $\|\text{E}_P(\Psi(u)u^{\text{op}})\|_1<\varepsilon,$
\item [(v)] $\|\text{E}_P(\Psi(u))\|_1<\varepsilon,$
\item [(vi)]  $\|\text{E}_P(\Psi(1)u^{\text{op}})\|_1<\varepsilon$
.\end{enumerate}
Indeed, if $u\in\mathcal U(M)$ and $y\in N$, then $\langle u\xi(u\otimes y),\xi\rangle=\tau(\Phi(u)(u\otimes y))=\tau(\text{E}_N(\Phi(u)u)y)$. Hence,  $\sup\{|\langle u\xi (u\otimes y),\xi\rangle| \mid y\in (N)_1\}=\|\text{E}_N(\Phi(u)u)\|_1$, which justifies the equivalence of (1) and (i). The other equivalences follow similarly.

 Note that there exists $S\subset M$ finite such that  $\|\text{E}_N(\Phi(1)u)\|_1+\|\text{E}_P(\Psi(1)u^{\text{op}})\|_1<\frac{\varepsilon}{2}+\max_{\zeta\in S}|\tau(u\zeta)|$, for every $u\in\mathcal U(M)$. After possibly enlarging $S$, we may clearly assume that $1\in S$.
  Using these facts,  that $\Phi$ and $\Psi$ are $\|\cdot\|$-contractive,  and that $\|x\|_1\leq \|x\|_2$, for every $x\in M$, it follows that conditions (i)--(vi) for $u\in\mathcal U(M)$ are implied by the following conditions:
 
   \begin{enumerate}
 \item[(a)] $\alpha(u):=\|\text{E}_{N}(\Phi(u)u)\|_2^2<\varepsilon^2,$
\item [(b)] $\beta(u):=\|\text{E}_N(\Phi(u))\|_2^2<\varepsilon^2,$
\item [(c)] $\gamma(u):=\|\text{E}_P(\Psi(u)u^{\text{op}})\|_2^2<\varepsilon^2,$
\item [(d)] $\delta(u):=\|\text{E}_P(\Psi(u))\|_2^2<\varepsilon^2,$
\item [(e)]  $\sigma(u):=\sum_{\zeta\in S}|\tau(u\zeta)|^2 <\frac{\varepsilon^2}{4}$.
\end{enumerate}
Our goal now becomes to find $u\in\mathcal U(M)$ such that $u^2=1$, $\tau(u)=0$ and (a)--(e) are satisfied.
To this end, we define $\varepsilon(u)=\alpha(u)+\beta(u)+\gamma(u)+\delta(u)+\sigma(u)$, for every $u\in\mathcal U(M)$.

Let $C=2\sum_{\zeta\in S}\|\zeta\|_2^2>0$. If $u,v\in\mathcal U(M)$, then   $|\alpha(u)-\alpha(v)|\leq 4\|u-v\|_2$, $|\beta(u)-\beta(v)|\leq 2\|u-v\|_2$, $|\gamma(u)-\gamma(v)|\leq 4\|u-v\|_2$, $|\delta(u)-\delta(v)|\leq 2\|u-v\|_2$ and $|\sigma(u)-\sigma(v)|\leq C\|u-v\|_2$. Hence, we have
\begin{equation}\label{varr}\text{$|\varepsilon(u)-\varepsilon(v)|\leq (12+C)\|u-v\|_2$, for every $u,v\in\mathcal U(M)$.}
\end{equation}

 In order to prove the conclusion, it is therefore enough to prove the following claim:
 
\begin{claim}\label{special_u}
There exists $u\in\mathcal U(M)$ such that 
$u^2=1$, $\tau(u)=0$ and $\varepsilon(u)<\frac{\varepsilon^2}{4}$.
\end{claim}

{\bf Proof of Claim \ref{special_u}.}
We define the completely positive maps $\Phi'=\Phi^*\circ \text{E}_N\circ \Phi:M\rightarrow M$ and $\Psi'=\Psi^*\circ\text{E}_P\circ\Psi:M\rightarrow M$.
Let $\widetilde\zeta=\sum_{\zeta\in S}\zeta\otimes\zeta^*\in\text{L}^2(M)\otimes\text{L}^2(M)$.

Recall that $\xi\in\mathcal H\ominus\mathcal H^M$. Since $M$ is diffuse, we have $(\text{L}^2(M)\otimes \text{L}^2(M))^{M}=\{0\}$.
Since $M\nprec_{M\overline{\otimes}N}N$ and $M\nprec_{M\overline{\otimes}P^{\text{op}}}P^{\text{op}}$, we derive that  $\text{L}^2(\langle M\overline{\otimes}N,e_N\rangle)^M=\{0\}$ and $\text{L}^2(\langle M\overline{\otimes}P^{\text{op}},e_{P^{\text{op}}}\rangle)^M=\{0\}$.
Finally, we claim that $\mathcal H_{\Phi'}^{M}=\{0\}$ and $\mathcal H_{\Psi'}^{M}=\{0\}$. Since $\Phi'=(\text{E}_N\circ\Phi)^*\circ (\text{E}_N\circ\Phi)$, we get that 
$\mathcal H_{\Phi'}\cong\mathcal H_{\text{E}_N\circ\Phi}\otimes_N\overline{{\mathcal H}_{\text{E}_N\circ\Phi}}$, as $M$-bimodules.  Since the $M$-$N$-bimodule $\mathcal H_{\text{E}_N\circ\Phi}$ is contained in $_M\mathcal H_{1\overline{\otimes}N}$, it is left weakly mixing and therefore $\mathcal H_{\Phi'}^M=\{0\}$. Similarly, it follows that $\mathcal H_{\Psi'}^{M}=\{0\}$.

Consider the $M$-bimodule $\mathcal H\oplus \text{L}^2(\langle M\overline{\otimes}N,e_{N}\rangle)\oplus \mathcal H_{\Phi'}\oplus
\text{L}^2(\langle M\overline{\otimes}P^{\text{op}},e_{P^{\text{op}}}\rangle)\oplus \mathcal H_{\Psi'}\oplus (\text{L}^2(M)\otimes\text{L}^2(M))$ and the $M$-$M^{\text{op}}$-bimodule $\mathcal K$.
By using the previous paragraph and applying Theorem \ref{2-for-1} to these bimodules, we deduce the existence of a partition of unity $\{p_i\}_{i=1}^m$ in $M$, such that $m\in\mathbb N$ is even, 
$\tau(p_i)=\frac{1}{m}$, for every $1\leq i\leq m$, and the following inequalities hold: 
\begin{enumerate}
\item[(x)] $\|\sum_{i=1}^mp_i\xi p_i\|<\frac{\varepsilon}{10}$,
\item[(y)] $\|\sum_{i=1}^mp_ie_Np_i\|_{2,\text{Tr}}<\frac{\varepsilon^2}{100}$,
\item[(z)] $\|\sum_{i=1}^mp_i\xi_{\Phi'}p_i\|<\frac{\varepsilon}{10}$,
\item[(t)] $\|\sum_{i=1}^mp_i\eta p_i^{\text{op}}\|<\frac{\varepsilon}{10}$,
\item[(u)] $\|\sum_{i=1}^mp_ie_{P^{\text{op}}}p_i\|_{2,\text{Tr}}<\frac{\varepsilon^2}{100}$,
\item[(v)] $\|\sum_{i=1}^mp_i\xi_{\Psi'}p_i\|<\frac{\varepsilon}{10}$,
\item[(w)] $\|\sum_{i=1}^mp_i\widetilde\zeta p_i\|<\frac{\varepsilon^8}{10^7(12+C)^4}$.
\end{enumerate}

Let $\Omega=\{-1,1\}^m$ with the uniform probability measure $\nu$. 
If $1\leq i\leq m$, then $0\leq\Phi(p_i)\leq \Phi(1)\leq 1$ and thus $\Phi(p_i)^2\leq \Phi(p_i)$ and so $\tau(\Phi(p_i)^2p_i)\leq \tau(\Phi(p_i)p_i)$. Hence by using (x) we get that \begin{equation}\label{phi}
\|\sum_{i=1}^m\Phi(p_i)p_i\|_2^2=\tau(\sum_{i=1}^m\Phi(p_i)^2p_i)\leq\tau(\sum_{i=1}^m\Phi(p_i)p_i)=\sum_{i=1}^m\langle p_i\xi p_i,\xi\rangle=\|\sum_{i=1}^mp_i\xi p_i\|^2<\frac{\varepsilon^2}{100}.
\end{equation}
Similarly, by using (t) we get that 
\begin{equation}\label{psi}
\|\sum_{i=1}^m\Psi(p_i)p_i^{\text{op}}\|_2^2=\sum_{i=1}^m\tau(\Psi(p_i)^2p_i^{\text{op}})\leq\sum_{i=1}^m\tau(\Psi(p_i)p_i^{\text{op}})=\|\sum_{i=1}^mp_i\eta p_i^{\text{op}}\|^2<\frac{\varepsilon^2}{100}.
\end{equation}

Considering the completely positive map $\text{E}_N:M\overline{\otimes}N\rightarrow N$ and the notation from Subsection \ref{bimm}, $\text{E}_N^*\circ \text{E}_N:M\overline{\otimes}N\rightarrow M\overline{\otimes}N$ is the conditional expectation onto $N$, hence $\mathcal H_{\text{E}_N^*\circ \text{E}_N}=\text{L}^2(\langle M\overline{\otimes}N,e_N\rangle)$ and $\xi_{\text{E}_N^*\circ \text{E}_N}=e_N$.
By combining \eqref{phi}, (y) and Lemma \ref{randomization}(2), we get that

\begin{equation}\label{alpha}
\int_{\mathcal U(M)}\alpha(u)\;\text{d}\nu(u)\leq \|\sum_{i=1}^m\Phi(p_i)p_i\|_2^2+2\|\sum_{i=1}^mp_ie_Np_i\|_{2,\text{Tr}}<\frac{3\varepsilon^2}{100}.\end{equation}

Since $\|\text{E}_N(\Phi(x))\|_2^2=\tau(\Phi'(x)x^*)$, for every $x\in M$, using Lemma \ref{randomization}(1) and (z) we get

\begin{equation}\label{beta}
\int_{\mathcal U(M)}\beta(u)\;\text{d}\nu(u)=\sum_{i=1}^m\|\text{E}_N(\Phi(p_i))\|_2^2=\sum_{i=1}^m\tau(\Phi'(p_i)p_i)=\|\sum_{i=1}^mp_i\xi_{\Phi'} p_i\|_2^2<\frac{\varepsilon^2}{100}.
\end{equation}

Next, by (u) we have $\|\sum_{i=1}^mp_i^{\text{op}}e_Pp_i^{\text{op}}\|_{2,\text{Tr}}=\|\sum_{i=1}^mp_ie_{P^{\text{op}}}p_i\|_{2,\text{Tr}}<\frac{\varepsilon^2}{100}$. By using \eqref{psi}, this fact, that the map $x\mapsto x^{\text{op}}$ is linear and positive, and applying Lemma \ref{randomization}(2), we get that

\begin{equation}\label{gamma}
\int_{\mathcal U(M)}\gamma(u)\;\text{d}\nu(u)\leq \|\sum_{i=1}^m\Psi(p_i)p_i^{\text{op}}\|_2^2+2\|\sum_{i=1}^mp_i^{\text{op}}e_Pp_i^{\text{op}}\|_{2,\text{Tr}}<\frac{3\varepsilon^2}{100}.
\end{equation}

Since $\|\text{E}_P(\Psi(x))\|_2^2=\tau(\Psi'(x)x^*)$, for every $x\in M$, using Lemma \ref{randomization}(1) and (v) we get that
\begin{equation}\label{delta'}
\int_{\mathcal U(M)}\delta(u)\;\text{d}\nu(u)=\sum_{i=1}^m\|\text{E}_P(\Psi(p_i))\|_2^2=\sum_{i=1}^m\tau(\Psi'(p_i)p_i)=\|\sum_{i=1}^mp_i\xi_{\Psi'} p_i\|^2<\frac{\varepsilon^2}{100}.
\end{equation}

Finally, since $\sigma(u)=\langle u\widetilde\zeta u^*,1\otimes 1\rangle$, for every $u\in\mathcal U(M)$, by using (w) we get that
\begin{equation}\label{sigma}
\int_{\mathcal U(M)}\sigma(u)\;\text{d}\nu(u)=\int_{\mathcal U(M)}\langle u\widetilde\zeta u^*,1\otimes 1\rangle\;\text{d}\nu(u)=\langle\sum_{i=1}^mp_i\widetilde\zeta u_i^*,1\otimes 1\rangle<\frac{\varepsilon^8}{10^7(12+C)^4}.
\end{equation}

For $u\in\mathcal U(M)$, let $$\varepsilon_0(u)=\alpha(u)+\beta(u)+\gamma(u)+\delta(u)+\frac{10^5(12+C)^4}{\varepsilon^6}\sigma(u).$$ By combining \eqref{alpha}-\eqref{sigma} we get that 

\begin{equation}\label{varepsilon_0}
\int_{\mathcal U(M)}\varepsilon_0(u)\;\text{d}\nu(u)<\frac{\varepsilon^2}{100}+\frac{3\varepsilon^2}{100}+\frac{\varepsilon^2}{100}+\frac{3\varepsilon^2}{100}+\frac{10^5(12+C)^4}{\varepsilon^6}\cdot\frac{\varepsilon^8}{10^7(12+C)^4}<\frac{\varepsilon^2}{10}.
\end{equation}
Thus, we can find $\lambda_1,\dots,\lambda_m\in\{-1,1\}$ such that $v=\sum_{i=1}^m\lambda_ip_i\in\mathcal U(M)$ satisfies
$\varepsilon_0(v)<\frac{\varepsilon^2}{10}$. Let $A=\{1\leq i\leq m\mid \lambda_i=1\}$ and $B=\{1\leq i\leq m\mid \lambda_i=-1\}$ so that $v=\sum_{i\in A}p_i-\sum_{i\in B}p_i$. After replacing $v$ with $-v$, we may assume that $|A|\geq |B|$. Since $m=|A|+|B|$ is even,  $|A|-|B|$ is also even. Let $F\subset A$ be a subset with $|F|=\frac{|A|-|B|}{2}$. Define $u=\sum_{i\in A\setminus F}p_i-\sum_{i\in B\cup F}p_i$. Then $u\in\mathcal U(M)$, $\tau(u)=0$ and $u^2=1$. 
Moreover, $v-u=2\sum_{i\in F}p_i$ and thus 
\begin{equation}\label{distance}\|v-u\|_2=2\sqrt{\frac{|F|}{m}}=2\sqrt{\frac{|A|-|B|}{2m}}=\sqrt{2\tau(v)}.\end{equation}
Since $\varepsilon_0(v)<\frac{\varepsilon^2}{10}$, we get that $\frac{10^5(12+C)^4}{\varepsilon^6}\sigma(v)<\frac{\varepsilon^2}{10}$ and hence $\sigma(v)<\frac{\varepsilon^8}{10^6(12+C)^4}$. Since $1\in S$, we have $|\tau(v)|\leq\sqrt{\sigma(v)}$, which further entails that $\tau(v)<\frac{\varepsilon^4}{10^3(12+C)^2}$. In combination with \eqref{distance} we conclude that $\|u-v\|_2=\sqrt{2\tau(v)}<\frac{\varepsilon^2}{15(12+C)}$. Hence, using \eqref{varr} we get $$|\varepsilon(u)-\varepsilon(v)|\leq (12+C)\|u-v\|_2<\frac{\varepsilon^2}{15}.$$ Since $\varepsilon(v)\leq\varepsilon_0(v)<\frac{\varepsilon^2}{10}$, we get $\varepsilon(u)<\frac{\varepsilon^2}{15}+\frac{\varepsilon^2}{10}<\frac{\varepsilon^2}{4}$. This finishes the proof of both the claim and the theorem.
\end{proof}

\begin{remark}\label{unitaryleftaction} In order to obtain item (3) in the previous lemma, we used neither the fact that $X$ is orthogonal to $\mathcal H^M$ 
nor the fact that the $M$-$N$-bimodule $_M\mathcal H_{1\overline{\otimes}N}$ is left weakly mixing. 
The only property of $M$ that we used was diffuseness (see the items (iii)--(e)--(w)--\eqref{sigma} in the proof of Lemma \ref{one_unitary}). Hence, for any $M$-$M\overline{\otimes}N$-bimodule $\mathcal{H}$ 
we have that for every $\epsilon>0$ and every finite set $X\subset\mathcal{H}$, there is $u\in \mathcal U(M)$ such that $u^2=1$, $\tau(u)=0$ and $\sup\{|\langle\xi_1(u\otimes y),\xi_2\rangle|\mid\xi_1,\xi_2\in X, y\in (N)_1\}\leq\epsilon$.

\end{remark}

\section{Coarse hyperfinite subfactors}\label{Sec:hyperfinite}

In this section we combine Lemma \ref{one_unitary} with ideas from Popa's iterative method from \cite[Theorem 4.2]{Pop18b} to build an increasing sequence of copies of $2\times 2$ matrix algebras inside $M$. This yields a hyperfinite subfactor $R\subset M$ that is coarse relative to the given bimodules and preserves the required weak mixing properties. The main goal of this section is to prove the following result, which is a more general version of Theorem \ref{popa_extension}.

\begin{theorem}\label{bimodulenonintertwining} Let $M$ be a separable II$_1$ factor and $(N,\tau)$, $(P,\tau)$, $(Q,\tau)$, $(S,\tau)$ be separable tracial von Neumann algebras. 
Assume that $\mathcal H,\mathcal K,\mathcal L$ are separable Hilbert spaces such that
\begin{itemize}
\item[(1)] $\mathcal H$ is an $M$-$M\overline{\otimes}N$-bimodule  such that the $M$-$N$-bimodule $_{M}\mathcal H_{1\overline{\otimes} N}$ is left weakly mixing.
\item[(2)] $\mathcal K$ is an $M$-$M^{\emph{op}}\overline{\otimes}P$-bimodule such that the $M$-$P$-bimodule $_{M}\mathcal K_{1\overline{\otimes} P}$ is left weakly mixing. 
\item[(3)]  $\mathcal L$ is an $Q$-$M\overline{\otimes}S$-bimodule such that the $Q$-$S$ bimodule $_{Q}\mathcal L_{1\overline{\otimes} S}$ is left weakly mixing.
\end{itemize}

Then there exists a hyperfinite subfactor $R\subset M$ satisfying the following properties:
\begin{enumerate}
\item [(a)] The $R$-$R\overline{\otimes}N$-bimodule $\mathcal H\ominus\overline{\emph{sp}}(R\mathcal H^M)$ is coarse.
\item [(b)] The $R$-$R^{\emph{op}}\overline{\otimes}P$-bimodule $\mathcal K$ is coarse.
\item [(c)] The $Q$-$R\overline{\otimes}S$-bimodule $\mathcal L$ is  left weakly mixing.
\end{enumerate}

\end{theorem}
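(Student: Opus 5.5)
We will build $R$ as the inductive limit of an increasing sequence of finite dimensional factors $B_n\cong\mathbb M_{2^n}(\mathbb C)$ inside $M$. At each step we adjoin one order two unitary produced by Lemma \ref{one_unitary}, following Popa's iterative scheme from \cite[Theorem 4.2]{Pop18b}. Throughout, $B_n$ is generated by $2^n\times 2^n$ matrix units $\{e^{(n)}_{a,b}\}$, and $M_n:=B_n'\cap M$ is a II$_1$ factor with $M=B_n\overline{\otimes}M_n$. Given $B_n$, we first fix the data for the next step. Choose dense sequences in $\mathcal H\ominus\overline{\text{sp}}(M\mathcal H^M)$ (note that this contains the relevant part of $\mathcal H\ominus\overline{\text{sp}}(R\mathcal H^M)$ once we handle $\overline{\text{sp}}(M\mathcal H^M)\ominus\overline{\text{sp}}(R\mathcal H^M)$ separately via Lemma \ref{central_space}, since that piece is $\text{L}^2(M)\ominus\text{L}^2(R)$ tensored with a multiplicity space), in $\mathcal K$, and in $\mathcal L$. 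Enumerate them using a diagonal bookkeeping so that every vector is treated at infinitely many stages.

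At stage $n$, compress the bimodules to $M_n$. Regard $\mathcal H$ as an $M_n$-$M_n\overline{\otimes}(B_n^{\rm op}\text{-twisted }N)$-bimodule by setting $N_n:=B_n\overline{\otimes}N$ (or $\ell^\infty$ over the matrix indices tensored with $N$). Then the vectors $e_{1,a}\xi e_{b,1}$, with $\xi$ from the finite list, lie in $\mathcal H\ominus\mathcal H^{M_n}$. This uses Lemma \ref{fixed_point_proj}(2), which gives $\mathcal H^{M_n}=\text{sp}(B_n\mathcal H^M)$. Left weak mixing over $N_n$ is inherited, since $B_n$ is finite dimensional. Treat $\mathcal K$ similarly, with $P_n=B_n^{\rm op}\overline{\otimes}P$. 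Apply Lemma \ref{one_unitary} to $M_n$ (which is of type II$_1$), with tolerance $\varepsilon_n$ and these finite sets. This yields $u_n\in M_n$ with $u_n^2=1$ and $\tau(u_n)=0$, and hence a copy of $\mathbb M_2(\mathbb C)$. To get a full $2\times2$ matrix algebra we also need a partial isometry. Following Popa, we take $u_n$ as the symmetry $p-(1-p)$ and then apply the lemma once more inside $pM_np$ to obtain an order two unitary $v$ with $\tau(v)=0$ conjugating $p$ to $1-p$; alternatively, use two anticommuting symmetries, both chosen by Lemma \ref{one_unitary}. Set $B_{n+1}=B_n\otimes\mathbb M_2$. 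For (c), apply Remark \ref{unitaryleftaction} to $\mathcal L$ regarded as a $Q$-$M_n\overline{\otimes}(B_n\overline{\otimes}S)$-bimodule to make $|\langle\zeta_1(u\otimes y),\zeta_2\rangle|$ small; iterating over words in the new unitaries then produces unitaries in $R$ witnessing condition (2) of Definition \ref{weaklymixing}.

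Coarseness in (a) and (b) will come from Lemma \ref{coarse-criterion}. The orthonormal basis of $\text{L}^2(R)$ is the set of words $w=w_1w_2\cdots w_k$ in the $2^n$-normalized matrix units generated stagewise (a Pauli-type basis, in which each basis element is a product of one new generator per level). We need $\sum_{w,w'}\sup_y|\langle w\xi(w'\otimes y),\xi\rangle|<\infty$ for $\xi$ in a dense set. The conditions (1)--(3) (respectively (4)--(6)) of Lemma \ref{one_unitary} at level $n$ exactly control the coefficients in which the level-$n$ letter appears on the left, on the right, or on both sides. Choosing $\varepsilon_n$ smaller than $4^{-n}$ divided by the number of basis words at level $\le n$ gives summability. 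This uses the tensor factorization $M=B_n\overline{\otimes}M_n$ to reduce the coefficient of a word whose highest letter lies at level $n$ to a coefficient of $u_n$ against compressed vectors.

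The main obstacle is this last bookkeeping step. A basis word involves letters from all levels, while the lemma at stage $n$ only sees finitely many vectors and the current $B_n$. Later unitaries must not spoil earlier estimates, so the estimate must be applied to vectors of the form $x\xi y$ with $x,y$ ranging over the finite basis of $B_n$, including the tensor with $B_n$ in $N_n$. I expect this to force the $\ell^1$ bound to be organized by the highest level present on each side, with a geometric choice of $\varepsilon_n$.
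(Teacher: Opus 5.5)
Your reduction of part (a) leaves a genuine gap. Splitting off $\mathcal H\ominus\overline{\text{sp}}(M\mathcal H^M)$ is legitimate: that summand has no $M_n$-central vectors at any stage, so your scheme works there. The complementary summand $\overline{\text{sp}}(M\mathcal H^M)\ominus\overline{\text{sp}}(R\mathcal H^M)$, however, is never actually handled.

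Lemma \ref{central_space} only identifies this summand with $(\text{L}^2(M)\ominus\text{L}^2(R))\otimes\mathcal L$ for some right $N$-module $\mathcal L$. So whenever $\mathcal H^M\neq\{0\}$, its coarseness is equivalent to $R$ being coarse in $M$, i.e.\ to Popa's coarseness theorem for the very $R$ you are building. Nothing in your inductive step enforces this. Already for $\mathcal H=\text{L}^2(M)$ and $N=\mathbb C1$, your dense sequence lies in $\mathcal H\ominus\overline{\text{sp}}(M\mathcal H^M)=\{0\}$, so the construction imposes no condition at all, while (a) is exactly Popa's theorem.

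The difficulty is that the relevant vectors $\sigma-\mathbb P_{\overline{\text{sp}}(R\mathcal H^M)}(\sigma)$ depend on the limit $R$. At stage $n$ you can only work with $\sigma-\mathbb P_{\mathcal H^{M_n}}(\sigma)$, where $\mathcal H^{M_n}=\text{sp}(B_n\mathcal H^M)$. These vectors are orthogonal to $\mathcal H^{M_n}$, so Lemma \ref{one_unitary} applies to them. To transfer the level-$n$ coefficient bounds to the limit vectors, you must also force $\|\mathbb P_{\mathcal H^{M_{n+1}}}(\sigma)-\mathbb P_{\mathcal H^{M_n}}(\sigma)\|$ to decay faster than the number of new basis pairs grows. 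The paper requires $2^{-5n}$ against at most $2^{4n}$ pairs. This is an extra constraint on the new generators, not bookkeeping. The paper obtains it (condition (2n)) from Claim \ref{f}, namely $\|\mathbb P_{\text{sp}(wR_{n-1}\mathcal H^M)}(\sigma)\|^2=\langle wf(\sigma)w^*,1\otimes 1\rangle$ with $f(\sigma)\in\text{L}^2(M)\otimes\text{L}^2(M)$. It then adds the coarse bimodule $\text{L}^2(M)\otimes\text{L}^2(M)$, which has no central vectors, to the applications of Lemma \ref{one_unitary}, so that these quantities are small for $w\in\{a,b,ab\}$.

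Two further steps need repair.

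\emph{Part (c).} The unitaries witnessing condition (2) of Definition \ref{weaklymixing} for the $Q$-$R\overline{\otimes}S$-bimodule must lie in $Q$, not in $R$. Making $\langle\zeta_1(u\otimes y),\zeta_2\rangle$ small for the new letters $u$ says nothing about coefficients against the already constructed finite dimensional algebra tensored with $S$. The paper interleaves two choices:
\begin{enumerate}
\item At stage $n$ it picks $v_n\in\mathcal U(Q)$ with small coefficients against $U_{n-1}\otimes(S)_1$. This is possible because ${}_Q\mathcal L_{1\overline{\otimes}S}$ is left weakly mixing and $U_{n-1}$ is finite.
\item At every later stage $m$ it includes the vectors $v_i\zeta(u\otimes 1)$, for $i\leq m$ and $u\in U_{m-1}$, in the finite sets, so that the later letters are small against them.
\end{enumerate}
Summing over the basis of $R$ then gives $\sup_{y\in(R\overline{\otimes}S)_1}|\langle v_n\zeta y,\zeta'\rangle|\to 0$.

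\emph{The second symmetry.} Lemma \ref{one_unitary} gives no control over commutation relations, so a symmetry anticommuting with the first one cannot be produced by it directly. Also, a unitary of $pM_np$ cannot conjugate $p$ to $1-p$, so your first variant does not work. The fix is to take any $b_0\in M_n$ anticommuting with $a$. Then choose $b_1$ by the lemma inside $\{a,b_0\}'\cap M_n$, applied to the finite sets enlarged on both sides by $\{1,a,b_0,ab_0\}$, and set $b=b_0b_1$.
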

We note that the formulation of (a) implicitly uses the fact that $\overline{\text{sp}}(R\mathcal H^M)$ is an $R$-$R\overline{\otimes}N$-bimodule.

\begin{proof} 

We will define $R=(\otimes_{i=1}^\infty A_i)''$, where $(A_i)_{i=1}^\infty$ is a well-chosen sequence of pairwise commuting copies of $\mathbb M_2(\mathbb C)$ inside $M$. We start by fixing some notation:

\begin{itemize}
\item Let $\{u_j\}_{0\leq j\leq 3}$ be unitaries spanning $\mathbb M_2(\mathbb C)$ with $u_0=u_1^2=u_2^2=1$ and $u_3=u_1u_2=-u_2u_1$. 
Specifically, we can take $u_1=\begin{pmatrix}1&0\\0&-1 \end{pmatrix}$, $u_2=\begin{pmatrix} 0&1\\1&0 \end{pmatrix}$ and $u_3=\begin{pmatrix}0&1\\-1&0 \end{pmatrix}$.

\item For $i\geq 1$, let $\Delta_i:=\{u_{i,j}\}_{0\leq j\leq 3}$ be a copy of $\{u_j\}_{0\leq j\leq 3}$ inside $A_i$. 

\item For  $n\geq 1$, let $R_n=\otimes_{i=1}^nA_i$ and $U_n=\Delta_1\cdots\Delta_n\subset\mathcal U(R_n)$. Put $R_0=\mathbb C1$ and $U_0=\{1\}$.

\item For a Hilbert subspace $\mathcal H_0\subset\mathcal H$, let $\mathbb P_{\mathcal H_0}$ denote the orthogonal projection onto $\mathcal H_0$.

\item Let $\{\xi_n\}_{n\geq 1}\subset \mathcal{H}$, $\{\eta_n\}_{n\geq 1}\subset\mathcal{K}$ and $\{\zeta_n\}_{n\geq 1}\subset\mathcal L$ be dense sequences. 

\item For $n\geq 1$, let $F_n=\{\xi_k\}_{1\leq k\leq n}$,  $\widetilde F_n=U_{n-1}\mathbb P_{\mathcal H\ominus\mathcal H^{R_{n-1}'\cap M}}(F_n)U_{n-1}$, $\widehat{F}_n=\mathbb P_{\overline{\text{sp}}(M\mathcal H^M)}(F_n)$,  $G_n=\{\eta_k\}_{1\leq k\leq n}$, $\widetilde G_n=U_{n-1}G_nU_{n-1}$ and $H_n=\{\zeta_k\}_{1\leq k\leq n}$.

\end{itemize}

{\bf Part I.} In the first part of the proof, we construct recursively the sequence $(A_i)_{i=1}^\infty$  and unitaries  $(v_i)_{i=1}^\infty\subset\mathcal U(Q)$ such that, in the above notation,  the following conditions  hold for every $n\geq 1$:

\begin{enumerate}

\item[(1n)] $\sup_{x\in (N)_1}|\langle y_1\xi (y_2\otimes x),\xi'\rangle|\leq 2^{-5n}$, for every $(y_1,y_2)\in (\Delta_n\times \Delta_n)\setminus \{(1,1)\}$ and $\xi,\xi'\in \widetilde F_n $. 
\item[(2n)] $\|\mathbb P_{\mathcal H^{R_n'\cap M}}(\sigma)-\mathbb P_{\mathcal H^{R_{n-1}'\cap M}}(\sigma)\|\leq 2^{-5n}$, for every $\sigma\in F_n$.
\item[(3n)] $\sup_{z\in (P)_1}|\langle y_1\eta(y_2^{\text{op}}\otimes z),\eta'\rangle|\leq 2^{-5n}$, for every $(y_1,y_2)\in (\Delta_n\times \Delta_n)\setminus \{(1,1)\}$ and $\eta,\eta'\in \widetilde G_n$.
\item[(4n)] $\sup_{t\in (S)_1}|\langle \zeta(y\otimes t),\zeta'\rangle|\leq 2^{-3n}$, for every $y\in \Delta_n\setminus\{1\}$ and $\zeta,\zeta'\in\widetilde H_n:=H_n\cup(\cup_{i=1}^n(v_iH_nU_{n-1}))$.
\item[(5n)]  $\sup_{u\in U_{n-1},t\in (S)_1}|\langle v_n\zeta(u\otimes t),\zeta'\rangle|\leq 2^{-3n}$, for every $\zeta,\zeta'\in H_n$.
\end{enumerate}

Suppose that for some $n\geq 1$ we have constructed $(A_i)_{i=1}^{n-1}$  and $(v_i)_{i=1}^{n-1}$ such that  $(1k)$-$(4k)$ are satisfied for $k=1,\cdots,n-1$. Since the  $Q$-$S$ bimodule $_Q\mathcal L_{1\otimes S}$ is left weakly mixing, there exists $v_{n}\in\mathcal U(Q)$ such that  (5n) holds. Our goal is to construct a von Neumann subalgebra $A_n\subset R_{n-1}'\cap M$  such that $A_n\cong\mathbb M_2(\mathbb C)$  and conditions (1n)-(4n) hold.

Towards this goal, we first prove:

\begin{claim}\label{f}
There exists $f:\overline{\emph{sp}}(M\mathcal H^M)\rightarrow \emph{L}^2(M)\otimes\emph{L}^2(M)$ such that 
\begin{equation}\label{map}\text{$\|\mathbb P_{\emph{sp}(wR_{n-1}\mathcal H^M)}(\sigma)\|^2=\langle wf(\sigma) w^*,1\otimes 1\rangle$, for every $\sigma\in \overline{\emph{sp}}(M\mathcal H^M)$ and $w\in\mathcal U(M)$.}\end{equation}
\end{claim}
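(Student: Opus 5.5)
We will use Lemma \ref{central_space} to reduce the claim to a computation inside $\text{L}^2(M)\otimes\ell^2(I)$. That lemma gives an $M$-bimodular unitary $T:\overline{\text{sp}}(M\mathcal H^M)\rightarrow\text{L}^2(M)\otimes\ell^2(I)$ with $T(\mathcal H^M)=\mathbb C1\otimes\ell^2(I)$. It follows that $T(\overline{\text{sp}}(wR_{n-1}\mathcal H^M))=\overline{wR_{n-1}}\otimes\ell^2(I)=\text{L}^2(wR_{n-1})\otimes\ell^2(I)$. Here $wR_{n-1}$ is finite dimensional, so the span is already closed. Writing $T(\sigma)=\sum_{k\in I}\sigma_k\otimes\delta_k$ with $\sigma_k\in\text{L}^2(M)$, we get
$$\|\mathbb P_{\text{sp}(wR_{n-1}\mathcal H^M)}(\sigma)\|^2=\sum_{k\in I}\|\mathbb P_{\text{L}^2(wR_{n-1})}(\sigma_k)\|_2^2=\sum_{k\in I}\|\text{E}_{R_{n-1}}(w^*\sigma_k)\|_2^2 .$$
The last equality holds because left multiplication by $w^*$ is a unitary carrying $\text{L}^2(wR_{n-1})$ onto $\text{L}^2(R_{n-1})$.

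Next we express $\|\text{E}_{R_{n-1}}(x)\|_2^2$ as a matrix coefficient on the coarse bimodule. Since $R_{n-1}\cong\mathbb M_{2^{n-1}}(\mathbb C)$ has the orthonormal basis $\{2^{-(n-1)/2}\,u\}_{u\in U_{n-1}}$ of unitaries (with respect to the normalized trace, these are orthonormal), we have
$$\|\text{E}_{R_{n-1}}(x)\|_2^2=2^{-(n-1)}\sum_{u\in U_{n-1}}|\tau(u^*x)|^2 .$$
Substituting $x=w^*\sigma_k$ gives $|\tau(u^*w^*\sigma_k)|^2=|\langle \sigma_k w^*,wu\rangle|$-type expressions. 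More cleanly,
$$|\tau(u^*w^*\sigma_k)|^2=\langle (w^*\sigma_k)\otimes(\sigma_k^*w),\;u\otimes u^*\rangle=\langle w^*(\sigma_k\otimes\sigma_k^*)w,\;u\otimes u^*\rangle$$
in $\text{L}^2(M)\otimes\text{L}^2(M)$. Here $\langle a\otimes b,c\otimes d\rangle=\tau(d^*b)\overline{\tau(c^*a)}$ up to conjugation conventions; we will fix the convention so that this evaluates to $|\tau(u^*w^*\sigma_k)|^2$, replacing $u\otimes u^*$ by $u\otimes u$ or by $1\otimes1$ after moving $u$ if needed. For unbounded $\sigma_k\in\text{L}^2(M)$ we first treat $\sigma_k\in M$ and then extend by continuity, since both sides are continuous in $\sigma$.

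To obtain exactly the form $\langle wf(\sigma)w^*,1\otimes1\rangle$, we use the outer $M$-bimodule structure $a(\xi\otimes\eta)b=a\xi\otimes\eta b$. Put
$$f(\sigma)=2^{-(n-1)}\sum_{k\in I}\sum_{u\in U_{n-1}}(u^*\otimes 1)(\sigma_k\otimes\sigma_k^*)(1\otimes u)$$
with the appropriate placement of adjoints. Then
$$\langle w f(\sigma)w^*,1\otimes1\rangle=2^{-(n-1)}\sum_{k,u}\tau(wu^*\sigma_k)\overline{\tau(\sigma_k u w^*)^*}=2^{-(n-1)}\sum_{k,u}|\tau(u^*\,\cdot\,)|^2 ,$$
where we use traciality to rewrite $\tau(wu^*\sigma_k)=\tau(u^*\sigma_kw)$, and the convention is chosen so the two factors are complex conjugates of each other. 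Because this is a sum of squares, the series converges absolutely: the sum over $k$ is bounded by $\sum_k\|\sigma_k\|_2^2=\|\sigma\|^2$. The required $f(\sigma)$ therefore lies in $\text{L}^2(M)\otimes\text{L}^2(M)$ as an absolutely convergent sum, since $\|\sigma_k\otimes\sigma_k^*\|=\|\sigma_k\|_2^2$.

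The only delicate points are bookkeeping ones: checking the adjoint and conjugation conventions so that each term is a genuine $|\cdot|^2$, and the continuity extension from $M$ to $\text{L}^2$. Neither is a real obstacle. The map $f$ need not be linear, only well defined, and that is all the claim asks for.
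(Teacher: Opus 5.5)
Your overall route is the paper's. You transport through the bimodular unitary $T$ of Lemma \ref{central_space}, reduce to $\sum_k\|\text{E}_{R_{n-1}}(w^*\sigma_k)\|_2^2$, expand in an orthonormal basis of $\text{L}^2(R_{n-1})$, and encode $|\tau(x)|^2=\langle x^*\otimes x,1\otimes 1\rangle$. Up to that reduction everything is fine. However, the claim consists of nothing more than the final computation, and the $f$ you write down does not satisfy the identity. The ``bookkeeping'' you defer is exactly where it breaks, in two places.

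The first problem is the constant. The $4^{n-1}$ unitaries in $U_{n-1}$ satisfy $\tau(u^*v)=\delta_{u,v}$ for the normalized trace. So $U_{n-1}$ itself, not $2^{-(n-1)/2}U_{n-1}$, is an orthonormal basis of $\text{L}^2(R_{n-1})$, and $\|\text{E}_{R_{n-1}}(x)\|_2^2=\sum_{u\in U_{n-1}}|\tau(u^*x)|^2$. Your factor $2^{-(n-1)}$ is therefore wrong (test $x=1$).

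The second problem, which is more serious, is the orientation. Use $w(\xi\otimes\eta)w^*=w\xi\otimes\eta w^*$ and drop the constant. Your $f(\sigma)=\sum_{k,u}u^*\sigma_k\otimes\sigma_k^*u$ then gives $\langle wf(\sigma)w^*,1\otimes1\rangle=\sum_{k,u}|\tau(wu^*\sigma_k)|^2=\sum_k\|\text{E}_{R_{n-1}}(\sigma_kw)\|_2^2$. This is the projection onto $R_{n-1}w^*$, not onto $wR_{n-1}$. Your appeal to traciality hides the fact that $\tau(u^*\sigma_kw)\neq\tau(u^*w^*\sigma_k)$ in general.

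For a concrete failure, take $\sigma_k=w\in\mathcal U(R_{n-1}'\cap M)$ with $\tau(w^2)=0$. The correct contribution is $\|\text{E}_{R_{n-1}}(1)\|_2^2=1$. Any term $\langle w(a\sigma_kb\otimes c\sigma_k^*d)w^*,1\otimes1\rangle$ with $a,b,c,d\in R_{n-1}$ contains the factor $\tau(wawb)=\tau(w^2)\tau(ab)=0$. So no redistribution of $u$ and $u^*$ inside the legs can repair it.

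Your intermediate identity $|\tau(u^*w^*\sigma_k)|^2=\langle w^*(\sigma_k\otimes\sigma_k^*)w,u\otimes u^*\rangle$ is correct, but there $w^*$ acts on the left leg. To reach the required form $w(\cdot)w^*$ you must put $\sigma_k^*$ in the left leg. The correct choice, which is the paper's, is $f(\sigma)=\sum_{k\in I}\sum_{u\in U_{n-1}}u^*\sigma_k^*\otimes\sigma_ku$. For this $f$,
$$\langle wf(\sigma)w^*,1\otimes1\rangle=\sum_{k,u}\tau(wu^*\sigma_k^*)\tau(\sigma_kuw^*)=\sum_{k,u}|\tau(uw^*\sigma_k)|^2=\sum_k\|\text{E}_{R_{n-1}}(w^*\sigma_k)\|_2^2,$$
since $\{u^*\}_{u\in U_{n-1}}$ is again an orthonormal basis of $\text{L}^2(R_{n-1})$.

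With these two corrections the rest of your argument goes through: the closedness of $wR_{n-1}\otimes\ell^2(I)$, convergence via $\|\sigma_k^*\otimes\sigma_k\|=\|\sigma_k\|_2^2$, and the extension from $M$ to $\text{L}^2(M)$.
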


\begin{proof}[Proof of Claim \ref{f}.]
Lemma \ref{central_space} gives an $M$-bimodular unitary $T:\overline{\text{sp}}(M\mathcal H^M)\rightarrow\text{L}^2(M)\otimes\ell^2(I)$, for some set $I$, with $T(\mathcal H^M)=\mathbb C1\otimes\ell^2(I)$.
Let $\sigma\in\overline{\text{sp}}(M\mathcal H^M)$. Write $T(\sigma)=\oplus_{i\in I}\sigma_i$, with $\sigma_i\in\text{L}^2(M)$, for $i\in I$.
As $\|\sigma_i^*\otimes \sigma_i\|_2=\|\sigma_i\|_2^2$, we have $\sum_{i\in I}\|\sigma_i^*\otimes \sigma_i\|_2=\sum_{i\in I}\|\sigma_i\|_2^2=\|T(\sigma)\|_2^2<\infty$.
Thus, we can define $f(\sigma)\in\text{L}^2(M)\otimes\text{L}^2(M)$ by $f(\sigma)=\sum_{i\in I,u\in U_{n-1}}u^*\sigma_i^*\otimes \sigma_iu$.
Let $w\in\mathcal U(M)$. Since $T$ is $M$-modular, $T(\text{sp}(wR_{n-1}\mathcal H^M))=\text{sp}(wR_{n-1}T(\mathcal H^M))=wR_{n-1}\otimes\ell^2(I)$.
Since $wU_{n-1}$ is an orthonormal basis of $wR_{n-1}$ and  $|\tau(x)|^2=\langle x^*\otimes x,1\otimes 1\rangle$, for every $x\in M$, we get that
\begin{align*}
\|\mathbb P_{\text{sp}(wR_{n-1}\mathcal H^M)}(\sigma)\|^2&=\|\mathbb P_{wR_{n-1}\otimes\ell^2(I)}(T(\sigma))\|^2\\&=\sum_{i\in I}\|\mathbb P_{wR_{n-1}}(\sigma_i)\|^2\\&=\sum_{i\in I,u\in U_{n-1}}|\tau(\sigma_iu^*w^*)|^2
\\&=\sum_{i\in I,u\in U_{n-1}}\langle wu\sigma_i^*\otimes\sigma_iu^*w^*,1\otimes 1\rangle=\langle wf(\sigma) w^*,1\otimes 1\rangle,
\end{align*}
which proves \eqref{map}.
\end{proof}

Since the bimodules $_M\mathcal{H}_{1\overline{\otimes} N}$ and $_{M}\mathcal{K}_{1\overline{\otimes} P}$ are left weakly mixing and $R_{n-1}'\cap M\subset M$ has finite index, the bimodules $_{R_{n-1}'\cap M}\mathcal{H}_{1\overline{\otimes} N}$ and $_{R_{n-1}'\cap M}\mathcal{K}_{1\overline{\otimes} P}$ are left weakly mixing. By construction, since $U_{n-1}\subset R_{n-1}$, we have $\widetilde{F}_{n}\subset U_{n-1}(\mathcal H\ominus\mathcal H^{R_{n-1}'\cap M})U_{n-1}= \mathcal H\ominus\mathcal H^{R_{n-1}'\cap M}$. Since $\mathcal L$ is a right $M\overline{\otimes}S$-module, we can view it a submodule of $\text{L}^2(M\overline{\otimes}S)\otimes\ell^2(J)$, for some set $J$. We endow  $\text{L}^2(M\overline{\otimes}S)\otimes\ell^2(J)$ with the usual left $M$-module structure, and notice that it is a left weakly mixing  $M$-$1\overline{\otimes}S$-bimodule. Finally, note that the coarse $M$-bimodule $\text{L}^2(M)\otimes\text{L}^2(M)$ is left weakly mixing.
Lemma \ref{one_unitary} and Remark \ref{unitaryleftaction} provide $a\in\mathcal{U}(R_{n-1}'\cap M)$ such that $a^2=1$, $\tau(a)=0$, and the following hold:
\begin{enumerate}
    \item[(a1)] $\sup_{x\in (N)_1}|\langle y_1\xi(y_2\otimes x),\xi'\rangle|\leq 2^{-5n}$, for every $y_1,y_2\in\{1,a\}$, $(y_1,y_2)\not=(1,1)$, and $\xi,\xi'\in \widetilde{F}_{n}$.
      \item [(a2)] $\langle af(\sigma)a,1\otimes 1\rangle\leq \frac{1}{3}2^{-10n}$, for every $\sigma\in\widehat{F}_n$.
    \item[(a3)] $\sup_{z\in (P)_1}|\langle y_1\eta(y_2^{\text{op}}\otimes z),\eta'\rangle|\leq 2^{-5n}$, for every $y_1,y_2\in\{1,a\}$, $(y_1,y_2)\not=(1,1)$, and  $\eta,\eta'\in \widetilde{G}_{n}$.
    \item[(a4)] $\sup_{t\in (S)_1}|\langle\zeta (a\otimes t),\zeta'\rangle|\leq 2^{-3n}$, for every $\zeta,\zeta'\in \widetilde{H}_{n}$.

\end{enumerate}

Next, since $R_{n-1}'\cap M$ is a II$_1$ factor, $a^2=1$ and $\tau(a)=0$, we can find $b_0\in \mathcal U(R_{n-1}'\cap M)$ such that $b_0^2=1$, $\tau(b_0)=0$, $ab_0=-b_0a$ and therefore  $\{ a,b_0\}''\cong \mathbb M_{2}(\mathbb C)\subset R_{n-1}'\cap M$. 
Denote $M_0=\{a,b_0\}'\cap (R_{n-1}'\cap M)$.
Since the inclusion  $M_0\subset M$ has finite index, $\mathcal{H}$ is left weakly mixing as an $M_0$-$(1\overline{\otimes} N)$-bimodule, and $\mathcal K$ is left weakly mixing as an $M_0$-$(1\overline{\otimes} P)$-bimodule. 

Let $T=\{1,a,b_0,ab_0\}$ and $T^{\text{op}}=\{x^{\text{op}}\mid x\in T\}$. Put $X=T\widetilde{F}_{n}T$, $Y=T\widetilde{G}_{n}T^{\text{op}}$ and $Z=\widetilde{H}_{n}T$.
Note that $X\subset\{a,b_0\}''\widetilde{F}_n\{a,b_0\}''\subset\{a,b_0\}''  (\mathcal H\ominus\mathcal H^{R_{n-1}'\cap M})\{a,b_0\}''\subset\mathcal H\ominus\mathcal H^{M_0}.$
By using this fact and applying Lemma \ref{one_unitary} in the same manner as above we find $b_1\in \mathcal{U}(M_0)$ such that $b_1^2=1$, $\tau(b_1)=0$, and the following hold:
\begin{enumerate}
  \item[(b1)] $\sup_{x\in (N)_1}|\langle y_1\xi(y_2\otimes x),\xi'\rangle|\leq 2^{-5n}$, for every $y_1,y_2\in\{1,b_1\}$, $(y_1,y_2)\not=(1,1)$ and $\xi,\xi'\in X$.
    \item [(b2)] $\langle b_1(b_0f(\sigma) b_0)b_1,1\otimes 1\rangle\leq \frac{1}{3}2^{-10n}$ and $\langle b_1(b_0af(\sigma) ab_0)b_1,1\otimes 1\rangle\leq \frac{1}{3}2^{-10n}$, for every $\sigma\in\widehat{F}_n$.
      \item[(b3)] $\sup_{z\in (P)_1}|\langle y_1\eta(y_2^{\text{op}}\otimes z),\eta'\rangle|\leq 2^{-5n}$, for every $y_1,y_2\in\{1,b_1\}$, $(y_1,y_2)\not=(1,1)$ and $\eta,\eta'\in Y$.
    \item[(b4)] $\sup_{t\in (S)_1}|\langle\zeta (b_1\otimes t),\zeta'\rangle|\leq 2^{-3n}$, for every $\eta,\eta'\in Z$.
\end{enumerate}
Put $b=b_0 b_1\in\mathcal{U}(R_{n-1}'\cap M)$. Since $\tau(b)=0$, $b^2=1$ and $ab=ab_0b_1=-b_0ab_1=-b_0b_1a=-ba$, we derive that $\{a,b\}''\cong \mathbb M_2(\mathbb C)\subset R_{n-1}'\cap M_0$. We will show that $A_n=\{a,b\}''$ and  $\Delta_n=\{1,a,b,ab\}$ satisfy inequalities (1n)-(4n).

It is immediate that (a1) and (b1) imply (1n), that (a3) and (b3) imply (3n), and that (a4) and (b4) imply (4n). 
Therefore, it remains to verify (2n). Let $\sigma\in F_n$ and define $\sigma_0=\mathbb P_{\overline{\text{sp}}(M\mathcal H^M)}(\sigma)\in\widehat{F}_n$.
Lemma \ref{fixed_point_proj}(2) gives that $\mathcal H^{R_{n-1}'\cap M}=\text{sp}(R_{n-1}\mathcal H^M)$ and $\mathcal H^{R_n'\cap M}=\text{sp}(R_n\mathcal H^M)$. Since $M$ is a II$_1$ factor, $x\mathcal H^M\perp\mathcal H^M$, for every $x\in M$ with $\tau(x)=0$. Also,  $R_n\ominus R_{n-1}=aR_{n-1}\oplus bR_{n-1}\oplus abR_{n-1}$.
These three facts together imply that 
\begin{align*}
\mathcal H^{R_n'\cap M}\ominus\mathcal H^{R_{n-1}'\cap M}&=\text{sp}(R_n\mathcal H^M)\ominus\text{sp}(R_{n-1}\mathcal H^M)\\&=\text{sp}(aR_{n-1}\mathcal H^M)\oplus\text{sp}(bR_{n-1}\mathcal H^M)\oplus\text{sp}(abR_{n-1}\mathcal H^M).
\end{align*}
Since $\mathbb P_{\mathcal H^{R_n'\cap M}}(\sigma)-\mathbb P_{\mathcal H^{R_{n-1}'\cap M}}(\sigma)=\mathbb P_{\mathcal H^{R_n'\cap M}}(\sigma_0)-\mathbb P_{\mathcal H^{R_{n-1}'\cap M}}(\sigma_0)$, we deduce that
$$\|\mathbb P_{\mathcal H^{R_n'\cap M}}(\sigma)-\mathbb P_{\mathcal H^{R_{n-1}'\cap M}}(\sigma)\|^2=\|\mathbb P_{\text{sp}(aR_n\mathcal H^M)}(\sigma_0)\|^2+\|\mathbb P_{\text{sp}(bR_n\mathcal H^M)}(\sigma_0)\|^2+\|\mathbb P_{\text{sp}(abR_n\mathcal H^M)}(\sigma_0)\|^2.$$
By combining this identity with \eqref{map}, (a2) and (b2), we get that 
\begin{align*}\|\mathbb P_{\mathcal H^{R_n'\cap M}}(\sigma)-\mathbb P_{\mathcal H^{R_{n-1}'\cap M}}(\sigma)\|^2=\langle af(\sigma_0)a,1\otimes 1\rangle+\langle bf(\sigma_0)b,1\otimes 1\rangle+\langle abf(\sigma_0)ba, 1\otimes 1\rangle\leq 2^{-10n}.
\end{align*}
Hence, $\|\mathbb P_{\mathcal H^{R_n'\cap M}}(\sigma)-\mathbb P_{\mathcal H^{R_{n-1}'\cap M}}(\sigma)\|\leq 2^{-5n}$, for every $\sigma\in F_n$, which proves (2n). This finishes the first part of the proof.

{\bf Part II.} Define $R=(\otimes_{i=1}^\infty A_i)''$. Then $R\subset M$ is a  hyperfinite II$_1$ subfactor. 
In the second part of the proof we prove that $R$ satisfies conditions (a)-(c) from the hypothesis. 

Let $U=\cup_{n\geq 1}U_n\subset\mathcal U(R)$ and note that $U$ is an orthonormal basis for $\text{L}^2(R)$.

First, we prove (a). To this end, fix $\sigma=\xi_k$, for some $k\geq 1$. If $n\geq k$, we have
  $U_{n-1}\Delta_n=U_n$ and $\sigma\in F_n$. Applying (1n) to $\xi'=\xi=u_1(\sigma-\mathbb P_{\mathcal H^{R_{n-1}'\cap M}}(\sigma))u_2\in\widetilde{F}_n$ for $u_1,u_2\in U_{n-1}$  gives that for every  $(y_1,y_2)\in (U_n\times U_n)\setminus (U_{n-1}\times U_{n-1})$ we have
\begin{equation}\label{sigma0}\text{$\sup_{x\in (N)_1}|\langle y_1(\sigma-\mathbb P_{\mathcal H^{R_{n-1}'\cap M}}(\sigma))(y_2\otimes x),(\sigma-\mathbb P_{\mathcal H^{R_{n-1}'\cap M}}(\sigma))\rangle|\leq 2^{-5n}$, for every $n\geq k$}.\end{equation}
Since  $\sigma\in F_n$, applying (2n) to $\sigma$ also gives that 
\begin{equation}\label{sigma1}\text{$\|\mathbb P_{\mathcal H^{R_n'\cap M}}(\sigma)-\mathbb P_{\mathcal H^{R_{n-1}'\cap M}}(\sigma)\|\leq 2^{-5n}$, for every $n\geq k$.}\end{equation}
By Lemma \ref{fixed_point_proj} for every  $m\geq 1$ we have $\mathcal H^{R_m'\cap M}=\text{sp}(R_m\mathcal H^M)$. Since $\cup_{m\geq 1}\text{sp}(R_m\mathcal H^M)$ is dense in $\overline{\text{sp}}(R\mathcal H^M)$, we conclude that
 $\mathbb P_{\mathcal H^{R_m'\cap M}}=\mathbb P_{\text{sp}(R_m\mathcal H^M)}\rightarrow\mathbb P_{\overline{\text{sp}}(R\mathcal H^M)}$ in the  SOT, as  $m\rightarrow\infty$. Hence,  using \eqref{sigma1} we derive that for every $n\geq k$ we have
\begin{equation}\label{sigma2}
\text{$\|\mathbb P_{\mathcal H^{R_{n-1}'\cap M}}(\sigma)-\mathbb P_{\overline{\text{sp}}(R\mathcal H^M)}(\sigma)\|\leq\sum_{m=n}^\infty 2^{-5m}<2^{-5n+1}$.}
\end{equation}
Combining \eqref{sigma0} with \eqref{sigma2} gives that  for every $n\geq k$ and $(y_1,y_2)\in (U_n\times U_n)\setminus (U_{n-1}\times U_{n-1})$ 
\begin{equation}\label{U}\text{$c_k(y_1,y_2):=\sup_{x\in (N)_1}|\langle y_1(\sigma-\mathbb P_{\overline{\text{sp}}(R\mathcal H^M)}(\sigma))(y_2\otimes x),(\sigma-\mathbb P_{\overline{\text{sp}}(R\mathcal H^M)}(\sigma))\rangle|\leq 2^{-5n+3}$.}\end{equation}
Since $U_{n-1}\subset U_n$ and $|U_n|=2^{2n}$, for every $n\geq 1$, and $U=\cup_{n\geq 1}U_n$, by using \eqref{U} we get that
\begin{align*}\sum_{(y_1,y_2)\in (U\times U)\setminus (U_{k-1}\times U_{k-1})}c_k(y_1,y_2)&\leq\sum_{n=k}^\infty 2^{-5n+3}|(U_n\times U_n)\setminus (U_{n-1}\times U_{n-1})|\\&\leq\sum_{n=k}^\infty 2^{4n}2^{-5n+3}=2^{-k+4}.\end{align*}
This inequality allows us to conclude that $c_k\in\ell^1(U\times U)$. Since $U$ is an orthonormal basis for $\text{L}^2(R)$ and the sequence $\{\xi_k-\mathbb P_{\overline{\text{sp}}(R\mathcal H^M)}(\xi_k)\}_{k\geq 1}$ is dense in $\mathcal H\ominus\overline{\text{sp}}(R\mathcal H^M)$, Lemma \ref{coarse-criterion} implies (a). 

Similarly, combining (3n) with Lemma \ref{coarse-criterion} implies (b). We leave the details to the reader.

To check (c), let $n\geq 1$ and $\zeta,\zeta'\in H_n$. Then (5n) gives  $\sup_{u\in U_{n-1},t\in (S)_1}|\langle v_n\zeta (u\otimes t),\zeta'\rangle|\leq 2^{-3n}$. If $m\geq n$, then since $H_n\subset H_m$, we get $\zeta,\zeta'\in H_m$. Thus, $v_n\zeta (U_{m-1}\otimes 1)\subset \widetilde H_m$ and $\zeta'\in\widetilde H_m$.
Since $U_{m-1}(\Delta_m\setminus\{1\})=U_m\setminus U_{m-1}$, 
condition (4m) gives $\sup_{u\in U_m\setminus U_{m-1},t\in (S)_1}|\langle v_n\zeta (u\otimes t),\zeta'\rangle|\leq 2^{-3m}$. 

Let $x\in (R\overline{\otimes}S)_1$. Since $U$ is an orthonormal basis of $\text{L}^2(R)$, we get that $x=\sum_{u\in U}u\otimes t_u$, where $t_u=\text{E}_{S}(u^*x)$ satisfies $\|t_u\|\leq \|x\|\leq 1$, for every $u\in U$. Since $U=U_{n-1}\cup(\bigcup_{m\geq n}(U_{m}\setminus U_{m-1}))$, and we have $|U_{n-1}|=2^{2(n-1)}$, $|U_m\setminus U_{m-1}|=3\cdot 2^{2(m-1)}$, using (5n) we get that 
\begin{align*}|\langle v_n\zeta x,\zeta'\rangle|&\leq\sum_{u\in U}|\langle v_n\zeta (u\otimes t_u),\zeta'\rangle| \\ &\leq 2^{2(n-1)}\sup_{u\in U_{n-1},t\in (S)_1}|\langle v_n\zeta(u\otimes t),\zeta'\rangle|+\sum_{m\geq n}3\cdot 2^{2(m-1)}\sup_{u\in U_m\setminus U_{m-1},t\in (S)_1}|\langle v_n\zeta (u\otimes t),\zeta'\rangle|  \\&\leq 2^{2(n-1)}2^{-3n}+\sum_{m\geq n} 3\cdot 2^{2(m-1)} 2^{-3m}=2^{-n-2}+3\cdot 2^{-n-1}<2^{-n+1}.\end{align*}
Hence, we get that $\sup_{x\in (R\overline{\otimes}S)_1}|\langle v_n\zeta x,\zeta'\rangle|<2^{-n+1}$, for every $n\geq 1$ and $\zeta,\zeta'\in H_n$. This gives that $\sup_{x\in (R\overline{\otimes}S)_1}|\langle v_n\zeta x,\zeta'\rangle|\rightarrow 0$, for every $\zeta,\zeta'\in\mathcal L$, which implies (c).
\end{proof}

The following consequence of Theorem \ref{bimodulenonintertwining} is a more general version of Corollary \ref{corG} that will be needed in the proof of Theorem \ref{embeddings}.

\begin{theorem}\label{homomorphisms}

Let $M$ be a separable II$_1$ factor and $M_0\subset M$ be an irreducible subfactor. Let $(\widetilde N_j,\tau_j)_{j\geq 1}, (N_{j})_{j\geq N}, (P_k,\tau_k)_{k\geq 1},(Q_\ell,\tau_\ell)_{\ell\geq 1}, (S_\ell,\tau_\ell')_{\ell\geq 1}$ be tracial von Neumann algebras such that $N_{j}\subset \widetilde N_j$, for every $j\geq 1$.
Assume that we are given the following data:

\begin{enumerate}

\item[(1)] For every $j\geq 1$, a $*$-homomorphism $\alpha_j:M\rightarrow M\overline{\otimes}\widetilde N_j$ such that $\alpha_j(M_0)\nprec_{M\overline{\otimes}\widetilde N_j}1\overline{\otimes} N_{j}$ and there is no nonzero $v\in M\overline{\otimes}\widetilde N_j$ such that $\alpha_j(x)v=v(x\otimes 1)$, for every $x\in M_0$.

\item[(2)] For every $k\geq 1$, a $*$-homomorphism $\beta_k:M\rightarrow M^{\emph{op}}\overline{\otimes}P_k$ such that $\beta_k(M_0)\nprec_{M^{\emph{op}}\overline{\otimes}P_k}1\overline{\otimes} P_k$.
\item[(3)] For every $\ell\geq 1$, a $*$-homomorphism $\gamma_\ell:Q_\ell\rightarrow M\overline{\otimes}S_\ell$ such that $\gamma_l(Q_\ell)\nprec_{M\overline{\otimes}S_\ell}1\overline{\otimes} S_\ell$.

\end{enumerate}

Then there exists a hyperfinite subfactor $R\subset M_0$ 
such that the following conditions hold:
\begin{enumerate}
\item[(a)]  $_{\alpha_j(R)}\emph{L}^2(M\overline{\otimes}\widetilde N_j)_{R\overline{\otimes} N_j}$ is a coarse bimodule, so $\alpha_j(R)\nprec_{M\overline{\otimes}\widetilde N_j}R\overline{\otimes}N_{j}$, for every $j\geq 1$.
\item[(b)]  $_{\beta_k(R)}\emph{L}^2(M^{\emph{op}}\overline{\otimes}P_k)_{R^{\emph{op}}\overline{\otimes}P_k}$ is a coarse bimodule, so $\beta_k(R)\nprec_{M^{\emph{op}}\overline{\otimes}P_k}R^{\emph{op}}\overline{\otimes}P_k$, for every $k\geq 1$.
\item[(c)] $\gamma_\ell(Q_\ell)\nprec_{M\overline{\otimes}S_\ell}R\overline{\otimes}S_\ell$, for every $\ell\geq 1$.

\item[(d)] $R$ is coarse in $M$, i.e., the $R$-bimodule $\emph{L}^2(M)\ominus\emph{L}^2(R)$ is coarse.
\end{enumerate}

\end{theorem}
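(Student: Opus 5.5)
We will deduce Theorem \ref{homomorphisms} from Theorem \ref{bimodulenonintertwining}, applied with $M_0$ in place of $M$ (note that $M_0$ is a II$_1$ factor, and separability is inherited, after the usual reduction to separable subalgebras generated by the relevant data). The plan is to package the countably many homomorphisms into three bimodules by direct sums. Let $N=\bigoplus_j N_j$ be realized as $\ell^\infty$-direct sum with a weighted trace, and similarly $P=\bigoplus_k P_k$, $S=\bigoplus_\ell S_\ell$, $Q=\bigoplus_\ell Q_\ell$. Define the $M_0$-$M_0\overline{\otimes}N$-bimodule
$$\mathcal H=\bigoplus_{j}\;{}_{\alpha_j(M_0)}\text{L}^2(M\overline{\otimes}\widetilde N_j)_{M_0\overline{\otimes}N_j}\;\oplus\;{}_{M_0}(\text{L}^2(M)\ominus\text{L}^2(M_0))_{M_0},$$
where the last summand carries the trivial right action of $N$ (e.g. through a fixed extra summand $\mathbb C$ of $N$). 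Similarly let $\mathcal K=\bigoplus_k {}_{\beta_k(M_0)}\text{L}^2(M^{\text{op}}\overline{\otimes}P_k)_{M_0^{\text{op}}\overline{\otimes}P_k}$ and $\mathcal L=\bigoplus_\ell {}_{\gamma_\ell(Q_\ell)}\text{L}^2(M\overline{\otimes}S_\ell)_{M_0\overline{\otimes}S_\ell}$, a $Q$-$M_0\overline{\otimes}S$-bimodule.

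Next we verify the hypotheses of Theorem \ref{bimodulenonintertwining}. Left weak mixing of ${}_{M_0}\mathcal H_{N}$ on the $\alpha_j$ summands is Theorem \ref{corner}(e) applied to $\alpha_j(M_0)\nprec 1\overline{\otimes}N_j$; on the $\text{L}^2(M)\ominus\text{L}^2(M_0)$ summand the right $N$-action is trivial, so weak mixing amounts to $M_0$ being diffuse (no finite dimensional left $M_0$-submodules). For $\mathcal K$ and $\mathcal L$ this is again Theorem \ref{corner}(e) from the hypotheses (2),(3). The key additional point is to identify $\mathcal H^{M_0}$: on the $\alpha_j$ summand, an $M_0$-central vector is $\xi\in\text{L}^2(M\overline{\otimes}\widetilde N_j)$ with $\alpha_j(x)\xi=\xi(x\otimes1)$, $x\in M_0$; by the standard polar decomposition argument (bounded vectors via Lemma \ref{fixed_point}, then $\xi^*\xi$ commutes with $M_0\otimes 1$) a nonzero such $\xi$ yields a nonzero intertwiner $v\in M\overline{\otimes}\widetilde N_j$, excluded by (1). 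On the last summand, central vectors lie in $\text{L}^2(M_0'\cap M)\ominus \text{L}^2(M_0)=\{0\}$ by irreducibility. Hence $\mathcal H^{M_0}=\{0\}$ and $\overline{\text{sp}}(R\mathcal H^{M_0})=0$.

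Theorem \ref{bimodulenonintertwining} then produces a hyperfinite $R\subset M_0$ with $\mathcal H$ coarse as $R$-$R\overline{\otimes}N$-bimodule, $\mathcal K$ coarse, and $\mathcal L$ left weakly mixing as $Q$-$R\overline{\otimes}S$-bimodule. Restricting to summands gives (a) and (d) (coarseness of the $R$-bimodule $\text{L}^2(M)\ominus\text{L}^2(M_0)$, while $\text{L}^2(M_0)\ominus\text{L}^2(R)$ is handled by also including ${}_{M_0}\text{L}^2(M_0)_{M_0}$ — whose central part $\mathbb C1$ gives $\overline{\text{sp}}(R\mathcal H^{M_0})=\text{L}^2(R)$ — so we simply include all of $\text{L}^2(M)$ instead and use conclusion (a) with $\overline{\text{sp}}(R\cdot\mathbb C1)=\text{L}^2(R)$), and (b). For (c), weak mixing of the $\ell$-th summand restricted to $Q_\ell$ and $S_\ell$ is Theorem \ref{corner}(e) for $\gamma_\ell(Q_\ell)\nprec R\overline{\otimes}S_\ell$. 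The non-intertwining consequences in (a),(b) follow since a coarse bimodule over a diffuse $R$ is left weakly mixing. The main obstacle I expect is the bookkeeping ensuring direct sums of algebras with weighted traces preserve weak mixing and the coarse conclusions restrict to each summand; these are routine since projections onto summands are central in $N,P,S,Q$.
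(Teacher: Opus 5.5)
Your argument, in its final self-corrected form, is exactly the paper's proof: keep all of $\text{L}^2(M)$ as an extra summand with trivial right $N$-action (the paper takes $\widetilde N_0=N_0=\mathbb C1$ and $\alpha_0=\mathrm{id}$), so that $\mathcal H^{M_0}=\mathbb C1\oplus 0$ by irreducibility and hypothesis (1), hence $\overline{\text{sp}}(R\mathcal H^{M_0})=\text{L}^2(R)$, and apply Theorem \ref{bimodulenonintertwining} to $M_0$ with the three direct-sum bimodules. You should simply delete the earlier version with the summand $\text{L}^2(M)\ominus\text{L}^2(M_0)$ and the claim $\mathcal H^{M_0}=\{0\}$, since that version does not give coarseness of $\text{L}^2(M_0)\ominus\text{L}^2(R)$ and so does not give (d).
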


\begin{proof} Let $\widetilde N_0=N_0=\mathbb C1$ with its unique trace $\tau_0$.
Define the following tracial von Neumann algebras: $(\widetilde N,\tau)=(\oplus_{j\geq 0}\widetilde N_j,\sum_{j\geq 0}2^{-j-1}\tau_j)$,  $N=\oplus_{j\geq 0}N_{j}$, $(P,\tau)=(\oplus_{k\geq 1}P_k,\sum_{k\geq 1}2^{-k}\tau_k)$, $(Q,\tau)=(\oplus_{\ell\geq 1}Q_\ell,\sum_{\ell\geq 1}2^{-\ell}\tau_\ell)$ and $(S,\tau)=(\oplus_{\ell\geq 1}S_\ell,\sum_{\ell\geq 1}2^{-\ell}\tau_{\ell}')$. Let $\alpha_0:M\rightarrow M=M\overline{\otimes}\widetilde N_0$ be the identity.
Then $M\overline{\otimes}\widetilde N=\oplus_{j\geq 0}(M\overline{\otimes}\widetilde N_j), M^{\text{op}}\overline{\otimes}P=\oplus_{k\geq 1}(M^{\text{op}}\overline{\otimes}P_k)$ and $M\overline{\otimes}S=\oplus_{\ell\geq 1}(M\overline{\otimes}S_\ell)$.
Define the $*$-homomorphisms $\alpha=\oplus_{j\geq 0}\alpha_j:M\rightarrow M\overline{\otimes}\widetilde N$,  $\beta=\oplus_{k\geq 1}\beta_k:M\rightarrow M^{\text{op}}\overline{\otimes}P$ and
 $\gamma=\oplus_{\ell\geq 1}\gamma_\ell:Q\rightarrow M\overline{\otimes}S$.
Let
\begin{itemize}
\item $\mathcal H=\text{L}^2(M\overline{\otimes}\widetilde N)$ with the $M_0$-$M_0\overline{\otimes}N$-bimodule structure given by $x\xi y=\alpha(x)\xi y$,
\item $\mathcal K=\text{L}^2(M^{\text{op}}\overline{\otimes}P)$ with the $M_0$-$M_0^{\text{op}}\overline{\otimes}P$-bimodule  structure given by $x\xi y=\beta(x)\xi y$, and
\item $\mathcal L=\text{L}^2(M\overline{\otimes}S)$ with the $Q$-$M_0\overline{\otimes}S$-bimodule structure given by $x\xi y=\gamma(x)\xi y$.
\end{itemize}

Since $M_0$ is diffuse, the $M_0$-$\mathbb C 1$-bimodule $_{M_0}\text{L}^2(M)_{\mathbb C1}$ is left weakly mixing. If $j\geq 1$, then since 
$\alpha_j(M_0)\nprec_{M\overline{\otimes}\widetilde N_j}1\overline{\otimes} N_{j}$ by (1), the $M_0$-$N_j$-bimodule $_{\alpha_j(M_0)}\text{L}^2(M\overline{\otimes}\widetilde N_j)_{1\overline{\otimes} N_j}$ is left weakly mixing. Hence, the  $M_0$-$N$-bimodule $_{M_0}\mathcal H_{1\overline{\otimes} N}$ is left weakly mixing.
Similarly, the $M_0$-$P$ bimodule $_{M_0}\mathcal K_{1\overline{\otimes} P}$ and the $Q$-$S$-bimodule $_Q\mathcal L_{1\overline{\otimes} S}$ are left weakly mixing. 
Since $\mathcal H=\text{L}^2(M)\oplus(\oplus_{j\geq 1}\text{L}^2(M\overline{\otimes}\widetilde N_j))$, by using (1) again we derive that $\mathcal H^{M_0}=\mathbb C1\oplus(\oplus_{j\geq 1}\{0\})$.

By applying Theorem \ref{bimodulenonintertwining} we find a hyperfinite subfactor $R\subset M_0$ satisfying the following properties:
\begin{enumerate}
\item [(i)] The $R$-$R\overline{\otimes}N$-bimodule $\mathcal H\ominus\overline{\text{sp}}(R\mathcal H^M_0)=(\text{L}^2(M)\ominus\text{L}^2(R))\oplus(\oplus_{j\geq 1}\text{L}^2(M\overline{\otimes}\widetilde N_j))$ is coarse.
\item [(ii)] The $R$-$R^{\text{op}}\overline{\otimes}P$-bimodule $\mathcal K$ is a multiple of the coarse $R$-$R^{\text{op}}\overline{\otimes}P$-bimodule.
\item [(iii)] The $Q$-$R\overline{\otimes}S$-bimodule $\mathcal L$ is  left weakly mixing.
\end{enumerate}
Condition (i)  implies (d). It also gives that the $R$-$R\overline{\otimes}N_j$-bimodule $_{\alpha_j(R)}\text{L}^2(M\overline{\otimes}\widetilde N_j)_{R\overline{\otimes}N_j}$ is coarse and so (a) holds, for every $j\geq 1$. Similarly, (b) and (c) follow from (ii) and (iii), respectively. 
\end{proof}

\section{Rigidity of embeddings}\label{Sec:embeddings}
This section is devoted to proving the following rigidity theorem, from which the main results of the paper, Theorems \ref{no_crossed1} and \ref{no_comult1}, follow.

\begin{theorem}\label{embeddings} Let $M_1$ and $M_2$ be II$_1$ factors with property (T). Then there exist coarse embeddings of the hyperfinite II$_1$ factor $\alpha_1:R\rightarrow M_1$ and $\alpha_2:R\rightarrow M_2$ such that
 $M=M_1*_RM_2$ is a II$_1$ factor which satisfies the following properties.
 
 \begin{enumerate}
 \item
Let $\theta:M\rightarrow\mathcal M$ be any embedding, where $\mathcal M=M^{\mathbb N}\overline{\otimes}(M^{\emph{op}})^{\mathbb N}$. 
For $n\in\mathbb N$,  identify $\mathcal M=M^{\{n\}}\overline{\otimes}(M^{\mathbb N\setminus\{n\}}\overline{\otimes}(M^{\emph{op}})^{\mathbb N})$ and let $\Delta_n:M\rightarrow\mathcal M$ be the embedding $\Delta_n(x)=x\otimes 1$.

\vspace{2mm}

\noindent Then there exist projections $(p_n)_{n\in\mathbb N}\subset\mathcal Z(\theta(M)'\cap\mathcal M)$ and $(q_n)_{n\in\mathbb N}\subset\mathcal M$, and unitaries $(u_n)_{n\in\mathbb N}\subset\mathcal M$ such that we have $\sum_{n\in\mathbb N}p_n=1$, 
$q_n\in\Delta_n(M)'\cap\mathcal M$, for every $n\in\mathbb N$, and $\theta(x)p_n=u_n\Delta_n(x)q_nu_n^*$, for every $x\in M$. 

\vspace{2mm}

\item For $\rho\in\emph{Aut}(R)$, denote $M(\rho)=M_1*_RM_2$ the amalgamated free product associated with the embeddings $\alpha_1\circ\rho:R\rightarrow M_1$ and $\alpha_2:R\rightarrow M_2$. 
Then
\begin{itemize} 
\item[(a)] The property in (1) holds if we replace $M$ by $M(\rho)$, for every $\rho\in\emph{Aut}(R)$.
\item[(b)] For every $\rho,\sigma\in\emph{Aut}(R)$, the following  are equivalent:  
\begin{itemize}
\item[($i$)] $M(\rho)$ is isomorphic to $M(\sigma)$.
\item[($ii$)] $M(\rho)$ embeds into $M(\sigma)$.
\item[($iii$)] $M(\rho)$ stably embeds into $M(\sigma)$.
\item[($iv$)]$M(\rho)$ embeds into $M(\sigma)^{\mathbb N}\overline{\otimes}(M(\sigma)^{\emph{op}})^{\mathbb N}$.
\item[($v$)] $\rho^{-1}\sigma\in\emph{Inn}(R)$.
\end{itemize}
\item[(c)] Let $\rho\in\emph{Aut}(R)$ and $(\rho_n)_{n\in\mathbb N}\subset\emph{Aut}(R)$. Then $M(\rho)$ embeds into $\overline{\otimes}_{n\in\mathbb N}M(\rho_n)$ if and only if $\rho^{-1}\rho_n\in\emph{Inn}(R)$ (equivalently, by (b), $M(\rho)\cong M(\rho_n)$), for some $n\in\mathbb N$.
\end{itemize}

\end{enumerate}
\end{theorem}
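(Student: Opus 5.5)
The plan follows the outline in Subsection \ref{comments}, with the $M^{\text{op}}$ factors handled by the $\beta_k$ part of Theorem \ref{homomorphisms}.

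\textbf{Step 1: choosing $R_i\subset M_i$.}
For $i\in\{1,2\}$, consider the countably many target algebras
\[
\mathcal N=\mathbb M_t(\mathbb C)\overline{\otimes}\big(\overline{\otimes}_{k\in F}\mathcal A_k\big),
\]
where $F$ is finite and each $\mathcal A_k$ ranges over $M_1,M_2,M_1^{\text{op}},M_2^{\text{op}}$. By Lemma \ref{sep_argument}, each space $\mathcal E(M_i,\mathcal N)$ is separable, so I fix a countable dense family of embeddings in each. From these I keep those $\theta$ that are non-degenerate (in the sense of Subsection \ref{comments}) and, when $\theta$ lands in a single copy of $M_i$, properly outer. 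Each $\theta$ in the family is then rewritten as a $*$-homomorphism of one of the forms
\[
\alpha_j:M_i\to M_i\overline{\otimes}\widetilde N_j,\qquad \beta_k:M_i\to M_i^{\text{op}}\overline{\otimes}P_k,
\]
or as a $\gamma_\ell$ from $M_i$ into $M_{i'}\overline{\otimes}S_\ell$, depending on which tensor coordinate $\ell$ is being tested. Non-degeneracy supplies exactly the non-intertwining hypotheses (1)--(3) of Theorem \ref{homomorphisms}.

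A subtlety is that $(\diamond)$ involves $R_{i'}$ for both values of $i'$. I resolve this by a two-step construction. First build $R_1$ via Theorem \ref{homomorphisms} applied to the $\alpha,\beta$ data of $M_1$. Then build $R_2$, taking as $\gamma$-data all maps from $M_1$ and $R_1$ into tensor products involving $M_2$, together with the $\alpha,\beta$ data of $M_2$. The remaining cross conditions from $M_2$ into $R_1$ are handled by placing them at the first step as $\gamma_\ell$ data, using the maps from $M_2$ into $M_1\overline{\otimes}S$ that are available in advance.

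Lemma \ref{dense_inter} upgrades non-intertwining from the dense family to all embeddings. Coarseness of $R_i\subset M_i$ comes from (d). Since $R_1\cong R_2\cong R$, this yields $\alpha_1,\alpha_2$. Factoriality of $M=M_1*_RM_2$ follows from coarseness, since $R'\cap M_i=\mathbb C$.

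\textbf{Step 2: analysing $\theta$.}
Fix $i$. By Theorem \ref{amalgamated} together with Lemma \ref{elementary_facts}(3) and a maximality argument, I obtain a partition of unity in $\mathcal Z(\theta(M_i)'\cap\mathcal M)$ indexed by pairs $(F,j)$. On the piece indexed by $(F,j)$, $\theta(M_i)$ is strongly intertwined into $M_{F,j}$ but into no proper subproduct. Applying Lemma \ref{elementary_facts}(1) produces a non-degenerate embedding $M_i\to M_{F,j}^t$. By Step 1, this piece satisfies $(\dagger)$, namely
\[
\theta(R)\nprec\big(\overline{\otimes}_{k\in F\setminus\{\ell\}}M_{j_k}\big)\overline{\otimes}R_{j_\ell}
\]
for every $\ell\in F$, unless the piece is a corner of some $\Delta_n|_{M_i}$.

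\textbf{Step 3: ruling out the bad pieces (main obstacle).}
Take a piece where $(\dagger)$ holds for $i=1$ and intersect it with the decomposition for $i=2$; note that $\theta(R)$ is common to both. On an overlap, $\theta(M_1)$ lies in $M_{F,j}$ and $\theta(M_2)$ lies in $M_{F',j'}$. Lemma \ref{tensor_products}(3), applied coordinatewise, should force $\theta(R)$ into a product in which some coordinate is $R$, contradicting $(\dagger)$. The alternative outcome is $F\neq F'$, and I expect this mismatch to be the hardest case. There $\theta(R)$ would sit in intersections of distinct subproducts, and I would handle it by a separate application of intertwining for tensor products, showing that $\theta(R)\prec$ a smaller subproduct, which contradicts non-degeneracy.

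Once the bad pieces are excluded, the supports of the maximal intertwiners from Lemma \ref{intertwiners} with $\Delta_n|_{M_i}$ give a partition of unity. The intertwiners obtained for $i=1$ and $i=2$ agree on $R$ and both have support equal to the relevant central piece. Using irreducibility of $R$ in $M_i$, I glue them into a single intertwiner on $M=\{M_1,M_2\}''$, which gives (1).

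\textbf{Step 4: part (2).}
For (2a), the same $R_i$ work for every $\rho$, since $(\diamond)$ does not depend on how the identification is made. For (2b)--(2c), Step 3 shows that any embedding of $M(\rho)$ into such a tensor product is conjugate to a corner of some $\Delta_n$. Restricting to $M_1$ and $M_2$ and using that embeddings of property (T) factors into the relevant amplifications are inner, I obtain unitaries in $M_1$ and $M_2$ that implement $\rho^{-1}\sigma$ on $R$. Since $R_i'\cap M_i=\mathbb C$ and by the coarseness of $R_i\subset M_i$, these unitaries lie in $R$, which gives $\rho^{-1}\sigma\in\text{Inn}(R)$. Conversely, an inner automorphism obviously yields an isomorphism.
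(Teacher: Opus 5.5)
Your Step 3 has a genuine gap, because it puts the difficulty in the wrong place. Overlaps where the two decompositions disagree at some coordinate are the \emph{easy} ones. If $(F,j)\neq(F',j')$, choose a coordinate $k$ at which $\theta(M_1)$ sits in $M_{j_k}$ while $\theta(M_2)$ sits in the other factor or in $\mathbb C$ (or vice versa). Then Lemma \ref{tensor_products}(3), applied to the splitting $M=M_1*_RM_2$ of the $k$-th copy, puts the corresponding corner of $\theta(R)$ strongly into the product with $R$ in coordinate $k$, which contradicts $(\dagger)$. This is exactly how the paper disposes of $F\neq F'$ in Claim \ref{a}; overlaps with $\Delta_n$-pieces are equally direct. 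The case your plan never treats is the matched one, $(F,j)=(F',j')$, where $\theta(M_1)q$ and $\theta(M_2)q$ are both non-degenerately intertwined into the \emph{same} $\mathcal M_F$. There no coordinate carries a free product splitting, so Lemma \ref{tensor_products}(3) gives nothing, and $(\dagger)$ for $i=1,2$ yields no contradiction. For example, take an embedding $\pi:M\to M_1$ with $\pi|_{M_1}$ properly outer and $\pi(R)\nprec_{M_1}R_1$, placed in one coordinate. It satisfies $(\dagger)$ for both $i$, since each instance reduces to $\pi(R)\nprec R_1$, yet it is not a corner of any $\Delta_n$. The remark in the introduction that the conditions $(\dagger)$ are incompatible for $i=1,2$ is only a heuristic.

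Closing the matched case requires two ingredients missing from your plan.

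First, you need a genericity condition coupling $R_1$ and $R_2$, imposed in the second step of the construction: $\sigma_1(R_1)\nprec_{\mathcal M_G}\sigma_2(R_2)$ and $\sigma_2(R_2)\nprec_{\mathcal M_G}\sigma_1(R_1)$ for all (not necessarily unital) embeddings $\sigma_i:M_i\to\mathcal M_G$ into a common finite tensor product of $M_1,M_2,M_1^{\text{op}},M_2^{\text{op}}$. This is Theorem \ref{embedd2}(2), condition (2e) in its proof. Your $\alpha,\beta,\gamma$ data only produce conditions of the form ``$\nprec$ a product with $R_{i'}$ in one coordinate''.

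Second, you need a localization step. With $\varphi_1,\varphi_2$ the intertwining maps for $\theta(M_1)q,\theta(M_2)q$, the element $v=v_1v_2^*\neq0$ intertwines $\varphi_1(\theta(R)q)$ with $\varphi_2(\theta(R)q)$. But a priori $v$ lies in $\mathbb M_{\ell_1,\ell_2}(\mathbb C)\overline{\otimes}\mathcal M$, and $\mathcal M$, being built from $M$, cannot serve as a target in the construction of $R_1,R_2$. The paper first shows that $\varphi_1(\theta(R)q)$ does not intertwine into the subalgebra with $R$ in coordinate $k$ (Claim \ref{non-inter} and Lemma \ref{tensor_products}(2)). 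It then applies \cite[Theorem 1.1]{IPP05} to the splitting of $\mathcal M$ at each $k\in\widetilde F$, giving $v\in(\mathbb M_{\ell_1,\ell_2}(\mathbb C)\overline{\otimes}\mathcal M_F)\overline{\otimes}\mathcal P$, where $\mathcal P$ is the tensor product of the remaining coordinates. A nonzero Fourier coefficient over $\mathcal P$ then gives $\alpha_1(R_1)\prec_{\mathcal M_F}\alpha_2(R_2)$, contradicting the genericity condition (Claim \ref{b}).

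The same matched case recurs in the adaptation of (1) used for $(iv)\Rightarrow(v)$, so (2b)--(2c) inherit the gap.

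Separately, in Step 4 you invoke the claim that embeddings of property (T) factors into amplifications are inner; this is neither available nor needed. Instead, restrict $\theta:M(\rho)\to\mathcal M(\sigma)$ to each $M_i$ and compare with the coordinate embeddings $\Delta_{i,n}$ to force equal indices. This yields a nonzero $v\in M(\sigma)$ with $v\alpha_1(\rho(x))=\alpha_1(\sigma(x))v$ for $x\in R$. Coarseness of $R$ in $M(\sigma)$, not merely in $M_i$, then puts $v$ in $\alpha_1(R)$.
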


In preparation for the proof of Theorem \ref{embeddings}, we introduce two new notions for embeddings and prove a key technical result, see Theorem \ref{embedd2}.

\begin{definition}\label{pouter}
Let $(M,\tau)$ be a tracial von Neumann algebra and $p\in\mathbb M_n(\mathbb C)\overline{\otimes}M$ be a projection, for some $n\in\mathbb N$. We say that an embedding $\theta:M\rightarrow p(\mathbb M_n(\mathbb C)\overline{\otimes}M)p$ is {\it properly outer} if there exists no nonzero $v\in p(\mathbb M_n(\mathbb C)\overline{\otimes}M)$ satisfying $\theta(x)v=v(1\otimes x)$, for every $x\in M$.
\end{definition}

This notion generalizes the usual notion of proper outerness for automorphisms $\theta:M\rightarrow M$ which requires that the restriction of $\theta$ to $Mz$ is outer, for every central projection $z\in M$ with $\theta(z)=z$.

\begin{definition}
Let $M, N_1,\cdots,N_k$ be tracial von Neumann algebras. Let $\mathcal M=\mathbb M_n(\mathbb C)\overline{\otimes}N_1\overline{\otimes}\cdots\overline{\otimes}N_k$ and $p\in\mathcal M$ be a projection, for $n\in\mathbb N$. We say that an embedding $\theta:M\rightarrow p\mathcal M p$ is  {\it non-degenerate} if it satisfies that $$\text{$\theta(M)\nprec_{\mathcal M}\mathbb M_n(\mathbb C)\overline{\otimes}N_1\overline{\otimes}\cdots\overline{\otimes} N_{j-1}\overline{\otimes}1\overline{\otimes}N_{j+1}\overline{\otimes}\cdots\overline{\otimes}N_k$, for every $1\leq j\leq k$.}$$

Note that if $k=1$ and $M$ is diffuse, then any embedding $\theta:M\rightarrow p(\mathbb M_n(\mathbb C)\overline{\otimes}N_1)p$ is non-degenerate.
\end{definition}

The proof of Theorem \ref{embeddings} relies on the following technical result.

\begin{theorem}\label{embedd2}
Let $M_1$ and $M_2$ be II$_1$ factors with property (T).  Put $\mathcal A=\{M_1,M_1^{\emph{op}},M_2,M_2^{\emph{op}}\}$. Then there exist coarse copies $R_1\subset M_1$ and $R_2\subset M_2$ of the hyperfinite II$_1$ factor such that denoting $f(M_1)=R_1,f(M_1^{\emph{op}})=R_1^{\emph{op}},f(M_2)=R_2$, $f(M_2^{\emph{op}})=R_2^{\emph{op}}$, the following conditions hold:
\begin{enumerate}

\item Let $\theta:M_i\rightarrow p\mathcal Mp$ be a non-degenerate embedding, where $i\in\{1,2\}$, 
  $\mathcal M=\mathbb M_n(\mathbb C)\overline{\otimes}N_1\overline{\otimes}\cdots\overline{\otimes}N_k$ and $p\in\mathcal M$ is a projection, for some $n,k\in\mathbb N$ and $N_1,\dots,N_k\in\mathcal A$.
Assume that either \\ (1) $k\geq 2$, (2) $k=1$ and $N_1\not=M_i$, or (3) $k=1$, $N_1=M_i$ and $\theta$ is properly outer.

\vspace{2mm}

\noindent Then $\theta(R_i)\nprec_{\mathcal M}\mathbb M_n(\mathbb C)\overline{\otimes} N_1\overline{\otimes}\cdots\overline{\otimes} N_{j-1}\overline{\otimes}f(N_j)\overline{\otimes}N_{j+1}\overline{\otimes}\cdots\overline{\otimes}N_k,$ for every $1\leq j\leq k$.

\vspace{2mm}

\item Let $\sigma_1:M_1\rightarrow p_1\mathcal Mp_1$ and $\sigma_2:M_2\rightarrow p_2\mathcal Mp_2$ be embeddings, where  $\mathcal M=\mathbb M_n(\mathbb C)\overline{\otimes}N_1\overline{\otimes}\cdots\overline{\otimes}N_k$ and $p\in\mathcal M$ is a projection, for some $n,k\in\mathbb N$ and $N_1,\dots,N_k\in\mathcal A$.

\vspace{2mm}

\noindent Then $\sigma_1(R_1)\nprec_{\mathcal M}\sigma_2(R_2)$ and $\sigma_2(R_2)\nprec_{\mathcal M}\sigma_1(R_1)$.

\end{enumerate}

\end{theorem}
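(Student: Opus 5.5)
The plan is to build $R_1$ and $R_2$ one after the other, each time applying Theorem \ref{homomorphisms} to countably many embeddings. Property (T) together with Lemma \ref{sep_argument} is what makes countably many enough. Fix $n,k$ and $N_1,\dots,N_k\in\mathcal A$. Since $\mathbb M_n(\mathbb C)\overline{\otimes}N_1\overline{\otimes}\cdots\overline{\otimes}N_k$ is separable, Lemma \ref{sep_argument} says the space $\mathcal E(M_i,\mathcal M)$ is separable. There are only countably many such data $(n,k,N_1,\dots,N_k)$.

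The conditions we want, namely $\theta(R_i)\nprec_{\mathcal M}\mathcal P$ for the relevant subalgebras $\mathcal P$, are stable under $\text{d}$-limits by Lemma \ref{dense_inter}. So it suffices to obtain them for a countable dense family. One point needs care: the subsets of non-degenerate and of properly outer embeddings are not closed. For these I will use the openness in the proof of Lemma \ref{dense_inter} together with Theorem \ref{corner}(c). Concretely, I take a countable set of embeddings dense in each relevant (relatively open-closed) subset, obtained by intersecting with countable unions of open balls. Then I check the conclusion only along sequences that stay inside the subset, which is all Lemma \ref{dense_inter} needs.

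For part (1) with $i$ fixed, I recast each embedding $\theta$ in the form required by Theorem \ref{homomorphisms}. I use the identification $\mathcal M=N_j\overline{\otimes}\widetilde N$, with $\widetilde N=\mathbb M_n(\mathbb C)\overline{\otimes}\bigotimes_{l\neq j}N_l$.

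If $N_j=M_i$, I regard $\theta$ as $\alpha:M_i\rightarrow M_i\overline{\otimes}\widetilde N$ (after cutting by $p$). Non-degeneracy gives $\alpha(M_i)\nprec 1\overline{\otimes}\widetilde N$. For $k\geq 2$ I must also rule out an intertwiner $v$ with $\alpha(x)v=v(x\otimes 1)$. Such a $v$ would place $\theta(M_i)$ inside a corner of $M_i\otimes 1$, and $M_i\otimes 1$ embeds into $1\overline{\otimes}(\text{other factor})$ only degenerately; this contradicts non-degeneracy. For $k=1$ the absence of $v$ is exactly proper outerness. Theorem \ref{homomorphisms}(a) then gives $\alpha(R_i)\nprec R_i\overline{\otimes}\widetilde N$.

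If $N_j=M_i^{\text{op}}$, I use the maps $\beta_k$ and conclusion (b) instead.

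If $N_j\in\{M_{3-i},M_{3-i}^{\text{op}}\}$, the target factor $f(N_j)$ is not yet known, so I postpone this case. At the second stage I handle it through family (3), with $Q=M_i$ and $\gamma=\theta$ viewed in $N_j\overline{\otimes}S$, again using non-degeneracy. Conclusion (c) then yields $\theta(M_i)\nprec N_1\overline{\otimes}\cdots\overline{\otimes}f(N_j)\overline{\otimes}\cdots$, and a fortiori the same for $\theta(R_i)$.

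So I construct $R_1$ first, using families (1) and (2), with $M_0=M_1$ and $\widetilde N_j$, $N_j$ chosen as above. I then construct $R_2$ using families (1) and (2) for $M_2$, plus family (3) consisting of all countably many dense embeddings of $M_1$ and of $M_2$ whose distinguished coordinate is $M_2$ or $M_2^{\text{op}}$. The embeddings of $M_2$ into $M_1$-coordinates need nothing new, because $f(M_1)=R_1$ is already fixed. But the construction of $R_1$ must already anticipate them. I will therefore first run, for $R_1$, family (3) with $Q=M_2$ and the embeddings whose relevant coordinate is $M_1$ or $M_1^{\text{op}}$. This is symmetric, so every case is covered by the stage that builds the relevant $f(N_j)$. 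Coarseness of $R_i\subset M_i$ is conclusion (d).

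For part (2), the inclusion $\sigma_2(R_2)\subset\sigma_2(M_2)$ reduces $\sigma_1(R_1)\nprec\sigma_2(R_2)$ to a statement about a bimodule. I use $\mathcal L=\text{L}^2(p_1\mathcal M p_2)$, viewed as a $Q$-$M_2$-bimodule with $Q=\sigma_1(M_1)$ acting on the left and $\sigma_2(M_2)$ acting on the right. I split $\mathcal L$ into its left weakly mixing part and its complement. The complement has finite right $M_2$-dimension. On the weakly mixing part, Theorem \ref{bimodulenonintertwining}(c), applied at the $R_2$ stage over countable dense choices of $(\sigma_1,\sigma_2)$, gives left weak mixing over $R_2$. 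On the complement, a finite-dimensional intertwining of $\sigma_1(M_1)$ into $\sigma_2(M_2)$ yields, up to amplification, an embedding $\psi:M_1\rightarrow M_2^t$. So it suffices that $\psi(R_1)\nprec R_2$ inside $M_2^t$. This is the case $k=1$, $N_1=M_2\neq M_1$ of part (1), which has already been arranged.

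The symmetric statement $\sigma_2(R_2)\nprec\sigma_1(R_1)$ is handled the same way, by reversing roles and adding the corresponding family (3) with $Q=M_2$ at the $R_1$ stage. The main obstacle I expect is this part (2) reduction. I must verify that an intertwining of $\sigma_1(R_1)$ into $\sigma_2(R_2)$ really forces either the weakly mixing part to fail over $R_2$, or else a genuine finite-index-type intertwining of the whole of $\sigma_1(M_1)$ into $\sigma_2(M_2)$. The difficulty is that $R_1$ is much smaller than $M_1$. I would first try to upgrade the intertwining from $\sigma_1(R_1)$ to $\sigma_1(M_1)$ using the coarseness of $R_1\subset M_1$: the quasi-normalizer of a coarse $R_1$ is $R_1$ itself, which should control the bimodule generated.
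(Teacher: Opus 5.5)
Your treatment of the slots with $N_j\in\{M_i,M_i^{\text{op}}\}$ is correct and is what the paper does. You use families (1) and (2) of Theorem \ref{homomorphisms}, with non-degeneracy giving the $\nprec 1\overline{\otimes}\widetilde N$ hypothesis, and exclude the intertwiner via a second slot when $k\geq 2$ or via proper outerness when $k=1$.

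The gap is the step ``$\theta(M_i)\nprec X$, and a fortiori $\theta(R_i)\nprec X$'', which you use for the cross slots $N_j\in\{M_{3-i},M_{3-i}^{\text{op}}\}$. Non-intertwining passes to larger algebras, not smaller ones. A net in $\mathcal U(\theta(R_i))$ witnessing $\theta(R_i)\nprec X$ as in Theorem \ref{corner}(c) is also a net in $\mathcal U(\theta(M_i))$. So $\theta(R_i)\nprec X$ implies $\theta(M_i)\nprec X$, and the converse fails in general: a hyperfinite $R_i$ can intertwine where $M_i$ cannot.

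Moreover, the statement you do obtain is empty. When $X$ has the hyperfinite $f(N_j)$ in slot $j$, property (T) of $M_i$ (as in the proof of Theorem \ref{amalgamated}) turns $\theta(M_i)\prec X$ into $\theta(M_i)\prec$ the algebra with $1$ in slot $j$, which non-degeneracy forbids. So family (3) with $Q=M_i$ contributes nothing. The whole content of these cases is the descent from $M_i$ to $R_i$, which is exactly what you skip.

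Part (2) has the same defect, as you partly notice. With $Q=\sigma_1(M_1)$, conclusion (c) only controls $\sigma_1(M_1)$ against $\sigma_2(R_2)$. Your finite-dimensional-complement reduction to $\psi(R_1)\nprec R_2$ rests on the cross case of part (1), and hence on the same faulty step.

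What is missing is to push non-degeneracy down to $R_i$ at the stage where $R_i$ is built, and then use $R_i$ itself as $Q$. For $i=1$ and $N_j\in\{M_2,M_2^{\text{op}}\}$, the paper requires already of $R_1$ that $\theta(R_1)\nprec$ the algebra with $1$ in slot $j$ (its (1b)). Such source-side statements come out of Theorem \ref{homomorphisms}, for instance by feeding $x\mapsto 1\otimes\theta(x)$ into family (1). Only afterwards, at the $R_2$ stage, does it apply family (3) with $Q=R_1$ and $\gamma=\theta_{|R_1}$, which gives (2a). For $i=2$ and $N_j\in\{M_1,M_1^{\text{op}}\}$, the statement $\theta(M_2)\nprec X$ must likewise be transferred to $\theta(R_2)$ by including these maps among the data when $R_2$ is constructed ((1c)$\to$(2c)).

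For part (2) no splitting of $\mathcal L$ is needed. At the $R_2$ stage, apply Theorem \ref{bimodulenonintertwining}(c) to $\mathrm{L}^2(p_1\mathcal Mp_2)$ with $Q=R_1$ acting through $\sigma_1$. Since $R_1$ is diffuse, the $R_1$-$\mathbb C$-bimodule is automatically left weakly mixing, and (c) gives $\sigma_1(R_1)\nprec\sigma_2(R_2)$ directly. For the other direction, $\sigma_2(M_2)\nprec\sigma_1(R_1)$ holds because $M_2$ has property (T) and $R_1$ is amenable. This is then transferred to $\sigma_2(R_2)$ in the same way.
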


\begin{proof}
Let $\mathcal S$ be the set of tuples $F=(N_1,\dots,N_k,n)$, where $k,n\in\mathbb N$ and $N_1,\cdots,N_k\in\mathcal A$. For $F=(N_1,\dots,N_k,n)\in\mathcal S$, put $\mathcal M_F=\mathbb M_n(\mathbb C)\overline{\otimes}N_1\overline{\otimes}\cdots\overline{\otimes}N_k$. 
For $i\in\{1,2\}$, let $\mathcal S_i$ be the subset of $\mathcal S$ consisting of $F=(N_1,\dots,N_k,n)\in\mathcal S$ such that
(1) $k\geq 2$, or (2) $k=1$ and $N_1\not=M_i$.

Fix $i\in\{1,2\}$, $F\in\mathcal S_i$, $G\in\mathcal S$ and $r\in\mathbb N$. 
Since $M_i$ has property (T), it is separable and Lemma \ref{sep_argument} implies that $(\mathcal E(M_i,\mathcal M),\text{d})$ is a separable metric space, for every separable tracial von Neumann algebra $(\mathcal M,\tau)$.
This allows us to find:

\begin{itemize}
\item[(i)] A sequence $(\theta_\ell^{i,F}:M_i\rightarrow\mathcal M_F)_{\ell\in\mathbb N}$ which is dense in the set of all non-degenerate, not necessarily unital, embeddings $\theta:M_i\rightarrow\mathcal M_F$.
\item[(ii)] A sequence $(\rho_m^{i,r}:M_i\rightarrow\mathbb M_r(\mathbb C)\overline{\otimes}M_i)_{m\in\mathbb N}$ which is dense in the set of all, not necessarily unital, properly outer embeddings $\rho:M_i\rightarrow\mathbb M_r(\mathbb C)\overline{\otimes}M_i$.
\item[(iii)] A sequence $(\sigma_s^{i,G}:M_i\rightarrow\mathcal M_G)_{s\in\mathbb N}$ which is dense in the set of all, not necessarily unital, embeddings $\sigma:M_i\rightarrow\mathcal M_G$. 
\end{itemize}

Then for every $F=(N_1,\dots,N_k,n)\in\mathcal S_1$, $1\leq j\leq k$, $F'=(N_1',\dots,N_{k'}',n')\in\mathcal S_2$, $1\leq j'\leq k'$,  and $\ell,m,r\in\mathbb N$ we have \begin{itemize}
\item If $N_j=M_1$, then $k\geq 2$,
$\theta_\ell^{1,F}(M_1)\nprec_{\mathcal M_F}\mathbb M_n(\mathbb C)\overline{\otimes}N_1\overline{\otimes}\cdots\overline{\otimes}N_{j-1}\overline{\otimes}1\overline{\otimes}N_{j+1}\overline{\otimes}\cdots\overline{\otimes}N_k$ and $\theta_\ell^{1,F}(M_1)\nprec_{\mathcal M_F}1\overline{\otimes}\cdots\overline{\otimes}1\overline{\otimes}N_j\overline{\otimes}1\cdots\overline{\otimes}1=1\overline{\otimes}\cdots\overline{\otimes}1\overline{\otimes}M_1\overline{\otimes}1\cdots\overline{\otimes}1$.
\item If $N_j\in\{M_1^{\text{op}},M_2,M_2^{\text{op}}\}$, then $\theta_\ell^{1,F}(M_1)\nprec_{\mathcal M_F}\mathbb M_n(\mathbb C)\overline{\otimes}N_1\overline{\otimes}\cdots\overline{\otimes}N_{j-1}\overline{\otimes}1\overline{\otimes}N_{j+1}\overline{\otimes}\cdots\overline{\otimes}N_k$.
\item $\theta_\ell^{2,F'}(M_2)\nprec_{\mathcal M_{F'}}\mathbb M_{n'}(\mathbb C)\overline{\otimes}N_1'\overline{\otimes}\cdots\overline{\otimes}N'_{j'-1}\overline{\otimes}1\overline{\otimes}N_{j'+1}'\overline{\otimes}\cdots\overline{\otimes}N_{k'}'$.
\item  $\rho_m^{1,r}:M_1\rightarrow\mathbb M_r(\mathbb C)\overline{\otimes}M_1$ is a properly outer embedding.
\end{itemize}

Since the family of $*$-homomorphisms  $\{\theta_\ell^{i,F}\mid\ell\in\mathbb N,1\leq i\leq 2, F\in\mathcal S_i\}\cup\{\rho_m^{1,r}\mid m,r\in\mathbb N\}$ is countable, applying Theorem \ref{homomorphisms} gives a hyperfinite subfactor $R_1\subset M_1$ such that for every $F=(N_1,\dots,N_k,n)\in\mathcal S_1$,  $1\leq j\leq k$, $F'=(N_1',\dots,N_{k'}',n')\in\mathcal S_2$, $1\leq j'\leq k'$, and $\ell,m,r\in\mathbb N$ we have
\begin{enumerate}
\item[(1a)] $\theta_\ell^{1,F}(R_1)\nprec_{\mathcal M_F}\mathbb M_n(\mathbb C)\overline{\otimes}N_1\overline{\otimes}\cdots\overline{\otimes}N_{j-1}\overline{\otimes}f(N_j)\overline{\otimes}N_{j+1}\overline{\otimes}\cdots\overline{\otimes}N_k$, if $N_j\in\{M_1,M_1^{\text{op}}\}$.
\item[(1b)] $\theta_\ell^{1,F}(R_1)\nprec_{\mathcal M_F}\mathbb M_n(\mathbb C)\overline{\otimes}N_1\overline{\otimes}\cdots\overline{\otimes}N_{j-1}\overline{\otimes}1\overline{\otimes}N_{j+1}\overline{\otimes}\cdots\overline{\otimes}N_k$, if $N_j\in\{M_2,M_2^{\text{op}}\}$.
\item[(1c)]  $\theta_\ell^{2,F'}(M_2)\nprec_{\mathcal M_{F'}}\mathbb M_{n'}(\mathbb C)\overline{\otimes}N_1'\overline{\otimes}\cdots\overline{\otimes}N_{j'-1}'\overline{\otimes}f(N_{j'}')\overline{\otimes}N_{j'+1}'\overline{\otimes}\cdots\overline{\otimes}N_{k'}'$ if $N_{j'}'\in\{M_1,M_1^{\text{op}}\}$.
\item[(1d)] $\rho_m^{1,r}(R_1)\nprec_{\mathbb M_r(\mathbb C)\overline{\otimes}M_1}\mathbb M_r(\mathbb C)\overline{\otimes}R_1$.
\end{enumerate}

Next, for every $F=(N_1,\dots,N_k,n)\in\mathcal S_1$, $1\leq j\leq k$,  $F'=(N_1',\dots,N_{k'}',n')\in\mathcal S_2$, $1\leq j'\leq k'$, 
 $\ell,m,r,s,s'\in\mathbb N$ and $G\in\mathcal S$ we have
\begin{itemize}
\item If $N_j\in\{M_2,M_2^{\text{op}}\}$, then $\theta_\ell^{1,F}(R_1)\nprec_{\mathcal M_F}\mathbb M_n(\mathbb C)\overline{\otimes}N_1\overline{\otimes}\cdots\overline{\otimes}N_{j-1}\overline{\otimes}1\overline{\otimes}N_{j+1}\overline{\otimes}\cdots\overline{\otimes}N_k$.
\item If $N_{j'}'=M_2$, then $k'\geq 2$,
$\theta_\ell^{2,F'}(M_2)\nprec_{\mathcal M_{F'}}\mathbb M_{n'}(\mathbb C)\overline{\otimes}N_1'\overline{\otimes}\cdots\overline{\otimes}N_{j'-1}'\overline{\otimes}1\overline{\otimes}N_{j'+1}'\overline{\otimes}\cdots\overline{\otimes}N_{k'}'$ and $\theta_\ell^{2,F'}(M_2)\nprec_{\mathcal M_{F'}}1\overline{\otimes}\cdots\overline{\otimes}1\overline{\otimes}N_{j'}'\overline{\otimes}1\cdots\overline{\otimes}1=1\overline{\otimes}\cdots\overline{\otimes}1\overline{\otimes}M_2\overline{\otimes}1\cdots\overline{\otimes}1$.
\item If $N_{j'}'=M_2^{\text{op}}$, then $\theta_\ell^{2,F'}(M_2)\nprec_{\mathcal M_{F'}}\mathbb M_{n'}(\mathbb C)\overline{\otimes}N_1'\overline{\otimes}\cdots\overline{\otimes}N_{j'-1}'\overline{\otimes}1\overline{\otimes}N_{j'+1}'\overline{\otimes}\cdots\overline{\otimes}N_{k'}'$.
\item If $N_{j'}'\in\{M_1,M_1^{\text{op}}\}$, then $\theta_\ell^{2,F'}(M_2)\nprec_{\mathcal M_{F'}}\mathbb M_{n'}(\mathbb C)\overline{\otimes}N_1'\overline{\otimes}\cdots\overline{\otimes}N_{j'-1}'\overline{\otimes}f(N_{j'}')\overline{\otimes}N_{j'+1}'\overline{\otimes}\cdots\overline{\otimes}N_{k'}'$.
\item  $\rho_m^{2,r}:M_2\rightarrow\mathbb M_r(\mathbb C)\overline{\otimes}M_2$ is a properly outer embedding.
\item $\sigma_{s'}^{2,G}(M_2)\nprec_{\mathcal M_G}\sigma_s^{1,G}(R_1)$.
\end{itemize}
 Theorem \ref{homomorphisms} gives a hyperfinite subfactor $R_2\subset M_2$ such that for every 
 $F=(N_1,\dots,N_k,n)\in\mathcal S_1$, $1\leq j\leq k$, $F'=(N_1',\dots,N_{k'}',n')\in\mathcal S_2$, $1\leq j'\leq k'$,  $\ell,m,r,s,s'\in\mathbb N$ and $G\in\mathcal S$ we have

\begin{enumerate}
\item[(2a)] $\theta_\ell^{1,F}(R_1)\nprec_{\mathcal M_F}\mathbb M_n(\mathbb C)\overline{\otimes}N_1\overline{\otimes}\cdots\overline{\otimes}N_{j-1}\overline{\otimes}f(N_j)\overline{\otimes}N_{j+1}\overline{\otimes}\cdots\overline{\otimes}N_k$, if $N_j\in\{M_2,M_2^{\text{op}}\}$.
\item[(2b)]  $\theta_\ell^{2,F'}(R_2)\nprec_{\mathcal M_{F'}}\mathbb M_{n'}(\mathbb C)\overline{\otimes}N_1'\overline{\otimes}\cdots\overline{\otimes}N_{j'-1}'\overline{\otimes}f(N_{j'}')\overline{\otimes}N_{j'+1}'\overline{\otimes}\cdots\overline{\otimes}N_{k'}'$, if $N_{j'}'\in\{M_2,M_2^{\text{op}}\}$.
\item[(2c)] $\theta_\ell^{2,F'}(R_2)\nprec_{\mathcal M_{F'}}\mathbb M_{n'}(\mathbb C)\overline{\otimes}N_1'\overline{\otimes}\cdots\overline{\otimes}N_{j'-1}'\overline{\otimes}f(N_{j'}')\overline{\otimes}N_{j'+1}'\overline{\otimes}\cdots\overline{\otimes}N_{k'}'$, if $N_{j'}'\in\{M_1,M_1^{\text{op}}\}$.
\item[(2d)] $\rho_m^{2,r}(R_2)\nprec_{\mathbb M_r(\mathbb C)\overline{\otimes}M_2}\mathbb M_r(\mathbb C)\overline{\otimes}R_2$.
\item[(2e)] $\sigma_{s'}^{2,G}(R_2)\nprec_{\mathcal M_G}\sigma_s^{1,G}(R_1)$ and $\sigma_s^{1,G}(R_1)\nprec_{\mathcal M_G}\sigma_{s'}^{2,G}(R_2)$.
\end{enumerate}

The combination of (1a), (1d) and (2a)-(2e) implies that  for every $i\in\{1,2\}$, $F=(N_1,\dots,N_k,n)\in\mathcal S_i$, $\ell,m,r,s,s'\in\mathbb N$ and $G\in\mathcal S$ we have \begin{itemize}
\item[(iv)] $\theta_\ell^{i,F}(R_i)\nprec_{\mathcal M_F}\mathbb M_n(\mathbb C)\overline{\otimes}N_1\overline{\otimes}\cdots\overline{\otimes}N_{j-1}\overline{\otimes}f(N_j)\overline{\otimes}N_{j+1}\overline{\otimes}\cdots\overline{\otimes}N_k$, for every $1\leq j\leq k$.

\item[(v)] $\rho_m^{i,r}(R_i)\nprec_{\mathbb M_r(\mathbb C)\overline{\otimes}M_i}\mathbb M_r(\mathbb C)\overline{\otimes}R_i$.
\item[(vi)] $\sigma_{s'}^{2,G}(R_2)\nprec_{\mathcal M_G}\sigma_s^{1,G}(R_1)$ and $\sigma_s^{1,G}(R_1)\nprec_{\mathcal M_G}\sigma_{s'}^{2,G}(R_2)$.
\end{itemize}
Let $1\leq i\leq 2,F=(N_1,\dots,N_k,n)\in\mathcal S_i$, $r\geq 1$ and $G\in\mathcal S$.
By  combining (i)-(iii) with (iv)-(vi),  Lemma \ref{dense_inter}
implies that
\begin{itemize}
\item $\theta(R_i)\nprec_{\mathcal M_F}\mathbb M_n(\mathbb C)\overline{\otimes}N_1\overline{\otimes}\cdots\overline{\otimes}N_{j-1}\overline{\otimes}f(N_j)\overline{\otimes}N_{j+1}\overline{\otimes}\cdots\overline{\otimes}N_k$, for every $1\leq j\leq k$ and every non-degenerate, not necessarily unital, embedding $\theta:M_i\rightarrow\mathcal M_F$.
\item $\rho(R_i)\nprec_{\mathbb M_r(\mathbb C)\overline{\otimes}M_i}\mathbb M_r(\mathbb C)\overline{\otimes}R_i$, for every properly outer, not necessarily unital, embedding $\rho:M_i\rightarrow\mathbb M_r(\mathbb C)\overline{\otimes}M_i$.
\item $\sigma_1(R_1)\nprec_{\mathcal M_G}\sigma_2(R_2)$ and $\sigma_2(R_2)\nprec_{\mathcal M_G}\sigma_1(R_1)$, for any not necessarily unital embeddings $\sigma_1:M_1\rightarrow\mathcal M_G$ and $\sigma_2:M_2\rightarrow\mathcal M_G$.
\end{itemize}
This finishes the proof.
\end{proof}

We are now ready to prove Theorem \ref{embeddings}.
\begin{proof}[\bf Proof of Theorem \ref{embeddings}, assertion (1)]
Let $R_1\subset M_1$ and $R_2\subset M_2$ be coarse copies of the hyperfinite II$_1$ factor, $R$, given by Theorem \ref{embedd2}. 
Identify $R_1=R_2$ via any isomorphism $R_1\cong R_2$, 
denote $R=R_1=R_2$ and define $M=M_1*_{R}M_2$. 
Let $\theta:M\rightarrow\mathcal M$ be any embedding.

We continue by introducing the following notation which we use throughout the proof:
\begin{itemize}

    \item We write $\mathcal M=M^{{\mathbb N}}\overline{\otimes}(M^{\text{op}})^{{\mathbb Z_{-}}}$, where $\mathbb Z_{-}=\mathbb Z\setminus\mathbb N$ is the set of negative integers.
    \item For $n\in\mathbb N$, denote by $\Delta_N:M\rightarrow\mathcal M$ the embedding of $M$ into the $n^{\text{th}}$ copy of $M$.
   \item  For $k\in \mathbb Z$, denote by $\mathcal M_k$ the tensor product obtained from $\mathcal M$ by replacing the $k^{\text{th}}$ tensor factor $M$ with $R$ if $k\in\mathbb N$, and replacing the $k^{\text{th}}$ tensor factor $M^{\mathrm{op}}$ with $R^{\mathrm{op}}$ if $k\in\mathbb Z_-$.
   
    \item Let $\mathcal S$ be the collection of $4$-tuples $F=(F_1,F_2,F_3,F_4)$ of finite 
    sets such that $F_1,F_2\subset\mathbb N$, $F_3,F_4\subset\mathbb Z_{-}$ and $F_1\cap F_2=F_3\cap F_4=\emptyset$.  For $F=(F_1,F_2,F_3,F_4),G=(G_1,G_2,G_3,G_4)\in\mathcal S$, we write $F\leq G$ if $F_j\subset G_j$, for every $1\leq j\leq 4$. We also write $F<G$ if $F\leq G$ and $F\not=G$.
    
  \item   For $F=(F_1,F_2,F_3,F_4)\in\mathcal S$, we define a von Neumann subalgebra $\mathcal M_F\subset\mathcal M$ by letting $$\mathcal M_F=M_1^{F_1}\overline{\otimes}M_2^{F_2}\overline{\otimes}(M_1^{\text{op}})^{F_3}\overline{\otimes}(M_2^{\text{op}})^{F_4}.$$ 
   \item We define $f(M_1)=R_1,f(M_1^{\text{op}})=R_1^{\text{op}},f(M_2)=R_2$ and $f(M_2^{\text{op}})=R_2^{\text{op}}$.
  \item  
  For $F=(F_1,F_2,F_3,F_4)\in\mathcal{S}$, put $\widetilde{F}=F_1\cup F_2\cup F_3\cup F_4$. For $k\in\widetilde{F}$, let $\mathcal{M}_{F,k}$ be the tensor product obtained from $\mathcal{M}_F$ by replacing the tensor factor $N$ in the $k^{\text{th}}$ position with $f(N)$.

\end{itemize}

 Since $R$ is coarse in both $M_1$ and $M_2$, it is also coarse in $M$. In particular, we have that $R'\cap M=\mathbb C1$. 
Let $i\in\{1,2\}$ and $n\in\mathbb N$. 
Since $M_i'\cap M=\mathbb C1$, we have $\Delta_n(M_i)'\cap \mathcal M=M^{\mathbb N\setminus\{n\}}\overline{\otimes}(M^{\text{op}})^{{\mathbb Z_{-}}}$,  hence $\mathcal Z(\Delta_n(M_i)'\cap \mathcal M)=\mathbb C1$.
By Lemma \ref{intertwiners} we can find a projection $p_{i,n}\in\mathcal Z(\theta(M_i)'\cap \mathcal M)$ and a partial isometry $v_{i,n}\in\mathcal M$ such that $v_{i,n}v_{i,n}^*=p_{i,n}$ and $\theta(x)v_{i,n}=v_{i,n}\Delta_n(x)$, for every $x\in M_i$, and if $w\in\mathcal M$ satisfies $\theta(x)w=w\Delta_n(x)$, for every $x\in M_i$, then $(1-p_{i,n})w=0$.

Let $i\in\{1,2\}$. If $m,n\in\mathbb N$ and $m\not=n$,  then $$\text{$v_{i,m}^*v_{i,n}\Delta_n(x)=v_{i,m}^*\theta(x)v_{i,n}=\Delta_m(x)v_{i,m}^*v_{i,n}$, for every $x\in M_i$.}$$ Since $M_i$ is diffuse being a II$_1$ factor, we get that $v_{i,m}^*v_{i,n}=0$ and thus $p_{i,m}p_{i,n}=0$. Therefore, $\{p_{i,n}\}_{n\in\mathbb N}$ are pairwise orthogonal projections. Hence, $p_i=\sum_{n\in\mathbb N}p_{i,n}\in\mathcal Z(\theta(M_i)'\cap \mathcal M)$ is a projection such that if $w\in\mathcal M$ and $n\in\mathbb N$ satisfy $\theta(x)w=w\Delta_n(x)$, for every $x\in M_i$, then $(1-p_{i})w=0$. We define $q_i=1-p_i\in\mathcal Z(\theta(M_i)'\cap\mathcal M)$.

The rest of the proof relies on four claims, Claims \eqref{max_proj}-\eqref{b}.

\begin{claim}\label{max_proj} Let $i\in\{1,2\}$. Then there exist projections   $(q_{i,F})_{F\in\mathcal S}\subset\mathcal Z(\theta(M_i)'\cap \mathcal M)q_i$ such that 
\begin{enumerate}
\item $\theta(M_i)q_{i,F}\prec^{{s}}_{\mathcal M}\mathcal M_F$ and $\theta(M_i)q_{i,F}\nprec_{\mathcal M}\mathcal M_G$, for every $F,G\in\mathcal S$ with $G<F$. 
\item $\sum_{F\in\mathcal S}q_{i,F}=q_i$.
\end{enumerate}
\end{claim}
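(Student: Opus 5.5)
The plan is a maximality argument built on Theorem \ref{amalgamated} and Lemma \ref{elementary_facts}(3). Fix $i\in\{1,2\}$ and write $\mathcal Z_i=\mathcal Z(\theta(M_i)'\cap\mathcal M)$.

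\textbf{Step 1: one good projection.} Let $z\in\mathcal Z_iq_i$ be a nonzero projection. We first produce $F\in\mathcal S$ and a nonzero projection $z'\in\mathcal Z_i$ with $z'\leq z$ such that
\[
\theta(M_i)z'\prec^s_{\mathcal M}\mathcal M_F\quad\text{and}\quad\theta(M_i)z'\nprec_{\mathcal M}\mathcal M_G \text{ for every } G<F.
\]
\begin{enumerate}
\item Since $M_i$ is a II$_1$ factor, $x\mapsto\theta(x)z$ is injective. Hence $\theta(M_i)z\subset z\mathcal Mz$ is a II$_1$ subfactor with property (T).
\item Each tensor factor of $\mathcal M$ splits as an amalgamated free product: $M=M_1*_RM_2$ and $M^{\text{op}}=M_1^{\text{op}}*_{R^{\text{op}}}M_2^{\text{op}}$. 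Applying Theorem \ref{amalgamated} with index set $K=\mathbb N\cup\mathbb Z_-$ gives a finite $F\in\mathcal S$ with $\theta(M_i)z\prec_{\mathcal M}\mathcal M_F$. The four sets $F_1,\dots,F_4$ record where the chosen factor is $M_1,M_2,M_1^{\text{op}},M_2^{\text{op}}$, respectively.
\item Since every $G<F$ has fewer coordinates, we may choose $F$ minimal for $\leq$ among all $F$ with $\theta(M_i)z\prec\mathcal M_F$.
\item Lemma \ref{elementary_facts}(3) then gives a nonzero projection $z'\in\mathcal N_{z\mathcal Mz}(\theta(M_i)z)'\cap z\mathcal Mz$ with $\theta(M_i)z'\prec^s_{\mathcal M}\mathcal M_F$.
\item The normalizer contains $\theta(M_i)z$ and $\mathcal U(\theta(M_i)'\cap z\mathcal Mz)$, and $z$ is central in $\theta(M_i)'\cap\mathcal M$. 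Hence $z'$ commutes with both $\theta(M_i)$ and $\theta(M_i)'\cap\mathcal M$, so $z'\in\mathcal Z_i$.
\item If $\theta(M_i)z'\prec\mathcal M_G$ for some $G<F$, then $\theta(M_i)z\prec\mathcal M_G$, since $z'\leq z$ is in the relative commutant. This contradicts the minimality of $F$.
\end{enumerate}

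\textbf{Step 2: exhaustion.} By Zorn's lemma, take a maximal family $\{z_\alpha\}$ of pairwise orthogonal nonzero projections in $\mathcal Z_iq_i$, each labelled by some $F_\alpha\in\mathcal S$ satisfying the conclusion of Step 1. If $\sum_\alpha z_\alpha\neq q_i$, apply Step 1 to $z=q_i-\sum_\alpha z_\alpha$; this contradicts maximality, so $\sum_\alpha z_\alpha=q_i$. Now set $q_{i,F}=\sum_{F_\alpha=F}z_\alpha\in\mathcal Z_iq_i$, which gives (2).

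\textbf{Step 3: passing to the sums.} For (1) we must check that both properties survive these (possibly countably infinite) central sums.
\begin{itemize}
\item For $\prec^s$: let $p'\in\theta(M_i)'\cap\mathcal M$ be nonzero with $p'\leq q_{i,F}$. Then $p'z_\alpha\neq0$ for some $\alpha$ with $F_\alpha=F$. Since $z_\alpha$ is central, $p'z_\alpha\in(\theta(M_i)z_\alpha)'\cap z_\alpha\mathcal Mz_\alpha$. Hence $\theta(M_i)p'z_\alpha\prec\mathcal M_F$, and so $\theta(M_i)p'\prec\mathcal M_F$.
\item For non-intertwining: suppose $\theta(M_i)q_{i,F}\prec\mathcal M_G$ with $G<F$. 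Theorem \ref{corner}(b) gives a nonzero partial isometry $w$ with $w=wq_{i,F}$ and $\vartheta(\theta(x)q_{i,F})w=w\,\theta(x)q_{i,F}$. Choose $\alpha$ with $F_\alpha=F$ and $wz_\alpha\neq0$. Because $z_\alpha$ commutes with $\theta(M_i)$, the element $wz_\alpha$ intertwines $\theta(M_i)z_\alpha$ into $\mathcal M_G$, so $\theta(M_i)z_\alpha\prec\mathcal M_G$. This contradicts Step 1.
\end{itemize}

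\textbf{Main obstacle.} I expect the delicate point to be Step 1: making sure that the projection produced by Lemma \ref{elementary_facts}(3) lands in the \emph{center} of $\theta(M_i)'\cap\mathcal M$, and that the corner $\theta(M_i)z$ still satisfies the hypotheses of Theorem \ref{amalgamated}, namely that it is a property (T) subfactor of a corner of $\mathcal M$. Both are handled by the factoriality of $M_i$, which makes $\theta(\cdot)z$ injective, together with the observation that relative-commutant unitaries normalize $\theta(M_i)z$. The remaining steps are routine bookkeeping with intertwining.
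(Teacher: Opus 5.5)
Your argument is correct, but it is organized differently from the paper's proof.

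\textbf{The paper's route.} The paper proceeds canonically. For each $F\in\mathcal S$ it takes the largest projection $r_{i,F}\in\mathcal Z(\theta(M_i)'\cap\mathcal M)q_i$ with $\theta(M_i)r_{i,F}\prec^s_{\mathcal M}\mathcal M_F$, so that $\theta(M_i)(q_i-r_{i,F})\nprec_{\mathcal M}\mathcal M_F$. It then gets $\bigvee_F r_{i,F}=q_i$ from Theorem \ref{amalgamated} and Lemma \ref{elementary_facts}(3), notes that $r_{i,F}\leq r_{i,G}$ for $F\leq G$, and sets $q_{i,F}=r_{i,F}-\bigvee_{G<F}r_{i,G}$. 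Property (1) is then immediate from $q_{i,F}\leq r_{i,F}$ and $q_{i,F}\leq q_i-r_{i,G}$ for $G<F$.

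\textbf{Your route.} You instead choose a $\leq$-minimal $F$ locally and exhaust $q_i$ by a maximal orthogonal family. This costs you the check in your Step 3 that $\prec^s$ and non-intertwining pass to countable central sums. In exchange, the pairwise orthogonality of the $q_{i,F}$ is automatic; (2) relies on it, and so does the later deduction $q_{1,F}=q_{2,F}$.

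\textbf{What each buys.} In the paper's construction that orthogonality is not automatic for incomparable $F,F'$. One needs $r_{i,F}r_{i,F'}\leq r_{i,G}$ for some $G<F$, i.e., that being $\prec^s$ into both $\mathcal M_F$ and $\mathcal M_{F'}$ forces $\prec^s$ into a strictly smaller $\mathcal M_G$. Showing this takes Lemma \ref{tensor_products}(3) at a coordinate of $F$ not carried with the same label by $F'$, followed by property (T) to discard the resulting $R$-coordinate; the paper does not spell this out. Your $q_{i,F}$ are not canonical, but that is harmless, since the subsequent claims use only properties (1) and (2).
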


\begin{proof}[Proof of Claim \ref{max_proj}] 
Let $i\in\{1,2\}$. If $F\in\mathcal S$, then Lemma \ref{elementary_facts}(3) implies that there exists a projection
 $r_{i,F}\in\mathcal Z(\theta(M_i)'\cap \mathcal M)q_i$ such that $\theta(M_i)r_{i,F}\prec^{s}_{\mathcal M}\mathcal M_F$ and $\theta(M_i)(q_i-r_{i,F})\nprec_{\mathcal M}\mathcal M_F$. Since $M_i$ has property (T), Theorem \ref{amalgamated} and Lemma \ref{elementary_facts}(3) readily imply that $\bigvee_{F\in\mathcal S}r_{i,F}=q_i$. It is clear that $r_{i,F}\leq r_{i,G}$ for every $F,G\in\mathcal S$ with $F\leq G$.  Then defining $q_{i,F}=r_{i,F}-\bigvee_{G<F}r_{i,G}$, for every $F\in\mathcal S$, the conclusion holds.
\end{proof}

\begin{claim}\label{non-inter}
Let $i\in\{1,2\}$ and $F=(F_1,F_2,F_3,F_4)\in\mathcal S$. 
Then $\theta(R)q_{i,F}\nprec_{\mathcal M}\mathcal M_{F,k}$, for every $k\in \widetilde F$.
\end{claim}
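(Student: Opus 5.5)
We argue by contradiction: suppose $\theta(R)q_{i,F}\prec_{\mathcal M}\mathcal M_{F,k}$ for some $k\in\widetilde F$. The plan is to convert the intertwining of $\theta(M_i)q_{i,F}$ into $\mathcal M_F$ into a non-degenerate embedding $\psi:M_i\rightarrow p(\mathbb M_n(\mathbb C)\overline{\otimes}\mathcal M_F)p$, and then to contradict Theorem \ref{embedd2}(1). Since $R=R_i\subset M_i$, this theorem gives $\psi(R_i)\nprec\mathbb M_n(\mathbb C)\overline{\otimes}(\mathcal M_F)_{k}$, where $(\mathcal M_F)_k$ denotes $\mathcal M_F$ with its $k$-th tensor factor $N$ replaced by $f(N)$.

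\textbf{Step 1: the embedding $\psi$.} By Claim \ref{max_proj}(1) we have $\theta(M_i)q_{i,F}\prec^s_{\mathcal M}\mathcal M_F$. Lemma \ref{elementary_facts}(1) then gives $n$, a projection $p\in\mathbb M_n(\mathbb C)\overline{\otimes}\mathcal M_F$, a $*$-homomorphism $\psi:M_i\rightarrow p(\mathbb M_n(\mathbb C)\overline{\otimes}\mathcal M_F)p$ and a partial isometry $w\in p(\mathbb M_{n,1}(\mathbb C)\overline{\otimes}\mathcal M)q_{i,F}$. These satisfy $\psi(x)w=w\theta(x)q_{i,F}$, and $w^*w$ is close to $q_{i,F}$. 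Since $M_i$ is a factor, $\psi$ is injective, and after cutting by the support of $\text{E}_{\mathbb M_n\overline{\otimes}\mathcal M_F}(ww^*)$ we may assume, as in the proof of Lemma \ref{elementary_facts}(2), that intertwinings of $\psi(M_i)$ pull back to $\theta(M_i)q_{i,F}$.

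\textbf{Step 2: non-degeneracy of $\psi$.} Suppose $\psi(M_i)\prec\mathbb M_n\overline{\otimes}\mathcal M_G$ inside $\mathbb M_n\overline{\otimes}\mathcal M_F$ for some $G<F$ obtained by deleting one index. Composing intertwiners, as in Lemma \ref{elementary_facts}(2), gives $\theta(M_i)q'\prec\mathcal M_G$ for some nonzero subprojection $q'$ of $q_{i,F}$. We then upgrade this to $\theta(M_i)q_{i,F}\prec_{\mathcal M}\mathcal M_G$, which contradicts Claim \ref{max_proj}(1). The upgrade is legitimate because $q_{i,F}$ is central in $\theta(M_i)'\cap\mathcal M$, and a corner of $\theta(M_i)q'$ embedding gives a corner of $\theta(M_i)q_{i,F}$ embedding. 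The same composition argument shows that an intertwining $\psi(R)\prec\mathbb M_n\overline{\otimes}(\mathcal M_F)_k$ follows from our assumption. Here we use that $(\mathcal M_F)_k\subset\mathcal M_{F,k}$ and that $\mathcal M_{F,k}=(\mathcal M_F)_k\overline{\otimes}\mathcal M_{F^c}$, where $\mathcal M_{F^c}$ collects the remaining tensor factors. Lemma \ref{tensor_products}(2),(1) (\cite[Theorem 1.1]{IPP05}) transfers non-intertwining from $\mathcal M_F$ up to $\mathcal M$, which is the direction we need for the contradiction.

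\textbf{Step 3: the exceptional case.} It remains to check the hypotheses (1)--(3) of Theorem \ref{embedd2}(1). If $|\widetilde F|\geq 2$, or $\widetilde F=\{k\}$ with the factor $N_k\neq M_i$, there is nothing to check. Note that $\widetilde F=\emptyset$ is impossible since $k\in\widetilde F$. The remaining case is $\widetilde F=\{k\}$ with $N_k=M_i$, so $k\in\mathbb N$ and $\mathcal M_F=\Delta_k(M_i)$. Here we must show that $\psi:M_i\rightarrow p(\mathbb M_n\overline{\otimes}M_i)p$ is properly outer. If it is not, a nonzero $v$ with $\psi(x)v=v(1\otimes x)$ combines with $w^*$ to give $\theta(x)q_{i,F}w^*v=q_{i,F}w^*v\Delta_k(x)$ for $x\in M_i$. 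By the choice of $p_{i,k}$, the element $q_{i,F}w^*v$ lies under $p_i$ and hence vanishes, because $q_{i,F}\leq q_i=1-p_i$. Proving that $q_{i,F}w^*v\neq0$, using the support normalization from Step 1, is the delicate point. Once it is established, $\psi$ is properly outer, Theorem \ref{embedd2}(1) applies, and we reach the contradiction.

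I expect the main obstacle to be the bookkeeping in Steps 1--2: keeping the intertwiners nonzero through compositions and passing between $\mathbb M_n(\mathbb C)\overline{\otimes}\mathcal M_F$ and $\mathcal M$, together with the corresponding nonvanishing argument for the proper-outerness case.
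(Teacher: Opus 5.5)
This is essentially the paper's proof. You intertwine $\theta(M_i)q_{i,F}$ into $\mathcal M_F$ with $w^*w$ close to $q_{i,F}$ and normalized support, get non-degeneracy from Claim \ref{max_proj}(1) and proper outerness from the maximality of $p_{i,k}$, apply Theorem \ref{embedd2}(1), and lift the resulting non-intertwining from $\mathbb M_n(\mathbb C)\overline{\otimes}\mathcal M_F$ to $\mathbb M_n(\mathbb C)\overline{\otimes}\mathcal M$ via Lemma \ref{tensor_products}(2). The steps you defer close as in the paper, and the notational slip is harmless:
\begin{itemize}
\item \textbf{Pushing the intertwining forward.} First pick a nonzero $r\leq q_{i,F}$ in $\theta(R)'\cap\mathcal M$ with $\theta(R)r\prec^s_{\mathcal M}\mathcal M_{F,k}$ (Lemma \ref{elementary_facts}(3)). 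Only then take $w$ with $\|w^*w-q_{i,F}\|_2<\|r\|_2$, so that $wr\neq 0$ and $\psi(R)ww^*=w\theta(R)w^*\prec\mathbb M_n(\mathbb C)\overline{\otimes}\mathcal M_{F,k}$ inside $\mathbb M_n(\mathbb C)\overline{\otimes}\mathcal M$. This is a push-forward of the assumed intertwining, not the pull-back of Lemma \ref{elementary_facts}(2). What keeps it nonzero is the closeness of $w^*w$ to $q_{i,F}$, not the support normalization.
\item \textbf{The nonvanishing in Step 3.} If $w^*v=0$, then $\text{E}_{\mathbb M_n(\mathbb C)\overline{\otimes}\mathcal M_F}(ww^*)v=\text{E}_{\mathbb M_n(\mathbb C)\overline{\otimes}\mathcal M_F}(ww^*v)=0$, because $v$ has entries in $\mathcal M_F$. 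Since the support of this expectation is $p$, you get $v=pv=0$.
\item \textbf{Notation.} In the paper, $\mathcal M_{F,k}$ is by definition your $(\mathcal M_F)_k$, with no factors outside $\widetilde F$. This changes nothing, because Lemma \ref{tensor_products}(2) actually rules out intertwining into the larger algebra $\mathbb M_n(\mathbb C)\overline{\otimes}\mathcal M_k$.
\end{itemize}
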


\begin{proof}[Proof of Claim \ref{non-inter}]
Assume by contradiction that $\theta(R)q_{i,F}\prec_{\mathcal M}\mathcal M_{F,k}$, for $k\in\widetilde F$. By Lemma \ref{elementary_facts}(3) we can find a nonzero projection $r\in\theta(R)'\cap \mathcal M$ such that $r\leq q_{i,F}$ and $\theta(R)r\prec_{\mathcal M}^s\mathcal M_{F,k}$. Since $\theta(M_i)q_{i,F}\prec_{\mathcal M}\mathcal M_F$ there exist $\ell\in\mathbb N$, a $*$-homomorphism $\varphi:\theta(M_i)q_{i,F}\rightarrow p(\mathbb M_\ell(\mathbb C)\overline{\otimes}\mathcal M_F)p$ and a nonzero partial isometry $v\in p(\mathbb M_{\ell,1}(\mathbb C)\overline{\otimes}\mathcal M)q_{i,F}$ such that $\varphi(x)v=vx$, for every $x\in \theta(M_i)q_{i,F}$. 

Since $\theta(M_i)q_{i,F}\prec_{\mathcal M}^{s}\mathcal M_F$, by Lemma \ref{elementary_facts}(1) we may take $v^*v$ arbitrarily close to $q_{i,F}$. Hence, we may assume that  $vr\not=0$. 
Moreover, if $e\in\varphi(\theta(M_i)q_{i,F})'\cap p(\mathbb M_\ell(\mathbb C)\overline{\otimes}\mathcal M_F)p$ is the support projection of $\text{E}_{p(\mathbb M_\ell(\mathbb C)\overline{\otimes}\mathcal M_F)p}(vv^*)$, then after replacing $\varphi(\cdot)$ with $\varphi(\cdot)e$, we may assume that $p=e$. Since  $\theta(M_i)q_{i,F}\nprec_{\mathcal M}\mathcal M_G$, it follows that $\varphi(\theta(M_i)q_{i,F})\nprec_{\mathbb M_\ell(\mathbb C)\overline{\otimes}\mathcal M_F}\mathbb M_\ell(\mathbb C)\overline{\otimes}\mathcal M_G$, for every $G\in\mathcal S$ with $G<F$ (see \cite[Remark 3.8]{Vae07} or the proof of Lemma \ref{elementary_facts}(2)). 

Define an embedding $\delta:M_i\rightarrow p(\mathbb M_\ell(\mathbb C)\overline{\otimes}\mathcal M_F)p$ by letting $\delta(x)=\varphi(\theta(x)q_{i,F})$.
Then we have \begin{equation}\label{star}\text{$\delta(M_i)\nprec_{\mathbb M_\ell(\mathbb C)\overline{\otimes}\mathcal M_F}\mathbb M_\ell(\mathbb C)\overline{\otimes}\mathcal M_G$,\;\; for every $G\in\mathcal S$ with $G<F$.}\end{equation}

Moreover, assume that $|F_i|=1$ and $F_j=\emptyset$, for every $j\not=i$. In other words, $\widetilde F=F_i=\{k\}$, for some $k\in \mathbb N$, and $\mathcal M_F=M_i^{\{k\}}$ can be identified with $M_i$.
In this case, we claim that 
\begin{equation}\label{starstar}\text{$\delta:M_i\rightarrow p(\mathbb M_\ell(\mathbb C)\overline{\otimes}M_i)p$ \;\; is properly outer.}\end{equation} To this end, let $w\in p(\mathbb M_{\ell,1}(\mathbb C)\overline{\otimes}M_i)$ such that $\delta(x)w=w(1\otimes x)$, for every $x\in M_i$.  Then $v^*w\in q_{1,F}\mathcal M$ and for every $x\in M_i$ we have
\begin{align*}\theta(x)v^*w=\theta(x)q_{i,F}v^*w&=(v\theta(x^*)q_{i,F})^*w\\&=(\varphi(\theta(x^*)q_{i,F})v)^*w=(\delta(x^*)v)^*w\\&=v^*\delta(x)w=(v^*w)(1\otimes x)=(v^*w)\Delta_k(x).\end{align*}

The  definition of $p_{i,k}$ implies that $(1-p_{i,k})(v^*w)=0$. Since $q_{i,F}\leq 1-p_{i,k}$ and $vq_{i,F}=v$, we conclude that $v^*w=q_{i,F}(v^*w)=0$. 
Hence, $\text{E}_{p(\mathbb M_\ell(\mathbb C)\overline{\otimes}\mathcal M_F)p}(vv^*)w=\text{E}_{p(\mathbb M_\ell(\mathbb C)\overline{\otimes}\mathcal M_F)p}(vv^*w)=0$. Since the support projection of $\text{E}_{p(\mathbb M_\ell(\mathbb C)\overline{\otimes}\mathcal M_F)p}(vv^*)$ is equal to $p$, we further get that $pw=0$ and so $w=0$. This proves that $\delta$ is properly outer, as claimed in \eqref{starstar}. 

Next, \eqref{star} and \eqref{starstar} say that  $\delta:M_i\rightarrow p(\mathbb M_\ell(\mathbb C)\overline{\otimes}\mathcal M_F)p$ satisfies the hypothesis of Theorem \ref{embedd2}(1), which gives that
$\delta(R)=\delta(R_i)\nprec_{\mathbb M_\ell(\mathbb C)\overline{\otimes}\mathcal M_F}\mathbb M_\ell(\mathbb C)\overline{\otimes}\mathcal M_{F,k}$, for every $k\in \widetilde F$.  We claim that
\begin{equation}\label{delta}\text{$\delta(R)\nprec_{\mathbb M_\ell(\mathbb C)\overline{\otimes}\mathcal M}\mathbb M_\ell(\mathbb C)\overline{\otimes}\mathcal M_{F,k}$, for every $k\in\widetilde F$.}\end{equation}
 To prove \eqref{delta}, let $k\in\widetilde F$. 
Write $\mathcal M_F=\overline{\otimes}_{k'\in \widetilde F}N_{k'}^{\{k'\}}$, where $(N_{k'})_{k'\in\widetilde F}\subset\{M_1,M_1^{\text{op}},M_2,M_2^{\text{op}}\}$. 
If $k\in F_1\cup F_2\subset\mathbb N$, then $\mathbb M_\ell(\mathbb C)\overline{\otimes}\mathcal M=(\mathbb M_\ell(\mathbb C)\overline{\otimes}M^{\{k\}})\overline{\otimes}\mathcal N$, where 
$\mathcal N=M^{{\mathbb N\setminus\{k\}}}\overline{\otimes}(M^{\text{op}})^{{\mathbb Z_{-}}}$.
If $k\in F_3\cup F_4\subset\mathbb Z_{-}$, then $\mathbb M_\ell(\mathbb C)\overline{\otimes}\mathcal M=(\mathbb M_\ell(\mathbb C)\overline{\otimes}(M^{\text{op}})^{\{k\}})\overline{\otimes}\mathcal N$, where 
$\mathcal N=M^{{\mathbb N}}\overline{\otimes}(M^{\text{op}})^{\mathbb Z_{-}\setminus\{k\}}$.
Also, we have
 $\mathbb M_\ell(\mathbb C)\overline{\otimes}\mathcal M_F=(\mathbb M_\ell(\mathbb C)\overline{\otimes}N_k^{\{k\}})\overline{\otimes}\mathcal N_0$ and $\mathbb M_\ell(\mathbb C)\overline{\otimes}\mathcal M_{F,k}=(\mathbb M_\ell(\mathbb C)\overline{\otimes}f(N_k)^{\{k\}})\overline{\otimes}\mathcal N_0$, where
$\mathcal N_0\subset\mathcal N$ is given by $\mathcal N_0=\overline{\otimes}_{k'\in\widetilde F\setminus\{k\}}N_{k'}^{\{k'\}}$. 

Note that $M=N_k*_{f(N_k)}P$, for some $P\in\{M_1,M_1^{\text{op}},M_2,M_2^{\text{op}}\}$,  and that $\delta(R)\subset p(\mathbb M_\ell(\mathbb C)\overline{\otimes}\mathcal M_F)p$ satisfies  $\delta(R)\nprec_{\mathbb M_\ell(\mathbb C)\overline{\otimes}\mathcal M_F}\mathbb M_\ell(\mathbb C)\overline{\otimes}\mathcal M_{F,k}$ by \eqref{delta}. By using the previous paragraph, Lemma \ref{tensor_products}(2) gives that $\delta(R)\nprec_{\mathbb M_\ell(\mathbb C)\overline{\otimes}\mathcal M}\mathbb M_\ell(\mathbb C)\overline{\otimes}\mathcal M_{F,k}$. This finishes the proof of \eqref{delta}.

Finally, since $\delta(R)=\varphi(\theta(R)q_{i,F})$, we get $\delta(R)vv^*=\varphi(\theta(R)q_{i,F})vv^*=v\theta(R)q_{i,F}v^*=v\theta(R)v^*$. Hence, \eqref{delta} implies that 
$\theta(R)v^*v\nprec_{\mathcal M}\mathcal M_{F,k}$, for every $k\in\widetilde F$.
However, this contradicts the fact that $vr\not=0$ and $\theta(R)r\prec_{\mathcal M}^s\mathcal M_{F,k}$.
\end{proof}

\begin{claim}\label{a}
For every $n\in\mathbb N$ and $F,F'\in\mathcal S$, $F\not=F'$, we have $q_{1,F}p_{2,n}=q_{2,F}p_{1,n}=q_{1,F}q_{2,F'}=0$.
\end{claim}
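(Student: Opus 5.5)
The common mechanism for all three identities will be a transfer of intertwining through the relative commutant of $\theta(R)$, followed by a contradiction with Claim \ref{non-inter}. Since $R\subset M_1\cap M_2$, all of the projections $q_{1,F},q_{2,F'},p_{1,n},p_{2,n}$ lie in $\theta(R)'\cap\mathcal M$.

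Consequently, if $e,f$ are two of these projections with $ef\neq 0$, then $x=ef\in\theta(R)'\cap\mathcal M$ is a nonzero element with $(\theta(y)e)x=x(\theta(y)f)$ for all $y\in R$. Taking the polar decomposition of $x$ gives $\theta(R)e\prec_{\mathcal M}\theta(R)f$. Moreover, a nonzero subprojection of $e$ in $\theta(R)'\cap\mathcal M$ gets intertwined, and that is what I will use with Lemma \ref{elementary_facts}(3).

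\emph{First identity} ($q_{1,F}p_{2,n}=0$, and symmetrically $q_{2,F}p_{1,n}=0$). Suppose $q_{1,F}p_{2,n}\neq 0$. Since $\theta(x)p_{2,n}=v_{2,n}\Delta_n(x)v_{2,n}^*$ for $x\in R$, the observation above yields a nonzero $r\le q_{1,F}$ in $\theta(R)'\cap\mathcal M$ such that $\theta(R)r\prec_{\mathcal M}\Delta_n(R)\subset\mathcal M_n$. I also have $\theta(R)r\prec^s_{\mathcal M}\mathcal M_F$, because $\theta(M_1)q_{1,F}\prec^s\mathcal M_F$ and Lemma \ref{elementary_facts}(3) lets me pass to the corner $r$.

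The pair $\mathcal M_F,\mathcal M_n$ forms a commuting square of tensor-type subalgebras. The standard intersection lemma for commuting squares then gives $\theta(R)r\prec_{\mathcal M}\mathcal M_F\cap\mathcal M_n$; I would invoke the form of this lemma in which one of the two intertwinings is strong, as in \cite{DHI16}. There are two cases.
\begin{itemize}
\item If $n\in\widetilde F$, then $\mathcal M_F\cap\mathcal M_n\subset\mathcal M_{F,n}$, and this contradicts Claim \ref{non-inter} applied to $r\le q_{1,F}$.
\item If $n\notin\widetilde F$, the intersection has trivial $n$-th coordinate. Combined with $\theta(R)r\prec\Delta_n(R)$, this forces $\theta(R)r\prec\mathbb C1$, which is impossible because $R$ is diffuse.
\end{itemize}
The case $\widetilde F=\emptyset$ is excluded directly: there $\mathcal M_F=\mathbb C$, and $\theta(M_1)q_{1,F}$ is diffuse.

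\emph{Second identity} ($q_{1,F}q_{2,F'}=0$ for $F\neq F'$). Suppose $q_{1,F}q_{2,F'}\ne0$. In the same way I get a nonzero $r\le q_{1,F}$ with $\theta(R)r\prec^s\mathcal M_F$ and $\theta(R)r\prec\mathcal M_{F'}$, hence $\theta(R)r\prec\mathcal M_F\cap\mathcal M_{F'}$. I then compute this intersection coordinatewise, using $M_1\cap M_2=R$ inside $M$.
\begin{itemize}
\item A coordinate that lies in $F_1$ and in $F'_2$ (or in $F_3$ and in $F'_4$, or the reverse) contributes $R$ or $R^{\text{op}}$.
\item A coordinate lying in exactly one of $\widetilde F,\widetilde F'$ contributes $\mathbb C$.
\end{itemize}
Since $F\ne F'$, there is some $k\in\widetilde F\cup\widetilde F'$ at which the types disagree or one of the two sets omits $k$.
\begin{itemize}
\item If such a $k$ lies in $\widetilde F$, then $\mathcal M_F\cap\mathcal M_{F'}\subset\mathcal M_{F,k}$, contradicting Claim \ref{non-inter}.
\item Otherwise $\widetilde F\subsetneq\widetilde F'$ with matching types, so $F<F'$. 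By symmetry I then start from $q_{2,F'}$ instead, choose a coordinate $k\in\widetilde F'\setminus\widetilde F$, and obtain $\theta(R)r'\prec\mathcal M_{F'}\cap\mathcal M_F\subset\mathcal M_{F',k}$, again contradicting Claim \ref{non-inter}, now for $i=2$.
\end{itemize}

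\emph{Expected obstacle.} The main obstacle is justifying the intersection step $\prec^s A$ together with $\prec B$ implies $\prec A\cap B$ for these tensor subalgebras of $\mathcal M$. Two points need care. First, I need $\mathcal M_F$ and $\mathcal M_{F'}$ (respectively $\mathcal M_n$) to form commuting squares inside $\mathcal M$; this should follow coordinatewise from $\text{E}_{M_1}\text{E}_{M_2}=\text{E}_R$ in the amalgamated free product $M=M_1*_RM_2$. Second, the strong intertwining must genuinely be preserved on the relevant corner, which is where Lemma \ref{elementary_facts}(3) enters.
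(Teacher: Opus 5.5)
There is a genuine gap, and it sits at the step you flagged. You assume that $\theta(R)r\prec^s_{\mathcal M}Q_1$ and $\theta(R)r\prec_{\mathcal M}Q_2$ imply $\theta(R)r\prec_{\mathcal M}Q_1\cap Q_2$ whenever $Q_1,Q_2$ form a commuting square. That is false in general. In $\text{L}(\mathbb F_2)$ with $\mathbb F_2=\langle a,b\rangle$, the subalgebras $Q_1=\text{L}(\langle a\rangle)$ and $Q_2=u_bQ_1u_b^*$ form a commuting square with $Q_1\cap Q_2=\mathbb C1$. Yet the diffuse algebra $P=Q_1$ satisfies $P\prec^s Q_1$ and $P\prec Q_2$.

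Intersection lemmas of this type, such as the one in \cite{DHI16}, carry an extra hypothesis like regularity of one of the algebras. That hypothesis fails here: by \cite{IPP05} the normalizer of $M_i$ in $M$ stays inside $M_i$, and $R$ is coarse in $M$. So $\text{E}_{M_1}\text{E}_{M_2}=\text{E}_R$ alone cannot give your second identity.

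The missing ingredient is the amalgamated free product structure through \cite[Theorem 1.1]{IPP05}, packaged in the paper as Lemma \ref{tensor_products}(3): if $Q\prec^s M_1\overline{\otimes}N_0$ and $Q\prec^s M_2\overline{\otimes}N$, then $Q\prec^s R\overline{\otimes}N_0$. The paper applies it in a single slot $k\in F_i\setminus F_i'$. If $k\notin\widetilde{F'}$, then $\mathcal M_{F'}$ still lies in the algebra with the other $M_j$ in slot $k$. This gives $\prec\mathcal M_{F,k}$ directly, and the full intersection $\mathcal M_F\cap\mathcal M_{F'}$ is never needed. That lemma needs both intertwinings to be strong on a common corner, whereas your corner $r$ carries only one. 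The paper handles this by replacing $q_{1,F},q_{2,F'}$ with their central supports $z_1,z_2$ in $\theta(R)'\cap\mathcal M$. Then $\theta(R)z_1\prec^s\mathcal M_F$ and $\theta(R)z_2\prec^s\mathcal M_{F'}$, and $\eta=z_1z_2\neq 0$ inherits both.

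For the first identity no intersection lemma is needed, so your appeal to one there is unjustified and also unnecessary. Write $r=\zeta\zeta^*$, where $\zeta$ is a partial isometry with $\theta(x)\zeta=\zeta\Delta_n(x)$ for $x\in R$. Then $\theta(R)r$ is conjugate to $\Delta_n(R)g$ with $g=\zeta^*\zeta$, so $\theta(R)r\prec\mathcal M_F$ gives $\Delta_n(R)g\prec\mathcal M_F$. Since $R$ is diffuse and $\Delta_n(R)$ lives in slot $n$, this forces $n\in F_1\cup F_2$. Then $\Delta_n(R)\subset\mathcal M_{F,n}$, so $\theta(R)r\prec\mathcal M_{F,n}$, hence $\theta(R)q_{1,F}\prec\mathcal M_{F,n}$, contradicting Claim \ref{non-inter}. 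This is the paper's argument, run with the central support of $q_{1,F}$ in place of your $r$. Your case analysis and your use of Claim \ref{non-inter} as the endgame in both identities are otherwise the right ones.
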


\begin{proof}[Proof of Claim \ref{a}]
Let $n\in\mathbb N$ and $F=(F_1,F_2,F_3,F_4)\in\mathcal S$. 
We first prove that $q_{1,F}p_{2,n}=0$. Assume by contradiction that $q_{1,F}p_{2,n}\not=0$. Note that $q_{1,F}\in\theta(R)'\cap\mathcal M$ and let $z\in\mathcal Z(\theta(R)'\cap\mathcal M)$ be the central support of $q_{1,F}$ in $\theta(R)'\cap\mathcal M$.  Since $\theta(M_1)q_{1,F}\prec^s_{\mathcal M}\mathcal M_F$, we get $\theta(R)q_{1,F}\prec^{{s}}_{\mathcal M}\mathcal M_F$ and Lemma \ref{elementary_facts}(3) gives that $\theta(R)z\prec^{{s}}_{\mathcal M}\mathcal M_F$.
 Since $\theta(x)v_{2,n}=v_{2,n}\Delta_n(x)$ we deduce that \begin{equation}\label{v_{2,n}}\text{$\theta(x)(zv_{2,n})=(zv_{2,n})\Delta_n(x)$, for every $x\in R$.}\end{equation} As $q_{1,F}p_{2,n}\not=0$, we have $zp_{2,n}\not=0$. Since $v_{2,n}v_{2,n}^*=p_{2,n}\in\theta(R)'\cap\mathcal M$ and $z\in\mathcal Z(\theta(R)'\cap\mathcal M)$, we get that $\zeta=zv_{2,n}$ is a nonzero partial isometry. Put $f=\zeta\zeta^*\in (\theta(R)'\cap \mathcal M)z$ and $g=\zeta^*\zeta\in\Delta_n(R)'\cap\mathcal M$. Then \eqref{v_{2,n}} implies that  $\theta(R)f=\zeta(\Delta_n(R)g)\zeta^*$. Since $f\leq z$ and $\theta(R)z\prec^{{s}}_{\mathcal M}\mathcal M_F$, we get that $\theta(R)f\prec_{\mathcal M}\mathcal M_F$, hence $\Delta_n(R)g\prec_{\mathcal M}\mathcal M_F$. Since $\mathcal M_F\subset M^{F_1\cup F_2}\overline{\otimes}(M^{\text{op}})^{F_3\cup F_4}$, it is immediate that $n\in F_1\cup F_2$ and therefore $\Delta_n(R)\subset\mathcal M_{F,n}$. Using that $\theta(R)f=\zeta(\Delta_n(R)g)\zeta^*$, we derive that $\theta(R)f\prec_{\mathcal M}\mathcal M_{F,n}$. 
 Since $f\leq z$, it follows that $\theta(R)z\prec_{\mathcal M}\mathcal M_{F,n}$. 
 Since $z$ is the central support of $q_{1,F}$ in $\theta(R)'\cap\mathcal M$, we get that $\theta(R)q_{1,F}\prec_{\mathcal M}\mathcal M_{F,n}$, which contradicts Claim \ref{non-inter}.

By symmetry, it also follows that $q_{2,F}p_{1,n}=0$. 

Thus, it remains to prove that $q_{1,F}q_{2,F'}=0$, for every $F,F'\in\mathcal S$ with $F\not=F'$. 
Assume by contradiction that $q_{1,F}q_{2,F'}\not=0$, for some $F=(F_1,F_2,F_3,F_4),F'=(F_1',F_2',F_3',F_4')\in\mathcal S$ with $F\not=F'$.  
Let $i\in\{1,2,3,4\}$ such that $F_i\not=F_i'$. Without loss of generality, we may assume that $F_i\setminus F_i'\not=\emptyset$. Let $k\in F_i\setminus F_i'$.
Let $z_1,z_2\in\mathcal Z(\theta(R)'\cap\mathcal M)$ be the central supports of $q_{1,F}$ and $q_{2,F'}$ in $\theta(R)'\cap\mathcal M$, respectively. By using Lemma \ref{elementary_facts}(3) and reasoning as above we get that $\theta(R)z_1\prec_{\mathcal M}^s\mathcal M_F$ and $\theta(R)z_2\prec_{\mathcal M}^s\mathcal M_{F'}$.
Then $\eta=z_1z_2\not=0$, $\theta(R)\eta\prec_{\mathcal M}^s\mathcal M_{F}$ and $\theta(R)\eta\prec_{\mathcal M}^s\mathcal M_{F'}$. 

By using this fact, Lemma \ref{tensor_products}(3) and that $M=M_1*_RM_2$, we conclude that $\theta(R)\eta\prec_{\mathcal M}^s\mathcal M_{F,k}$.
Since $\eta\leq z_1$ we get that $\theta(R)z_1\prec_{\mathcal M}\mathcal M_{F,k}$ and further that $\theta(R)q_{1,F}\prec_{\mathcal M}\mathcal M_{F,k}$.
This, however, contradicts Claim \ref{non-inter}.
\end{proof}

If $F\in\mathcal S$, then $q_{1,F}p_{2,n}=q_{1,F}q_{2,F'}$, for every $n\in\mathbb N$ and $F'\in\mathcal S\setminus\{F\}$ by the previous claim. Since $(\sum_{n\in\mathbb N}p_{2,n})+(\sum_{F'\in\mathcal S}q_{2,F'})=1$, we deduce that $q_{1,F}\leq q_{2,F}$. By symmetry, we also get that $q_{2,F}\leq q_{1,F}$ and so we conclude that $q_{1,F}=q_{2,F}$, for every $F\in\mathcal S$.

\begin{claim}\label{b}
For every $F\in\mathcal S$, we have $q_{1,F}=q_{2,F}=0$.
\end{claim}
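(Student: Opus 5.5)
Fix $F\in\mathcal S$, put $q:=q_{1,F}=q_{2,F}$, and suppose towards a contradiction that $q\neq 0$. The idea is to use the two intertwinings of $\theta(M_1)q$ and $\theta(M_2)q$ into $\mathcal M_F$ together with $\theta(R_1)=\theta(R_2)=\theta(R)$, and then reach a contradiction with Claim \ref{non-inter} or with Theorem \ref{embedd2}(2).

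First I would handle the case $\widetilde F=\emptyset$. Then $\mathcal M_F=\mathbb C1$, and $\theta(M_1)q\prec_{\mathcal M}\mathbb C1$ is impossible because $M_1$ is diffuse. So I may assume $\widetilde F\neq\emptyset$.

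In the general case, Claim \ref{max_proj} gives $\theta(M_i)q\prec^s_{\mathcal M}\mathcal M_F$ for both $i=1,2$. As in the proof of Claim \ref{non-inter}, I would produce $*$-homomorphisms $\varphi_i:\theta(M_i)q\to p_i(\mathbb M_{\ell}(\mathbb C)\overline{\otimes}\mathcal M_F)p_i$ and partial isometries $v_i$ with $\varphi_i(x)v_i=v_ix$, with $v_i^*v_i$ close to $q$ by Lemma \ref{elementary_facts}(1). Setting $\delta_i(x)=\varphi_i(\theta(x)q)$ gives embeddings $\delta_i:M_i\to p_i(\mathbb M_\ell(\mathbb C)\overline{\otimes}\mathcal M_F)p_i$. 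Since $\mathcal M_F=\mathbb M_1(\mathbb C)\otimes N_1\overline{\otimes}\cdots\overline{\otimes}N_k$ with $N_j\in\mathcal A$, this is exactly the setting of Theorem \ref{embedd2}(2), which yields $\delta_1(R_1)\nprec\delta_2(R_2)$ inside $\mathbb M_\ell(\mathbb C)\overline{\otimes}\mathcal M_F$.

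On the other hand, $R_1=R_2=R$ in $M$, so $\theta(R)q$ is intertwined by $v_1$ into $\delta_1(R)$ and by $v_2$ into $\delta_2(R)$. Then $v_2v_1^*$ should give an intertwiner between $\delta_1(R)$ and $\delta_2(R)$. Because $v_i^*v_i$ are both close to $q$ in $\|\cdot\|_2$, we get $v_2v_1^*\neq 0$, and this partial isometry (in $\mathbb M_\ell(\mathbb C)\overline{\otimes}\mathcal M$) satisfies $\delta_2(x)v_2v_1^*=v_2v_1^*\delta_1(x)$ for $x\in R$.

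\textbf{Main obstacle.} The intertwiner $v_2v_1^*$ lives in the ambient $\mathbb M_\ell(\mathbb C)\overline{\otimes}\mathcal M$, whereas Theorem \ref{embedd2}(2) only gives non-intertwining inside $\mathbb M_\ell(\mathbb C)\overline{\otimes}\mathcal M_F$. I need to transfer the relation $\delta_1(R)\prec\delta_2(R)$ from $\mathcal M$ down to $\mathcal M_F$. I would try first to write $\mathcal M=\mathcal M_{F}^{\#}\overline{\otimes}\mathcal N$ and to realise $\mathcal M_F^{\#}$ as an amalgamated free product over the factors of $\mathcal M_F$, using iterated applications of \cite{IPP05}, as in Lemma \ref{tensor_products}(2). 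One has to note that $\delta_2(R)$ is itself an algebra, not a tensor subproduct, so Lemma \ref{tensor_products} does not apply verbatim. I would instead show that the quasi-normalizer-type control from \cite[Theorem 1.1]{IPP05} forces any intertwiner between subalgebras of $\mathcal M_F$ that do not embed into the amalgam $\mathcal M_{F,k}$ to lie essentially in $\mathcal M_F\overline{\otimes}\mathcal N$. Claim \ref{non-inter} supplies this non-embedding for $\delta_i(R)$. A slicing by $\mathcal N$, which is a coarse direction relative to $\mathcal M_F$, should then finish the argument. If that route fails, I would instead argue that the combination $\delta_1(R)\prec\delta_2(R)$ together with the non-embeddings from Claim \ref{non-inter} contradicts Lemma \ref{tensor_products}(3) applied with $M=M_1*_RM_2$ in some coordinate $k\in\widetilde F$.
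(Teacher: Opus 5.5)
Your first route is correct and is essentially the paper's proof. The paper chooses $\varphi_1$ via Lemma \ref{elementary_facts}(2), so that $\varphi_1(\theta(R)q)\nprec\mathbb M_{\ell_1}(\mathbb C)\overline{\otimes}\mathcal M_{F,k}$ for all $k\in\widetilde F$ (this is where Claim \ref{non-inter} enters, and why the support cut-down matters). It then upgrades this with Lemma \ref{tensor_products}(2) to non-embedding into $\mathbb M_{\ell_1}(\mathbb C)\overline{\otimes}\mathcal M_k$ inside $\mathbb M_{\ell_1}(\mathbb C)\overline{\otimes}\mathcal M$. Next, for each $k\in\widetilde F$, it applies \cite[Theorem 1.1]{IPP05} to $\mathcal M=\mathcal M_{k,1}*_{\mathcal M_k}\mathcal M_{k,2}$, which places the intertwiner $v_1v_2^*$ in $(\mathbb M_{\ell_1,\ell_2}(\mathbb C)\overline{\otimes}\mathcal M_F)\overline{\otimes}\mathcal P$, and finally slices along an orthonormal basis of $\text{L}^2(\mathcal P)$ to contradict Theorem \ref{embedd2}(2). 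No coarseness is needed for that slicing, and your fallback via Lemma \ref{tensor_products}(3) is unnecessary.
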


\begin{proof}[Proof of Claim \ref{b}]
Assume by contradiction that $q:=q_{1,F}=q_{2,F}\not=0$, for some $F\in\mathcal S$. Since $\theta(M_1)q\prec_{\mathcal M}\mathcal M_F$ by Claim \ref{max_proj}, we can find $\ell_1\in\mathbb N$, a projection $p_1\in\mathbb M_{\ell_1}(\mathbb C)\overline{\otimes}\mathcal M_F$, a nonzero partial isometry $v_1\in p_1(\mathbb M_{\ell_1}(\mathbb C)\overline{\otimes}\mathcal M)q$ and a $*$-homomorphism $\varphi_1:\theta(M_1)q\rightarrow p_1(\mathbb M_{\ell_1}(\mathbb C)\overline{\otimes}\mathcal M_F)p_1$ such that $\varphi_1(x)v_1=v_1x$, for every $x\in\theta(M_1)q$. Since by Claim \ref{non-inter} we have $\theta(R)q\nprec_{\mathcal M}\mathcal M_{F,k}$, Lemma \ref{elementary_facts}(2) gives that we may also assume that
\begin{equation}\label{F,k1}\text{$\varphi_1(\theta(R)q)\nprec_{\mathbb M_{\ell_1}(\mathbb C)\overline{\otimes}\mathcal M_F}\mathbb M_{\ell_1}(\mathbb C)\overline{\otimes}\mathcal M_{F,k}$, for every $k\in\widetilde F$.}
\end{equation}

By using that $M=M_1*_RM_2$ and \eqref{F,k1}, Lemma \ref{tensor_products}(2) implies that
\begin{equation}\label{F,k2}\text{$\varphi_1(\theta(R)q)\nprec_{\mathbb M_{\ell_1}(\mathbb C)\overline{\otimes}\mathcal M}\mathbb M_{\ell_1}(\mathbb C)\overline{\otimes}\mathcal M_{k}$, for every $k\in\widetilde F$.}
\end{equation}
Indeed, Lemma \ref{tensor_products}(2) applies since there exist tracial von Neumann algebras  $\mathcal N_0\subset\mathcal N$ such that either (1)  $k\in F_1\cup F_2$, $\mathcal M=M\overline{\otimes}\mathcal N$, $\mathcal M_k=R\overline{\otimes}\mathcal N$ and $\mathcal M_{F,k}=R\overline{\otimes}\mathcal N_0$, or (2) $k\in F_3\cup F_4$, $\mathcal M=M^{\text{op}}\overline{\otimes}\mathcal N$, $\mathcal M_k=R^{\text{op}}\overline{\otimes}\mathcal N$ and $\mathcal M_{F,k}=R^{\text{op}}\overline{\otimes}\mathcal N_0$.

Since $\theta(M_2)q\prec_{\mathcal M}\mathcal M_F$ by Claim \ref{max_proj}, we can find $\ell_2\in\mathbb N$, a projection $p_2\in\mathbb M_{\ell_2}(\mathbb C)\overline{\otimes}\mathcal M_F$, a nonzero partial isometry $v_2\in p_2(\mathbb M_{\ell_2}(\mathbb C)\overline{\otimes}\mathcal M)q$ and a $*$-homomorphism $\varphi_2:\theta(M_2)q\rightarrow p_2(\mathbb M_{\ell_2}(\mathbb C)\overline{\otimes}\mathcal M_F)p_2$ such that $\varphi_2(x)v_2=v_2x$, for every $x\in\theta(M_2)q$. Moreover, since  $\theta(M_2)q\prec_{\mathcal M}^s\mathcal M_F$,  Lemma \ref{elementary_facts}(1) implies that we may take  $v_2^*v_2$ to be arbitrarily close to $q$. In particular, we may assume that $v:=v_1v_2^*\in p_1(\mathbb M_{\ell_1,\ell_2}(\mathbb C)\overline{\otimes}\mathcal M)p_2$ is nonzero.

 Next, we have 
 \begin{equation}
\label{v12}\text{$\varphi_1(x)v=\varphi_1(x)v_1v_2^*=v_1xv_2^*=v_1v_2^*\varphi_2(x)=v\varphi_2(x)$,  for every $x\in\theta(R)q$}. \end{equation} 
Consequently, we get that
\begin{equation}\label{interw}
    \varphi_1(\theta(R)q)v=v\varphi_2(\theta(R)q).
\end{equation}
For $k\in \mathbb Z$ and $j\in\{1,2\}$, denote by $\mathcal M_{k,j}$ the tensor product obtained from $\mathcal M$ by replacing the $k^{\text{th}}$ tensor factor $M$ with $M_j$ if $k\in\mathbb N$, and replacing the $k^{\text{th}}$ tensor factor $M^{\mathrm{op}}$ with $M_j^{\mathrm{op}}$ if $k\in\mathbb Z_-$.

Fix $k\in\widetilde F$ and note that $\mathcal M_F\subset\mathcal M_{k,j_k}$, for some $j_k\in\{1,2\}$. Specifically, we take $j_k=1$ if $k\in F_1\cup F_3$ and  $j_k=2$ if $k\in F_2\cup F_4$. Thus, we have $\varphi_i(\theta(R)q)\subset p_i(\mathbb M_{\ell_i}(\mathbb C)\overline{\otimes}\mathcal M_F)p_i\subset p_i(\mathbb M_{\ell_i}(\mathbb C)\overline{\otimes}\mathcal M_{k,j_k})p_i$.
Additionally, we have $\mathcal M=\mathcal M_{k,1}*_{\mathcal M_k}\mathcal M_{k,2}$.
By combining the last two facts with \eqref{F,k2} and \eqref{interw}, and applying \cite[Theorem 1.1]{IPP05}, we derive that $v\in  p_1(\mathbb M_{\ell_1,\ell_2}(\mathbb C)\overline{\otimes} \mathcal M_{k,j_k})p_2$. Hence, we get that \begin{equation}\label{vv}v\in\bigcap_{k\in\widetilde F}p_1(\mathbb M_{\ell_1,\ell_2}(\mathbb C)\overline{\otimes} \mathcal M_{k,j_k})p_2=p_1(\mathbb M_{\ell_1,\ell_2}(\mathbb C)\overline{\otimes}\mathcal M_F)p_2\overline{\otimes}\mathcal P,\end{equation} where $\mathcal P=M^{\mathbb N\setminus (F_1\cup F_2)}\overline{\otimes}(M^{\text{op}})^{\mathbb Z_{-}\setminus (F_3\cup F_4)}$. Let $(\eta_m)_{m\geq 1}\subset\mathcal P$ be an orthonormal basis for $\text{L}^2(\mathcal P)$, and write $v=\sum_{m\geq 1}\xi_m\otimes\eta_m$, where $(\xi_m)_{m\geq 1}\subset p_1(\mathbb M_{\ell_1,\ell_2}(\mathbb C)\overline{\otimes}\mathcal M_F)p_2$. As $v\not=0$, there is $m\geq 1$ with $\xi_m\not=0$. 
Since $\varphi_i(\theta(R)q)\subset p_i(\mathbb M_{\ell_i}(\mathbb C)\overline{\otimes}\mathcal M_F)p_i$, for $i\in\{1,2\}$,  \eqref{interw} implies that $\varphi_1(\theta(R)q)\xi_m=\xi_m\varphi_2(\theta(R)q)$.
Since $\xi_m\in p_1(\mathbb M_{\ell_1,\ell_2}(\mathbb C)\overline{\otimes}\mathcal M_F)p_2$, this entails $\varphi_1(\theta(R)q)\prec_{\mathbb M_\ell(\mathbb C)\overline{\otimes}\mathcal M_F}\varphi_2(\theta(R)q)$. 

For $i\in\{1,2\}$, let $\alpha_i:M_i\rightarrow p_i(\mathbb M_\ell(\mathbb C)\overline{\otimes}\mathcal M_F)p_i$ be the embedding given by $\alpha_i(x)=\varphi_i(\theta(x)q)$. Then $\alpha_1(R)\prec_{\mathbb M_\ell(\mathbb C)\overline{\otimes}\mathcal M_F}\alpha_2(R)$, or, equivalently, $\alpha_1(R_1)\prec_{\mathbb M_\ell(\mathbb C)\overline{\otimes}\mathcal M_F}\alpha_2(R_2)$, contradicting the choice of $R_1$ and $R_2$.
\end{proof}

We can now finish the proof of assertion (1).
Claim \ref{b} gives that $q_{1,F}=q_{2,F}=0$, for every $F\in\mathcal S$.
Hence, we get that $\sum_{n\in\mathbb N}p_{1,n}=\sum_{n\in\mathbb N}p_{2,n}=1$.

Let $m,n\in\mathbb N$. Then for every $x\in R$, we have $v_{2,n}^*v_{1,m}\Delta_m(x)=v_{2,n}^*\theta(x)v_{1,m}=\Delta_n(x)v_{2,n}^*v_{1,m}$. Since $R$ is diffuse, it follows that if $m\not=n$, then $v_{2,n}^*v_{1,m}=0$, so $p_{2,n}p_{1,m}=v_{2,n}v_{2,n}^*v_{1,m}v_{1,m}^*=0$. This implies that $p_{1,n}=p_{2,n}$, for every $n\in\mathbb N$. 

Let $n\in\mathbb N$ and put $p_n=p_{1,n}=p_{2,n}$. Then $p_n\in\mathcal Z(\theta(M_1)'\cap\mathcal M)\cap\mathcal Z(\theta(M_2)'\cap \mathcal M)\subset\mathcal Z(\theta(M)'\cap\mathcal M)$. 
Moreover,  $v_{2,n}^*v_{1,n}\in\Delta_n(R)'\cap \mathcal M$. 
Since $R'\cap M=\mathbb C1$, we have that $\Delta_n(R)'\cap\mathcal M=\Delta_n(M)'\cap \mathcal M$. Hence, $v_{2,n}^*v_{1,n}\in\Delta_n(M)'\cap \mathcal M$. Therefore, for every $x\in M_2$, we have that 
\begin{align*}
\theta(x)v_{1,n}=\theta(x)p_nv_{1,n}=\theta(x)v_{2,n}v_{2,n}^*v_{1,n}&=v_{2,n}\Delta_n(x)(v_{2,n}^*v_{1,n})\\&=v_{2,n}(v_{2,n}^*v_{1,n})\Delta_n(x)=p_nv_{1,n}\Delta_n(x)=v_{1,n}\Delta_n(x).
\end{align*}

Since also $\theta(x)v_{1,n}=v_{1,n}\Delta_n(x)$, for every $x\in M_1$, we conclude that $\theta(x)v_{1,n}=v_{1,n}\Delta_n(x)$, for every $x\in M_1\cup M_2$. 

As $\{M_1,M_2\}''=M$, we get  $\theta(x)v_{1,n}=v_{1,n}\Delta_n(x)$, for every $x\in M$. Let $q_n=v_{1,n}^*v_{1,n}\in\Delta_n(M)'\cap \mathcal M$ and $u_n\in\mathcal M$ be any unitary extending $v_{1,n}$. Then $\theta(x)p_n=v_{1,n}\Delta_n(x)v_{1,n}^*=u_n\Delta_n(x)q_nu_n^*$, for every $x\in M$. This finishes the proof of assertion (1) of Theorem \ref{embeddings}.
\end{proof}

\begin{proof}[\bf Proof of Theorem \ref{embeddings}, assertion (2)] 
(a) The proof of assertion (1) for $M=M_1*_{R}M_2$ only uses properties of the inclusions $R\subset M_1$ and $R\subset M_2$ which are the same for $M(\rho)=M_1*_{R}M_2$.

(b) Let $\rho,\sigma\in\text{Aut}(R)$. 
We clearly have that $(i)\Rightarrow(ii)\Rightarrow (iii)$.
Assume that  $M(\rho)$ embeds into $M(\sigma)^t$, for some $t>0$. 
Then $M(\rho)$ embeds into $M(\sigma)\overline{\otimes}N\cong M(\sigma)^t\overline{\otimes}N^{1/t}$, for every II$_1$ factor $N$. In particular $M(\rho)$ embeds into $\mathcal M(\sigma):=M(\sigma)^{\mathbb N}\overline{\otimes}(M(\sigma)^{\text{op}})^{\mathbb N}$. This shows that $(iii)\Rightarrow (iv)$.
Next, assume that $\rho^{-1}\sigma\in\text{Inn}(R)$, and let $u\in\mathcal U(R)$ such that $\sigma=\text{Ad}(u)\circ\rho$.
Then there exists a $*$-isomorphism $\Phi:M(\rho)\rightarrow M(\sigma)$ such that $\Phi_{|M_1}=\text{Ad}(\alpha_1(u))$ and $\Phi_{|M_2}=\text{Id}$. To see that $\Phi$ is well defined we note that for every $x\in R$, we have that $\alpha_1(\rho(x))=\alpha_2(x)$ in $M(\rho)$, and $\Phi(\alpha_1(\rho(x)))=\text{Ad}(\alpha_1(u))(\alpha_1(\rho(x)))=\alpha_1(\text{Ad}(u)(\rho(x)))=\alpha_1(\sigma(x))=\alpha_2(x)=\Phi(\alpha_2(x))$ in $M(\sigma)$.
 This shows that $(v)\Rightarrow(i)$.

Therefore, it remains to show that $(iv)\Rightarrow(v)$. To this end, assume that $(iv)$ holds, and there exists an embedding $\theta:M(\rho)\rightarrow \mathcal M(\sigma)$. For  every $n\in\mathbb N$ and $i\in\{1,2\}$, we denote by $\Delta_{i,n}:M_i\rightarrow\mathcal M(\sigma)$ be the natural embedding of $M_i$ into the $n^{\text{th}}$ tensor copy of $M(\sigma)$ in $\mathcal M(\sigma)$.
 
Let $i\in\{1,2\}$. An immediate adaptation of the proof of (1) gives a partition of unity $(p_{i,n})_{n\in\mathbb N}$ in $\mathcal Z(\theta(M_i)'\cap\mathcal M(\sigma))$ and partial isometries $(v_{i,n})_{n\in\mathbb N}\subset \mathcal M(\sigma)$ such that we have $v_{i,n}v_{i,n}^*=p_{i,n}$ and $\theta(x)v_{i,n}=v_{i,n}\Delta_{i,n}(x)$, for every $n\in\mathbb N$ and $x\in M_i$.

Let $m,n\in\mathbb N$ such that $p_{2,m}p_{1,n}\not=0$. If $x\in R$, then we have $\theta(\alpha_1(\rho(x)))v_{1,n}=v_{1,n}\Delta_{1,n}(\alpha_1(\rho(x)))$ and $\theta(\alpha_2(x))v_{2,m}=v_{2,m}\Delta_{2,m}(\alpha_2(x))$. Since $\alpha_1(\rho(x))=\alpha_2(x)$ in $M(\rho)$, for every $x\in R$ we have
\begin{equation}\label{rho}v_{2,m}^*v_{1,n}\Delta_{1,n}(\alpha_1(\rho(x)))=v_{2,m}^*\theta(\alpha_1(\rho(x))v_{1,n}=v_{2,m}^*\theta(\alpha_2(x))v_{1,n}=\Delta_{2,m}(\alpha_2(x))v_{2,m}^*v_{1,n}.\end{equation} 
Since $v_{2,m}^*v_{1,n}\not=0$, \eqref{rho} implies that $\Delta_{1,n}(\alpha_1(R))\prec_{\mathcal M(\sigma)}\Delta_{2,m}(\alpha_2(R))$, which forces that $n=m$.
 By decomposing $\mathcal M(\sigma)=M(\sigma)^{\{n\}}\overline\otimes(M(\sigma)^{\mathbb N\setminus\{n\}}\overline{\otimes}(M^{\text{op}})^{\mathbb N})$, using again \eqref{rho} and that $v_{2,m}^*v_{1,n}\not=0$, we find $v\in M(\sigma)$ such that $v\not=0$ and the following equality holds in $M(\sigma)$
 \begin{equation}\label{rrho}
 \text{$v\alpha_1(\rho(x))=\alpha_2(x)v$, for every $x\in R$.}
 \end{equation}
On the other hand, in $M(\sigma)$ we also have that $\alpha_1(\sigma(x))=\alpha_2(x)$, for every $x\in R$. In combination with \eqref{rrho} we deduce that $v\alpha_1(\rho(x))=\alpha_1(\sigma(x))v$, for every $x\in R$. In particular, $v\alpha_1(R)=\alpha_1(R)v$. Since $\alpha_1(R)$ is coarse in both $M_1$ and $M_2$, it is also coarse in $M(\sigma)=M_1*_{\alpha_1(R)}M_2$. Hence, we deduce that $v\in\alpha_1(R)$. Letting $w\in R$ such that $v=\alpha_1(w)$, we obtain that $w\rho(x)=\sigma(x)w$, for every $x\in R$. Since $w\not=0$ and $R$ is a factor, we conclude that $\rho^{-1}\sigma$ is an inner automorphism of $R$.
  This proves that $(v)$ holds, which finishes the proof of (b).
  
  (c) Proving this assertion is similar to the proof of (b), and so we leave the details to the reader.
\end{proof}

\section{Proofs of main results}\label{sec:proofmain}
This section is devoted to the proofs of our main results. We will first derive Theorem \ref{no_comult1} from Theorem \ref{embeddings}, and then use Theorem \ref{no_comult1} to prove Theorem \ref{no_crossed1} and Corollaries \ref{C}--\ref{applicationsC}.

\subsection{Proof of Theorem \ref{no_comult1}}
Let $M_1$, $M_2$ be II$_1$ factors with property (T).
 Theorem \ref{embeddings}(1) implies the existence of coarse embeddings $R\subset M_i$, for $i=1,2$, such that  $M=M_1*_RM_2$ is a II$_1$ factor satisfying Theorem \ref{no_comult1}. Choose continuum many automorphisms $(\rho_x)_{x\in 2^{\mathbb N}}\subset\text{Aut}(R)$
 which have distinct images in
$\mathrm{Out}(R)$. Then applying Theorem \ref{embeddings}(2) gives  II$_1$ factors $(M(\rho_x))_{x\in 2^{\mathbb N}}$  which satisfy Theorem \ref{no_comult1} and are pairwise not stably embeddable. \hfill$\square$

\begin{remark}
     Theorem \ref{embeddings}(2) moreover implies that if $x\in 2^{\mathbb N}$ and $(x_n)_{n\in\mathbb N}\subset 2^{\mathbb N}$ is a sequence then $M(\rho_x)$ embeds into $\overline{\otimes}_{n\in\mathbb N}M(\rho_{x_n})$ if and only if $x\in\{x_n\mid n\in\mathbb N\}$. Define $M_S=\overline{\otimes}_{x\in S}M(\rho_x)$, for every subset $S\subset 2^{\mathbb N}$. It follows that if $S,T\subset 2^{\mathbb N}$, then $M_S$ embeds into $M_T$ if and only if $S\subset T$ (cf. non-embeddability results from \cite{PV21}).
\end{remark}

\subsection{Proofs of Theorem \ref{no_crossed1} and Corollary \ref{D}} We next establish the following result which implies Theorem \ref{no_crossed1} and Corollary  \ref{D}.

\begin{corollary}\label{DD}
The following holds for any II$_1$ factor $M$ satisfying the conclusion of Theorem \ref{no_comult1}:
\begin{enumerate}
\item $M^t\not\cong B\rtimes_{\sigma} G$, for any  $t>0$ and any trace preserving action $G\curvearrowright^\sigma(B,\tau)$ of a countably infinite group $G$ on a tracial von Neumann algebra $(B,\tau)$.
\item $M^t\not\cong B\rtimes_{\sigma,c} G$, for any  $t>0$ and any trace preserving cocycle action $(\sigma,c)$ of a countably infinite group $G$ on a tracial von Neumann algebra $(B,\tau)$ with $c(G)''$  finite dimensional.

\item If $\Delta:M\rightarrow M^t$ is any embedding, for some $t>0$, then $t\in\mathbb N$ and there exists a unitary $u\in M^t=M\overline{\otimes}\mathbb M_t(\mathbb C)$ such that $\Delta(x)=u(x\otimes 1)u^*$, for every $x\in M$.
\item If $\Delta:M\rightarrow M\overline{\otimes}M$ is any embedding, then there exist projections $p_1,p_2\in M$  and unitaries $u_1,u_2\in M\overline{\otimes}M$ such that  $\Delta(x)=u_1(x\otimes p_1)u_1^*+u_2(p_2\otimes x)u_2^*$, for every $x\in M$. 
 \item If $\Delta:M\rightarrow M\overline{\otimes}M$ is any comultiplication, then there exists a completely atomic von Neumann subalgebra $A\subset M$ such that exactly one of the following conditions holds 
 \begin{itemize}
 \item[(a)] There exists a unitary $u\in A\overline{\otimes}M$ such that $\Delta(x)=u(x\otimes 1)u^*$, for every $x\in M$.  \\ In particular, $\Delta(x)=x\otimes 1$, for every $x\in A'\cap M$.
 \item[(b)]  There exists a unitary $u\in M\overline{\otimes}A$ such that $\Delta(x)=u(1\otimes x)u^*$, for every $x\in M$. \\ In particular, $\Delta(x)=1\otimes x$, for every $x\in A'\cap M$.
 \end{itemize}
 \item  Let $I,J,K,L$ be any (possibly infinite) sets. Then $M^I\overline{\otimes}(M^{\emph{op}})^J\hookrightarrow M^K\overline{\otimes}(M^{\emph{op}})^L$ if and only if    $|I|\leq |K|$ and $|J|\leq |L|$.
In particular, $M^{\emph{op}}$ does not embed into $M^{\mathbb N}$.
\end{enumerate}
\end{corollary}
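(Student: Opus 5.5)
The plan is to derive every item from the conclusion of Theorem \ref{no_comult1}: any embedding $\Delta:M\rightarrow\mathcal M=M^{\mathbb N}\overline{\otimes}(M^{\text{op}})^{\mathbb N}$ has the form $\Delta(x)=\sum_n u_n\Delta_n(x)q_nu_n^*$ with $q_n\in\Delta_n(M)'\cap\mathcal M$. Two easy consequences will be used repeatedly. First, $\Delta(M)$ has a nonzero intertwiner with some $\Delta_n$, so $\Delta(M)\prec_{\mathcal M}\Delta_n(M)\overline{\otimes}(\text{rest})$. Second, no embedding can satisfy $\Delta(M)\nprec_{\mathcal M}S$ for every ``co-one-slot'' algebra $S$ of the form $M^{\mathbb N\setminus\{n\}}\overline{\otimes}(\cdots)$ while also having no intertwiner with any $\Delta_n$. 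I will also need to handle amplifications and $\mathbb M_n(\mathbb C)$ factors. Given an embedding $M\rightarrow p(\mathbb M_n(\mathbb C)\overline{\otimes}\mathcal M)p$, tensor with a II$_1$ factor $N$ absorbing $\mathbb M_n$ and the projection; concretely, take $N=M^{\text{op}}$, which sits in a spare slot of $\mathcal M$. Since $p(\mathbb M_n\overline{\otimes}\mathcal M)p\overline{\otimes}$(spare slot) is isomorphic to $\mathcal M$ via an isomorphism preserving the slot structure up to amplification of one $M^{\text{op}}$ coordinate, this yields a genuine embedding into $\mathcal M$.

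For (1) and (2), I will compose a decomposition $M^t\cong B\rtimes_{\sigma,c}G$ with the triple comultiplication of Subsection \ref{cocycle_section}, or with Lemma \ref{cocyclefindim} in the finite dimensional cocycle case. This gives an embedding of $M^t$ into $p(\mathbb M_n\overline{\otimes}M^t\overline{\otimes}M^t\overline{\otimes}(M^t)^{\text{op}})p$ that does not intertwine into any of the three co-one-slot subalgebras. Restricting to $M\subset$ corner of $M^t$ and rewriting amplifications as above produces an embedding $M\rightarrow\mathcal M$ with $\Delta(M)\nprec$ every co-slot algebra, hence with no nonzero intertwiner with any $\Delta_k$; the unused slots only add tensor factors, so Lemma \ref{tensor_products}(1) transports the non-intertwining. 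This contradicts Theorem \ref{no_comult1}.

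For (3) and (4), I will view $M^t\subset\mathbb M_{\lceil t\rceil}(\mathbb C)\overline{\otimes}M$ and $M\overline{\otimes}M$ as the first two slots of $\mathcal M$. The decomposition in Theorem \ref{no_comult1} then has $p_n=0$ for all other $n$: an intertwiner with $\Delta_n$ for $n\geq 3$ would force $\Delta(M)\prec 1\otimes\cdots$, impossible since $\Delta(M)$ lies in the first slots and $M$ is diffuse. Projecting the $u_n,q_n$ into the relevant subalgebra by the usual conditional expectation argument, where intertwiners between subalgebras of a tensor factor can be taken inside it, gives (4). For (3), $q_1$ lies in $\Delta_1(M)'\cap\mathbb M_t\overline{\otimes}M=\mathbb M_t\otimes 1$, and a trace count forces $t\in\mathbb N$ with $q_1=1$.

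For (5), I will apply (4) to a comultiplication $\Delta$ and then use coassociativity. Writing $\Delta=\Delta^{(1)}p_1+\Delta^{(2)}p_2$ with the two pieces, coassociativity compares $(\Delta\otimes\text{Id})\Delta$ and $(\text{Id}\otimes\Delta)\Delta$ slot-wise. By uniqueness of the supports of maximal intertwiners (Lemma \ref{intertwiners}), this forces one piece to vanish; the surviving $p_i$ generates the atomic algebra $A$ via the support of $u$. This step is the main obstacle, since the bookkeeping of which slot each piece lands in under both compositions is delicate. For (6), the ``if'' direction is immediate. For the ``only if'' direction, I will restrict a given embedding to each copy $\Delta_i(M)$ and $M^{\text{op}}$ copy and apply the theorem (with $M^{\text{op}}$ handled by symmetry, using $\mathcal M^{\text{op}}$). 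Each copy of $M$ must intertwine with some $M$-slot and never with an $M^{\text{op}}$-slot, since $M^{\text{op}}\not\hookrightarrow M^t$ would follow from (3)-type rigidity combined with $M\not\cong M^{\text{op}}$. Distinct commuting copies cannot share a slot, because two commuting diffuse copies both intertwining into the same $\Delta_k(M)$ violate factoriality $M'\cap M=\mathbb C$. This gives the required injections $I\hookrightarrow K$ and $J\hookrightarrow L$.
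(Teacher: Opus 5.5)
Your treatment of (1)--(4) follows the paper's route. It uses a comultiplication or Lemma~\ref{cocyclefindim}, absorbs the amplification with a projection of the right trace in a spare factor, applies Theorem~\ref{no_comult1}, and then expands intertwiners along an orthonormal basis of the extra tensor factor.

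One point in (3) needs care. It is intertwiners, not the unitaries $u_n$, that can be projected. You then need a maximality argument to see that the resulting copies of $x\mapsto x\otimes e_{1,1}$ exhaust $\Delta(1)$; this uses that the intertwiner from Theorem~\ref{no_comult1} has full left support.

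The step you flag as the main obstacle in (5) is fine. By uniqueness of the support in Lemma~\ref{intertwiners}, the maximal intertwiner between $(\Delta\otimes\mathrm{Id})\Delta=(\mathrm{Id}\otimes\Delta)\Delta$ and $x\mapsto x\otimes1\otimes1$ has support of trace $\tau(p_1)^2$ from one expansion and $\tau(p_1)$ from the other. Hence $p_1\in\{0,1\}$. The paper compares the closely related quantity $\sup_z\limsup_n\|\mathrm{E}_{M\overline{\otimes}1\overline{\otimes}1}(\cdot)\|_2$.

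\textbf{The genuine gap is the second half of (5).} Once $p_1=1$, you only know $\Delta=\mathrm{Ad}(u)\circ(\,\cdot\otimes1)$ for some unitary $u\in M\overline{\otimes}M$. The sentence ``the surviving $p_i$ generates $A$ via the support of $u$'' has no content: $p_1=1$ carries no information about $u$, and for a typical unitary the smallest $A$ with $u\in A\overline{\otimes}M$ is diffuse. Complete atomicity must come from using coassociativity a second time. The paper does this as follows.
\begin{itemize}
\item It sets $N=\{x\in M\mid\Delta(x)=x\otimes1\}$.
\item Coassociativity and $(\tau\otimes\tau)\Delta=\tau$ give $(\mathrm{Id}\otimes\tau)\Delta(M)\subset N$, as in \cite[Lemma 2.7]{DV25}.
\item Writing $u=\sum_iv_i\otimes\xi_i$ yields $v_i\in N'$ and $\mathrm{E}_N(x)=\sum_iv_ixv_i^*$.
\item By Lemma~\ref{small_projections}, this gives $M\prec_MNz$ for every nonzero projection $z\in N'\cap M$, which forces $N'\cap M$ to be completely atomic.
\item Then $A=N'\cap M$ works, since $u\in(N\overline{\otimes}1)'\cap(M\overline{\otimes}M)=A\overline{\otimes}M$.
\end{itemize}

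For (6), the paper only proves $M^{\mathrm{op}}\not\hookrightarrow M^{\mathbb N}$ and leaves the general equivalence to the reader. Your argument covers that special case. However, the key step of your general argument is false: distinct commuting copies \emph{can} share a slot. Let $\sigma$ be the flip of $M\overline{\otimes}M$ and $e\in M$ a projection with $0<\tau(e)<1$. Then $a\mapsto a\otimes e+\sigma(a)\otimes(1-e)$ embeds $M\overline{\otimes}M$ into $M\overline{\otimes}M\overline{\otimes}M$, and both copies of $M$ intertwine into the first slot, on the orthogonal pieces $1\otimes1\otimes e$ and $1\otimes1\otimes(1-e)$.

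What is true is a localized version. Let $p^{(i)}_k$ be the support of the slot-$k$ piece of the $i$-th copy. These supports all commute, and $p^{(i)}_kp^{(i')}_k=0$ for $i\neq i'$. The reason is that on $p^{(i)}_k$ the relative commutant of the $i$-th copy is conjugate into the tensor product of the remaining slots; this is where $M'\cap M=\mathbb C1$ enters. A diffuse subalgebra of that cannot intertwine into slot $k$. So you must compress by a nonzero product $\prod_{i\in F}p^{(i)}_{k(i)}$, for finite $F\subset I$, to get an injection $F\to K$. For infinite cardinalities, use the pairwise orthogonality, which allows at most countably many copies per slot. The same argument applies to $J$ and $L$.
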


\begin{proof}
Let $M$ be any II$_1$ factor satisfying the conclusion of Theorem \ref{no_comult1}.

(1) 
Assume that $M^t=B\rtimes_\sigma G$, for some $t>0$ and a trace preserving action of a countably infinite group $G$. Let $\theta:M^t\rightarrow M^t\overline{\otimes}M^t$ be the comultiplication given by $\theta(bu_g)=bu_g\otimes u_g$, for every $b\in B$ and $g\in G$. Since $G$ is infinite, it is immediate that $\theta(M^t)\nprec_{M^t\overline{\otimes}M^t}M^t\overline{\otimes}1$ and 
$\theta(M^t)\nprec_{M^t\overline{\otimes}M^t}1\overline{\otimes}M^t$ (see, e.g., \cite[Lemma 9.2]{Ioa10}). Next, we amplify $\theta$ to get a $*$-homomorphism $\theta_0:=\theta^{1/t}:M\rightarrow M\overline{\otimes}M^t$. We further define  $\mathcal M=M^{\mathbb N}\overline{\otimes}(M^{\text{op}})^{\mathbb N}$ and $\theta_1:M\rightarrow M\overline{\otimes}M^t\overline{\otimes}\mathcal M$ by letting $\theta_1(x)=\theta_0(x)\otimes 1$.
Considering a natural isomorphism $M\overline{\otimes}M^t\overline{\otimes}\mathcal M\cong \mathcal M$, we can view $\theta_1$ as an embedding $\theta_1:M\rightarrow\mathcal M$. Since $\theta_1$  satisfies that $\theta_1(M)\nprec\Delta_n(M)$,  where $\Delta_n:M\rightarrow\mathcal M$  is the embedding into the $n^{\text{th}}$ tensor copy of $M$, for every $n\in\mathbb N$, we get a contradiction with Theorem \ref{no_comult1}.

(2)  Assume that $M^t=B\rtimes_{\sigma,c} G$, for some $t>0$ and a trace preserving cocycle action  $(\sigma,c)$ of a countably infinite group $G$ on a tracial von Neumann algebra with $c(G)''$ finite dimensional. Then Lemma \ref{cocyclefindim} provides an embedding $\theta:M\rightarrow (M\overline{\otimes}M\overline{\otimes}M^{\text{op}})^s$, for some $s>0$, such that $\theta(M)\nprec_{(M\overline{\otimes}M\overline{\otimes}M^{\text{op}})^s}S$, for every $S\in \{M^s\overline{\otimes}1\overline{\otimes}1,1\overline{\otimes}M^s\overline{\otimes}1,1\overline{\otimes}1\overline{\otimes}(M^{\text{op}})^s\}$. It is easy to derive a contradiction by arguing similarly to the proof of (1).

(3) Let $\Delta:M\rightarrow M^t$ be an embedding, for some $t>0$. 
Identify $M^t=p(M\overline{\otimes}\mathbb M_m(\mathbb C))p$, where $m\geq t$ is an integer and $p\in M\overline{\otimes}\mathbb M_m(\mathbb C)$ a projection of normalized trace $\frac{t}{m}$. Let $n\geq \frac{1}{t}$ be an integer and  $q\in\mathcal M\overline{\otimes}\mathbb M_n(\mathbb C)$ be a projection of normalized trace $\frac{1}{tn}$. 

Put $\mathcal N=(M\overline{\otimes}\mathbb M_m(\mathbb C))\overline{\otimes}(\mathcal M\overline{\otimes}\mathbb M_n(\mathbb C))$.
Then $p\otimes q\in \mathcal N$ is a projection of normalized trace $\frac{1}{mn}$ so it is equivalent to $(1\otimes e_{1,1})\otimes (1\otimes e_{1,1})$. Let $u\in \mathcal N$ be a unitary such that $u(p\otimes q)u^*=(1\otimes e_{1,1})\otimes (1\otimes e_{1,1})$. Define  $\Delta_0:M\rightarrow\mathcal N$
by letting $\Delta_0(x)=u(\Delta(x)\otimes q)u^*$. Then $\Delta_0(1)=(1\otimes e_{1,1})\otimes (1\otimes e_{1,1})$, and thus we can view $\Delta_0$ as an embedding of $M$ into $M\overline{\otimes}\mathcal M$, for which we have $\Delta_0(M)\nprec_{M\overline{\otimes}\mathcal M}1\overline{\otimes}\mathcal M$.

Using that $M\overline{\otimes}\mathcal M\cong\mathcal M$ and applying Theorem \ref{no_comult1}, we find a unitary $v\in (M\overline{\otimes}e_{1,1})\overline{\otimes}(\mathcal M\overline{\otimes}e_{1,1})$ such that $\Delta_0(x)=v((x\otimes e_{1,1})\otimes (1\otimes e_{1,1}))v^*$, for every $x\in M$. Hence, for every $x\in M$, we have 
$(\Delta(x)\otimes q)(u^*v)=(u^*v)((x\otimes e_{1,1})\otimes (1\otimes e_{1,1}))$. Therefore, $\zeta=(p\otimes q)(u^*v)\in\mathcal N$ is a partial isometry such that $\zeta\zeta^*=p\otimes q,\zeta^*\zeta=(1\otimes e_{1,1})\otimes (1\otimes e_{1,1})$ and
\begin{equation}\label{deltaa}\text{$(\Delta(x)\otimes q)\zeta=\zeta((x\otimes e_{1,1})\otimes (1\otimes e_{1,1}))$, for every $x\in M$.}\end{equation} Let $(\eta_i)_{i\in I}\subset q(\mathcal M\overline{\otimes}\mathbb M_n(\mathbb C))(1\otimes e_{1,1})$ be an orthonormal basis for
$\text{L}^2(q(\mathcal M\overline{\otimes}\mathbb M_n(\mathbb C))(1\otimes e_{1,1}))$.
Then we can   write $\zeta=\sum_{i\in I}\zeta_i\otimes\eta_i$, where $(\zeta_i)_{i\in I}\subset p(M\overline{\otimes}\mathbb M_m(\mathbb C))(1\otimes e_{1,1})$. Using \eqref{deltaa} we get that $\Delta(x)\zeta_i=\zeta_i(x\otimes e_{1,1})$, for every $i\in I$ and $x\in M$. 

Let $i\in I$ with $\zeta_i\not=0$. Then $\zeta_i^*\zeta_i\in\mathcal Z(M\overline{\otimes} e_{1,1})=\mathbb C\overline{\otimes} e_{1,1}$,  thus $\zeta_i^*\zeta_i=\lambda_i(1\otimes e_{1,1})$, for some $\lambda_i>0$. Therefore, $\xi_i=\lambda_i^{-\frac{1}{2}}\zeta_i$ is a partial isometry such that $\xi_i^*\xi_i=1\otimes e_{1,1}$ and $\Delta(x)\xi_i=\xi_i(x\otimes e_{1,1})$, for every  $x\in M$. 
Letting $p_i=\xi_i\xi_i^*\in\Delta(M)'\cap p(M\overline{\otimes}\mathbb M_m(\mathbb C))p$, we have that $\Delta(x)p_i=\xi_i(x\otimes e_{1,1})\xi_i^*$, for every $x\in M$. Hence, $\Delta(M)p_i=\xi_i(M\overline{\otimes} e_{1,1})\xi_i^*$, and taking relative commutants gives that $p_i(\Delta(M)'\cap p(M\overline{\otimes}\mathbb M_m(\mathbb C))p)p_i=\mathbb Cp_i$, that is, $p_i$ is a minimal projection of $\Delta(M)'\cap p(M\overline{\otimes}\mathbb M_m(\mathbb C))p$.

Let $(r_k)_{k=1}^K\subset \Delta(M)'\cap p(M\overline{\otimes}\mathbb M_m(\mathbb C))p$ be a maximal (necessarily finite, of cardinality $K\leq m$) family of pairwise orthogonal projections such that 
for every $1\leq k\leq K$, there is a partial isometry $\delta_k\in p(M\overline{\otimes}\mathbb M_m(\mathbb C))$ with $\delta_k\delta_k^*=r_k$, $\delta_k^*\delta_k=1\otimes e_{1,1}$ and $\Delta(x)r_k=\delta_k(x\otimes e_{1,1})\delta_k^*$, for every $x\in M$. 

We claim that $\sum_{k=1}^Kr_k=p$. Otherwise, assume that $r:=p-\sum_{k=1}^Kr_k\not=0$.  Since we have $(r\otimes 1)\zeta\zeta^*=(r\otimes 1)(p\otimes q)=r\otimes q\not=0$ and $(r\otimes 1)\zeta=\sum_{i\in I}r\zeta_i\otimes\eta_i$, we find $i\in I$ such that $r\zeta_i\not=0$. Thus, $r\xi_i\not=0$ and so $rp_i\not=0$.
Since $p_i$ is a minimal projection of $\Delta(M)'\cap p(M\overline{\otimes}\mathbb M_m(\mathbb C))p$, this provides a projection $s\in \Delta(M)'\cap p(M\overline{\otimes}\mathbb M_m(\mathbb C))p$ such that $s\leq r$ and $s$ is equivalent to $p_i$ in $\Delta(M)'\cap p(M\overline{\otimes}\mathbb M_m(\mathbb C))p$. This contradicts the maximality of the 
family $(r_k)_{k=1}^K$.

Finally, let $1\leq k\leq K$, and put $\delta_k'=\delta_k(1\otimes e_{1,k})$. Then $\Delta(x)r_k=\delta_k(x\otimes e_{1,1})\delta_k^*=\delta_k'(x\otimes e_{k,k})\delta_k'^*$, for every $x\in M$. Hence, if we let $\delta=\sum_{k=1}^K\delta_k'$ and $e=\sum_{k=1}^Ke_{k,k}$, then $\delta\delta^*=p,\delta^*\delta=1\otimes e$ and $$\text{$\Delta(x)=\sum_{k=1}^K\Delta(x)r_k=\sum_{k=1}^K\delta_k'(x\otimes e_{k,k})\delta_k'^*=\delta(x\otimes e)\delta^*$, for every $x\in M$.}$$
This finishes the proof of statement (3).

(4) The proof of this assertion is similar to the proof of (3), so we leave the details to the reader.

(5) Let $\Delta:M\rightarrow M\overline{\otimes}M$ be any comultiplication. 
Since $M$ is a II$_1$ factor, the uniqueness of its trace implies that $(\tau\otimes\tau)\Delta=\tau$.
By part (4), there exist projections $p_1,p_2\in M$  and unitaries $u_1,u_2\in M\overline{\otimes}M$ such that $\tau(p_1)+\tau(p_2)=1$ and $\Delta(x)=u_1(x\otimes p_1)u_1^*+u_2(p_2\otimes x)u_2^*$, for every $x\in M$. 
Then there exist unitaries $v_1,v_2,v_3,w_1,w_2,w_3\in M\overline{\otimes}M\overline{\otimes}M$ such that for every $x\in M$, $$(\Delta\otimes\text{Id})(\Delta(x))=\text{Ad}(v_1)(x\otimes p_1\otimes p_1)+\text{Ad}(v_2)(p_2\otimes x\otimes p_1)+\text{Ad}(v_3)(\Delta(p_2)\otimes x)$$ and 
$$\text{$(\text{Id}\otimes\Delta)(\Delta(x))=\text{Ad}(w_1)(x\otimes \Delta(p_1))+\text{Ad}(w_2)(p_2\otimes x\otimes p_1)+\text{Ad}(w_3)(p_2\otimes p_2\otimes x)$.}$$
Let $(u_n)\subset\mathcal U(M)$ be any sequence converging weakly to $0$. Using the last two formulas and the fact that $\|\Delta(p_1)\|_2=\sqrt{(\tau\otimes\tau)(\Delta(p_1))}=\sqrt{\tau(p_1)}$, it is easy to see that $$\sup_{z\in\mathcal U(M\overline{\otimes}M\overline{\otimes}M)}\Big(\limsup_{n\rightarrow\infty}\|\text{E}_{M\overline{\otimes}1\overline{\otimes}1}(\text{Ad}(z)(\Delta\otimes\text{Id})(\Delta(u_n)))\|_2\Big)=\|p_1\otimes p_1\|_2=\tau(p_1)$$ and $$\sup_{z\in\mathcal U(M\overline{\otimes}M\overline{\otimes}M)}\Big(\limsup_{n\rightarrow\infty}\|\text{E}_{M\overline{\otimes}1\overline{\otimes}1}(\text{Ad}(z)(\text{Id}\otimes\Delta)(\Delta(u_n)))\|_2\Big)=\|\Delta(p_1)\|_2=\sqrt{\tau(p_1)}.$$
Since  $\Delta$ is co-associative, 
we get that $\tau(p_1)=\sqrt{\tau(p_1)}$, hence $\tau(p_1)\in\{0,1\}$ and $p_1\in\{0,1\}$.

We assume first that $p_1=1$. Thus, $p_2=0$, so that $u=u_1\in M\overline{\otimes}M$ satisfies $\Delta(x)=u(x\otimes 1)u^*$, for every $x\in M$. 
Define $S=\{(\text{Id}\otimes\tau)(\Delta(x))\mid x\in M\}$ and $N=\{x\in M\mid \Delta(x)=x\otimes 1\}$.
Since $\Delta$ is co-associative and  $(\tau\otimes\tau)\Delta=\tau$, the proof of \cite[Lemma 2.7]{DV25} implies that $S\subset N$.

Let $(\xi_i)_{i\in\mathbb N}\subset M$ be an orthonormal basis for $\text{L}^2(M)$ and write $u=\sum_{i\in\mathbb N}v_i\otimes\xi_i$, where $(v_i)_{i\in\mathbb N}\subset M$. 
Since $u$ is a unitary, we get that $\sum_{i\in\mathbb N}v_iv_i^*=\sum_{i\in\mathbb N}v_i^*v_i=1$. 
Since $\Delta(x)=x\otimes 1=u(x\otimes 1)u^*$, we get that $x$ commutes with $(v_i)_{i\in \mathbb N}$, for every $x\in N$. In other words, $(v_i)_{i\in \mathbb N}\subset N'$.
Further, we have \begin{equation}\label{EN}\text{$(\text{Id}\otimes\tau)(\Delta(x))=(\text{Id}\otimes\tau)(u(x\otimes 1)u^*)=\sum_{i\in\mathbb N}v_ixv_i^*\in N$, for every $x\in M$.}\end{equation}
If $i\in \mathbb N$, then since $v_i\in N'$ we have that $\text{E}_N(v_iy)=\text{E}_N(yv_i)$, for every $y\in M$. By combining this fact with \eqref{EN} we derive that
\begin{equation}\label{ENN}\text{$\text{E}_N(x)=\sum_{i\in\mathbb N}\text{E}_N(xv_iv_i^*)=\sum_{i\in\mathbb N}\text{E}_N(v_ixv_i^*)=\sum_{i\in\mathbb N}v_ixv_i^*$, for every $x\in M$.}\end{equation}

We claim that $M\prec_MNz$, for every nonzero projection $z\in N'\cap M$. For $i\in\mathbb N$, denote $w_i=zv_iz$. Then by using \eqref{ENN} we get that 
\begin{equation}\label{ENz}
\text{$\text{E}_{Nz}(x)=\text{E}_N(xz)z=z\text{E}_N(zxz)z=\sum_{i\in\mathbb N}w_ixw_i^*$, for every $x\in M$.}
\end{equation}
To prove the claim, let $i\in\mathbb N$ with $w_i\not=0$. Let $q\in M$ be a nonzero projection such that $qw_i^*w_iq\geq cq$, for some $c>0$. Then for every projection $p\in qMq$ we have   $pw_i^*w_ip\geq cp$ and thus $\|w_ipw_i^*\|_2=\|pw_i^*w_ip\|_2\geq c\|p\|_2$. Together with \eqref{ENz} we get $\|\text{E}_{Nz}(p)\|_2\geq \|w_ipw_i^*\|_2\geq c\|p\|_2$, for every projection $p\in qMq$.  Lemma \ref{small_projections} gives that $qMq\prec_M Nz$, thus $M\prec_M Nz$, as claimed.

Next, if $z\in \mathcal Z(N'\cap M)$ is a nonzero projection then since $M\prec_M Nz$, \cite[Lemma 3.5]{Vae07} gives that $(N'\cap M)z=(Nz)'\cap zMz\prec_M \mathbb C1$. This implies that  $(N'\cap M)z$ contains a minimal projection. Since this holds for every $z\in \mathcal Z(N'\cap M)$, we conclude that $N'\cap M$ is completely atomic. 

Thus, $\mathcal Z(N)$ is completely atomic. Write  $\mathcal Z(N)=\oplus_{j\in J}\mathbb Cz_j$, for a partition of unity $(z_j)_{j\in J}$ in $\mathcal Z(N)$.
If $j\in J$, then $\mathcal Z(N)z_j=\mathbb Cz_j$, hence $Nz_j$ is a factor.
Since $z_jMz_j\prec_{z_jMz_j}Nz_j$, the inclusion $Nz_j\subset z_jMz_j$ has finite index. Since $M$ satisfies (1), by arguing as in the proof of Lemma \ref{finite_cross}, we get that the inclusion $Nz_j\subset z_jMz_j$  is isomorphic to $Nz_j\overline{\otimes}\mathbb C1\subset Nz_j\overline{\otimes}\mathbb M_{n_j}(\mathbb C)$, for some $n_j\in\mathbb N$.

Write $A=\oplus_{j\in J}(\mathbb C z_j\overline{\otimes}\mathbb M_{n_j}(\mathbb C))$. Then $A\subset M$ is a completely atomic von Neumann subalgebra with $A=N'\cap M$. Since $u\in (N\overline{\otimes}1)'\cap M\overline{\otimes}M$, we deduce that $u\in A\overline{\otimes}M$. This proves that (a) holds. Similarly, if we assume that $p_2=1$, then it follows that (b) holds.

(6) We only justify that  $M^{\text{op}}\not\hookrightarrow M^{\mathbb N}$, leaving the main assertion to the reader.   If  $M^{\text{op}}\hookrightarrow M^{\mathbb N}$, then  $M\hookrightarrow (M^{\text{op}})^{\mathbb N}$. Let $\theta:M\rightarrow (M^{\text{op}})^{\mathbb N}$ be any embedding and define $\Delta:M\rightarrow \mathcal M=M^{\mathbb N}\overline{\otimes}(M^{\text{op}})^{\mathbb N}$ by letting  $\Delta(x)=1\otimes\theta(x)$. Then $\Delta(M)\nprec_{\mathcal M}M^{\mathbb N}\overline{\otimes}1$, which contradicts Theorem \ref{no_comult1}.
\end{proof}

\subsection{Proof of Corollary \ref{C}} Corollary \ref{C} is a consequence of the following result:

\begin{corollary}\label{CC}
Let $M$ be a II$_1$ factor satisfying the conclusion of Theorem \ref{no_comult1}. Assume additionally that $M=M_1*_RM_2$, where $M_1,M_2$ are nonamenable II$_1$ factors and $R\subset M_1,R\subset M_2$ are coarse embeddings of the hyperfinite II$_1$ factor $R$. Let $t>0$. Then we have
\begin{enumerate}
\item  $M^t\not\cong\mathcal B\rtimes_{\sigma,c}\mathcal G$, for 
any trace preserving action $\mathcal G\curvearrowright^\sigma\mathcal B$ of an infinite discrete pmp groupoid $\mathcal G$ on a measurable field $\mathcal B=(B_x,\tau_x)_{x\in\mathcal G^{(0)}}$ of tracial von Neumann algebras and  $c\in Z^2(\mathcal G,\mathbb T)$.
\item If $M^t$ admits a regular  von Neumann subalgebra which is infinite dimensional and of infinite index, then $M^s$ admits a regular, irreducible subfactor of infinite index, for some $s\in (0,t]$.
\end{enumerate}
\end{corollary}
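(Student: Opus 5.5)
Two parts, and I will do (1) first and derive (2) from it.

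For (1), suppose $M^t\cong\mathcal B\rtimes_{\sigma,c}\mathcal G$ with $\mathcal G$ infinite. Let $A=\text{L}^\infty(\mathcal G^{(0)})\subset \mathcal B$ be the unit-space algebra, so $\mathcal B\subset M^t$ has conditional expectation and $A\subset\mathcal Z(\mathcal B)$. Decompose according to the structure of $\mathcal G$: since $M^t$ is a factor, $\mathcal G$ is ergodic. There are two cases.

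\emph{Case 1: $A$ is diffuse.} Then $A$ is a diffuse abelian subalgebra of $M^t$ whose normalizer contains $\mathcal B$ and the groupoid partial isometries, hence generates $M^t$. So $M^t$ has a regular diffuse abelian subalgebra, or more precisely the quasi-normalizer of $A$ (indeed of $\mathcal B\supset A$) generates $M^t$. I plan to rule this out using the amalgamated free product structure. $M_1,M_2$ are nonamenable and $R$ is coarse in each, so $M=M_1*_RM_2$ should behave like a free product relative to $R$. Applying the dichotomies of \cite{IPP05} (normalizers/quasi-normalizers of subalgebras of $M_1*_RM_2$), either $A\prec R$ or the normalizer is amenable relative to some $M_i$. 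The coarseness of $R\subset M_i$ should then force $A\prec M_i$ or amenability, both contradicting $\mathcal N(A)''=M^t$ with $M_i$ nonamenable factors. This is the step I expect to be the main obstacle.

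\emph{Case 2: $A$ is atomic.} Then by ergodicity $\mathcal G^{(0)}$ is a finite or countable set with counting-type measure. Restricting to one unit $x$ gives $pM^tp\cong B_x\rtimes_{\sigma,c}\mathcal G_x^x$, where the isotropy group $\mathcal G_x^x$ acts with scalar 2-cocycle. If $\mathcal G^{(0)}$ is finite, $\mathcal G$ infinite forces $\mathcal G_x^x$ infinite, and $pM^tp=M^{s}$ is a twisted crossed product by an infinite group with scalar cocycle. This contradicts Corollary~\ref{DD}(2). If $\mathcal G^{(0)}$ is infinite, the measure cannot be finite, invariant and atomic on an infinite transitive set, so this subcase does not occur.

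For (2), let $P\subset M^t$ be regular, infinite dimensional and of infinite index. First, $\mathcal Z(P)$ is normalized, so regularity passes to it. If $\mathcal Z(P)$ has a diffuse part, we get a diffuse abelian subalgebra with regular closure as in Case 1, which is excluded by the same argument. So $\mathcal Z(P)$ is atomic, and ergodicity of the normalizer action makes its minimal projections have equal trace. Cutting by a minimal central projection $z$ gives $Pz\subset zM^tz=M^s$ with $s\le t$, and $Pz$ is a regular subfactor (regularity is inherited via the normalizers fixing $z$). Next, replace $Pz$ by $Q=Pz\vee(Pz)'\cap M^s$. This is still regular, since its normalizer contains that of $Pz$. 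Because $M$ has only trivial embeddings into amplifications (Corollary~\ref{DD}(3), used as in Lemma~\ref{finite_cross}), $(Pz)'\cap M^s$ should be atomic, so compressing by a minimal projection of the relative commutant yields an irreducible regular subfactor. Infinite index survives, since finite-index compressions would make $P\subset M^t$ finite index.

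The main obstacle is the Case 1 analysis. I would first try the argument of \cite[Theorem 1.1]{IPP05} via relative amenability, with the coarseness of $R$ giving $A\nprec R$ plus control on normalizers.
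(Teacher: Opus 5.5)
Your atomic case of (1) is exactly the paper's argument: finitely many units by invariance of the measure, an infinite isotropy group, and a contradiction with Corollary \ref{DD}(2) after compressing by a unit projection. But the step you defer as ``the main obstacle'' is the core of the proof, and your sketch of it does not work as stated. \cite[Theorem 1.1]{IPP05} only controls quasi-normalizers of subalgebras that already lie in $M_1$ or $M_2$ and do not embed into $R$. It says nothing about an arbitrary diffuse abelian $A\subset M^t$, and coarseness of $R$ cannot ``force $A\prec M_i$''.

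What is needed is Vaes' dichotomy for normalizers of amenable subalgebras of amalgamated free products, \cite[Theorem A]{Vae13}. Let $B\subset M^t$ be amenable with $\mathcal N_{M^t}(B)''=M^t$. Since $M^t\nprec_{M^t}M_i^t$, and $M^t$ is not amenable relative to the amenable algebra $R^t$, one gets $B\prec^s_{M^t}R^t$. Coarseness enters only afterwards, through mixing: because $R\subset M_i$ are coarse, $R^t\subset M^t$ is a mixing inclusion. So if $B$ were diffuse, \cite[Lemma 9.4]{Ioa12} would give $M^t=\mathcal N_{M^t}(B)''\prec_{M^t}R^t$, contradicting nonamenability. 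A regular subalgebra of a II$_1$ factor is either diffuse or completely atomic, and in the latter case finite dimensional. Hence every regular amenable subalgebra of $M^t$ is finite dimensional. You need this for amenable subalgebras, not just abelian ones, because (2) also relies on it.

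In (2), your claim that $(Pz)'\cap M^s$ is atomic because $M$ has only trivial embeddings into its amplifications is unjustified. In Lemma \ref{finite_cross} that rigidity is applied to a \emph{finite index} subfactor. There the downward basic construction yields a subfactor isomorphic to an amplification of $M$, hence an embedding of $M$ into an amplification of itself, to which Corollary \ref{DD}(3) applies. Your $Pz\subset M^s$ has infinite index and need not be isomorphic to any amplification of $M$. So no such embedding arises, and Corollary \ref{DD}(3) says nothing about the relative commutant.

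The paper proves that $B'\cap M^t$ is finite dimensional for every diffuse regular $B\subset M^t$ using the amalgamated free product structure:
\begin{enumerate}
\item $B$ and $B'\cap M^t$ are both regular, so by the amenable case each is either finite dimensional or has no amenable direct summand.
\item If $B'\cap M^t$ were infinite dimensional, it would commute with $B$, which has no amenable direct summand. Then \cite[Theorem 1.1]{CH08} gives $B'\cap M^t\prec_{M^t}M_i^t$ for some $i$.
\item Since $B'\cap M^t\nprec_{M^t}R^t$, \cite[Theorem 1.1]{IPP05} upgrades this to $M^t=\mathcal N_{M^t}(B'\cap M^t)''\prec_{M^t}M_i^t$, a contradiction.
\end{enumerate}
Once this finite dimensionality is available, your cutting-down argument goes through. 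The paper does this as follows: take a minimal projection $z\in\mathcal Z(B'\cap M^t)$, so that $A=(Bz)'\cap zM^tz$ is a finite dimensional factor. Then pass to $N=A'\cap zM^tz\cong M^s$, in which $Bz$ is regular, irreducible and of infinite index.
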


\begin{proof} The proof relies on the following claim:
\begin{claim}\label{regular}
    Let $B\subset M^t$ be a regular von Neumann subalgebra. If $B$ is amenable, then $B$ is finite dimensional. If $B$ is diffuse, then $B'\cap M$ is finite dimensional.
\end{claim}
    
Assume that $B$ is amenable.
As $M_1,M_2$ are nonamenable,  $M$ is nonamenable and $M^t\nprec_{M^t}M_i^t$, for all $i\in\{1,2\}$.
Using that $M=M_1*_RM_2$ and applying \cite[Theorem A]{Vae13} we get that $B\prec_{M^t}^sR^t$. 

We claim that $B$ is completely atomic.
Suppose by contradiction that there exists a projection $z\in\mathcal Z(B)$ such that $Bz$ is diffuse. Since $B$ is regular in the II$_1$ factor $M^t$, it follows that $B$ is diffuse.
Since the inclusions $R\subset M_1$ and $R\subset M_2$ are coarse, so are the inclusions $R\subset M$ and $R^t\subset M^t$.
Hence, the inclusion $R^t\subset M^t$ is mixing (see \cite[Definition 9.2]{Ioa12}). Since $B\prec_{M^t}R^t$, applying \cite[Lemma 9.4]{Ioa12}  gives that $M^t=\mathcal N_{M^t}(B)''\prec_{M^t}R^t$,  contradicting that $M^t$ is nonamenable. This proves the claim.  Since $B$ is regular in the II$_1$ factor $M^t$, it must be finite dimensional.

Second, assume that $B$ is diffuse. Suppose by contradiction that $B'\cap M^t$ is not finite dimensional. 
Then by the above, $B$ and $B'\cap M^t$ cannot be amenable. Since $B$ and $B'\cap M^t$ are regular in the II$_1$ factor $M^t$, we get that they have no amenable direct summands. 
 Since $B$ has no amenable direct summand, by applying \cite[Theorem 1.1]{CH08} we find $i\in\{1,2\}$ such that $B'\cap M^t \prec_{M^t}M_i^t$. Since $B'\cap M^t$ has no amenable direct summand, we also have that $B'\cap M^t\nprec_{M^t}R^t$. Thus, \cite[Theorem 1.1]{IPP05} implies that $M^t=\mathcal N_{M^t}(B'\cap M^t)''\prec_{M^t}M_i^t$, which is a contradiction.

We are now ready to prove assertions (1) and (2) of the corollary.

(1) Assume that $M^t=\mathcal B\rtimes_{\sigma,c}\mathcal G$, for some $t>0$, a trace preserving action $\sigma$ of a discrete pmp groupoid $\mathcal G$ on a measurable field $\mathcal B=(B_x,\tau_x)_{x\in X}$ of separable tracial von Neumann algebras indexed over the space of units $(X,\mu)$ of $\mathcal G$, and a $2$-cocycle $c\in Z^2(\mathcal G,\mathbb T)$. 
Then $B=\int^{\oplus}_{X}B_x\;\text{d}\mu(x)$ is a regular von Neumann subalgebra of $M^t$. Hence, $\mathcal Z(B)$ is a regular abelian subalgebra of $M^t$. 

By Claim \ref{regular}, $\mathcal Z(B)$ must be finite dimensional. 
Thus, $\text{L}^\infty(X)\subset\mathcal Z(B)$ is finite dimensional, and therefore $(X,\mu)$ is finite and atomic.
For $x\in X$, let $p_x={\bf 1}_{\{x\}}\in \text{L}^{\infty}(X)$ and $G_x$ be the isotropy group of $x$. Consider the action $G_x\curvearrowright^{\rho}(B_x,\tau_x)$ and $2$-cocycle $d\in Z^2(G_x,\mathbb T)$ obtained by restricting $\sigma$ and $c$ to $G_x$. Then $p_x(\mathcal B\rtimes_{\sigma,c}\mathcal G)p_x\cong B_x\rtimes_{\rho,d}G_x$.
Hence, $M^{t\tau(p_x)}\cong p_xM^tp_x\cong B_x\rtimes_{\rho,d}G_x$. By Corollary \ref{DD}(2), we derive that $G_x$ is a finite group. 

Since $X$ is a finite set and the isotropy groups $(G_x)_{x\in X}$ are finite, the groupoid $\mathcal G$ must be finite.

(2) Let $B\subset M^t$ be a regular von Neumann subalgebra which is infinite dimensional and of infinite index in the sense that $M^t\nprec_{M^t}B$. Claim \ref{regular} implies that $B$ has no amenable direct summand, and further that $B'\cap M^t$ is finite dimensional. Let $z\in\mathcal Z(B'\cap M)$ be a minimal projection.
Since $\mathcal Z(B)\subset\mathcal Z(B'\cap M)$, we get that $Bz$ is a II$_1$ factor.
If $u\in\mathcal N_{M^t}(B)$, then $uzu^*\in\mathcal Z(B'\cap M^t)$ and therefore either $uzu^*=z$ or $uzu^*z=0$. 
This implies that $Bz$ is a regular subfactor of $zM^tz$. Moreover, $A=(Bz)'\cap zM^tz$ is a finite dimensional factor. 

Let $N=A'\cap zM^tz$. Then $Bz\subset N$ and $(Bz)'\cap N=\mathcal Z((Bz)'\cap zM^tz)=\mathbb Cz$. Hence, $Bz\subset N$ is an irreducible subfactor. We claim that it is also regular. To this end, let $v\in\mathcal N_{zM^tz}(Bz)$. Then $vAv^*=A$, and since $A$ is a finite dimensional factor, there exists $w_v\in\mathcal U(A)$ such that $vxv^*=w_vxw_v^*$, for every $x\in A$. Hence, $w_v^*v\in N$ and so $w_v^*v\in\mathcal N_{N}(Bz)$. This implies that $v\in\mathcal N_N(Bz)\mathcal U(A)$, and hence $\mathcal N_{zM^tz}(Bz)\subset \mathcal U(A)\mathcal N_N(Bz)$. Since $A$ is a finite dimensional factor, we also have $zM^tz=A\overline{\otimes}N$. Using the last two facts and that $Bz$ is regular in $zM^tz$, we conclude that $zM^tz\subset A\overline{\otimes}\mathcal N_N(Bz)''$ and further that $\mathcal N_N(Bz)''=N$. 

Altogether, we have shown that $Bz\subset N$ is a regular, irreducible subfactor.
Since $M^t\nprec_{M^t}B$, we also have $N\nprec_NBz$, and thus the inclusion $Bz\subset N$ has infinite index.
Let $n\geq 1$ such that $A\cong\mathbb M_n(\mathbb C)$. Then $N=A'\cap zM^tz\cong M^s$, where $s=\frac{\tau(z)t}{n}\in (0,t]$,  finishing the proof of (2).
\end{proof}

\subsection{Proof of Corollary \ref{applicationsC}} Let $M$ be a separable II$_1$ factor satisfying Theorem \ref{no_comult1} and denote by $\tau$ its trace. Then \cite[Proposition 2.1]{Phi02} provides a simple,   separable $C^*$-algebra $A\subset M$ such that $A''=M$ and $\tau_{|A}$ is the unique tracial state of $A$. Moreover, $A$ can be taken to be stably finite, with stable rank one and real rank zero.
Below we will use the following corollary of $\tau_{|A}$ being the unique tracial state of $A$:  every unital $*$-homomorphism $\pi:A\rightarrow N$, where $(N,\varphi)$ is a tracial von Neumann algebra, satisfies $\varphi\circ\pi=\tau$ and thus extends to a $*$-homomorphism $\pi:M\rightarrow N$.

(1) Assume that $A^{\text{op}}$ embeds into $\otimes_{n\in\mathbb N}A$. Then viewing $\otimes_{n\in\mathbb N}A$ as a subalgebra of $M^{\mathbb N}$ and applying the above fact, we would deduce that $M^{\text{op}}$ embeds $M^{\mathbb N}$, which contradicts Corollary \ref{DD}(4).

(2) Assume  that $A=B\rtimes_{\sigma,c,r}G$, for some action $G\curvearrowright^\sigma B$ of a countably infinite group $G$ on a $C^*$-algebra $B$ and a $2$-cocycle $c\in Z^2(G,\mathbb T)$. Denote by $(u_g)_{g\in G}\subset \mathcal U(A)$ the canonical unitaries, by $(\lambda_g)_{g\in G}\subset\mathcal U(\text{L}(G))$ the left regular representation of $G$ and by $\varphi:\text{L}(G)\rightarrow\mathbb C$ the usual trace. 
Consider the unital $*$-homomorphism $\Delta_0:A\rightarrow M\overline{\otimes}\text{L}(G)$ given by $\Delta_0(bu_g)=bu_g\otimes \lambda_g$, for every $b\in B$ and $g\in G$. The above fact implies that $(\tau\otimes\varphi)\Delta_0(a)=\tau(a)$, for every $a\in A$. In particular, $\tau(bu_g)=\tau(bu_g)\varphi(\lambda_g)=\delta_{g,e}\tau(b)$, for every $b\in B$ and $g\in G$. This implies that $M=A''=B''\rtimes_{\sigma'',c}G$, where $G\curvearrowright^{\sigma''}B''$ denotes the extension of $\sigma$ to $B''$, which contradicts Corollary \ref{C}.

(3) For this part, we take $M$ as in Theorem \ref{embeddings}. Thus, in addition to satisfying the conclusion of Theorem \ref{no_comult1}, $M=M_1*_RM_2$, where $R\subset M_1$ and $R\subset M_2$ are coarse embeddings of the hyperfinite II$_1$ factor $R$ into property (T) II$_1$ factors $M_1$ and $M_2$. As in the proof of Corollary \ref{DD}(3), this guarantees that any regular abelian von Neumann subalgebra $P\subset M$ must be finite dimensional.

Assume that $A=C^*_r(\mathcal G,\Sigma)$, for a locally compact Hausdorff  \'{e}tale groupoid $\mathcal G$ and a twist $\Sigma$ over $\mathcal G$.  Let $X=\mathcal G^{(0)}$ be the unit space of $\mathcal G$. Then $A_0=C_0(X)$ is a regular abelian $C^*$-subalgebra of $A$. Hence, $A_0''$ is a regular abelian von Neumann subalgebra of $M=A''$. By the above paragraph, $A_0''$ must be finite dimensional. Therefore, $A_0$ is finite dimensional, hence $X$ is finite and $\mathcal G$ is discrete. For $x\in X$, denote by $G_x$ the isotropy group of $x$ and $\Sigma_x=\Sigma_{|G_x}\in Z^2(G_x,\mathbb T)$. 
Since $x\in X$ is an isolated point, $p_x={\bf 1}_{\{x\}}\in A_0$ is a nonzero projection. 

Fix $x_0\in X$. Since $M$ is a factor it follows that for every $x\in X$ there exists $g\in\mathcal G$ such that $s(g)=x_0$ and $r(g)=x$. Hence, $p_x$ and $p_{x_0}$ are equivalent in $A$, for every $x\in X$. Therefore, denoting $n=|X|$ we have that $A\cong \mathbb M_n(p_{x_0}Ap_{x_0})$.  Since $A$ has a unique tracial state, we deduce that $p_{x_0}Ap_{x_0}$ has a unique tracial state.  
Moreover, \cite[Lemma 4.2]{BGP26} implies that $p_xAp_x\cong C^*_{r,\Sigma_x}(G_x)$.  Arguing as in (2), we get that $p_xMp_x=(p_xAp_x)''\cong \text{L}_{\Sigma_x}(G_x)$. This contradicts Corollary \ref{DD}(2). \hfill$\square$

\end{document}